\documentclass[a4paper,11pt]{amsart}
\usepackage[english]{babel}
\usepackage{amsmath,amscd,amssymb,amsthm,amsxtra,enumerate}
\usepackage{mathtools}
\usepackage{color,graphicx}
\usepackage{epsfig,epstopdf}
\usepackage[margin=1.25in]{geometry}
\usepackage{enumerate}
\usepackage{hyperref}

\newcommand{\norm}[1]{\left\Vert#1\right\Vert}

\newcommand{\R}{\mathbb{R}}

\newcommand{\lU}{\tilde{U}}

\newcommand{\StarQQPlus}{(Q^+_1)^*}

\newcommand\reallywidehat[1]{\arraycolsep=0pt\relax%
\begin{array}{c}
\stretchto{
  \scaleto{
    \scalerel*[\widthof{\ensuremath{#1}}]{\kern-.5pt\bigwedge\kern-.5pt}
    {\rule[-\textheight/2]{1ex}{\textheight}} 
  }{\textheight} %
}{0.5ex}\\           
#1\\                 
\rule{-1ex}{0ex}
\end{array}
}

\DeclareMathOperator{\diam}{diam}
\DeclareMathOperator{\dist}{dist}
\DeclareMathOperator{\dvie}{div}
\DeclareMathOperator{\Dom}{Dom}

\DeclareMathOperator*{\osc}{osc}

\DeclareMathOperator{\dive}{div}

\newtheorem{thm}{Theorem}[section]
\newtheorem{prop}[thm]{Proposition}
\newtheorem{cor}[thm]{Corollary}
\newtheorem{lem}[thm]{Lemma}

\theoremstyle{definition}

\newtheorem{rem}[thm]{Remark}

\numberwithin{equation}{section}

\allowdisplaybreaks

\author[A. Biswas]{Animesh Biswas}

\address{Department of Mathematics\\
Iowa State University\\
396 Carver Hall, Ames\\
IA 50011, United States of America}
\email{animbisw@iastate.edu, stinga@iastate.edu}

\author[P. R. Stinga]{Pablo Ra\'ul Stinga}

\thanks{The authors were partially supported by
Simons Foundation grant 580911 and by grant MTM2015-66157-C2-1-P
(MINECO/FEDER) from Government of Spain}

\keywords{Master equations, H\"older regularity, extension problem, compactness, parabolic H\"older spaces}

\subjclass[2010]{Primary: 35R11, 35B65, 35K65. Secondary: 35R09, 46E35, 58J35}

\begin{document}

\title[Regularity for master equations]{Regularity estimates for nonlocal space-time master equations in bounded domains}

\begin{abstract}
We obtain sharp parabolic interior and global Schauder estimates for solutions to nonlocal space-time
master equations $(\partial_t +L)^su = f$ in $\R \times \Omega$, where $L$ is an elliptic operator in divergence form,
subject to homogeneous Dirichlet and Neumann boundary conditions.
In particular, we establish the precise behavior of solutions near the boundary.
Along the way, we prove a characterization of the correct intermediate parabolic H\"older spaces in the spirit of Sergio Campanato.
\end{abstract}

\maketitle

\section{Introduction}\label{sec:sec_1}

We study interior and global Schauder estimates for solutions $u$ to nonlocal space-time master equations 
\begin{equation}\label{eq:Hsuf}
H^su\equiv(\partial_t-\dive(A(x)\nabla_x))^su=f\qquad\hbox{in}~\R\times\Omega
\end{equation}
where $f=f(t,x):\R\times\Omega\to\R$ is a given datum and $\Omega\subset\R^n$, $n\geq1$, is a bounded Lipschitz domain.
Here $H^s=(\partial_t-\dive(A(x)\nabla_x))^s$ is the fractional power of order $0<s<1$ of the parabolic operator
$H=\partial_t-\dive(A(x)\nabla_x)$. The coefficients in \eqref{eq:Hsuf}
are symmetric $A(x) = (A^{ij}(x)) = (A^{ji}(x))$ for $i, j = 1,\ldots,n$, bounded and measurable, and satisfy the uniform ellipticity condition
 $\Lambda_1 |\xi|^2 \leq A(x) \xi \cdot \xi \leq \Lambda_2 |\xi|^2$, for all $\xi \in \R^n$ and almost every $x \in \Omega$, for some ellipticity constants $0 < \Lambda_1 \leq \Lambda_2$.  The problem is subject to either homogeneous Dirichlet or Neumann boundary conditions, that is,
$$u=0\qquad\hbox{or}\qquad\partial_Au\equiv A(x)\nabla_xu\cdot\nu=0\qquad\hbox{on}~\R\times\partial\Omega$$
where $\nu$ is the exterior unit normal to $\partial\Omega$.

Master equations as in \eqref{eq:Hsuf} arise in several different physical applications
such as the phenomenon of osmosis in semipermeable membranes, in diffusion models for biological invasions,
in financial mathematics, in the Signorini problem of elasticity in heterogeneous materials and also in probability,
among others, see, for instance, \cite{Allen-Caffarelli-Vasseur, ACM, Danielli, BRR, Caffarelli-Silvestre-Master, DL, Stinga-Torrea-SIAM}
and references therein. All these phenomena are governed by a master equation given in generalized form as
 \begin{equation}\label{eq:gen_master}
     \int_{\R^n}\int^\infty_0 (u(t-\tau,z)-u(t,x))K(t,x,\tau,z) \,d \tau  \, dz= f(t,x)
 \end{equation}
 for $t \in \R$ and $x \in \R^n$, for some kernel $K$.
 
In terms of regularity, Caffarelli and Silvestre proved H\"older estimates of viscosity solutions to \eqref{eq:gen_master} with bounded right hand side, see \cite{Caffarelli-Silvestre-Master}. They assumed conditions on the kernel $K$ that ensure that \eqref{eq:gen_master} is an equation of fractional
order $s$ in time and $2s$ in space. On the other hand, in \cite{Stinga-Torrea-SIAM}, Stinga and Torrea studied the problem $(\partial_t - \Delta)^s u = f$, for $0<s<1$, which is the most basic form of a master equation. The systematic study of weak solutions to master equations as in \eqref{eq:Hsuf} was initiated in
 \cite{B-DLC-S}, where a precise definition of the fractional power operator $H^s$ is given. In particular, in \cite{B-DLC-S}, the pointwise formula
 and weak formulation for $H^su$ are obtained, see \eqref{eq:pointwise_formula} below.
In addition, it is shown that nonnegative solutions to $H^su=0$ satisfy interior and boundary parabolic Harnack inequalities and H\"older estimates.
 
We continue the development of the regularity theory for \eqref{eq:Hsuf}. We obtain interior and boundary parabolic Schauder estimates
for the solution $u$ to \eqref{eq:Hsuf} in the cases when $f$ is H\"older continuous, see Theorems \ref{thm:interiorholder}, \ref{thm:1.3_f_nonzero}, \ref{thm:1.3_f_zero} and \ref{thm:boundaryNeumannHolder}, and also when $f$ is just $L^p$ integrable, for $p$ large depending on $s$ and $n$, see Theorems \ref{thm:interiorLp} and \ref{thm:1.5}.
For these results, the coefficients $A(x)$ are assumed to be at least continuous. In particular, to establish the boundary behavior of solutions we need to perform a
precise asymptotic analysis of half space solutions. Furthermore, in order to apply our method, which 
is based on energy estimates and compactness arguments and is nonlinear in nature,
we need to prove a characterization of parabolic H\"older spaces in the spirit of Campanato.

In the following we present our main results. From now on, we fix $T_1<0<T_2$ and we call $I=(T_1,T_2)$.
We refer the reader to Section \ref{sec:Campanato_statement} for the definition of parabolic H\"older spaces.
In the first two statements, we present the interior regularity when $f$ is parabolically H\"older continuous in $I \times \Omega$
and when $f$ is in $L^p (I \times \Omega)$, respectively, under precise continuity assumptions on $A(x)$.
Interior regularity in both cases does not depend on the prescribed boundary conditions nor on the regularity of the boundary.

\begin{thm}[Interior regularity for $f$ H\"older]\label{thm:interiorholder}
Let $0<\alpha<1$ and suppose that $f\in C^{\alpha/2,\alpha}_{t,x}(I\times\Omega)$.
Let $u\in\Dom(H^s)$ be a weak solution to \eqref{eq:Hsuf} such that $u=0$ or $\partial_Au=0$ on $\R\times\partial\Omega$.
\begin{enumerate}[$(i)$]
\item Assume that $0<\alpha+2s<1$ and that $A(x)$ is continuous in $\Omega$. Then
$$u\in C^{(\alpha+2s)/2,\alpha+2s}_{t,x,\mathrm{loc}}(I\times\Omega)$$
and for any open subset $K\subset\subset I\times\Omega$
we have the estimate
$$\|u\|_{C^{(\alpha+2s)/2,\alpha+2s}_{t,x}(K)}\leq C\big(\|u\|_{\Dom(H^s)}+\|f\|_{C^{\alpha/2,\alpha}_{t,x}(I\times\Omega)}\big).$$
\item Assume that $1<\alpha+2s<2$ and that $A(x)\in C^{0,\alpha+2s-1}(\Omega)$. Then
$$u\in C^{(\alpha+2s)/2,1+(\alpha+2s-1)}_{t,x,\mathrm{loc}}(I\times\Omega)$$
and for any open subset $K\subset\subset I\times\Omega$
we have the estimate
$$\|u\|_{C^{(\alpha+2s)/2,1+(\alpha+2s-1)}_{t,x}(K)}\leq C\big(\|u\|_{\Dom(H^s)}+\|f\|_{C^{\alpha/2,\alpha}_{t,x}(I\times\Omega)}\big).$$
\end{enumerate}
The constants $C>0$ above depend only on $s,\alpha,K,I\times\Omega$ and the modulus of continuity of $A(x)$.
\end{thm}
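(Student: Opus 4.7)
The plan is to adapt the Caffarelli-style compactness and iteration scheme for Schauder estimates to the nonlocal parabolic setting by working with the extension problem for $H^s$, and then to translate the resulting polynomial approximation at dyadic parabolic scales into a H\"older estimate via the Campanato-type characterization of parabolic H\"older spaces established earlier in the paper.

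First, I would reduce to interior estimates on a parabolic cylinder centered at a point $(t_0,x_0)\in K$, and after translation place $(t_0,x_0)$ at the origin. The key tool is the extension characterization of $H^s u$ proved in \cite{B-DLC-S}: there is an auxiliary function $U$ on $I\times\Omega\times(0,\infty)$ satisfying a degenerate parabolic PDE with weight $y^{1-2s}$ whose trace is $u$ and whose weighted normal flux at $y=0$ equals $c_s f$. All estimates will be proved for $U$ in the half-cylinder $Q_\rho\times[0,\rho)$ and then restricted to the $y=0$ slice to obtain the conclusion for $u$.

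The heart of the argument is an improvement-of-flatness lemma. For case $(i)$ I would claim: there exist $\rho\in(0,1/2)$ and $\delta>0$ such that whenever $U$ is a suitably normalized extension solution, the continuous coefficients $A$ are $\delta$-close to the constant matrix $A(0)$ near the origin, and $\|f\|_{C^{\alpha/2,\alpha}_{t,x}}\leq\delta$, then there exists a constant $c$ with
$$\sup_{Q_\rho\times[0,\rho)}\abs{U-c}\leq\rho^{\alpha+2s}.$$
Case $(ii)$ is identical, with $c$ replaced by an affine function $\ell(x)$ in the $x$ variable only, and with H\"older smallness on $A-A(0)$. This is proved by contradiction: failure produces sequences $(U_k,A_k,f_k)$ for which, after extracting a subsequence, the energy estimates for the weighted extension PDE combined with the parabolic De Giorgi--Nash--Moser H\"older estimates from \cite{B-DLC-S} give uniform equicontinuity of $U_k$ on compact subsets of $I\times\Omega\times[0,\infty)$. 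The limit $U_\infty$ solves the frozen-coefficient extension equation with zero weighted flux at $y=0$, hence $H_{A(0)}^sU_\infty|_{y=0}=0$, and classical Schauder theory for constant-coefficient parabolic problems furnishes approximation at scale $\rho$ by constants (resp.\ affine functions), contradicting the assumed failure.

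Next, I would rescale and iterate along the dyadic sequence $r_k=\rho^k$. The normalized iterate
$$\widetilde U_k(t,x,y)=\frac{U(\rho^{2k}t,\rho^k x,\rho^k y)-P_k(x)}{\rho^{k(\alpha+2s)}},$$
with $P_k$ the previously constructed constant or affine approximation, must again satisfy the hypotheses of the approximation lemma; here the modulus of continuity of $A(x)$ in case $(i)$, respectively the $C^{0,\alpha+2s-1}$ regularity in case $(ii)$, is exactly what guarantees that the frozen-coefficient error after rescaling stays below $\delta$ at every step, and the scaling of $\|f\|_{C^{\alpha/2,\alpha}_{t,x}}$ under parabolic dilations is compatible with the factor $\rho^{k(\alpha+2s)}$ because $\alpha+2s$ is exactly the target exponent. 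Telescoping yields a Cauchy sequence of polynomials $P_k$ of the correct degree with $\sup_{Q_{r_k}\times[0,r_k)}\abs{U-P_k}\leq Cr_k^{\alpha+2s}$ and $\abs{P_{k+1}-P_k}$ controlled at the corresponding scale. The Campanato-type characterization proved in Section \ref{sec:Campanato_statement} then converts this uniform polynomial approximation of $u=U|_{y=0}$ into the desired H\"older norm on $K$, with constants controlled by $\|u\|_{\Dom(H^s)}+\|f\|_{C^{\alpha/2,\alpha}_{t,x}(I\times\Omega)}$. The principal obstacle is the approximation lemma: one must verify in the compactness step that the limit $U_\infty$ actually inherits both the extension PDE and the correct homogeneous condition at $\{y=0\}$ in the weighted sense (the trace is delicate for $y^{1-2s}$-degenerate equations), and then invoke sharp interior Schauder estimates for the constant-coefficient extension whose trace regularity matches $\alpha+2s$ in each of the two subcritical ranges.
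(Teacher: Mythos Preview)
Your overall strategy---extension problem, compactness/approximation lemma, dyadic iteration, Campanato---is exactly the paper's, but you implement the compactness step differently. The paper works entirely in weighted $L^2$: a Caccioppoli inequality (via Steklov averages) gives a uniform energy bound on the sequence $U_k$, and the Aubin--Lions lemma then yields strong $L^2$ convergence to a limit $W$ solving the homogeneous constant-coefficient extension equation; the iteration is phrased as $\int_{Q_\lambda^*}y^a|U-c|^2\lesssim\lambda^{n+3+a+2(\alpha+2s)}$ rather than a sup bound. Regularity of the limit $W$ is obtained directly at the extension level (smoothness of solutions to $y^a\partial_t W-\dvie(y^a\nabla W)=0$ with $y^aW_y|_{y=0}=0$, including the crucial estimate $|W_y|\leq Cy$), not by going back to Schauder for $(\partial_t-\Delta_{A(0)})^s$ on the trace. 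The trace inequality then transfers the $L^2$ control from $Q_\lambda^*$ to $Q_\lambda$, and the $L^2$ Campanato theorem closes the argument.

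Your $L^\infty$/Arzel\`a--Ascoli route is viable but you should be aware of one soft spot: the uniform H\"older continuity of $U_k$ up to $\{y=0\}$ that you invoke from \cite{B-DLC-S} is stated there for $H^su=0$, i.e.\ for vanishing Neumann data. With nonzero (but small) $f_k$ you would need a version of the degenerate De Giorgi--Nash--Moser estimate that tracks the Neumann datum, or else first subtract a barrier like $\tfrac{y^{1-a}}{1-a}f(0,0)$ as the paper does. The $L^2$ approach avoids this entirely, since passing to the limit in the weak formulation handles the boundary term automatically. Either method gives the theorem; the paper's is more self-contained for weak solutions in energy spaces, while yours would give slightly stronger pointwise information along the way at the cost of importing more regularity theory at the compactness step.
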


\begin{thm}[Interior regularity for $f$ in $L^p$]\label{thm:interiorLp}
Suppose that $f\in L^p(I\times\Omega)$ for some $2\leq p<\infty$.
Let $u\in\Dom(H^s)$ be a weak solution to \eqref{eq:Hsuf} such that $u=0$ or $\partial_Au=0$ on $\R\times\partial\Omega$.
\begin{enumerate}[$(i)$]
\item Assume that $(n+2)/(2s)<p<(n+2)/(2s-1)_+$ and that $A(x)$ is continuous in $\Omega$. Then
$$u\in C^{\alpha/2,\alpha}_{t,x,\mathrm{loc}}(I\times\Omega)$$
where $\alpha=2s-(n+2)/p\in(0,1)$ and, for any open subset $K\subset\subset I\times\Omega$,
we have the estimate
$$\|u\|_{C^{\alpha/2,\alpha}_{t,x}(K)}\leq C\big(\|u\|_{\Dom(H^s)}+\|f\|_{L^p(I\times\Omega)}\big).$$
\item Assume that $s>1/2$, $p>(n+2)/(2s-1)$ and that $A(x)\in C^{0,\alpha}(\Omega)$ for $\alpha=2s-(n+2)/p-1\in(0,1)$. Then
$$u\in C^{(1+\alpha)/2,1+\alpha}_{t,x,\mathrm{loc}}(I\times\Omega)$$
and for any open subset $K\subset\subset I\times\Omega$
we have the estimate
$$\|u\|_{C^{(1+\alpha)/2,1+\alpha}_{t,x}(K)}\leq C\big(\|u\|_{\Dom(H^s)}+\|f\|_{L^p(I\times\Omega)}\big).$$
\end{enumerate}
The constants $C>0$ above depend only on $s,p,K,I\times\Omega$ and the modulus of continuity of $A(x)$.
\end{thm}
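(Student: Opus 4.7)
The strategy is to reuse the compactness-and-iteration scheme underlying Theorem \ref{thm:interiorholder}, replacing the H\"older hypothesis on $f$ by the Morrey-type control that membership in $L^p$ provides on parabolic cylinders via H\"older's inequality. By the parabolic Campanato characterization of Section \ref{sec:Campanato_statement}, proving part (i) at an interior point $(t_0,x_0)\in K$ reduces to the decay estimate
\begin{equation*}
\inf_{c\in\R}\frac{1}{|Q_r(t_0,x_0)|}\int_{Q_r(t_0,x_0)}|u-c|^2\,dt\,dx\leq Cr^{2\alpha}
\end{equation*}
for all small $r>0$, and part (ii) to the analogous bound with $c$ replaced by an $x$-affine function and exponent $2(1+\alpha)$. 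Under the rescaling $\tilde u(t,x)=r^{-\alpha}u(t_0+r^2t,x_0+rx)$ the equation $H^su=f$ becomes a fractional parabolic equation with coefficients $A(x_0+rx)$ and right hand side $\tilde f$, and H\"older's inequality together with $|Q_r|\sim r^{n+2}$ yields
\begin{equation*}
\|\tilde f\|_{L^p(Q_1)}=r^{2s-\alpha-(n+2)/p}\|f\|_{L^p(Q_r(t_0,x_0))}=\|f\|_{L^p(Q_r(t_0,x_0))},
\end{equation*}
the final identity holding precisely because $\alpha=2s-(n+2)/p$ in (i) (with $\alpha$ replaced by $1+\alpha$ in (ii)). The assumption $p>(n+2)/(2s)$ is what makes this scaling critical rather than supercritical.

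The engine of the proof is a one-step flatness lemma: for every $\varepsilon>0$ there exist $\delta,\theta\in(0,1)$, depending only on the parameters and on the modulus of continuity of $A$, such that every weak solution $v$ of $H^sv=g$ in $Q_1$ with $\|v\|_{\Dom(H^s)}+\|g\|_{L^p(Q_1)}\leq 1$ and $\osc_{B_1}A\leq\delta$ satisfies
\begin{equation*}
\inf_{c}\frac{1}{|Q_\theta|}\int_{Q_\theta}|v-c|^2\leq\theta^{2(\alpha+\varepsilon)}
\end{equation*}
(with constants replaced by affine functions in (ii)). This is proved by contradiction: from a failing sequence $(v_k,g_k)$ one extracts, using the energy estimates of \cite{B-DLC-S} and the uniform continuity of $A$, a subsequential limit $v_\infty$ solving the constant-coefficient homogeneous master equation $H^s_{A(x_0)}v_\infty=0$. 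Applying Theorem \ref{thm:interiorholder} to $v_\infty$ with $f\equiv 0$ (a constant coefficient matrix lies trivially in $C^{0,\gamma}$ for every $\gamma$) provides interior regularity $C^{\gamma/2,\gamma}_{\mathrm{loc}}$ for some $\gamma>\alpha$ in case (i) and $C^{(1+\gamma)/2,1+\gamma}_{\mathrm{loc}}$ for some $\gamma>\alpha$ in case (ii), so $v_\infty$ is approximated at scale $\theta$ by constants (resp.\ by affine functions) with error of order $\theta^\gamma$ (resp.\ $\theta^{1+\gamma}$), contradicting the failure of the decay.

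Iterating the flatness lemma along the geometric sequence of radii $\theta^k$ produces the Campanato decay required in the first paragraph, and the characterization of Section \ref{sec:Campanato_statement} then translates this into the claimed H\"older estimate on $K$. The main obstacle is the compactness step itself: because $H^s$ is nonlocal in both time and space through the pointwise formula \eqref{eq:pointwise_formula}, one must verify that the rescaled solutions $v_k$ converge in a topology strong enough to preserve the weak form of the master equation in the limit, and in particular to control the contributions of $v_k$ from the exterior of $Q_1$ that persist after rescaling. Once this is done, freezing the coefficients at $x_0$ is a direct use of uniform continuity, and the remaining manipulations reduce to standard scaling and iteration.
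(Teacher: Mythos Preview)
Your high-level plan (Morrey control of $f$ via H\"older's inequality, a one-step flatness lemma by compactness, then iteration and Campanato) is the same as the paper's. The gap is in \emph{where} you run the compactness argument. You propose to work directly with the nonlocal equation $H^sv=g$ on $Q_1$, extract a limit $v_\infty$, and show it solves the constant-coefficient master equation. You yourself flag the obstacle: $H^s$ sees all of $\R\times\Omega$, so after rescaling the contributions from outside $Q_1$ do not vanish and the weak formulation \eqref{eq:pointwise_formula} does not localize. You do not explain how to control these tails, and there is no reason to expect the energy estimates of \cite{B-DLC-S} alone to give you compactness in a topology that passes the nonlocal equation to the limit on a fixed cylinder. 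Relatedly, your rescaling $\tilde u(t,x)=r^{-\alpha}u(t_0+r^2t,x_0+rx)$ does not transform $H^s$ into another operator of the same type on $Q_1$: the operator is defined spectrally on $\Omega$, not by a translation-invariant kernel, so ``$H^s\tilde u=\tilde f$ on $Q_1$'' is not a well-posed statement without further work.

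The paper avoids this entirely by passing first to the extension problem (Theorem~\ref{th_exten}): the trace $u=U(\cdot,\cdot,0)$ where $U$ solves the \emph{local} degenerate parabolic equation \eqref{eq:exten_1_mod} on $Q_1^*$ with Neumann datum $f$. All of the analysis---the Caccioppoli estimate (Lemma~\ref{lem:cacc_mod}), the approximation by a solution $W$ of the constant-coefficient extension equation (Corollary~\ref{cor:approx}), and the regularity of $W$ (Proposition~\ref{Prop:Harmonic})---is carried out for this local problem, where rescaling is unproblematic and Aubin--Lions gives genuine $L^2$ compactness. The $L^p$ hypothesis enters exactly as you say, through the Morrey-type seminorms $[f]_{L^{-s+\alpha/2,-2s+\alpha}(0,0)}$ introduced before Theorem~\ref{thm:thm_2_holder}, and the proof of Theorem~\ref{thm:thm_2_holder} is then a line-by-line repetition of the H\"older case (Theorem~\ref{thm:thm_1_holder}) with the exponent $\alpha$ shifted to $-2s+\alpha$ or $-2s+\alpha+1$ and with no need to normalize $f(0,0)$. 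If you route your argument through the extension, the rest of your outline goes through without change.
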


Next we state our results on global regularity. The first one, Theorem \ref{thm:1.3_f_nonzero}, deals with solutions satisfying the Dirichlet boundary condition
$u=0$ on $\R\times\partial\Omega$ when $f$ is H\"older continuous in $\overline{I\times\Omega}$ and, in addition,
is allowed to be nonzero on the boundary $I\times \partial \Omega$.  The fact that $f$ is nonzero on the boundary will affect the global regularity of the solution. Instead, when $f$ is identically zero on the boundary, we get better global regularity which is consistent with the interior estimates of Theorem \ref{thm:interiorholder},
see Theorem \ref{thm:1.3_f_zero}.
This is in high contrast with the local case of parabolic equations, namely, when $s=1$, see \cite{Lieberman}.
Such feature had already been observed in the case of fractional elliptic equations in divergence form in \cite{Stinga-Caffa}.
Our statements are also precise in terms of the sharp regularity of the coefficients and the boundary $\partial\Omega$.

\begin{thm}[Global regularity for Dirichlet and $f$ H\"older]\label{thm:1.3_f_nonzero}
Let $0<\alpha<1$ and suppose that $f\in C^{\alpha/2,\alpha}_{t,x}(\overline{I\times\Omega})$.
Let $u\in\Dom(H^s)$ be a weak solution to \eqref{eq:Hsuf} such that $u=0$ on $\R\times\partial\Omega$.
\begin{enumerate}[$(i)$]
\item Assume that $0<\alpha+2s<1$, $\partial\Omega$ is $C^{1,\alpha}$ and that $A(x) \in C^{0,\alpha}(\overline{\Omega})$. Then
$$u(t,x)\sim\dist(x,\partial\Omega)^{2s}+v(t,x)\qquad\hbox{for all}~t\in I$$
where
$$v\in C^{(\alpha+2s)/2,\alpha+2s}_{t,x}(\overline{I\times\Omega})$$
and we have the estimate
$$\|v\|_{C^{(\alpha+2s)/2,\alpha+2s}_{t,x}(\overline{I\times\Omega})}
\leq C\big(1+\|u\|_{\Dom(H^s)}+\|f\|_{C^{\alpha/2,\alpha}_{t,x}(\overline{I\times\Omega})}\big).$$
\item Assume that $s=1/2$, $\partial \Omega$ is $C^{1,\alpha+\varepsilon}$ and that $A(x)\in C^{0,\alpha+\varepsilon}(\overline{\Omega})$,
for some $\varepsilon>0$ such that $0<\alpha+\varepsilon<1$. Then
$$u(t,x)\sim\dist(x,\partial\Omega)|\log\dist(x,\partial\Omega)|+v(t,x)\qquad\hbox{for all}~t\in I$$
where
$$v\in C^{(1+\alpha)/2,1+\alpha}_{t,x}(\overline{I\times\Omega})$$
and we have the estimate
$$\|v\|_{C^{(1+\alpha)/2,1+\alpha}_{t,x}(\overline{I\times\Omega})}
\leq C\big(1+\|u\|_{\Dom(H^s)}+\|f\|_{C^{\alpha/2,\alpha}_{t,x}(\overline{I\times\Omega})}\big).$$
\item Assume that $s>1/2$, $1<\alpha+2s<2$, $\partial \Omega$ is $C^{1,\alpha+2s-1}$ and that $A(x)\in C^{0,\alpha+2s-1}(\overline{\Omega})$. Then
$$u(t,x)\sim\dist(x,\partial\Omega)+v(t,x)\qquad\hbox{for all}~t\in I$$
where
$$v\in C^{(\alpha+2s)/2,1+(\alpha+2s-1)}_{t,x}(\overline{I\times\Omega})$$
and we have the estimate
$$\|v\|_{C^{(\alpha+2s)/2,1+(\alpha+2s-1)}_{t,x}(\overline{I\times\Omega})}
\leq C\big(1+\|u\|_{\Dom(H^s)}+\|f\|_{C^{\alpha/2,\alpha}_{t,x}(\overline{I\times\Omega})}\big).$$
\end{enumerate}
The constants $C>0$ above depend only on $n,s,\alpha$ and the modulus of continuity of $\partial\Omega$ and $A(x)$.
\end{thm}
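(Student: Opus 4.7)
The plan is to combine a sharp asymptotic analysis of explicit half-space solutions with a compactness–iteration scheme in the spirit of Caffarelli, organized through the Campanato characterization of parabolic H\"older spaces established earlier in the paper. First I would reduce to a model setting: at a boundary point $x_0 \in \partial\Omega$, flatten $\partial\Omega$ locally via a diffeomorphism whose regularity matches that of $\partial\Omega$ (namely $C^{1,\alpha}$, $C^{1,\alpha+\varepsilon}$, or $C^{1,\alpha+2s-1}$ depending on the case), and after a linear change of variables assume that the frozen coefficient matrix $A(x_0)$ equals the identity. The equation then becomes a perturbation of the model problem $(\partial_t - \Delta)^s U = g$ in $\R \times \{x_n > 0\}$ with $U = 0$ on $\{x_n = 0\}$, and the error terms are controlled by the H\"older modulus of $A$ and by the local curvature of $\partial\Omega$.

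The analytical heart of the argument is the classification of half-space profiles. Using the extension characterization of $H^s$ from \cite{B-DLC-S} (equivalently, Fourier methods on the half-line), I would construct explicit time-independent solutions $\Phi_s(x_n)$ of the model problem $(\partial_t-\Delta)^s\Phi_s = 1$ on $\R\times\{x_n>0\}$ with $\Phi_s=0$ on $\{x_n=0\}$, whose boundary behavior is precisely $x_n^{2s}$ when $2s<1$, $x_n|\log x_n|$ when $2s=1$, and $a\,x_n + b\,x_n^{2s}$ when $2s>1$. Multiplied by a smooth factor capturing the boundary values of $f$, these profiles supply the singular boundary layer appearing in the statement; crucially, their expansions at $x_n = 0$ are compatible with the parabolic polynomials used in the Campanato iteration.

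With this in hand, the H\"older regularity of the remainder $v(t,x) := u(t,x) - c(t,x)\,\Phi_s(\dist(x,\partial\Omega))$ is obtained by a dyadic blow-up argument at each boundary point. At scale $r_k = 2^{-k}$, one shows by compactness that $v$ is $r_k^{\alpha+2s}$-close to an appropriate parabolic polynomial, of degree zero in case $(i)$ and of degree one in cases $(ii)$ and $(iii)$. If this decay failed, a normalized blow-up sequence, bounded via the energy framework of \cite{B-DLC-S} and the interior estimates of Theorem \ref{thm:interiorholder}, would converge to an entire half-space solution of the constant-coefficient model that grows polynomially and vanishes on $\{x_n=0\}$; a Liouville-type classification based on the explicit asymptotics of the previous step forces this limit to be zero, the desired contradiction. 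Summing the dyadic estimates and applying the Campanato characterization proved earlier yields the required H\"older regularity of $v$ near $\partial\Omega$, which is patched with interior regularity from Theorem \ref{thm:interiorholder} by a standard covering argument.

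The main obstacle is case $(ii)$, the resonance $s = 1/2$ where the profile $x_n|\log x_n|$ breaks pure power scaling. The dyadic iteration there requires an $\varepsilon$-gain of regularity at each step in order to absorb the logarithmic factor, and this is precisely what motivates the strengthened hypotheses $A\in C^{0,\alpha+\varepsilon}(\overline{\Omega})$ and $\partial\Omega\in C^{1,\alpha+\varepsilon}$ in $(ii)$. A secondary difficulty lies in the construction of the half-space profile itself: because $H^s$ is the spectral fractional power of the parabolic operator rather than a classical fractional Laplacian, the profile cannot be imported from known formulas and must be derived via the extension problem associated with $H^s$, which is where the value of $s$ enters and cleanly separates the three cases.
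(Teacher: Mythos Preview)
Your overall architecture---flatten the boundary, subtract an explicit half-space profile built from the boundary value of $f$, then run a Campanato-type iteration on the remainder---matches the paper. The half-space profiles you describe are exactly those computed in Section~\ref{sec:regularity_bdd_heat} (subsection~\ref{subsection:halfspace}) and Lemmas~\ref{lem:W}--\ref{lem:nablaW}, and your identification of the $s=1/2$ logarithmic resonance as the reason for the extra $\varepsilon$ in the coefficient and boundary hypotheses is correct.

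Where you diverge from the paper is in the iteration step itself. You propose a \emph{contradiction/Liouville} scheme: assume the decay of $v$ at scale $\lambda^k$ fails, blow up, and classify global half-space solutions of the constant-coefficient extension problem with subquadratic growth. The paper instead runs a \emph{constructive approximation} scheme: at each scale it invokes the compactness lemma (Corollary~\ref{cor:boundaryapprox}) to produce a nearby solution $W$ of the model equation, reads off the needed polynomial directly from the values $W(0,0,0)$ and $\nabla_xW(0,0,0)$ (using the explicit interior estimates of Proposition~\ref{Prop:Harmonic} and Corollary~\ref{cor:bdd_reg}), and iterates as in Lemmas~\ref{lem:induc_step_1}--\ref{lem:poly_induction}. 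The subtraction of the half-space profile is carried out at the level of the extension (set $V=U-W$ and track the new drift term $F=(I-B(x))\nabla W$, whose Morrey norm is controlled via Lemma~\ref{lem:nablaW}), rather than at the level of $u$ with a variable prefactor $c(t,x)$ as you suggest.

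Both routes are standard in the Caffarelli school and lead to the same conclusion. The paper's constructive route has the advantage that it never needs a global Liouville classification: it only uses \emph{local} smoothness of the approximating $W$, which is available directly from Proposition~\ref{Prop:Harmonic}. Your blow-up route is cleaner conceptually but requires you to actually prove the Liouville step---that a global solution of the degenerate extension problem on $\R\times\R^n_+\times(0,\infty)$ with zero Neumann data on $\{y=0\}$, zero Dirichlet data on $\{x_n=0\}$, and growth at most $r^{\alpha+2s}$ is a polynomial of the appropriate degree. This is true (it follows, for instance, from the estimates in Corollary~\ref{cor:bdd_reg} by scaling), but it is an additional ingredient that the paper's approach sidesteps. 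If you pursue your route, make that Liouville statement and its proof explicit; as written, ``a Liouville-type classification based on the explicit asymptotics'' is the one genuinely underspecified step in your plan.
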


\begin{thm}[Global regularity for Dirichlet and $f$ H\"older, $f\equiv0$ on the boundary]\label{thm:1.3_f_zero}
Let $0<\alpha<1$ and suppose that $f\in C^{\alpha/2,\alpha}_{t,x}(\overline{I\times\Omega})$ is such that $f=0$ on $I\times\partial\Omega$.
Let $u\in\Dom(H^s)$ be a weak solution to \eqref{eq:Hsuf} such that $u=0$ on $\R\times\partial\Omega$.
\begin{enumerate}[$(i)$]
\item Assume that $0<\alpha+2s<1$, $\partial\Omega$ is $C^1$ and that $A(x)$ is continuous in $\overline{\Omega}$. Then
$$u\in C^{(\alpha+2s)/2,\alpha+2s}_{t,x}(\overline{I \times \Omega})$$
and we have the estimate
$$\|u\|_{C^{(\alpha+2s)/2,\alpha+2s}_{t,x}(\overline{I \times \Omega})}
\leq C\big(\|u\|_{\Dom(H^s)}+\|f\|_{C^{\alpha/2,\alpha}_{t,x}(\overline{I \times \Omega})}\big).$$
\item Assume that $1<\alpha+2s<2$, $\partial \Omega$ is $C^{1,\alpha+2s-1}$ and that $A(x)\in C^{0,\alpha+2s-1}(\overline{\Omega})$. Then
$$u\in C^{(\alpha+2s)/2,1+(\alpha+2s-1)}_{t,x}(\overline{I \times \Omega})$$
and we have the estimate
$$\|u\|_{C^{(\alpha+2s)/2,1+(\alpha+2s-1)}_{t,x}(\overline{I \times \Omega})}
\leq C\big(\|u\|_{\Dom(H^s)}+\|f\|_{C^{\alpha/2,\alpha}_{t,x}(\overline{I \times \Omega})}\big).$$
\end{enumerate}
The constants $C>0$ above depend only on $n,s,\alpha$ and the modulus of continuity of $\partial\Omega$ and $A(x)$.
\end{thm}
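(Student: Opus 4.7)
The plan is to establish a Campanato-type decay estimate at every point $(t_0,x_0)\in\overline{I\times\Omega}$, and then invoke the Campanato characterization of parabolic H\"older spaces proved earlier in the paper to convert this decay into the claimed global H\"older bound. Concretely, we seek a polynomial $P_{(t_0,x_0),r}$ (a constant in case $(i)$; a parabolic polynomial of $x$-degree $1$ vanishing on the tangent plane to $\partial\Omega$ at $x_0$ in case $(ii)$) satisfying
\begin{equation*}
\norm{u-P_{(t_0,x_0),r}}_{L^2(Q_r(t_0,x_0)\cap(I\times\Omega))}\le Cr^{(n+2)/2+(\alpha+2s)}
\end{equation*}
for all small $r$. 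For $(t_0,x_0)$ at positive distance from $\partial\Omega$, this is exactly the content of Theorem \ref{thm:interiorholder}, so the only new issue is the case $x_0\in\partial\Omega$.

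Fix such a boundary point and flatten $\partial\Omega$ locally by a $C^1$ (resp.\ $C^{1,\alpha+2s-1}$) diffeomorphism matched to the coefficient regularity. The problem becomes a weak solution $u$ of $H^s u=f$ in a half-cylinder $Q_1^+$ with $u=0$ on $\{x_n=0\}\cap Q_1^+$ and, crucially, $f\equiv 0$ on that same flat piece. The heart of the proof is the following dichotomy, realized by the nonlinear energy-plus-compactness scheme announced in the introduction: for every $\varepsilon>0$ there exists $\rho\in(0,1)$ such that, whenever $\norm{u}_{\Dom(H^s)}+\norm{f}_{C^{\alpha/2,\alpha}}\le 1$ and $f\equiv 0$ on $\{x_n=0\}$, one can find an admissible polynomial $P$ with
\begin{equation*}
\rho^{-(n+2)/2-(\alpha+2s)}\norm{u-P}_{L^2(Q_\rho^+)}\le\varepsilon.
\end{equation*}
This is shown by contradiction: if sequences $u_k,f_k$ violate the estimate, renormalize so that the Campanato quantity equals $1$; uniform energy estimates from \cite{B-DLC-S} together with the Campanato characterization yield equicontinuity and a limit $u_\infty$ on a half-space. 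Because $f_k\equiv 0$ on $\{x_n=0\}$ and is uniformly H\"older, the rescaled right-hand sides tend to $0$, so $u_\infty$ solves the \emph{homogeneous} frozen-coefficient master equation in the half-space with zero Dirichlet data on $\{x_n=0\}$. A Liouville-type classification, obtained via the extension problem for $H^s$ and odd reflection across $\{x_n=0\}$, forces $u_\infty$ to be a constant (hence $0$) in case $(i)$, or an affine function vanishing on $\{x_n=0\}$ in case $(ii)$, contradicting that its Campanato quantity equals $1$.

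Iterating the dichotomy at geometric scales $\rho^k$ yields a Cauchy sequence of comparison polynomials and the desired boundary decay at every small radius; combining this with the interior decay from Theorem \ref{thm:interiorholder} at distances comparable to $\dist(x,\partial\Omega)$ produces the Campanato estimate at every point $(t,x)\in\overline{I\times\Omega}$, and the Campanato characterization then closes the proof. The main obstacle is the compactness step: because $H^s$ is nonlocal in both $t$ and $x$, the rescaling must be performed so that the rescaled functions still lie in $\Dom(H^s)$ and so that the far-field tails of $u_k$ are controlled, and the convergence must be strong enough to pass to the limit in the weak nonlocal formulation. This is precisely where the Campanato characterization is essential (upgrading energy bounds into equicontinuity), and where the precise asymptotic analysis of half-space solutions enters (for the Liouville step). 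The hypothesis $f\equiv 0$ on $I\times\partial\Omega$ is exactly what eliminates the $\dist^{2s}$, $\dist\lvert\log\dist\rvert$, and $\dist$ boundary profiles of Theorem \ref{thm:1.3_f_nonzero}, allowing the full $\alpha+2s$ regularity asserted here.
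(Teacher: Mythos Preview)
Your overall plan---Campanato decay at boundary points via compactness and iteration, then Theorem~\ref{thm:Campanato}---matches the paper, and you are right that $f\equiv 0$ on $\partial\Omega$ is exactly what kills the boundary profiles of Theorem~\ref{thm:1.3_f_nonzero}. But the paper implements the compactness step very differently. It never runs a contradiction/blow-up argument on the nonlocal equation. Instead it passes immediately to the extension $U$ (Theorem~\ref{th_exten}), which solves a \emph{local} degenerate parabolic equation in $(Q_1^+)^*$; after flattening and normalizing so that $A(0)=I$ and $f(0,0)=0$, the Caccioppoli estimate (Lemma~\ref{lem:cacc_mod}) plus Aubin--Lions compactness (Corollary~\ref{cor:boundaryapprox}) produce a nearby solution $W$ of the constant-coefficient degenerate heat equation with $W=0$ on $\{x_n=0\}$, whose explicit smoothness (Corollary~\ref{cor:bdd_reg}) yields the one-step improvement. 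The paper's proof of Theorem~\ref{thm:1.3_f_zero} is thus literally the interior argument (Theorem~\ref{thm:thm_1_holder}) with $Q_r$ replaced by $Q_r^+$: since $f(0,0)=0$ after flattening, no half-space solution needs to be subtracted, and the approximating constant (case $(i)$) or linear polynomial (case $(ii)$) coming from $W$ automatically vanishes at $x_n=0$. No Liouville theorem is used.

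Your alternative---compactness directly on solutions of the nonlocal equation, followed by a Liouville classification of the blow-up limit---is a reasonable strategy in the abstract, but the obstacle you flag is real and your sketch does not overcome it. Rescalings of $u$ around a boundary point do not remain in $\Dom(H^s)$ for the fixed domain $\Omega$; the nonlocal tails in the weak formulation~\eqref{eq:pointwise_formula} are not controlled by $\|u\|_{\Dom(H^s)}$ alone after rescaling; and invoking the Campanato characterization to obtain equicontinuity of the blow-up sequence is circular, since the Campanato decay is precisely what you are trying to prove. Moreover, the Liouville theorem you need (for $(\partial_t-\Delta_D^+)^s$ in a half-space, under the growth bound inherited from the blow-up) is not established anywhere in the paper and would itself require the extension machinery. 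The extension localizes the problem and sidesteps all of these issues at once, which is why the paper routes the entire argument through $U$ rather than through $u$.
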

 
In the following we turn our attention to global regularity results for the case of the Neumann boundary condition $\partial_Au=0$ on $\R\times\partial\Omega$,
when $f$ is H\"older continuous. In contrast with the case of Dirichlet boundary condition, here the global estimates 
do not depend on the values of $f$ on the boundary and, therefore,
are consistent with the interior regularity obtained in Theorem \ref{thm:interiorholder}.

\begin{thm}[Global regularity for Neumann and $f$ H\"older]\label{thm:boundaryNeumannHolder}
Let $0<\alpha<1$ and suppose that $f\in C^{\alpha/2,\alpha}_{t,x}(\overline{I\times\Omega})$.
Let $u\in\Dom(H^s)$ be a weak solution to \eqref{eq:Hsuf} such that $\partial_Au=0$ on $\R\times\partial\Omega$.
\begin{enumerate}[$(i)$]
\item Assume that $0<\alpha+2s<1$, $\partial\Omega\in C^1$ and that $A(x)$ is continuous in $\overline{\Omega}$. Then
$$u\in C^{(\alpha+2s)/2,\alpha+2s}_{t,x}(\overline{I\times\Omega})$$
and we have the estimate
$$\|u\|_{C^{(\alpha+2s)/2,\alpha+2s}_{t,x}(\overline{I\times\Omega})}\leq
C\big(\|u\|_{\Dom(H^s)}+\|f\|_{C^{\alpha/2,\alpha}_{t,x}(\overline{I\times\Omega})}\big).$$
\item Assume that $1<\alpha+2s<2$, $\partial\Omega\in C^{1,\alpha+2s-1}$ and that $A(x)\in C^{0,\alpha+2s-1}(\overline{\Omega})$. Then
$$u\in C^{(\alpha+2s)/2,1+(\alpha+2s-1)}_{t,x}(\overline{I\times\Omega})$$
and we have the estimate
$$\|u\|_{C^{(\alpha+2s)/2,1+(\alpha+2s-1)}_{t,x}(\overline{I\times\Omega})}\leq
C\big(\|u\|_{\Dom(H^s)}+\|f\|_{C^{\alpha/2,\alpha}_{t,x}(\overline{I\times\Omega})}\big).$$
\end{enumerate}
The constants $C>0$ above depend only on $n,s,\alpha$ and the modulus of continuity of $\partial\Omega$ and $A(x)$.
\end{thm}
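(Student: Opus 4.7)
The plan is to combine the interior Schauder estimates of Theorem \ref{thm:interiorholder} with a boundary H\"older estimate near each point of $\R\times\partial\Omega$, and then glue the two through the Campanato-type characterization of parabolic H\"older spaces proved in Section \ref{sec:Campanato_statement}. Interior regularity is already provided by Theorem \ref{thm:interiorholder}, so the work is concentrated on establishing matching regularity up to the lateral boundary under the condition $\partial_A u=0$. The key reason that, unlike the Dirichlet case of Theorem \ref{thm:1.3_f_nonzero}, no singular boundary profile $\dist(x,\partial\Omega)^{2s}$ appears is that the conormal Neumann condition is compatible with even reflection of $u$ across $\partial\Omega$; thus the boundary model solutions are merely the smooth polynomials that satisfy the flat Neumann condition, and the resulting regularity exponent coincides with the interior one.

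The first technical step is to flatten $\partial\Omega$ near a boundary point $x_0$ by a $C^1$ (case (i)) or $C^{1,\alpha+2s-1}$ (case (ii)) diffeomorphism, which transforms $\dive(A(x)\nabla_x)$ into a divergence form operator $\dive(\tilde A(y)\nabla_y)$ with coefficients of the same regularity, and the condition $\partial_A u=0$ into $\tilde A(y)\nabla_y\tilde u\cdot e_n=0$ on $\{y_n=0\}$. I then use the parabolic extension of Stinga--Torrea type (see \cite{B-DLC-S,Stinga-Torrea-SIAM}) to realize $H^s$ locally as a weighted Dirichlet-to-Neumann map for a degenerate parabolic equation in an additional variable $y>0$. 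This leads to a local problem with two Neumann-type boundary conditions: the original conormal condition on $\{y_n=0\}$ and the weighted condition at $\{y=0\}$.

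A compactness/improvement-of-flatness iteration is then run at each such boundary point. By induction on the dyadic scale $r_k=\rho^k$, I construct polynomials $P_k(t,x)$ satisfying the flat Neumann condition $\partial_{x_n}P_k=0$ on $\{x_n=0\}$---constants in case (i) and affine functions of the tangential variables in case (ii)---such that
$$\sup_{Q^+_{r_k}}|u-P_k|\leq C\, r_k^{\alpha+2s},\qquad |P_k-P_{k-1}|\leq C\, r_{k-1}^{\alpha+2s}.$$
The inductive step is proved by contradiction: if it failed, a suitably rescaled and normalized sequence of solutions would, thanks to the energy estimates and the a priori H\"older regularity from \cite{B-DLC-S}, admit a subsequential limit that is a nontrivial solution of the homogeneous constant-coefficient half-space model problem with controlled polynomial growth, contradicting a Liouville classification stating that all such solutions are themselves admissible polynomials of the above form.

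Summing the telescoping control and invoking the parabolic Campanato characterization of Section \ref{sec:Campanato_statement} produces the desired H\"older norm of $u$ in a boundary strip; covering $\partial\Omega$ by finitely many such neighborhoods and combining with Theorem \ref{thm:interiorholder} yields the global bound stated in the theorem. The principal obstacle is the Liouville-type classification for the homogeneous extended half-space problem with the two Neumann conditions and the associated compactness for rescaled families: this requires a precise asymptotic analysis of model solutions of the weighted parabolic extension equation satisfying the conormal condition, together with the uniform energy and H\"older estimates of \cite{B-DLC-S}. Once that input is secured, the Campanato/iteration machinery closes the argument with no further subtlety beyond tracking the sharp dependence of the constants on the moduli of continuity of $A(x)$ and $\partial\Omega$.
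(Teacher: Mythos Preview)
Your overall strategy is correct and would succeed, but it differs from the route the paper actually takes. You propose an improvement-of-flatness iteration in which the inductive step is proved by a blow-up/contradiction argument: rescale, pass to a limit, and invoke a Liouville classification for the constant-coefficient half-space extension problem with the two Neumann conditions. The paper instead follows the Caffarelli--Stinga scheme of \cite{Stinga-Caffa}: at each fixed scale it uses the compactness Corollary~\ref{cor:boundaryapprox} to approximate the extension $U$ in $L^2$ by a solution $W$ of the constant-coefficient problem in $(Q_{3/4}^+)^*$, reads off the needed polynomial directly from the Taylor expansion of $W$ at the origin (using the explicit smoothness of $W$ in Corollary~\ref{cor:bdd_reg}), and iterates exactly as in Lemmas~\ref{lem:induc_step_1}--\ref{lem:poly_induction} with $Q_r$ replaced by $Q_r^+$. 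No Liouville theorem is ever needed; the role of your global classification is played by the local regularity of the approximant $W$ at a single scale. Incidentally, since the approximant $W$ satisfies $\partial_{x_n}W=0$ on $\{x_n=0\}$, one gets $\partial_{x_n}W(0,0,0)=0$ automatically, so the linear polynomial $\ell(x)=W(0,0,0)+\nabla_xW(0,0,0)\cdot x$ is tangential, which matches your observation.

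What each approach buys: the paper's method is more elementary in that it avoids proving a Liouville theorem and needs only the soft compactness of Corollary~\ref{cor:boundaryapprox} plus the interior-type smoothness of $W$ (obtained by even reflection in $x_n$ and in $y$). Your approach is more flexible and is standard in the recent nonlocal literature; for Neumann it would go through cleanly because even reflection in $x_n$ reduces the Liouville problem to the full-space degenerate parabolic equation with $A_2$ weight, whose entire solutions of subquadratic growth are handled by the regularity in \cite{Chiarenza-Serapioni} and Proposition~\ref{Prop:Harmonic}. So the ``principal obstacle'' you flag is real but, in the Neumann case, mild.
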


Finally, we state our global Schauder estimates for the case of $L^p$ right hand side,
which are in accordance with the interior estimates of Theorem \ref{thm:interiorLp}.

\begin{thm}[Global regularity for $f$ in $L^p$]\label{thm:1.5}
Suppose that $f\in L^p(I\times\Omega)$ for some $2\leq p<\infty$.
Let $u\in\Dom(H^s)$ be a weak solution to \eqref{eq:Hsuf} such that $u=0$ or $\partial_Au=0$ on $\R\times\partial\Omega$.
\begin{enumerate}[$(i)$]
\item Assume that $(n+2)/(2s)<p<(n+2)/(2s-1)_+$, $\partial\Omega\in C^1$ and that $A(x)$ is continuous in $\overline{\Omega}$. Then
$$u\in C^{\alpha/2,\alpha}_{t,x}(\overline{I\times\Omega})$$
where $\alpha=2s-(n+2)/p\in(0,1)$ and we have the estimate
$$\|u\|_{C^{\alpha/2,\alpha}_{t,x}(\overline{I\times\Omega})}\leq C\big(\|u\|_{\Dom(H^s)}+\|f\|_{L^p(I\times\Omega)}\big).$$
\item Assume that $s>1/2$, $p>(n+2)/(2s-1)$ and that $A(x)\in C^{0,\alpha}(\overline{\Omega})$ for $\alpha=2s-(n+2)/p-1\in(0,1)$. Then
$$u\in C^{(1+\alpha)/2,1+\alpha}_{t,x}(\overline{I\times\Omega})$$
and we have the estimate
$$\|u\|_{C^{(1+\alpha)/2,1+\alpha}_{t,x}(\overline{I\times\Omega})}\leq C\big(\|u\|_{\Dom(H^s)}+\|f\|_{L^p(I\times\Omega)}\big).$$
\end{enumerate}
The constants $C>0$ above depend only on $n,s,p$ and the modulus of continuity of $\partial\Omega$ and $A(x)$.
\end{thm}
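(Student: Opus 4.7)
The strategy is to combine the interior $L^p$ estimate from Theorem \ref{thm:interiorLp} with the global H\"older estimates of Theorems \ref{thm:1.3_f_nonzero}, \ref{thm:1.3_f_zero} and \ref{thm:boundaryNeumannHolder}, using the Campanato characterization of parabolic H\"older spaces developed in Section \ref{sec:Campanato_statement} together with a blow-up/compactness argument at boundary points. The interior estimate is already available, so the core of the proof consists in establishing the analogous sharp control in a neighborhood of any point on the lateral boundary $I\times\partial\Omega$.

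Fix such a boundary point $(t_0,x_0)$ and denote by $Q_r=Q_r(t_0,x_0)$ the parabolic cylinder of radius $r$ centered there. For case $(i)$ I would prove the zeroth-order Campanato decay
$$\int_{Q_r\cap(I\times\Omega)}|u-c_r|^2\,dt\,dx\leq Cr^{n+2+2\alpha},\qquad 0<r\leq r_0,$$
where $c_r$ denotes the average of $u$ on $Q_r\cap(I\times\Omega)$, $\alpha=2s-(n+2)/p$, and $C$ depends only on the data. For case $(ii)$ I would prove the first-order analogue in which $c_r$ is replaced by an affine polynomial $P_r$ and the exponent by $n+2+2(1+\alpha)$, with $\alpha=2s-(n+2)/p-1$. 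Together with the interior decay provided by Theorem \ref{thm:interiorLp}, the Campanato characterization then yields the asserted global parabolic H\"older regularity.

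Each decay estimate I would prove by contradiction. Assume it fails along a sequence of solutions $u_k$ to $H^su_k=f_k$ with $\|u_k\|_{\Dom(H^s)}+\|f_k\|_{L^p(I\times\Omega)}\leq 1$, at scales $r_k\to 0$ and boundary points $(t_k,x_k)$. Straightening the boundary via its $C^1$ (case $(i)$) or $C^{1,\alpha}$ (case $(ii)$) regularity and applying the parabolic rescaling while dividing by the (assumed large) Campanato quantity at scale $r_k$, produces rescaled functions $\tilde u_k$ of unit Campanato oscillation, solving master equations whose coefficients $\tilde A_k$ approach a constant matrix (by continuity of $A$) on domains approaching a half-space. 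Using the scaling identity $H^s(u(r^2\cdot,r\cdot))=r^{2s}(H^su)(r^2\cdot,r\cdot)$, one finds that the rescaled right-hand sides carry a factor $r_k^{2s-\alpha-(n+2)/p}$ (respectively $r_k^{2s-(1+\alpha)-(n+2)/p}$) multiplying the original $L^p$ norm, which vanishes as $r_k\to 0$ by the very definition of $\alpha$. Energy estimates and the compactness embedding for $\Dom(H^s)$ then produce a limit $\tilde u_\infty$ solving a constant-coefficient master equation on a half-space with homogeneous Dirichlet or Neumann boundary condition and zero right-hand side, to which I would apply Theorem \ref{thm:1.3_f_zero} or \ref{thm:boundaryNeumannHolder} (or their half-space counterparts obtained en route) to conclude that $\tilde u_\infty$ lies in the claimed H\"older class. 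This in turn gives Campanato decay for $\tilde u_\infty$ at unit scale, contradicting the normalization.

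The main obstacle is the compactness step. Obtaining uniform estimates on $\tilde u_k$ in a norm strong enough to pass to the limit with the correct boundary trace, and simultaneously verifying that the rescaled nonlocal operators $H^s_{\tilde A_k}$ on the straightened domains converge to the constant-coefficient half-space master operator, requires careful use of the extension problem characterization of $H^s$ together with the Campanato machinery developed earlier in the paper. In case $(ii)$ for the Dirichlet problem one must additionally rule out that the blow-up limit $\tilde u_\infty$ develops the $\dist(x,\partial\Omega)$ singularity of Theorem \ref{thm:1.3_f_nonzero}; here the vanishing of the rescaled source in $L^p$ effectively plays the role of the hypothesis $f\equiv 0$ on the boundary in Theorem \ref{thm:1.3_f_zero}, which excludes that singular profile and makes the $C^{(1+\alpha)/2,1+\alpha}$ regularity consistent with the interior result of Theorem \ref{thm:interiorLp}.
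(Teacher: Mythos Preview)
Your approach is genuinely different from the paper's. The paper does \emph{not} argue by blow-up/contradiction at the level of $u$ and the nonlocal operator $H^s$. Instead it passes immediately to the extension $U$ of Theorem~\ref{th_exten}, flattens the boundary, and proves a boundary Campanato estimate for $U(t,x,0)$ by a \emph{direct iteration}: at each dyadic scale one invokes the approximation Corollary~\ref{cor:boundaryapprox} (compactness via Aubin--Lions enters there, at unit scale) to find a smooth solution $W$ of the constant-coefficient degenerate heat equation close to $U$ in $L^2$, subtracts the constant or linear part of $W$, rescales, and repeats. This is Theorem~\ref{thm:bdd_holder_2}, whose proof is literally the half-ball version of Theorem~\ref{thm:thm_2_holder} and of Lemmas~\ref{lem:induc_step_1}--\ref{lem:poly_induction}. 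The $L^p$ hypothesis on $f$ enters only through H\"older's inequality, which yields the Morrey decay $\int_{Q_r^+}|f|^2\le C\|f\|_{L^p}^2\, r^{n+2-2(n+2)/p}$ needed for the iteration; no pointwise value $f(0,0)$ is subtracted, so the boundary behavior of $f$ plays no role and the Dirichlet/Neumann cases are handled simultaneously.

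There is also a concrete gap in your route. The scaling identity $H^s(u(r^2\cdot,r\cdot))=r^{2s}(H^su)(r^2\cdot,r\cdot)$ is \emph{false} for the operator considered here: $H^s$ is defined spectrally through $L$ on the fixed domain $\Omega$, and rescaling sends $\Omega$ to $r^{-1}\Omega$, so the rescaled function neither belongs to $\Dom(H^s)$ for the original domain nor solves a rescaled equation for the same operator. What scales cleanly is the \emph{local} degenerate parabolic PDE satisfied by the extension $U$, which is precisely why the paper works with $U$ throughout. If you repair your argument by passing to the extension first (as your ``main obstacle'' paragraph suggests), the compactness you need is exactly Corollary~\ref{cor:boundaryapprox}, after which the Campanato decay follows constructively; a global contradiction argument is then unnecessary, and you are back to the paper's scheme.
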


The main technique to prove our Schauder estimates is to use the parabolic extension problem, which turns the nonlocal equation \eqref{eq:Hsuf}
into a local degenerate parabolic problem with Neumann boundary condition.
This result for $H^s$ was proved in \cite{B-DLC-S}. Such an extension problem is in the spirit of the famous Caffarelli--Silvestre
extension problem for the fractional Laplacian \cite{Caffa-Silv}. As the extension \eqref{th_exten} localizes the equation,
we can prove energy estimates with appropriate test functions and then apply compactness arguments in the local parabolic equation.
Indeed, we first prove a counterpart of the parabolic Caccioppoli inequality in Lemma \ref{lem:cacc_mod}. 
For this the Steklov averages are an essential tool. Second, the compactness provided by the Aubin--Lions lemma \cite{Aubin-Lions}, together with the energy estimate,
give us that there is a solution $W$ to a degenerate heat equation \eqref{eq:harmonic} that is `close' to our solution $U$ in $L^2$, see Corollary \ref{cor:approx}.
This approximation is applied at any scale to finally transfer the regularity from $W$ to $U$.

For the last step above, we need to use an appropriate characterization of parabolic H\"older spaces in terms of approximations of solutions by linear polynomials.
The definition of the space $C^{\delta/2,\delta}_{t,x}$ is clear in the case when $0<\delta<1$, namely, when there are no derivatives in time and space.
It is also clear how to define the space $C^{1+\delta/2,2+\delta}_{t,x}$, that is, when we have one derivative in time and two derivatives in space.
But it is not immediate how to define the appropriate intermediate H\"older space $C^{(1+\delta)/2,1+\delta}_{t,x}$, that is,
the one that corresponds to one derivative in space. In \cite{Krylov}, N.~V.~Krylov used interpolation results to suggest a definition.
Indeed, in Remark 8.8.7 he claims that ``\textit{with respect to the parabolic metric, one derivative in $t$ is worth two derivatives in $x$. This suggests that $C^{(1+\delta)/2, 1+\delta}(\R^{d+1})$ should be defined as the space of all functions with finite norm 
$\|u\|_0 + \|u_x\|_{\delta/2,\delta} + \sup_{s\neq t,x} \frac{|u(t,x)-u(s,x)|}{|t-s|^{(1+\delta)/2}}$.}''
Stinga and Torrea showed that this definition for the intermediate H\"older space $C^{(1+\delta)/2,1+\delta}_{t,x}(\R^{n+1})$ is correct in terms of the Poisson semigroup
generated by the heat operator, see \cite[Theorem~7.2]{Stinga-Torrea-SIAM}. They used such a semigroup characterization to prove Schauder estimates for 
solutions to $(\partial_t - \Delta)^{\pm s}u = f$. In turn, here we show in Theorem \ref{thm:Campanato}$(2)$
that Krylov's definition of intermediate parabolic H\"older space is also the correct one for bounded
domains in terms of approximations by linear polynomials that depend only on space.
This is a Campanato-type characterization that, up to the best of our knowledge, has not been proved in the literature.
Notice that in the case of no derivative in time and space, or one derivative in time and two derivatives in space,
such characterizations are very well known, see \cite{Lieberman, Schlag}.

There are some intricate issues in the proof of global regularity, in particular, in Theorem \ref{thm:1.3_f_nonzero}.
We have already pointed out that regularity is improved when $f$ is zero on the boundary.
This fact is better explained by computing particular one dimensional pointwise solutions to $(\partial_t - D^+_{xx})^su=f$ in $\R \times \R_+$, given $u(t,0) = 0$ in $\R$, when $f$ is nonzero on the boundary. Here $D_{xx}^+$ is the Dirichlet Laplacian in the positive half line $\R_+$.
On one hand, this one dimensional particular solution has the same regularity as the difference $u-v$ in Theorem \ref{thm:1.3_f_nonzero}.
On the other hand, due to this solution, for $s \leq 1/2$, we need a little bit more regularity on the boundary $\partial \Omega$ and on the coefficients $A(x)$ to get $C^{(\alpha+2s)/2,1+(\alpha+2s-1)}_{t,x}(\overline{I \times \Omega})$ regularity for $v$.
Therefore, in this paper we also need to prove sharp estimates on the behavior of half space solutions,
both for Dirichlet and Neumann boundary conditions.

This paper is organized as follows. We give the definition of intermediate parabolic H\"older spaces and state the Campanato-type characterization in Section \ref{sec:Campanato_statement}. Then, in Section \ref{sec:Hs_def},
we go through a brief review of the definition and some properties of the operator $H^s$, including fundamental solutions,
and where we also define the weak solution to the extension problem for $H^s$. In Section \ref{sec:caccio} we prove a parabolic Caccioppoli inequality using Steklov averages for the extension problem. The proofs of interior and global Schauder estimates for the solution of \eqref{eq:Hsuf} are given in Sections \ref{sec:regularity_interior} and \ref{sec:regularity_global} respectively. In between, we present the boundary regularity for the fractional heat equation, see Section \ref{sec:regularity_bdd_heat}. In 
addition, we give a detailed study of the behavior of particular one dimensional pointwise solutions to $(\partial_t - D^+_{xx})^su=f$,
in $\R \times \R_+$. Finally, in Section \ref{section:proofofCampanato} we provide the proof of Theorem \ref{thm:Campanato}.

\section{Notation and parabolic H\"older spaces}\label{sec:Campanato_statement}

\noindent\textbf{Notation.} Throughout this paper we will use the following notation.
For $(t,x) \in\R\times\R^n$ and  $r>0$, we define
\begin{align*}
B_r(x) &= \{z=(z_1,z_2,\ldots,z_n)\in \R^n : |x-z|<r\}\subset\R^n \\
Q_r(t,x) &=\{(\tau,z)\in \R\times\R^n: |t-\tau|<r^2,~|x-z|<r \} \\
&=(t-r^2,t+r^2)\times B_r(x)\subset\R\times\R^{n}\\
B_r(x)^* &= \{(z,y)\in\R^n\times(0,\infty):z\in B_r(x),~0<y<r\} \\
&=B_r(x)\times(0,r)\subset\R^{n+1}_+\\
Q_r(t,x)^* &= \{(\tau,z,y)\in\R\times\R^n\times(0,\infty):|t-\tau|<r^2,~z\in B_r(x),~0<y<r\} \\
&=Q_r(t,x) \times(0,r)\subset\R\times\R^{n+1}_+.
\end{align*}
We write $B_r$, $Q_r$, etc, when $(t,x)=(0,0)$. If we let
$$B_r^+=B_r\cap\{x_n>0\}\subset\R^n_+$$
then we can also define $Q^+_r$, $(B_r^+)^*$ and $(Q^+_r)^*$ analogously. The fractional power $s\in(0,1)$
and we will always denote
$$a=1-2s\in(-1,1).$$
Finally, $x\in\Omega\subset\R^n$, $y>0$, $X=(x,y)\in\Omega\times(0,\infty)$ and $\dive$ and $\nabla$ denote
the divergence and gradient with respect to the variable $X$, respectively.

\medskip

\noindent\textbf{Parabolic H\"older spaces.}
Let $\Omega\subset\R^n$ be a bounded Lipschitz domain with Lipschitz constant $M>0$, and let $I\subset\R$ be a bounded interval. Fix any $0<\beta\leq1$.

The classical parabolic H\"older space $C^{\beta/2,\beta}_{t,x}(\overline{I\times\Omega})$
is the set of continuous functions $u=u(t,x):\overline{I\times\Omega}\to\R$ such that
$$\|u\|_{C^{\beta/2,\beta}_{t,x}(\overline{I\times\Omega})}=\|u\|_{L^\infty(I\times\Omega)}+[u]_{C^{\beta/2,\beta}_{t,x}(I\times\Omega)}<\infty$$
where
$$[u]_{C^{\beta/2,\beta}_{t,x}(I\times\Omega)}=\sup_{t,\tau\in I,\,x,z\in\Omega}
\frac{|u(t,x)-u(\tau,z)|}{\max(|t-\tau|^{1/2},|x-z|)^\beta}.$$
It is also customary to define the space $C^{(2+\beta)/2,2+\beta}_{t,x}(\overline{I\times\Omega})=C^{1+\beta/2,2+\beta}_{t,x}(\overline{I\times\Omega})$
by requiring that $u_t,D^2u\in C^{\beta/2,\beta}_{t,x}(\overline{I\times\Omega})$. For these two definitions see \cite[Chapter~8]{Krylov}.

We define the space $C^{(1+\beta)/2,1+\beta}_{t,x}(\overline{I\times\Omega})$, as the set of continuous functions
$u=u(t,x):\overline{I\times\Omega}\to\R$ such that
\begin{itemize}
\item $u$ is $(1+\beta)/2$-H\"older continuous in $t$ uniformly in $x$, that is,
$$[u]_{L^\infty_x(\Omega;C^{(1+\beta)/2}_t(I))}=\sup_{x\in\Omega}[u(\cdot,x)]_{C^{(1+\beta)/2}_t(I)}=\sup_{x\in\Omega}\sup_{t,\tau\in I}
\frac{|u(t,x)-u(\tau,x)|}{|t-\tau|^{(1+\beta)/2}}<\infty.$$
\item $\nabla_x u\in C(\overline{I\times\Omega})$ and$$[\nabla_x u]_{C^{\beta/2,\beta}_{t,x}(I\times\Omega)}=
\sup_{t,\tau\in I,\,x,z\in\Omega}\frac{|\nabla_x u(t,x)-\nabla_x u(\tau,z)|}{\max(|t-\tau|^{1/2},|x-z|)^\beta}<\infty.$$
\end{itemize}
The norm in $C^{(1+\beta)/2,1+\beta}_{t,x}(\overline{I\times\Omega})$ is given by
\begin{align*}
\|u\|_{C^{(1+\beta)/2,1+\beta}_{t,x}(\overline{I\times\Omega})}
&= \|u\|_{L^\infty(I\times\Omega)}+\|\nabla_x u\|_{L^\infty(I\times\Omega)} \\
&\quad+[u]_{L^\infty_x(\Omega;C^{(1+\beta)/2}_t(I))}+[\nabla_x u]_{C^{\beta/2,\beta}_{t,x}(I\times\Omega)}.
\end{align*}

For a point $(t,x)\in\R^{n+1}$ and $r>0$ recall that $Q_r(t,x)=(t-r^2,t+r^2)\times B_r(x)$.
Notice that $|Q_r(t,x)|=C_nr^{n+2}$, for some universal constant $C_n>0$. For the rest of this section we let
$$r_0=\min\{|I|^{1/2},\diam(\Omega)\}>0.$$
Observe that there exists a constant $C>0$ depending on $n$ and $M$ such that
for any $(t,x)\in\overline{I\times\Omega}$ and $0<r\leq r_0$ we have (see, for instance, \cite[eq.~(1.1)]{Campanato})
$$|Q_r(t,x)\cap(I\times\Omega)|=|(t-r^2,t+r^2)\cap I||B_r(x)\cap\Omega|\geq C_nr^{n+2}.$$
Let $\mathcal{P}_1$ be the set of polynomials of degree $1$ in $x$, that is,
$$\mathcal{P}_1=\big\{P(z)=A_0+A_1\cdot z:A_0\in\R,~A_1\in\R^n\big\}.$$ 

\begin{thm}[Campanato-type characterizations]\label{thm:Campanato}
Let $0<\beta\leq1$. Suppose that $u=u(t,x)\in L^2(I\times\Omega)$. Then:
\begin{enumerate}[$(1)$]
\item $u\in C^{\beta/2,\beta}_{t,x}(\overline{I\times\Omega})$ if and only if there is a constant $C>0$
such that
$$\inf_{c\in\R}\frac{1}{|Q_r(t,x)\cap(I\times\Omega)|}\int_{Q_r(t,x)\cap(I\times\Omega)}|u(\tau,z)-c|^2\,d\tau\,dz\leq Cr^{2\beta}$$
for all $(t,x)\in\overline{I\times\Omega}$ and $0<r\leq r_0$ small. In this case, if we denote by $C_\ast>0$
the least constant for which the inequality above holds, then $\|u\|^2_{L^2(I\times\Omega)}+C_\ast$ is equivalent
to $\|u\|_{C^{\beta/2,\beta}_{t,x}(\overline{I\times\Omega})}^2$.
\item $u\in C^{(1+\beta)/2,1+\beta}_{t,x}(\overline{I\times\Omega})$ if and only if there is a constant $C>0$
such that
\begin{equation}\label{eq:condition2}
\inf_{P\in \mathcal{P}_1}\frac{1}{|Q_r(t,x)\cap(I\times\Omega)|}\int_{Q_r(t,x)\cap(I\times\Omega)}|u(\tau,z)-P(z)|^2\,d\tau\,dz\leq Cr^{2(1+\beta)}
\end{equation}
for all $(t,x)\in\overline{I\times\Omega}$ and $0<r\leq r_0$ small. In this case, 
if we denote by $C_{\ast\ast}>0$ the least constant for which the inequality above holds,
then $\|u\|^2_{L^2(I\times\Omega)}+C_{\ast\ast}$ is equivalent
to $\|u\|_{C^{(1+\beta)/2,1+\beta}_{t,x}(\overline{I\times\Omega})}^2$.
\end{enumerate}
\end{thm}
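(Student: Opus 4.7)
The plan is to adapt the classical Campanato scheme to the parabolic cylinders $Q_r(t,x) \cap (I \times \Omega)$: part (1) uses approximation by constants, part (2) by polynomials of degree one in space. For part (1) the forward direction is immediate on taking $c = u(t,x)$, since $|u(\tau,z) - u(t,x)| \le [u]_{C^{\beta/2,\beta}_{t,x}} r^\beta$ on $Q_r(t,x)$. For the converse the $L^2$-best constant is the mean $c_r(t,x) = |Q_r(t,x)\cap(I\times\Omega)|^{-1} \int_{Q_r(t,x)\cap(I\times\Omega)} u$, and a standard dyadic telescoping using the measure density $|Q_r\cap(I\times\Omega)| \ge C_n r^{n+2}$ produces a continuous representative with the right modulus; a two-point comparison of $c_r(t,x)$ and $c_r(\tau,z)$ on the overlap of their defining cylinders (with $r$ comparable to $\max(|t-\tau|^{1/2},|x-z|)$) then delivers the H\"older bound.

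The substantive part is (2). Fix $(t,x) \in \overline{I \times \Omega}$ and $0 < r \le r_0$, and let $P_{(t,x),r}(z) = A_0(t,x,r) + A_1(t,x,r) \cdot (z - x)$ denote the $L^2(Q_r(t,x) \cap (I\times\Omega))$-minimizer in $\mathcal{P}_1$, recentered around $x$. For the forward direction I choose $P(z) = u(t,x) + \nabla_x u(t,x) \cdot (z-x)$ and split
\begin{align*}
u(\tau,z) - P(z) &= \bigl[u(\tau,z) - u(\tau,x) - \nabla_x u(\tau,x) \cdot (z - x)\bigr] \\
&\quad + \bigl[u(\tau,x) - u(t,x)\bigr] + \bigl[\nabla_x u(\tau,x) - \nabla_x u(t,x)\bigr] \cdot (z - x).
\end{align*}
On $Q_r(t,x)$ the first bracket is $O(|z-x|^{1+\beta}) = O(r^{1+\beta})$ by a Taylor expansion in $x$ at fixed $\tau$ (since $\nabla_x u \in C^{\beta/2,\beta}_{t,x}$); the second is $O(|t-\tau|^{(1+\beta)/2}) = O(r^{1+\beta})$ by the uniform time H\"older hypothesis on $u$; the third is $O(|t-\tau|^{\beta/2}|z-x|) = O(r^{1+\beta})$. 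Squaring and averaging yields \eqref{eq:condition2}.

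The reverse direction of (2) is the heart of the proof. Two ingredients power the argument. First, an \emph{inverse inequality} for $\mathcal{P}_1$: because $\Omega$ is Lipschitz and $r \le r_0$, there is a parabolic subcylinder $\tilde Q \subset Q_r(t,x) \cap (I \times \Omega)$ of size comparable to $r$ on which a direct computation gives
\begin{equation*}
\|A_0 + A_1 \cdot (\cdot - x)\|_{L^2(\tilde Q)}^2 \ge c_n r^{n+2}\bigl(|A_0|^2 + r^2 |A_1|^2\bigr).
\end{equation*}
Second, a \emph{scale step}: for $r/2 \le r' \le r$, the triangle inequality in $L^2(Q_{r'}(t,x) \cap (I\times\Omega))$ together with \eqref{eq:condition2} and the inverse inequality yields
\begin{equation*}
|A_0(t,x,r) - A_0(t,x,r')| + r|A_1(t,x,r) - A_1(t,x,r')| \le C r^{1+\beta}.
\end{equation*}
Telescoping along $r_k = 2^{-k} r_0$ then shows that $\{A_0(t,x,r_k)\}$ and $\{A_1(t,x,r_k)\}$ are Cauchy, with limits I denote $u(t,x)$ and $\nabla_x u(t,x)$ (the first agreeing with the given $u$ at Lebesgue points), and with rates $|u(t,x) - A_0(t,x,r)| \le Cr^{1+\beta}$ and $|\nabla_x u(t,x) - A_1(t,x,r)| \le Cr^{\beta}$.

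The final step is a \emph{two-point comparison}. For $(t,x),(\tau,z)\in\overline{I\times\Omega}$ set $d = \max(|t-\tau|^{1/2},|x-z|)$ and choose $r \simeq d$; after re-expanding $P_{(\tau,z),r}$ around $x$, the triangle inequality on the overlap $Q_r(t,x) \cap Q_r(\tau,z) \cap (I\times\Omega)$, the inverse inequality, and the rates above combine to give $|\nabla_x u(t,x) - \nabla_x u(\tau,z)| \le C d^\beta$, and in the case $z = x$, $|u(t,x) - u(\tau,x)| \le C|t-\tau|^{(1+\beta)/2}$. Specializing to $\tau = t$ and $r = |h|$ in the comparison at $(t,x)$ versus $(t,x+h)$ verifies $u(t,x+h) - u(t,x) - \nabla_x u(t,x) \cdot h = O(|h|^{1+\beta})$, so that $\nabla_x u$ is the classical spatial gradient. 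The main technical obstacle is boundary geometry: both the inverse inequality and the overlap in the two-point comparison require lower bounds on $|Q_r \cap (I \times \Omega)|$ uniform in $(t,x)\in\overline{I\times\Omega}$, which rest on the Lipschitz constant of $\partial\Omega$ and the restriction $r\le r_0$. Equally delicate is the bookkeeping of exponents, since $A_0$ must converge one order faster in $r$ than $A_1$ in order for the two-point comparison to return precisely the norms defining $C^{(1+\beta)/2,1+\beta}_{t,x}$ rather than a weaker space.
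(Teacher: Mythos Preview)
Your proposal is correct and follows essentially the same Campanato-style scheme as the paper: the paper's Lemma~8.1 is your inverse inequality, Lemmas~8.2 and~8.4 are your scale step and dyadic telescoping, Lemma~8.5 is the existence of the limits of $A_0,A_1$, Lemma~8.3 and Theorem~8.6 are your two-point comparison, Theorem~8.7 is your verification that the limit of $A_1$ is the classical gradient, and Theorem~8.9 is the identification of the limit of $A_0$ with $u$ via Lebesgue points. The paper omits the easy forward direction of (2) that you include, and it invokes Campanato's original polynomial lemma for the inverse inequality rather than explicitly locating a full subcylinder, but these are cosmetic differences rather than a different route.
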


We postpone the proof of Theorem \ref{thm:Campanato} until Section \ref{section:proofofCampanato}.

\section{Existence of weak solutions, fundamental solution and extension problem}\label{sec:Hs_def}

In this section we present the precise definition of $H^su(t,x)=(\partial_t+L)^su(t,x)$.
Let $\Omega\subset\R^n$ be a bounded Lipschitz domain and
$$Lu=-\dvie(A(x) \nabla_x u )\qquad\hbox{in}~\Omega$$
where $A(x)=(A^{ij}(x))$ is as in the introduction. Let $f\in L^2(\Omega)$. For $u \in L^2(\Omega)$, $Lu = f$ in $\Omega$ in the weak sense means that
$ \nabla_x u \in L^2(\Omega)$ and 
$$\int_{\Omega} A(x) \nabla_x u \nabla_x v\,dx  = \int_{\Omega} fv\,dx,$$
for every $v\in C^\infty_c(\Omega)$.
It is well known that, under homogeneous Dirichlet boundary condition $u=0$ on $\partial\Omega$,
$L$ has a countable family of nonnegative
eigenvalues and eigenfunctions $(\lambda_k,\phi_k)_{k=0}^\infty$
such that the set $\{\phi_k\}_{k=0}^\infty$ forms an orthonormal basis for $L^2(\Omega)$.
In the case of homogeneous Neumann boundary condition $\partial_Au=0$ on $\partial\Omega$,
a similar statement is true but the first eigenvalue $\lambda_0=0$ and we will still denote
the corresponding eigenfunctions as $\phi_k$. In this situation we will assume that all the functions
involved have zero spatial mean. In particular, $L \phi_k = \lambda_k \phi_k$, for all $k\geq0$ in the weak sense.  
Therefore, if we define
$$H^1_{L}(\Omega)\equiv \Dom(L)=\Big\{u\in L^2(\Omega):\sum_{k=0}^\infty\lambda_k|u_k|^2<\infty\Big\}$$
where $\displaystyle u_k=\int_{\Omega}u\phi_k\,dx$, then, for any $u,v\in H^1_L(\Omega)$,
$$\int_{\Omega} A(x) \nabla_x u \nabla_x v\,dx=\sum_{k=0}^\infty\lambda_k u_kv_k.$$
Thus, if $L$ is endowed with homogeneous Dirichlet boundary condition, then $H^1_L(\Omega)=H^1_0(\Omega)$,
while if $L$ is endowed with homogeneous Neumann boundary condtion, then $H^1_L(\Omega)=H^1(\Omega)$.
With this, any function $u(t,x)\in L^2(\R\times\Omega)$ can be written as
$$u(t,x)=\frac{1}{(2\pi)^{1/2}}\int_{\R}\sum_{k=0}^\infty\widehat{u_k}(\rho)\phi_k(x)e^{it\rho}\,d\rho$$
where, for almost every $t\in\R$,
$$u_k(t)=\int_{\Omega}u(t,x)\phi_k(x)\,dx$$
and $\widehat{u_k}(\rho)$ is the Fourier transform of $u_k(t)$ with respect to the variable $t\in\R$:
$$\widehat{u_k}(\rho)=\frac{1}{(2\pi)^{1/2}}\int_\R u_k(t)e^{-i\rho t}\,dt.$$
The domain of the fractional operator $H^s\equiv(\partial_t+L)^s$, $0\leq s\leq1$, is defined as
$$\Dom(H^s)=\bigg\{u\in L^2(\R\times\Omega):
\|u\|_{\Dom(H^s)}^2:=\int_{\R}\sum_{k=0}^\infty|i\rho+\lambda_k|^s|\widehat{u_k}(\rho)|^2\,d\rho<\infty\bigg\}.$$
This is a complex Hilbert space with norm $\|\cdot\|_{\Dom(H^s)}$,
whose dual is denoted by $\Dom(H^s)^\ast$.
For $u\in\Dom(H^s)$ we define
$H^su\in\Dom(H^s)^\ast$ as acting on any $v\in\Dom(H^s)$ by
$$\langle H^su,v\rangle\equiv\int_{\R}\sum_{k=0}^\infty(i\rho+\lambda_k)^s\widehat{u_k}(\rho)\overline{\widehat{v_k}(\rho)}\,d\rho$$
where $\overline{\widehat{v_k}(\rho)}$ denotes the complex conjugate of $\widehat{v_k}(\rho)$. 

As the family of eigenfunctions $\{\phi_k\}_{k\geq0}$ is an orthonormal basis of $L^2(\Omega)$, we can write
the semigroup $\{e^{-\tau L}\}_{\tau\geq0}$ generated by $L$ as
$$\langle e^{-\tau L}\varphi,\psi\rangle_{L^2(\Omega)} =\sum_{k=0}^\infty e^{-\tau \lambda_k}\varphi_k\psi_k
=\int_\Omega\int_\Omega W_\tau (x,z)\varphi(z){{ \psi(x) }}\,dz\, dx$$ for any $\varphi,\psi\in L^2(\Omega)$, where $\displaystyle\varphi_k=\int_{\Omega}\varphi\phi_k\,dx$ and $\displaystyle\psi_k=\int_{\Omega}\psi\phi_k\,dx$.
The heat kernel for $L$ is symmetric and nonnegative:
 $$W_\tau(x,z)=W_\tau(z,x)\geq0\qquad x,z\in\Omega,~\tau>0$$
see \cite{Davies}. We define, for any $u\in L^2(\R\times\Omega)$,
$$e^{-\tau H}u(t,x)=e^{-\tau L}(e^{-\tau \partial_t} u)(t,x)=e^{-\tau L}(u(t-\tau,\cdot))(x)$$
in the sense that, for any $v\in L^2(\R\times\Omega)$, 
\begin{align*}
\langle e^{-\tau H}u,v\rangle_{L^2(\R\times\Omega)} &= \int_\R\sum_{k=0}^\infty e^{-\tau(i\rho+\lambda_k)}\widehat{u_k}(\rho)\overline{\widehat{v_k}(\rho)}\,d\rho \\
&=\int_\R\sum_{k=0}^\infty e^{-\tau\lambda_k}u_k(t-\tau)v_k(t)\,dt\\
&=\int_\R\iint_\Omega W_\tau (x,z) u(t-\tau,z){{v(t,x)}} \,dz\, dx\,dt.
\end{align*}

\begin{lem}[{See \cite{B-DLC-S}}]
		Let $0<s<1$. If $u\in\Dom(H^s)$ then 
	$$H^su =  \frac{1}{\Gamma(-s)} \int^{\infty}_0 \left( {e^{-\tau H} u }- {u} \right) \frac{d \tau} {\tau^{1+s}}$$
		in the sense that, for any $v\in\Dom(H^s)$,
$$ \langle H^s u , v \rangle =  \frac{1}{\Gamma(-s)} \int^{\infty}_0 \left(\langle {e^{-\tau H}u} , v \rangle_{L^2(\R\times\Omega)} - \langle {u} , {v} \rangle_{L^2{(\R\times\Omega)}} \right) \frac{d \tau}{\tau^{1+s}}.$$
	\end{lem}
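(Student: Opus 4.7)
The plan is to reduce the claimed operator identity to the scalar Balakrishnan formula applied spectrally, and then to justify the interchange of order of integration/summation by a uniform absolute-integrability estimate.

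Step one is to establish the numerical identity: for every complex number $w=\lambda+i\rho$ with $\lambda\geq 0$, $w\neq0$, and every $0<s<1$,
$$w^s=\frac{1}{\Gamma(-s)}\int_0^\infty (e^{-\tau w}-1)\,\frac{d\tau}{\tau^{1+s}}.$$
For $w>0$ this is the classical integral representation obtained from the definition of $\Gamma(-s)$ via $\Gamma(-s)=-\frac{1}{s}\Gamma(1-s)$ and the substitution $\sigma=\tau w$; for general $w$ in the closed right half-plane one extends by analytic continuation (or by a direct contour rotation using that both sides are holomorphic in $w$ on $\Re w>0$ and continuous up to the imaginary axis away from $0$).

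Step two is to insert this identity into the spectral definition of $H^s u$. For any test function $v\in\Dom(H^s)$, the definition of $H^s$ gives
$$\langle H^s u,v\rangle=\int_\R\sum_{k=0}^\infty (i\rho+\lambda_k)^s\,\widehat{u_k}(\rho)\overline{\widehat{v_k}(\rho)}\,d\rho.$$
Applying the scalar identity with $w=i\rho+\lambda_k$ (note $\Re w=\lambda_k\geq 0$) yields a triple integral over $\tau$, $\rho$ and the spectral index $k$. Then I would apply Fubini to pull the $\tau$-integral outside and recognise the resulting $\rho$-$k$ integral. Using the spectral representation of $\{e^{-\tau H}\}_{\tau\geq 0}$ recorded just above the statement, namely
$$\langle e^{-\tau H}u,v\rangle_{L^2(\R\times\Omega)}=\int_\R\sum_{k=0}^\infty e^{-\tau(i\rho+\lambda_k)}\widehat{u_k}(\rho)\overline{\widehat{v_k}(\rho)}\,d\rho,$$
together with Parseval ($\langle u,v\rangle_{L^2}=\int_\R\sum_k \widehat{u_k}(\rho)\overline{\widehat{v_k}(\rho)}\,d\rho$), the inner integral equals $\langle e^{-\tau H}u,v\rangle_{L^2}-\langle u,v\rangle_{L^2}$, which is exactly the claim.

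The main obstacle is justifying the Fubini interchange, so I would spend care on this step. The key pointwise bound is
$$|e^{-\tau w}-1|\leq C\min(1,\tau|w|)\qquad\text{for all }\tau>0\text{ and }\Re w\geq 0,$$
which yields
$$\int_0^\infty |e^{-\tau w}-1|\,\frac{d\tau}{\tau^{1+s}}\leq C|w|^s.$$
Applying this with $w=i\rho+\lambda_k$ and then the Cauchy--Schwarz inequality in the measure $\sum_k\int_\R d\rho$, I get
$$\int_0^\infty\!\!\int_\R\sum_k\frac{|e^{-\tau(i\rho+\lambda_k)}-1|}{\tau^{1+s}}\,|\widehat{u_k}(\rho)\widehat{v_k}(\rho)|\,d\rho\,d\tau\leq C\,\|u\|_{\Dom(H^s)}\|v\|_{\Dom(H^s)}<\infty,$$
where I used that $|i\rho+\lambda_k|^s=|i\rho+\lambda_k|^{s/2}\cdot|i\rho+\lambda_k|^{s/2}$ splits to match the norms on either factor. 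This absolute convergence legitimates Fubini and completes the argument.
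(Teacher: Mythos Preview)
Your argument is correct and is precisely the standard route to the Balakrishnan representation: reduce to the scalar identity for $(i\rho+\lambda_k)^s$, justify Fubini via the elementary bound $|e^{-\tau w}-1|\leq C\min(1,\tau|w|)$ for $\Re w\ge 0$, and then recognise the semigroup pairing. The only point worth a brief remark is the degenerate value $w=0$, which in the Neumann setting corresponds to $k=0$ and $\rho=0$; but the paper assumes zero spatial mean in that case, so $\widehat{u_0}\equiv 0$ and the issue is vacuous (and for $\lambda_k>0$ the set $\{\rho=0\}$ has measure zero anyway).

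There is nothing to compare against here: the paper does not supply its own proof of this lemma but simply attributes it to \cite{B-DLC-S}. Your write-up is a clean, self-contained verification of the cited fact.
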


\begin{thm}[{See \cite{B-DLC-S}}]
If $u,v\in \Dom(H^s)\cap C^\infty_c(\R\times\Omega)$ then
\begin{equation}\label{eq:pointwise_formula}
\begin{aligned}
&\langle H^su,v\rangle = \langle(\partial_t+L)^su,v\rangle \\ 
&=\int_0^\infty\int_{\R}\int_\Omega\int_{\Omega} K_s(\tau,x,z)(u(t-\tau,x)-u(t-\tau,z))({{v(t,x)-v(t,z)}})\,dz\,dx\,dt\,d\tau \\ 
&\quad +\int_0^\infty\bigg[\int_\R\int_\Omega\frac{\big(1-e^{-\tau L}1(x)\big)}{|\Gamma(-s)|\tau^{1+s}}u(t,x){{v(t,x)}}\,dx\,dt\\ 
&\quad\qquad\qquad-\int_\R\int_\Omega e^{-\tau L}1(x)\frac{(u(t-\tau,x)-u(t,x))}{|\Gamma(-s)|\tau^{1+s}}{{v(t,x)}}\,dx\,dt\bigg]\,d\tau
\end{aligned}
\end{equation}
where
$$K_s(\tau,x,z)=\frac{W_\tau(x,z)}{2|\Gamma(-s)|\tau^{1+s}}$$
and
$$e^{-\tau L}1(x)=\int_\Omega W_\tau(x,z)\,dz.$$
\end{thm}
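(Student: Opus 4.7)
The plan is to deduce the stated pointwise-type formula directly from the semigroup representation given in the preceding lemma
$$\langle H^s u, v\rangle = \frac{1}{\Gamma(-s)} \int_0^\infty \bigl(\langle e^{-\tau H}u, v\rangle_{L^2(\R\times\Omega)} - \langle u, v\rangle_{L^2(\R\times\Omega)}\bigr)\frac{d\tau}{\tau^{1+s}},$$
combined with the heat kernel expansion
$$\langle e^{-\tau H}u, v\rangle_{L^2(\R\times\Omega)} = \int_\R \iint_{\Omega\times\Omega} W_\tau(x,z)\, u(t-\tau, z)\, v(t,x)\, dz\, dx\, dt,$$
plus the standard algebraic trick of splitting a single difference into a space difference, a time difference, and a mass-defect term.

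First, I would perform the telescoping decomposition
$$u(t-\tau, z)\, v(t,x) - u(t,x)\, v(t,x) = \bigl[u(t-\tau, z) - u(t-\tau, x)\bigr] v(t,x) + \bigl[u(t-\tau, x) - u(t,x)\bigr] v(t,x) + u(t,x)v(t,x),$$
integrate against $W_\tau(x,z)\,dz$, and use $\int_\Omega W_\tau(x,z)\,dz = e^{-\tau L}1(x)$ to rewrite the last two pieces without a $z$ integral. This turns $\langle e^{-\tau H}u,v\rangle - \langle u,v\rangle$ into the sum of three natural pieces: a doubly-integrated space-difference term, a factor $e^{-\tau L}1(x)\,[u(t-\tau,x)-u(t,x)]v(t,x)$, and a mass-defect term $-[1-e^{-\tau L}1(x)]u(t,x)v(t,x)$.

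Second, I would symmetrize the space-difference term by renaming $x\leftrightarrow z$ and invoking $W_\tau(x,z)=W_\tau(z,x)$. Averaging the two expressions gives
$$-\tfrac12\int_\R\iint W_\tau(x,z)\,\bigl(u(t-\tau,x)-u(t-\tau,z)\bigr)\bigl(v(t,x)-v(t,z)\bigr)\,dz\,dx\,dt.$$
Dividing by $\Gamma(-s)\tau^{1+s}$ and using $\Gamma(-s) = -|\Gamma(-s)|$ for $0<s<1$ flips the sign and produces exactly the kernel $K_s(\tau,x,z)=W_\tau(x,z)/(2|\Gamma(-s)|\tau^{1+s})$ in front of the double difference. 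The same sign flip turns the mass-defect piece into a $+(1-e^{-\tau L}1(x))/|\Gamma(-s)|\tau^{1+s}$ term and the time-difference piece into a $-e^{-\tau L}1(x)/|\Gamma(-s)|\tau^{1+s}$ term. Comparing with \eqref{eq:pointwise_formula} identifies all three contributions.

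Third, I need to justify the interchange of the $\tau$-integral with the $t,x,z$-integrals (Fubini/Tonelli) on each of the three pieces separately. This is the main obstacle, because the factor $\tau^{-1-s}$ is non-integrable at $0$ and the splitting is designed precisely so that each piece is absolutely integrable. Exploiting $u,v\in C^\infty_c(\R\times\Omega)$, one has $|u(t-\tau,x)-u(t-\tau,z)|\,|v(t,x)-v(t,z)|\lesssim |x-z|^2$, and the moment bound $\int\int W_\tau(x,z)|x-z|^2\,dz\lesssim \tau$ coming from Gaussian upper bounds for the heat kernel (see \cite{Davies}) makes the space-difference piece integrable as $\tau^{-s}$ near $0$; similarly $|u(t-\tau,x)-u(t,x)|\lesssim \tau\|u_t\|_\infty$ handles the time-difference piece, and the estimate $0\leq 1-e^{-\tau L}1(x)\lesssim \tau$ uniformly on the compact support of $u$ (which is at positive distance from $\partial\Omega$) handles the mass-defect piece. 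Large $\tau$ poses no trouble because $\tau^{-1-s}$ is integrable there and all three integrands are uniformly bounded by $\|u\|_\infty\|v\|_\infty|\Omega|$. Once Fubini is justified on each piece, the three displayed terms of \eqref{eq:pointwise_formula} are obtained, completing the proof.
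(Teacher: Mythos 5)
The paper does not prove this theorem; it is quoted from \cite{B-DLC-S}. Your reconstruction is the natural argument and it is essentially correct: start from the semigroup formula for $\langle H^su,v\rangle$, insert the kernel representation of $\langle e^{-\tau H}u,v\rangle$, split into a space-difference piece, a time-difference piece and a mass-defect piece, symmetrize the first using $W_\tau(x,z)=W_\tau(z,x)$, and use $\Gamma(-s)=-|\Gamma(-s)|$ to fix the signs. All three resulting terms match \eqref{eq:pointwise_formula}, and your Fubini justification (the $|x-z|^2$ cancellation against the second moment of the Gaussian bound, the $O(\tau)$ time increment, and the smallness of $1-e^{-\tau L}1$ away from $\partial\Omega$) is the right way to handle the non-integrable factor $\tau^{-1-s}$ at the origin.

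Two small points to clean up. First, your displayed telescoping identity is off by one term: the right-hand side sums to $u(t-\tau,z)v(t,x)$, not to $u(t-\tau,z)v(t,x)-u(t,x)v(t,x)$. The correct bookkeeping is to decompose only $W_\tau(x,z)u(t-\tau,z)v(t,x)$ into the three pieces, integrate in $z$ (which produces the factor $e^{-\tau L}1(x)$ on the last two), and then subtract $\langle u,v\rangle$ separately; the mass-defect term $-(1-e^{-\tau L}1(x))u(t,x)v(t,x)$ arises from that subtraction. Your prose describes exactly this, so it is only the displayed formula that needs fixing. Second, the bound $0\le 1-e^{-\tau L}1(x)\lesssim\tau$ on the (compact) spatial support of $u$ deserves a word of justification when $A(x)$ is merely bounded and measurable: one cannot write $1-e^{-\tau L}\psi=\int_0^\tau e^{-rL}L\psi\,dr$ with $L\psi\in L^\infty$. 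Instead one should invoke domain monotonicity of the Dirichlet heat kernel together with off-diagonal (exit-time) estimates, which give the stronger bound $1-e^{-\tau L}1(x)\le Ce^{-d^2/(c\tau)}$ with $d=\dist(\operatorname{supp}_x u,\partial\Omega)>0$; for Neumann conditions this term vanishes identically since $e^{-\tau L}1\equiv1$.
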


\begin{rem}[Fundamental solution]\label{rem:fund1}
Given $f\in L^2(\R\times\Omega)$, the solution $u\in\Dom(H^s)$ to $H^su=f$ is given by
$$u(t,x)=H^{-s}f(t,x)=\frac{1}{(2\pi)^{1/2}}\int_{\R}\sum_{k=1}^\infty(i\rho+\lambda_k)^{-s}\widehat{f}_k(\rho)\varphi_k(x)e^{i\rho t}\,d\rho.$$
Using the Gamma function identity
$$(i\rho+\lambda_k)^{-s}=\frac{1}{\Gamma(s)}\int_0^\infty e^{-\tau(i\rho+\lambda_k)}\,\frac{d\tau}{\tau^{1-s}}$$
and the heat kernel $W_\tau(x,z)$ for $L$, we readily find that
\begin{align*}
u(t,x) &= H^{-s}f(t,x)=\frac{1}{\Gamma(s)}\int_0^\infty e^{-\tau L}f(t-\tau,x)\,\frac{d\tau}{\tau^{1-s}} \\
&= \int_{-\infty}^\infty\int_\Omega K_{-s}(\tau,x,z)f(t-\tau,z)\,dz\,d\tau
\end{align*}
where
$$K_{-s}(\tau,x,z)=\chi_{\tau>0}\frac{W_\tau(x,z)}{\Gamma(s)\tau^{1-s}}=\frac{\chi_{\tau>0}}{\Gamma(s)\tau^{1-s}}\sum_{k=0}^\infty e^{-\tau \lambda_k} \phi_k(x)\phi_k(z)$$
is the fundamental solution for $H^s$. We can estimate this kernel by applying known estimates for the heat kernel for $L$.
\begin{enumerate}[(a)]
\item If the coefficients $A(x)$ are bounded and measurable then, by \cite{Davies}, we find that
$$K_{-s}(\tau,x,z)\leq\frac{C}{\tau^{n/2+1-s}}e^{-|x-z|^2/(c\tau)}\qquad x,z\in\Omega,~\tau>0$$
for some constants $C,c>0$.
\item If the coefficients $A(x)$ are bounded and measurable in $\Omega=\R^n$ then, by Aronson's estimates \cite{Aronson},
$$\frac{C_1}{\tau^{n/2+1-s}}e^{-|x-z|^2/(c_1\tau)} \leq K_{-s}(\tau,x,z)\leq\frac{C_2}{\tau^{n/2+1-s}}e^{-|x-z|^2/(c_2\tau)}\qquad x,z\in\R^n,~\tau>0$$
for some constants $C_1,c_1,C_2,c_2>0$.
\item If the coefficients $A(x)$ are H\"older continuous with exponent $\alpha \in (0,1)$ and $L$ is endowed with homogeneous Dirichlet
boundary conditions then, from \cite[Theorem 2.2]{Riahi},
there exist positive constants $c, c_1, c_2$ and $\eta \leq 1 \leq \nu$ depending only on $n, \alpha, \Omega$ and ellipticity, with $c$ depending also on $s$, such that
\begin{multline*}
     c^{-1}\tau^{s-1} \min \bigg( 1, \frac{\phi_0(x) \phi_0(z)}{\max(1,\tau^\eta)}\bigg)e^{-\lambda_0 \tau} \frac{e^{-c_1 |x-z|^2/(\tau)}}{\max(1, \tau^{n/2})} 
    \leq K_{-s}(\tau,x,z) \\
    \leq  c\tau^{s-1} \min \bigg( 1, \frac{\phi_0(x) \phi_0(z)}{\max(1, \tau^\nu)}\bigg)e^{-\lambda_0 \tau} \frac{e^{-c_2 |x-z|^2/(\tau)}}{\max(1, \tau^{n/2})}
\end{multline*}
for all $x, z \in \Omega$, $t>0$.
\item Under the hypotheses of (c), if in addition we assume that $\Omega$ is a $C^{1, \gamma}$ domain for some $0 < \gamma <1$, then the estimate above is true for $\eta = \nu = 1$ and the constant $c$ depending also on $\gamma$. In particular, the estimate holds when $(\partial_t + L)^s = (\partial_t - \Delta_D)^s$, the fractional power of
the heat operator with Dirichlet Laplacian in a $C^{1, \gamma}$ domain. 
\item For the case of Neumann boundary conditions, if $\Omega$ is an inner uniform domain
then two-sided Gaussian estimates for the Neumann heat kernel hold and we obtain
$$\frac{C_1}{\tau^{n/2+1-s}}e^{-d(x,z)^2/(c_1\tau)} \leq K_{-s}(\tau,x,z)\leq\frac{C_2}{\tau^{n/2+1-s}}e^{-d(x,z)^2/(c_2\tau)}\qquad x,z\in\Omega,~\tau>0$$
where $d(x,z)$ denotes the geodesic distance between $x$ and $z$ in $\Omega$.
In particular, if $\Omega$ is bounded and convex, or if it is the region above the graph of a globally
Lipschitz function, then the geodesic distance $d(x,z)$ can be replaced
by the Euclidean distance $|x-z|$. For details about inner uniform domains, see \cite{Coste}.
\end{enumerate}
\end{rem}

In \cite{B-DLC-S} it was also proved that if $U$ solves
$$\begin{cases}
\partial_t U=y^{-a}\dvie(y^{a}B(x)\nabla U)&\hbox{in}~\R\times\Omega\times(0,\infty)\\
U(t,x,0)=u(t,x)&\hbox{on}~\R\times\Omega \\
U(t,x,y)=0\quad\hbox{or}\quad\partial_AU(t,x,y)=0&\hbox{on}~\R\times\partial\Omega\times(0,\infty)
\end{cases}$$
where $a=1-2s\in(-1,1)$, then, for some explicit constant $c_s>0$,
$$-\lim_{y\to0^+}y^{a}U_y(t,x,y)=c_sH^su$$
where
$$B(x)=
\begin{bmatrix}
A(x) & 0 \\
0 & 1
\end{bmatrix}$$
is also uniformly elliptic. To state this claim precisely, we need some notation.
Let us denote $D=\{(x,y):x\in\Omega,~y>0\}\subset\R^{n+1}$.
The $A_2(\R^N)$-class of Muckenhoupt weights is the set of all a.e.~positive functions $\omega\in L^1_{\mathrm{loc}}(\R^N)$,
$N\geq1$, for which there exists a constant $C_\omega>0$ such that
$$\bigg(\frac{1}{|B|}\int_B\omega\bigg)\bigg(\frac{1}{|B|}\int_B\omega^{-1}\bigg)\leq C_\omega$$
for every ball $B\subset\R^N$, see \cite{Duo}. It is straightforward to check that the weight $\omega(x,y)=|y|^{a}$ belongs
to the class $A_2(\R^{n+1})$.
Define $H^1_{L,a}(D)$ as the set of functions $w=w(x,y)\in L^2(D,y^{a}dxdy)$ such that 
\begin{align*}
[w]_{H^1_{L,y}(D)}^2 &:= \int^{\infty}_0\int_\Omega y^{a}A(x)\nabla_x w\nabla_x w
\,dx\,dy + \int^{\infty}_0 \int_{\Omega} y^{a}
|\partial_yw|^2\,dx\,dy\\
&=\int^{\infty}_0y^{a}\sum_{k=0}^\infty\lambda_k|w_k(y)|^2\,dy + \int^{\infty}_0 \int_{\Omega} y^{a}
|\partial_yw|^2\,dx\,dy< \infty,
\end{align*}
where $\displaystyle w_k(y)=\int_{\Omega}w(x,y)\phi_k(x)\,dx$, under the norm
$$\|w\|_{H^1_{L,a}(D)}^2=\|w\|_{L^2(D,y^{a}dxdy)}^2+[w]_{H^1_{L,a}(D)}^2.$$
Recall that $\{e^{-\tau H}\}_{\tau\geq0}$ denotes the semigroup generated by $H=\partial_t-\dive(A(x)\nabla_x)$.

\begin{thm}[Extension problem]\label{th_exten}
Let $u\in\Dom(H^s)$. For $(t,x)\in\R\times\Omega$ and $y>0$ we define
\begin{equation}\label{solution}
\begin{aligned}
U(t,x,y) &= \frac{y^{2s}}{4^s \Gamma(s)} \int^{\infty}_0 e^{-y^2/(4\tau)} e^{-\tau H} u(t,x)\,\frac{d \tau}{\tau^{1+s}} \\
&= \frac{1}{\Gamma(s)}\int_0^\infty e^{-r}e^{-\frac{y^2}{4r}H}u(t,x)\,\frac{dr}{r^{1-s}}\\
&= \frac{1}{\Gamma(s)}\int_0^\infty e^{-y^2/(4r)}e^{-rH}(H^su)(t,x)\,\frac{dr}{r^{1-s}}.
\end{aligned}
\end{equation}
Then $U$ belongs to $L^2(\R;H^1_{L,a}(D))\cap C^\infty((0,\infty);L^2(\R\times\Omega))\cap C([0,\infty);L^2(\R \times \Omega))$ and
is a weak solution to the parabolic extension problem
$$\begin{cases}
\partial_t U=y^{-a}\dvie(y^{a}B(x)\nabla U)&\hbox{for}~(t,x,y)\in\R\times\Omega\times(0,\infty)\\
-y^{a}\partial_yU\Big|_{y=0^+}=\frac{\Gamma(1-s)}{4^{s-1/2}\Gamma(s)}H^su&\hbox{for}~(t,x)\in\R\times\Omega\\
U(t,x,0)=u(t,x)&\hbox{for}~(t,x)\in\R\times\Omega
\end{cases}$$
with the boundary condition $U=0$ or $\partial_AU=0$ on $\R\times\partial\Omega\times(0,\infty)$,
depending whether $L$ is endowed with homogeneous Dirichlet or Neumann boundary conditions, respectively.
Namely, for any $V(t,x,y) \in C^{\infty}_c (\R\times\Omega\times[0, \infty) )$, in case of Dirichlet; or for any
$V(t,x,y) \in C^{\infty} (\R\times\Omega\times[0, \infty) )$ with compact support in $t$ and $y$, in case of Neumann,
\begin{align*}
\int_\R\int_\Omega U\partial_t V\,dx\,dt &= \int_\R\int_\Omega A(x)\nabla_xU\nabla_xV\,dx \,dt \\
&\quad-\int_\R\int_\Omega\Big(\tfrac{a}{y}\partial_y+\partial_{yy}\Big)UV \,dx\,dt
\end{align*}
$\lim_{y\to0^+}U(t,x,y)=u(t,x)$ in $L^2(\R\times\Omega)$ and
\begin{align*}
\int_0^\infty\int_\R\int_\Omega y^{a}U\partial_tV\,dx \,dt \,dy 
&=\int_0^\infty\int_\R\int_\Omega y^{a}B(x)\nabla U\nabla V\,dx\,dt \,dy \\
&\quad-\frac{\Gamma(1-s)}{4^{s-1/2}\Gamma(s)}\langle H^su, V(t,x,0) \rangle.
\end{align*}
By density, these identities hold for test functions $V$ in $L^2(\R;H^1_{L,a}(D))$.
In addition, we have the estimate
\begin{equation}\label{eq:L2vsHs}
\|U\|_{L^2(\R;H^1_{L,a}(D))}\leq C\|u\|_{\Dom(H^s)}
\end{equation}
where $C>0$ depends only on $s$.
\end{thm}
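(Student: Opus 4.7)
My plan is to diagonalize the problem via the spectral decomposition $\{(\lambda_k,\phi_k)\}$ of $L$ combined with the Fourier transform in $t$. Writing
$$U(t,x,y)=\frac{1}{(2\pi)^{1/2}}\int_\R\sum_{k=0}^\infty\widehat{U_k}(\rho,y)\phi_k(x)e^{it\rho}\,d\rho,$$
the interior equation $\partial_t U=y^{-a}\dvie(y^aB(x)\nabla U)$ decouples, for each mode with $\mu_k:=i\rho+\lambda_k$, into the Bessel-type ODE $\partial_{yy}\widehat{U_k}+\frac{a}{y}\partial_y\widehat{U_k}-\mu_k\widehat{U_k}=0$ on $y>0$ with $\widehat{U_k}(\rho,0)=\widehat{u_k}(\rho)$ and decay at infinity. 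I would first expand $e^{-\tau H}u$ spectrally in \eqref{solution} and evaluate the $\tau$-integral via the classical identity $\int_0^\infty e^{-y^2/(4\tau)-\tau\mu}\tau^{-s-1}\,d\tau=2(2\sqrt{\mu}/y)^sK_s(y\sqrt{\mu})$, where $K_s$ is the modified Bessel function of the second kind and the branch $\Re\sqrt{\mu_k}>0$ is used. This produces the explicit Bessel representation
$$\widehat{U_k}(\rho,y)=\widehat{u_k}(\rho)\,\frac{2^{1-s}}{\Gamma(s)}(y\sqrt{\mu_k})^sK_s(y\sqrt{\mu_k}).$$
The equivalence of the three integral expressions for $U$ in \eqref{solution} then follows by applying $\mu^{-s}=\Gamma(s)^{-1}\int_0^\infty e^{-r\mu}r^{s-1}\,dr$ together with the change of variable $r\leftrightarrow y^2/(4\tau)$.

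\textbf{Checking PDE, trace, and boundary condition.} That $\widehat{U_k}$ solves the Bessel ODE is the standard modified-Bessel identity. Next, using the asymptotic $z^sK_s(z)\to 2^{s-1}\Gamma(s)$ as $z\to 0^+$, I would read off $\widehat{U_k}(\rho,0)=\widehat{u_k}(\rho)$ and conclude $U(\cdot,\cdot,0)=u$ in $L^2(\R\times\Omega)$ by Plancherel. Differentiating the Bessel formula with the aid of $\frac{d}{dz}(z^sK_s(z))=-z^sK_{s-1}(z)$ and the asymptotic $z^{1-s}K_{1-s}(z)\to 2^{-s}\Gamma(1-s)$ will give
$$-y^a\partial_y\widehat{U_k}(\rho,y)\xrightarrow[y\to 0^+]{}\frac{\Gamma(1-s)}{4^{s-1/2}\Gamma(s)}\mu_k^s\widehat{u_k}(\rho),$$
which is precisely the spectral representation of $c_s H^su$. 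Smoothness of $U$ in $(t,x)$ for $y>0$ is inherited from the smoothing of the semigroup $e^{-\tau H}$ and the analyticity of $K_s$, while the $C([0,\infty);L^2)$ statement comes from the trace identity combined with dominated convergence.

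\textbf{Energy estimate, weak formulation, and main obstacle.} After the rescaling $z=y\sqrt{\mu_k}$, the universal bound $\int_0^\infty t^a\bigl(|(t^sK_s(t))'|^2+|t^sK_s(t)|^2\bigr)\,dt<\infty$ combined with Plancherel gives, mode by mode,
$$\int_0^\infty y^a\bigl(|\partial_y\widehat{U_k}(\rho,y)|^2+\lambda_k|\widehat{U_k}(\rho,y)|^2\bigr)\,dy\leq C|\mu_k|^s|\widehat{u_k}(\rho)|^2;$$
summing in $k$ and integrating in $\rho$ will yield \eqref{eq:L2vsHs}. The weak formulations should then follow by multiplying the pointwise equation by $y^aV$, integrating by parts in $X=(x,y)$ and in $t$, and tracking the boundary contribution at $y=0^+$, which by the flux computation above pairs against $V(t,x,0)$ to produce the duality $\langle H^su,V(\cdot,\cdot,0)\rangle$ with exactly the constant $\frac{\Gamma(1-s)}{4^{s-1/2}\Gamma(s)}$. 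I expect the hard part to be the rigorous interpretation of the trace $U|_{y=0}$ and of the co-normal flux $-\lim_{y\to 0^+}y^a\partial_yU$ when $U$ only lives in $L^2(\R;H^1_{L,a}(D))$: one must justify, uniformly in the spectral variable $(\rho,k)$, that the modewise limit commutes with the $\rho$-integral and $k$-sum, and give the flux a rigorous meaning as an element of $\Dom(H^s)^\ast$ rather than pointwise, which requires careful dominated-convergence arguments based on the uniform behavior of $K_s$ near $0$ and $\infty$.
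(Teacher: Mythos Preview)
The paper does not actually prove Theorem~\ref{th_exten}: it is quoted from \cite{B-DLC-S} (see the sentence ``This result for $H^s$ was proved in \cite{B-DLC-S}'' and the paragraph preceding the theorem), so there is no in-paper argument to compare against. Your spectral diagonalization via $(\lambda_k,\phi_k)$ and the Fourier transform in $t$, leading to the explicit Macdonald-function representation $\widehat{U_k}(\rho,y)=\widehat{u_k}(\rho)\,\frac{2^{1-s}}{\Gamma(s)}(y\sqrt{\mu_k})^sK_s(y\sqrt{\mu_k})$, is exactly the standard route used in the literature (it is the parabolic analogue of the Stinga--Torrea proof for $(-\Delta)^s$ and is, in essence, what \cite{B-DLC-S} does). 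The Bessel identities you quote---the integral representation of $K_s$, the small-argument asymptotics $z^sK_s(z)\to 2^{s-1}\Gamma(s)$ and $z^{1-s}K_{1-s}(z)\to 2^{-s}\Gamma(1-s)$, the differentiation rule $\frac{d}{dz}(z^sK_s(z))=-z^sK_{1-s}(z)$, and the finiteness of $\int_0^\infty t^{1-2s}\bigl(|(t^sK_s)'|^2+|t^sK_s|^2\bigr)\,dt$---are all correct and give the trace, the conormal limit with the right constant $\Gamma(1-s)/(4^{s-1/2}\Gamma(s))$, and the energy bound \eqref{eq:L2vsHs} after Plancherel.

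One genuine point you should be careful with, and which you correctly flag, is the use of $\sqrt{\mu_k}$ with $\mu_k=i\rho+\lambda_k$ complex: the integral identity $\int_0^\infty e^{-y^2/(4\tau)-\tau\mu}\tau^{-1-s}\,d\tau=2(2\sqrt{\mu}/y)^sK_s(y\sqrt{\mu})$ is valid for $\Re\mu>0$ and extends by analytic continuation to $\Re\mu\geq 0$, $\mu\neq 0$, with the branch $\Re\sqrt{\mu}>0$; in the Neumann case the zero eigenvalue $\lambda_0=0$ forces $\mu_0=i\rho$, which is on the boundary of the domain of validity, so the modewise limits and the exponential decay of $K_s$ along the ray $\arg z=\pm\pi/4$ must be invoked rather than the real-variable asymptotics alone. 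Apart from this branch-cut bookkeeping and the dominated-convergence justification you already identified for passing the limit $y\to 0^+$ through the $(\rho,k)$-sum, your outline is complete and would reconstruct the cited proof.
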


\begin{rem}[Fundamental solution using extension problem]
We can also get an estimate for the fundamental solution of $H^s$ by the extension method.
Let $K_{-s}(\tau,x,z)$ be the fundamental solution of $H^s$ with pole at $\tau=0$ and $z=x$.
For fixed $x$, let $U^x=U^x(\tau,z,y)$ be the solution to the following extension problem
$$\begin{cases}
y^a \partial_\tau U^x - \dvie(y^a B(z) \nabla U^x)= 0&\hbox{in}~\R \times \Omega \times (0,\infty) \\ 
-\lim_{y \rightarrow 0} y^a(U^x)_y(\tau,z,y)=c_s \delta_{(0,x,0)}&\hbox{on}~\R\times\Omega
\end{cases}$$
with the appropriate boundary condition on $\R\times\partial\Omega\times[0,\infty)$.
Here $\delta_{(0,x,0)}$ denotes the Dirac delta at $\tau=0$, $x\in\Omega$ and $y=0$. 
Then
$$K_{-s}(\tau,x,z)=U^x(\tau,z,0).$$
Let $\tilde{U}^x$ to be the even reflection of $U^x$ with respect to the variable $y$,
that is, $\tilde{U}^x(\tau,z,y) = U^x(\tau,z,|y|)$. Then, exactly as in \cite[Lemma 4.2]{B-DLC-S}, we find that $\tilde{U}^x$ solves
$$|y|^a\partial_\tau \tilde{U}^x - \dvie(|y|^a B(z) \nabla \tilde{U}^x) = c_s \delta_{(0,x,0)}\qquad\hbox{in}~\R \times \Omega \times(-\infty,\infty)$$
with the corresponding boundary conditions.
Clearly, for $\mathcal{U}^x(\tau,z,y)  = \chi_{\tau\geq0}\tilde{U}^x(\tau,z,y)$, we have
$$\begin{cases}
|y|^a\partial_\tau \mathcal{U}^x - \dvie(|y|^a B(z) \nabla \mathcal{U}^x)= 0&\hbox{in}~(0,\infty)\times\Omega\times(-\infty,\infty)\\ 
\lim_{\tau \to 0} \mathcal{U}^x(\tau,z,y) =c_s \delta_{(0,x,0)} 
\end{cases}$$
Then $\mathcal{U}^x$ is the heat kernel associated with the elliptic operator $\dvie(|y|^a B(x) \nabla)$ with pole at $(\tau,z,y)=(0,x,0)$.
Thus, from known heat kernel estimates for degenerate parabolic operators, we can derive bounds for the fundamental solution $K_{-s}(\tau,x,z)$.

Suppose that $\Omega=\R^n$, denote $X=(x,x_{n+1}),Z=(z,y)\in\R^{n+1}$ and let $W_{\tau}(X,Z)$ be the heat kernel for
$\dvie(|y|^a B(x) \nabla)$ with pole at $\tau=0$ and $Z=X$. From \cite{Uribe-Rios}, we have the Gaussian estimate
$$|W_\tau(X,Z)| \leq  \frac{C}{\sqrt{w_{\tau}(X)}\sqrt{w_{\tau}(Z)}} e^{-c|X-Z|^2/\tau}$$ 
where $w(Z)=|y|^a$ is an $A_2$ Muckenhoupt weight, $w_\tau(Z)$ is the $w$-volume of the 
ball centered at $Z$ with radius $\sqrt{\tau}$ in the usual metric in $\R^{n+1}$ and $C,c>0$ depend on $s$, $n$ and ellipticity.
It is easy to check that $w_\tau((z,0))\sim\tau^{n/2+1+s}$. Therefore, the fundamental solution for $H^s$ in $\Omega=\R^n$ verifies
$$K_{-s}(\tau,x,z)=W_\tau((x,0),(z,0))\leq\frac{C}{\tau^{n/2+1+s}}e^{-c|x-z|^2/\tau}\qquad\tau>0$$
for $C,c>0$ depending only on $s$, $n$ and ellipticity. Compare this estimate with those in Remark \ref{rem:fund1}.
\end{rem}

\begin{prop}\label{prop:t_1t_2}
Let $U$ be as in \eqref{solution} and assume that $f=H^su\in L^2(\R\times\Omega)$. Then $U_t\in L^2(\R;(H^1_{L,a}(D))^\ast)$ and,
in particular, $U\in C(\R;L^2(D,y^adX))$.
Furthermore, for every $\phi \in H^1([-1,1];L^2(B^*_1,y^adX)) \cap L^2([-1,1];H^1_{L,a}(B^*_1))$
such that $\phi = 0$ on $\partial Q^*_1 \backslash (Q_1\times \{ 0 \})$ and a.e. $t_1,t_2\in[-1,1]$, we have
\begin{multline*}
\int_{B_1^*}y^a\big[U\phi\big]^{t=t_2}_{t=t_1}\,dX-\int_{t_1}^{t_2}\int_{B_1^*}y^aU \partial_t\phi\,dXdt + \int_{t_1}^{t_2}\int_{B^*_1} y^a B(x) \nabla U \nabla \phi \, dXdt \\
=\frac{\Gamma(1-s)}{4^{s-1/2}\Gamma(s)}\int_{t_1}^{t_2}\int_{B_1}f(t,x) \phi(t,x,0) \, dx \, dt.
\end{multline*}
\end{prop}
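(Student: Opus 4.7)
The plan is to first identify $\partial_t U$ from the weak formulation in Theorem \ref{th_exten}, then invoke a standard Gelfand-triple continuity and integration-by-parts theorem, and finally localize to $B_1^*$ by exploiting the boundary behavior of the test function $\phi$.

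First I would take separable test functions $V(t, X) = \psi(t)\eta(X)$ with $\psi \in C_c^\infty(\R)$ and $\eta \in H^1_{L,a}(D)$ in the second weak identity of Theorem \ref{th_exten}. This immediately shows that, as a distribution in $t$ with values in $(H^1_{L,a}(D))^*$,
$$\langle \partial_t U(t), \eta \rangle = -\int_D y^a B(x) \nabla U(t) \cdot \nabla \eta \, dX + \frac{\Gamma(1-s)}{4^{s-1/2}\Gamma(s)} \int_\Omega f(t, x) \eta(x, 0) \, dx.$$
The first integral is bounded by $\Lambda_2 \|\nabla U(t)\|_{L^2(D, y^a dX)} \|\eta\|_{H^1_{L,a}(D)}$ by Cauchy--Schwarz. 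Since $y^a \in A_2(\R^{n+1})$, the weighted trace theorem of Fabes--Kenig--Serapioni gives $\|\eta(\cdot, 0)\|_{L^2(\Omega)} \leq C \|\eta\|_{H^1_{L,a}(D)}$, so the second integral is bounded by $C \|f(t)\|_{L^2(\Omega)} \|\eta\|_{H^1_{L,a}(D)}$. Squaring and integrating in $t$, using \eqref{eq:L2vsHs} and the hypothesis $f \in L^2(\R \times \Omega)$, yields $\partial_t U \in L^2(\R; (H^1_{L,a}(D))^*)$.

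Next, with $U \in L^2(\R; H^1_{L,a}(D))$ and $\partial_t U \in L^2(\R; (H^1_{L,a}(D))^*)$ in hand, I would invoke the classical Lions--Magenes lemma for the Gelfand triple $H^1_{L,a}(D) \hookrightarrow L^2(D, y^a dX) \hookrightarrow (H^1_{L,a}(D))^*$ (with pivot pairing $\int_D y^a u v \, dX$). This delivers both the continuity $U \in C(\R; L^2(D, y^a dX))$ and, for every $V$ in the same regularity class and a.e. $t_1 < t_2$, the chain-rule identity
$$\int_{t_1}^{t_2} \big( \langle \partial_t U, V \rangle + \langle U, \partial_t V \rangle \big) dt = \int_D y^a [U V]_{t=t_1}^{t=t_2} dX.$$

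Finally, given $\phi$ as in the statement, I would extend it by zero from $Q_1^*$ to all of $\R \times D$. Because $\phi$ vanishes on all of $\partial Q_1^*$ except on the flat face $Q_1 \times \{0\}$, in particular at $t = \pm 1$, on $\partial B_1 \times (0,1)$, and at $y = 1$, the zero extension still belongs to $L^2(\R; H^1_{L,a}(D)) \cap H^1(\R; L^2(D, y^a dX))$ and is supported in $\overline{Q_1^*}$. Substituting this extended $\phi$ into the chain-rule identity above and replacing $\langle \partial_t U, \phi \rangle$ by the expression from the first step produces exactly the stated formula, since all spatial integrals then automatically restrict to $B_1^*$. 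The main technical obstacle is the combined justification of the weighted trace inequality and the extension-by-zero step in this weighted Sobolev setting; both rely on $|y|^a \in A_2(\R^{n+1})$ and are handled by the standard Fabes--Kenig--Serapioni machinery together with density of smooth functions that vanish on the appropriate portion of $\partial Q_1^*$.
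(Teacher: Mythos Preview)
Your proposal is correct and follows essentially the same route as the paper: identify $\partial_tU$ by testing the weak formulation from Theorem \ref{th_exten} against separable functions $\psi(t)\eta(X)$, bound it in $L^2(\R;(H^1_{L,a}(D))^*)$ via Cauchy--Schwarz and the trace inequality, and then use the Gelfand-triple integration-by-parts to obtain the boundary-term identity. The only packaging difference is that the paper carries out the mollifier/chain-rule step by hand for separable test functions $\psi(t)\phi(X)$ on the full domain $D$ and then passes to the given $\phi$ ``by approximation,'' whereas you invoke Lions--Magenes directly and localize by extending $\phi$ by zero from $Q_1^*$ to $\R\times D$; these are equivalent maneuvers.
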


\begin{proof}
We claim that
\begin{equation}\label{UtinH1ast}
U_t=y^{-a}\dvie(y^{a}B(x)\nabla U)\in(H^1_{L,a}(D))^\ast
\end{equation}
in the weak sense, namely, that for any $\psi(t)\in C^\infty_c(\R)$ and any $\phi(x,y)\in H^1_{L,a}(D)$,
\begin{multline*}
\int_0^\infty y^a\int_\Omega\bigg(\int_\R U\psi_t\,dt\bigg)\,\phi\,dx\,dy \\
=\int_\R\int_0^\infty\int_\Omega y^aB(x)\nabla U\nabla\phi\,dX\psi\,dt+c_s\int_\R\int_{\Omega}f(t,x) \phi(x,0) \, dx\,\psi \, dt.
\end{multline*}
Indeed, notice that, by Theorem \ref{th_exten}, $U\in L^2(\R;H^1_{L,a}(D))$, so that
$$U_t(\psi)=-\int_\R U\psi_t\,dt\in H^1_{L,a}(D).$$
Therefore,
$$[U_t(\psi)](\phi)=-\int_0^\infty y^a\int_\Omega\bigg(\int_\R U\psi_t\,dt\bigg)\,\phi\,dx\,dy$$
is well defined. On the other hand, for a.e. $t\in\R$,
$$[y^{-a}\dvie(y^{a}B(x)\nabla U)](\phi)=-\int_0^\infty\int_\Omega y^{a}B(x)\nabla U\nabla \phi\,dx\,dy
+c_s\int_\Omega f(t,x)\phi(x,0)\,dx$$
is a well defined bounded linear functional on $H^1_{L,a}(D)$, because $U\in L^2(\R;H^1_{L,a}(D))$, $f\in L^2(\R\times\Omega)$
and the trace inequality $\|\phi\|_{L^2(\Omega)}\leq C_s\|\phi\|_{H^1_{L,a}(D)}$ holds true.
On the other hand, from Theorem \ref{th_exten}, we see that
$$-[U_t(\psi)](\phi)=\int_\R\bigg[\int_0^\infty\int_\Omega y^{a}B(x)\nabla U\nabla \phi\,dx\,dy
-c_s\int_\Omega f(t,x)\phi(x,0)\,dx\bigg]\,\psi\,dt.$$
Thus, \eqref{UtinH1ast} holds.

Moreover, it is clear that
\begin{multline*}
\bigg\|\int_0^\infty\int_\Omega y^{a}B(x)\nabla U\nabla \phi\,dx\,dy
-c_s\int_\Omega f(t,x)\phi(x,0)\,dx\bigg\|_{L^2(\R)} \\
\leq C\big(\|U\|_{L^2(\R,H^1_{L,a}(D))}+\|f\|_{L^2(\R\times\Omega)}\big)\|\phi\|_{H^1_{L,a}(D)}.
\end{multline*}
This gives that $U_t\in L^2(\R;(H^1_{L,a}(D))^\ast)$ and \eqref{UtinH1ast} holds a.e., namely,
\begin{equation}\label{ae}
\int_0^\infty y^a\int_\Omega U_t\phi\,dx\,dy=\int_0^\infty\int_\Omega y^{a}B(x)\nabla U\nabla \phi\,dx\,dy
-c_s\int_\Omega f(t,x)\phi(x,0)\,dx
\end{equation}
for a.e. $t\in\R$.

For the second claim, notice that $U\in H^1([-1,1];(H^1_{L,a}(D))^\ast)$. Then, for any $\psi\in C^\infty_c(-1,1)$
and a.e. $t_1,t_2\in(-1,1)$, by using a standard mollifier argument, we have
$$\big[U\psi\big]^{t=t_2}_{t=t_1}=(U\psi)(t_2)-(U\psi)(t_1)=\int_{t_1}^{t_2}U_t\psi\,dt+\int_{t_1}^{t_2}U\psi_t\,dt.$$
Whence, multiplying by $\psi$ and integrating from $t_1$ to $t_2$ in \eqref{ae}, we find that
\begin{multline*}
\int_0^\infty y^a\int_\Omega\big[U\psi\big]^{t=t_2}_{t=t_1} \phi\,dx\,dy-\int_0^\infty y^a\int_\Omega\int_{t_1}^{t_2}U\psi_t\phi\,dt\,dx\,dy \\
=\int_{t_1}^{t_2}\int_0^\infty\int_\Omega y^{a}B(x)\nabla U\nabla \phi\,dx\,dy\,\psi\,dt
-c_s\int_{t_1}^{t_2}\int_\Omega f(t,x)\phi(x,0)\,dx\,\psi\,dt.
\end{multline*}
The conclusion is true by approximation.
\end{proof}

\section{Caccioppoli estimate and approximation}\label{sec:caccio}

In view of Proposition \ref{prop:t_1t_2}, we define weak solutions to the extension problem in $Q^*_1$
in the following way. Consider the problem
\begin{equation}\label{eq:exten_1_mod}
\begin{cases}
y^a \partial_t U - \dvie(y^a B(x) \nabla U)=- \dvie(y^a F)&\hbox{in}~Q^*_1 \\
-y^a U_y\big|_{y=0}=f&\hbox{on}~Q_1.
\end{cases}
\end{equation}
Here $F=F(t,x)=(F_1,\ldots,F_n,F_{n+1})$ is an $\R^{n+1}$-valued vector field on $Q^*_1$
such that $F_{n+1} = 0$ and $|F|\in L^2(Q_1^*)$, and $f=f(t,x)\in L^2(Q_1)$.
We say that $U \in C([-1,1];L^2(B^*_1, y^a dX)) \cap L^2([-1,1];H^1_{L,a}(B^*_1))$ is a weak solution to \eqref{eq:exten_1_mod} if for every $-1<t_1<t_2<1$
\begin{equation}\label{eq:exten_2_mod}
\begin{aligned}
\int_{B^*_1} y^aU\phi\big|^{t=t_2}_{t=t_1} dX &-\int^{t_2}_{t_1} \int_{B^*_1}y^aU \partial_t\phi\,dXdt + \int^{t_2}_{t_1}\int_{B^*_1} y^a B(x) \nabla U \nabla \phi \, dXdt \\
&\qquad=\int^{t_2}_{t_1}\int_{B_1}f(t,x) \phi(t,x,0) \, dx \, dt +\int^{t_2}_{t_1}\int_{B^*_1} y^a F \nabla \phi \, dXdt
\end{aligned}
\end{equation}
holds for every $\phi \in H^1([-1,1];L^2(B^*_1,y^adX)) \cap L^2([-1,1];H^1_{L,a}(B^*_1))$
such that $\phi = 0$ on $\partial Q^*_1 \backslash (Q_1 \times \{ 0 \})$. Any such $\phi$ will be called a test function. 

\begin{lem}\label{lem:cacc_mod}
Suppose that $U$ is a weak solution to \eqref{eq:exten_1_mod} in the sense of \eqref{eq:exten_2_mod} with $F$ as described above.
Then, for any $\eta \in C^{\infty}_c(Q_1\times[0,1))$ and for any $-1<t_1<t_2<1$, 
\begin{align*}
&\sup_{t_1<t<t_2}\int_{B^*_1}y^aU^2 \eta^2 \,dX + \int^{t_2}_{t_1}\int_{B^*_1} y^a \eta^2 |\nabla U|^2\,dXdt \\
& \leq C\bigg[\int^{t_2}_{t_1}\int_{B^*_1} y^a \left(( |\partial_t (\eta^2)|+|\nabla \eta|^2)U^2 + |F|^2 \eta^2 \right)\,dXdt  \\
&\qquad+ \int^{t_2}_{t_1}\int_{B_1} (\eta(t,x,0))^2 |U(t,x,0)||f(t,x)|\,dxdt  \bigg] + \int_{B^*_1}y^aU^2(t_1,X) \eta^2(t_1,X) \,dX
\end{align*}
where $C>0$ depends only on ellipticity, $n$ and $s$.
\end{lem}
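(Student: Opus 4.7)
The strategy is the classical parabolic Caccioppoli argument: formally test \eqref{eq:exten_2_mod} with $\phi = U\eta^2$ and exploit the uniform ellipticity of $B(x)$. Since $U$ need not be differentiable in $t$, this product is not directly admissible, so as announced in the introduction I would work with Steklov averages in time,
$$U_h(t,X) = \frac{1}{h}\int_t^{t+h} U(\tau,X)\,d\tau, \qquad F_h,\,f_h \text{ analogously},$$
for which $\partial_t U_h(t,X) = h^{-1}[U(t+h,X)-U(t,X)]$ is pointwise defined. A standard approximation argument (use in \eqref{eq:exten_2_mod} test functions of the form $\phi(t,X) = h^{-1}\chi_{[t_0,t_0+h]}(t)\,\varphi(X)$, regularized in time if needed) yields that for a.e.~$t\in(-1,1-h)$ and every $\varphi\in H^1_{L,a}(B^*_1)$ vanishing on $\partial B^*_1\setminus(B_1\times\{0\})$,
$$\int_{B^*_1} y^a (\partial_t U_h)\,\varphi\,dX + \int_{B^*_1} y^a B(x)\nabla U_h\,\nabla\varphi\,dX = \int_{B_1} f_h\,\varphi(x,0)\,dx + \int_{B^*_1} y^a F_h\cdot\nabla\varphi\,dX.$$

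Next I insert $\varphi = U_h(t,\cdot)\,\eta^2(t,\cdot)$, which is admissible because $\eta$ has compact support in $Q_1\times[0,1)$. Using $U_h\,\partial_t U_h = \tfrac12\partial_t(U_h)^2$, the identity $\nabla(U_h\eta^2)=\eta^2\nabla U_h+2\eta U_h\nabla\eta$, and integrating in $t$ from $t_1$ to $t_2$, one arrives at a Steklov energy identity of the form
\begin{align*}
\tfrac{1}{2}\int_{B^*_1} y^a \big[(U_h)^2\eta^2\big]^{t=t_2}_{t=t_1}\,dX
&= \tfrac{1}{2}\int_{t_1}^{t_2}\!\!\int_{B^*_1} y^a (U_h)^2\,\partial_t(\eta^2)\,dX\,dt \\
&\quad - \int_{t_1}^{t_2}\!\!\int_{B^*_1} y^a B(x)\nabla U_h\,(\eta^2\nabla U_h + 2\eta U_h \nabla\eta)\,dX\,dt \\
&\quad + \int_{t_1}^{t_2}\!\!\int_{B_1} f_h\,U_h\,\eta^2(t,x,0)\,dx\,dt \\
&\quad + \int_{t_1}^{t_2}\!\!\int_{B^*_1} y^a F_h\cdot(\eta^2\nabla U_h + 2\eta U_h\nabla\eta)\,dX\,dt.
\end{align*}
Because $U\in C([-1,1];L^2(B^*_1,y^a dX))\cap L^2([-1,1];H^1_{L,a}(B^*_1))$, the classical convergence properties of Steklov averages give $U_h\to U$ in these spaces, $F_h\to F$ in $L^2(Q^*_1)$, $f_h\to f$ in $L^2(Q_1)$, and $U_h(t_i,\cdot)\to U(t_i,\cdot)$ in $L^2(B^*_1,y^a dX)$ for a.e.~$t_1,t_2$ as $h\to 0^+$.

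To close the estimate I would apply uniform ellipticity $B(x)\xi\cdot\xi\geq \Lambda_1|\xi|^2$ to the diagonal gradient term, producing $\Lambda_1\int y^a\eta^2|\nabla U|^2$, and control the three cross terms $2\eta U B(x)\nabla U\cdot\nabla\eta$, $y^a\eta^2 F\cdot\nabla U$ and $2y^a\eta U F\cdot\nabla\eta$ by Cauchy--Schwarz together with the weighted Young inequality, choosing the small parameters so that the resulting $|\nabla U|^2$ contributions are reabsorbed on the left. The surviving terms produce exactly $y^a(|\partial_t(\eta^2)|+|\nabla\eta|^2)U^2$ and $y^a|F|^2\eta^2$ on the right, while the pairing $\int\eta^2(t,x,0)|U(t,x,0)||f(t,x)|\,dx\,dt$ is kept as stated. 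Finally, since the resulting inequality holds for every $t_2\in(t_1,1)$ and its right-hand side is monotone in $t_2$, taking the supremum over $t_2$ yields the $\sup_{t_1<t<t_2}\int y^a U^2\eta^2$ bound.

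\textbf{Main obstacle.} The technical heart of the argument is the rigorous justification of the pointwise-in-$t$ Steklov-averaged equation and the careful passage to the limit $h\to 0^+$, in particular identifying the boundary-in-time values $U_h(t_i,\cdot)\to U(t_i,\cdot)$ for a.e.~$t_i$; this step relies essentially on the $L^2$-in-time continuity of $U$ established in Proposition \ref{prop:t_1t_2}. The rest of the computation is a fairly mechanical combination of ellipticity and Young's inequality.
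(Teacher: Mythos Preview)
Your proposal is correct and follows essentially the same route as the paper: derive a pointwise-in-time equation for the Steklov averages, test with $\eta^2 U_h$, integrate in $t$, pass to the limit $h\to 0$, and absorb via ellipticity and Young's inequality. The only cosmetic difference is that the paper obtains the Steklov equation by taking $t_1=t$, $t_2=t+h$ and a time-independent test function directly in \eqref{eq:exten_2_mod}, rather than via regularized characteristic functions in time.
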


\begin{proof}
First we will define the Steklov averages of $U$ and state some of their properties (see, for example, \cite{lady}).
Let $-1<t<1$ and $h>0$ such that $t+h < 1$. We define
$$ U_h(t,x,y) = \frac{1}{h} \int^{t+h}_t U(\tau,x,y)\,d\tau\qquad\hbox{for}~t \in (-1, 1-h] $$
and $U_h(t,x,y)=0$ for $t > 1-h$, for all $(x,y)\in B_1^*$.
Since $U(\cdot,x,y)\in L^2([-1,1])$ for almost every $(x,y)\in B^*_1$, it follows that
$U_h$ is differentiable almost everywhere in $(-1,1)$, for almost every $(x,y)\in B_1^*$, and
$$\partial_t U_h(t,x,y) = \frac{U(t+h,x,y)-U(t,x,y)}{h}\in L^2([-1,1]).$$
Moreover, since $U \in C([-1,1];L^2(B^*_1;y^adX))$, we have that
$$\lim_{h\to0}U_h=U\qquad\hbox{in}~L^2(B^*_1;y^adX),~\hbox{for every}~t \in (-1,1-\delta)$$
for any $\delta \in (0,2)$.
Additionally, for any $\delta \in (0,2)$,
$$\lim_{h\to0}U_h=U\qquad\hbox{in}~L^2([-1,1-\delta];L^2(B^*_1;y^adX)).$$

Now we see that $U_h$ satisfies
\begin{equation}\label{eq:steklov_5}
\begin{aligned}
\int_{ B^*_1} \big[y^a (U_h)_t \varphi &+ y^a B(x) \nabla U_h\nabla\varphi \big]\,dX \\
 &= \int_{B_1} f_h(t,x) \varphi(x,0) \, dx+\int_{B^*_1} y^a F_h\nabla \varphi \, dX 
 \end{aligned}
 \end{equation}
 for almost every $-1<t < 1-h$ and for every $\varphi=\varphi(x,y)\in H^1(B^*_1)$ such that $\varphi = 0$
 on $\partial B^*_1 \setminus (B_1\times \{ 0 \})$, where $F_h, f_h$ are defined in the similar fashion. 
This follows by choosing $t_1 = t$ and $t_2 = t+h$ such that $[t_1, t_2] \subset [-1, 1]$ and 
$\phi=\varphi$ (which is independent of the time variable) in the weak formulation \eqref{eq:exten_2_mod}.

Next, fix a subinterval $[t_1,t_2] \subseteq [-1,1]$ such that $t_2 + h < 1$. In \eqref{eq:steklov_5} we take
$\varphi=\phi$, where $\phi=\phi(t,x,y)$ is a test function as in the definition of the weak formulation \eqref{eq:exten_2_mod}.
Then \eqref{eq:steklov_5} holds for almost every $t\in(-1,1-h)$ and, if we integrate in the $t$-variable over $[t_1, t_2]$
and use integration by parts in $t$, we finally get
\begin{equation}\label{eq:steklov_6}
\begin{aligned}
\int_{ B^*_1}[y^a U_h\phi]^{t_2}_{t_1} \,dX + &\int^{t_2}_{t_1}  \int_{ B^*_1} \left[-y^aU_h \phi_{t} + y^a B(x) \nabla U_h \nabla \phi \right] dXdt \\
&=  \int^{t_2}_{t_1} \int_{B_1} f_h(t,x) \phi(t,x,0) \, dx  dt  + \int^{t_2}_{t_1} \int_{B^*_1} y^a F_h\nabla \phi \, dX  dt.
\end{aligned}
\end{equation}
Observe that, from the earlier properties of Steklov average,
by taking $h \to 0$ in \eqref{eq:steklov_6} one arrives to \eqref{eq:exten_2_mod}.

For the proof of the Caccioppoli inequality, let $\phi = \eta^2 U_h$ in \eqref{eq:steklov_6}.
Since
\begin{align*}
\int^{t_2}_{t_1}\int_{B^*_1} y^a U_h \partial_t( \eta^2 U_h)\,dXdt &= \int^{t_2}_{t_1}\int_{B^*_1}  y^a U_h^2 \partial_t(\eta^2)\,dXdt + \frac{1}{2} \int^{t_2}_{t_1}\int_{B^*_1}  y^a \eta^2  \partial_t( U_h^2)\,dXdt \\
&=\frac{1}{2}\int^{t_2}_{t_1}\int_{B^*_1}  y^a U_h^2 \partial_t(\eta^2)\,dXdt + \frac{1}{2}\int_{ B^*_1}[y^a \eta^2 U^2_h]^{t_2}_{t_1} dX
\end{align*}
it follows that 
\begin{align*}
\frac{1}{2}\int_{ B^*_1}[y^a \eta^2 U^2_h]^{t_2}_{t_1}&\,dX+\int^{t_2}_{t_1}\int_{B^*_1} y^a B(x) \eta^2 \nabla U_h \nabla U_h\,dXdt \\ 
&= \frac{1}{2} \int^{t_2}_{t_1}\int_{B^*_1} y^a U_h^2 \partial_t(\eta^2)\,dXdt - 2\int^{t_2}_{t_1}\int_{B^*_1} y^a B(x) \eta U_h \nabla U_h \nabla \eta\,dXdt \\
&\quad+   \int^{t_2}_{t_1}\int_{B^*_1} y^a \eta^2 F_h \nabla U_h\,dXdt+ 2 \int^{t_2}_{t_1}\int_{B^*_1} y^a F_hU_h \eta \nabla \eta\,dXdt \\
&\quad + \int^{t_2}_{t_1}\int_{B_1} (\eta(t,x,0))^2U_h(t,x,0) f_h(t,x) \,dxdt.
\end{align*}
By the properties of Steklov averages, we can take the limit as $h\to0$ above to deduce that the same
identity holds for $U$, $F$ and $f$ in place of $U_h$, $F_h$ and $f_h$, respectively.
Then, by ellipticity and the Cauchy inequality with $\varepsilon >0$,
\begin{align*}\nonumber
\frac{1}{2}&\int_{ B^*_1}[y^a \eta^2 U^2]^{t_2}_{t_1}\,dX + \lambda \int^{t_2}_{t_1}\int_{B^*_1} y^a \eta^2 |\nabla U|^2\,dXdt \\ 
&\leq \frac{1}{2} \int^{t_2}_{t_1}\int_{B^*_1} y^a U^2 |\partial_t(\eta^2)|\,dXdt \\
&\quad+\frac{C}{\varepsilon}\int^{t_2}_{t_1}\int_{B^*_1} y^a U^2 |\nabla \eta|^2\,dXdt + \varepsilon \int^{t_2}_{t_1}\int_{B^*_1} y^a \eta^2 |\nabla U|^2\,dXdt \\ 
&\quad + \frac{C}{\varepsilon} \int^{t_2}_{t_1}\int_{B^*_1} y^a \eta^2 |F|^2\,dXdt + \varepsilon \int^{t_2}_{t_1}\int_{B^*_1} y^a \eta^2 |\nabla U|^2\,dXdt \\ 
&\quad+C\int^{t_2}_{t_1}\int_{B^*_1} y^a \eta^2 |F|^2\,dXdt +C\int^{t_2}_{t_1}\int_{B^*_1} y^a U^2 |\nabla\eta|^2\,dXdt \\
&\quad+ \int^{t_2}_{t_1}\int_{B^*_1} \eta^2 |U| |f|\,dxdt.
\end{align*}
The conclusion follows in a standard way by choosing $\varepsilon>0$ sufficiently small.
\end{proof}

Let us consider a test function $\phi \in H^1([-1,1];L^2(B^*_1,y^adX)) \cap L^2([-1,1];H^1_{L,a}(B^*_1))$
with $\phi = 0$ on $\partial Q^*_1 \backslash (Q_1 \times \{ 0 \})$.
Let $U \in C([-1,1];L^2(B^*_1, y^a dX)) \cap L^2([-1,1];H^1_{L,a}(B^*_1))$.
If $U$ is a weak solution to \eqref{eq:exten_1_mod} in the sense of \eqref{eq:exten_2_mod} then,
by letting $t_2\to1$ and $t_1\to-1$, we find that
\begin{equation}\label{eq:exten_2}
\begin{aligned}
-\int_{Q^*_1}y^aU \partial_t\phi\,dXdt &+ \int_{Q^*_1} y^a B(x) \nabla U \nabla \phi \, dXdt \\
&\qquad=\int_{Q_1}f(t,x) \phi(t,x,0) \, dx \, dt +\int_{Q^*_1} y^a F \nabla \phi \, dXdt.
\end{aligned}
\end{equation}
Conversely, if $U$ satisfies \eqref{eq:exten_2} for all such $\phi$ then, by
using arguments similar to Proposition \ref{prop:t_1t_2} we get that \eqref{eq:exten_2_mod} holds.
Therefore, when referring to weak solutions to \eqref{eq:exten_1_mod},
we will mean that \eqref{eq:exten_2_mod} or, equivalently, \eqref{eq:exten_2}, hold for the corresponding test functions.

\begin{cor}\label{cor:approx}
Let $U$ be a weak solution to \eqref{eq:exten_1_mod}. Suppose that
$$\int_{Q_1} U(t,x,0)^2 \, dxdt + \int_{Q^*_1} y^a U^2 \,dXdt\leq 1.$$
Then for every $\varepsilon >0$ there exists $\delta = \delta( \varepsilon)>0$ such that if
$$\int_{Q_1}f^2\,dxdt + \int_{Q^*_1} y^a |F|^2\,dXdt + \int_{B_1} |A(x) - I|^2\,dx \leq \delta^2$$
where $I$ denotes the identity matrix, then there exists a weak solution $W$ to 
\begin{equation}\label{eq:harmonic}
\begin{cases}
y^a \partial_tW - \dvie(y^a \nabla W) = 0& \hbox{in}~Q^*_{3/4} \\ 
-y^a W_y\big|_{y=0} = 0 &\hbox{on}~Q_{3/4} 
\end{cases}
\end{equation}
such that
$$\int_{Q^*_{3/4}}y^a |U-W|^2 \, dXdt< \varepsilon^2.$$
\end{cor}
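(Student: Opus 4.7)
The plan is the compactness-and-contradiction scheme advertised in the introduction, built on the Caccioppoli estimate of Lemma \ref{lem:cacc_mod} and on the Aubin--Lions lemma. Assume the claim fails. Then there exist $\varepsilon_0>0$ and sequences $(A_k,F_k,f_k,U_k)$, where $U_k$ is a weak solution of \eqref{eq:exten_1_mod} with data $(A_k,F_k,f_k)$, satisfying the normalization
$$\int_{Q_1}U_k(t,x,0)^2\,dxdt+\int_{Q^*_1}y^aU_k^2\,dXdt\le 1,\qquad\int_{Q_1}f_k^2+\int_{Q^*_1}y^a|F_k|^2+\int_{B_1}|A_k-I|^2\le \tfrac{1}{k^2},$$
but such that $\int_{Q^*_{3/4}}y^a|U_k-W|^2\,dXdt\ge \varepsilon_0^2$ for every weak solution $W$ of \eqref{eq:harmonic}.

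First I fix a cutoff $\eta\in C^\infty_c(Q_1\times[0,1))$ with $\eta\equiv 1$ in a neighborhood of $\overline{Q^*_{7/8}}$ and $\eta(-1,\cdot,\cdot)\equiv 0$. Lemma \ref{lem:cacc_mod} applied with $t_1\searrow -1$ and $t_2\nearrow 1$, combined with Young's inequality to absorb the boundary integral $\int(\eta(\cdot,\cdot,0))^2|U_k||f_k|$ via the $L^2$ bound on $U_k(\cdot,\cdot,0)$, yields a $k$-independent bound
$$\sup_t\int_{B^*_{7/8}}y^aU_k^2\,dX+\int_{Q^*_{7/8}}y^a|\nabla U_k|^2\,dXdt\le C.$$
Reading the weak formulation \eqref{eq:exten_2} backwards in the spirit of Proposition \ref{prop:t_1t_2}, it follows that $\partial_t U_k$ is bounded in $L^2((-(7/8)^2,(7/8)^2);(H^1_{L,a}(B^*_{7/8}))^*)$ uniformly in $k$.

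Since $a=1-2s\in(-1,1)$, the weight $y^a$ lies in $A_2(\R^{n+1})$, so the embedding $H^1_{L,a}(B^*_{7/8})\hookrightarrow L^2(B^*_{7/8};y^a dX)$ is compact, and the Aubin--Lions lemma gives, up to extraction, $U_k\to W$ strongly in $L^2(Q^*_{3/4};y^a dXdt)$ and $\nabla U_k\rightharpoonup\nabla W$ weakly in $L^2(Q^*_{7/8};y^a dXdt)$. Passing to the limit in \eqref{eq:exten_2} against any test function $\phi$ supported in $Q_{3/4}\times[0,3/4)$, the terms $\int f_k\phi(\cdot,\cdot,0)$ and $\int y^aF_k\nabla\phi$ vanish by Cauchy--Schwarz and the local integrability of $y^a$, while
$$\int y^aB_k\nabla U_k\nabla\phi=\int y^a\nabla U_k\nabla\phi+\int y^a(B_k-I)\nabla U_k\nabla\phi.$$
The first piece converges to $\int y^a\nabla W\nabla\phi$ by weak convergence, and the second is bounded by $\|\nabla\phi\|_\infty\bigl(\int y^a|B_k-I|^2\bigr)^{1/2}\bigl(\int y^a|\nabla U_k|^2\bigr)^{1/2}\to 0$, since $\int_{Q^*_1}y^a|B_k-I|^2\,dXdt\le C\int_{B_1}|A_k-I|^2\to 0$. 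Hence $W$ is a weak solution of \eqref{eq:harmonic} on $Q^*_{3/4}$, and the strong convergence gives $\int_{Q^*_{3/4}}y^a|U_k-W|^2\to 0$, contradicting the choice of the sequences.

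The main delicate point, and the step I expect to be the chief technical obstacle, is handling the bilinear term $\int y^aB_k\nabla U_k\nabla\phi$: with $A_k\to I$ only in $L^2(B_1)$ and $\nabla U_k$ only weakly convergent in the weighted $L^2$, the usual weak-strong product rule does not apply directly. The argument works precisely because the $A_2$ weight $y^a$ is locally integrable (as $a>-1$), which permits the $L^2$--$L^2$ pairing of $y^a|B_k-I|^2$ against $y^a|\nabla U_k|^2$, both controlled uniformly by the energy estimate and the smallness hypothesis.
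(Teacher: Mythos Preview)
Your proof is correct and follows essentially the same contradiction--compactness scheme as the paper: Caccioppoli to get a uniform energy bound, Aubin--Lions for strong $L^2$ compactness, and passage to the limit in the weak formulation. If anything, your version is more explicit than the paper's on two points the paper glosses over: the dual-space bound on $\partial_t U_k$ needed to invoke Aubin--Lions, and the splitting of the bilinear term $\int y^a B_k\nabla U_k\nabla\phi$ into a weakly convergent piece plus a vanishing remainder.
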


\begin{proof}
We will prove this by contradiction. Let us assume that there exists $\varepsilon >0$,
coefficients $A_k(x)$, data $f_k$, vector fields $F^k$ and solutions $U_k$  in $Q^*_1$, $k\geq1$, such that
$$ \int_{Q_1} U^2_k \, dxdt + \int_{Q^*_1} y^a U^2_k \,dX dt\leq 1, $$
$$\int_{Q_1}f_k^2\,dxdt + \int_{Q^*_1} y^a |F^k|^2\,dXdt + \int_{B_1} |A_k(x) - I|^2\,dx < \frac{1}{k^2} $$
and such that, for any weak solution $W$ to \eqref{eq:harmonic},
\begin{equation}\label{eq:contradiction}
    \int_{Q^*_{3/4}}y^a|U_k - W|^2 \, dXdt  \geq \varepsilon^2.
\end{equation}

If in Lemma \ref{lem:cacc_mod} we choose $\eta$ such that $\eta\equiv1$ in $Q^*_{3/4}$, $0\leq\eta\leq1$ in $Q^*_1$,
and we let $t_1\to-1$ and $t_2\to1$, then we find that
$$\int_{Q^*_{3/4}} y^a |\nabla U_k |^2 \, dXdt\leq C$$
for all $k\geq1$. Let us define $T_1 = - 9/16$, $T_2 = 9/16$. The previous estimate says that the sequence $\{U_k\}_{k=1}^\infty$ is bounded in $L^2([T_1,T_2];H^1(B^*_{3/4},y^adX))$. By the Aubin--Lions Lemma, this space is compactly embedded in $L^2([T_1,T_2];L^2(B^*_{3/4},y^adX))$, so that
there exists a subsequence, again denoted by $\{U_k\}_{k=1}^\infty$, and a function $U_\infty$ such that 
\begin{align*}
U_k \to U_{\infty} \text{ strongly in } L^2([T_1,T_2];L^2(B^*_{3/4},y^adX))\\
U_k \to U_{\infty} \text{ weakly in } L^2([T_1,T_2];H^1(B^*_{3/4},y^adX)).
\end{align*}
We show next that $U_{\infty}$ is a solution to \eqref{eq:harmonic} and this will give a contradiction to \eqref{eq:contradiction}.
Indeed, for any $k\geq1$ and any test function $\phi$,
\begin{align*}
-\int_{Q^*_{3/4}}y^aU_k \partial_t\phi\,dXdt &+ \int_{Q^*_{3/4}} y^a B_k(x) \nabla U_k \nabla \phi \, dXdt \\
&\qquad=\int_{Q_{3/4}}f_k(t,x) \phi(t,x,0) \, dxdt +\int_{Q^*_{3/4}} y^a F^k \nabla \phi \, dXdt.
\end{align*}
By letting $k \to \infty$, the equation above reduces to 
$$-\int_{Q^*_{3/4}}y^aU_{\infty} \partial_t \phi \,dXdt + \int_{Q^*_{3/4}} y^a \nabla U_{\infty} \nabla \phi \, dX dt=0$$
as desired.
\end{proof}

Similarly as with \eqref{eq:exten_1_mod}--\eqref{eq:exten_2_mod}, we can define the notion of weak solutions to
\begin{equation}\label{eq:boundaryQ1}
\begin{cases}
y^a \partial_t U - \dvie(y^a B(x) \nabla U)=- \dvie(y^a F)&\hbox{in}~(Q^+_1)^* \\
-y^a U_y\big|_{y=0}=f&\hbox{on}~Q_1^+ \\
U=0\quad\hbox{or}\quad\partial_AU=0&\hbox{on}~Q_1^*\cap\{x_n=0\}
\end{cases}
\end{equation}
with test functions $\phi$ such that $\phi=0$ on $\partial (Q^+_1)^* \backslash (Q_1^+ \times \{ 0 \})$ (for Dirichlet boundary condition),
or $\phi=0$ on $\partial (Q^+_1)^* \backslash [(Q_1^+ \times \{ 0 \})\cup (Q_1^*\cap\{x_n=0\})]$ (for Neumann boundary condition).
Then, exactly as with Corollary \ref{cor:approx}, we can prove the following approximation result up to the boundary.

\begin{cor}\label{cor:boundaryapprox}
Let $U$ be a weak solution to \eqref{eq:boundaryQ1}. Suppose that
$$\int_{Q_1^+} U(t,x,0)^2 \, dxdt + \int_{(Q^+_1)^*} y^a U^2 \,dXdt\leq 1.$$
Then for every $\varepsilon >0$ there exists $\delta = \delta( \varepsilon)>0$ such that if
$$\int_{Q_1^+}f^2\,dxdt + \int_{(Q^+_1)^*} y^a |F|^2\,dXdt + \int_{B_1^+} |A(x) - I|^2\,dx \leq \delta^2$$
where $I$ denotes the identity matrix, then there exists a weak solution $W$ to 
$$\begin{cases}
y^a \partial_tW - \dvie(y^a \nabla W) = 0& \hbox{in}~(Q^+_{3/4})^* \\ 
-y^a W_y\big|_{y=0} = 0 &\hbox{on}~Q_{3/4}^+ \\
W=0\quad\hbox{or}\quad\partial_{x_n}W=0&\hbox{on}~Q_{3/4}^*\cap\{x_n=0\}
\end{cases}$$
such that
$$\int_{(Q^+_{3/4})^*}y^a |U-W|^2 \, dXdt< \varepsilon^2.$$
\end{cor}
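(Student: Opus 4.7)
The approach is a direct boundary adaptation of the proof of Corollary \ref{cor:approx}. The only ingredients that need to be upgraded are the Caccioppoli inequality of Lemma \ref{lem:cacc_mod}, which must be reformulated on the half-cylinder $(Q_1^+)^*$ in a way compatible with the lateral condition on $\{x_n=0\}$, and the compactness and passage to the limit, which must preserve that same boundary condition.

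I would argue by contradiction. Suppose the conclusion fails: then there exist $\varepsilon>0$, coefficient matrices $A_k(x)$, data $f_k$, vector fields $F^k$ and corresponding weak solutions $U_k$ to \eqref{eq:boundaryQ1} with
\[
\int_{Q_1^+} U_k(t,x,0)^2\,dx\,dt + \int_{(Q_1^+)^*} y^a U_k^2\,dX\,dt \leq 1,
\]
\[
\int_{Q_1^+} f_k^2\,dx\,dt + \int_{(Q_1^+)^*} y^a|F^k|^2\,dX\,dt + \int_{B_1^+}|A_k - I|^2\,dx < \frac{1}{k^2},
\]
and yet for every weak solution $W$ of the limit problem on $(Q_{3/4}^+)^*$ satisfying the same boundary condition on $\{x_n=0\}$ one has $\int_{(Q_{3/4}^+)^*} y^a|U_k - W|^2\,dX\,dt \geq \varepsilon^2$.

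Next I would establish the half-space version of Lemma \ref{lem:cacc_mod}. The Steklov-average computation carries over verbatim provided the test function $\eta^2 U_h$ is admissible. Under Dirichlet conditions, $U_h$ already vanishes on $Q_1^*\cap\{x_n=0\}$, so the cutoff $\eta$ need only vanish on the remaining part of the lateral boundary. Under Neumann conditions, the admissible test functions are allowed to be nonzero on $\{x_n=0\}$, so again a suitable $\eta$ exists. Taking $\eta\equiv 1$ on $(Q_{3/4}^+)^*$ and using the smallness of $f_k$ and $F^k$ gives the uniform bound
\[
\int_{(Q_{3/4}^+)^*} y^a|\nabla U_k|^2\,dX\,dt \leq C.
\]
On $[T_1,T_2]=[-9/16, 9/16]$, the sequence $\{U_k\}$ is therefore bounded in $L^2(T_1,T_2; H^1(B_{3/4}^+, y^a dX))$, and the equation provides a dual bound on $\partial_t U_k$. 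By the Aubin--Lions lemma, a subsequence converges to some $U_\infty$ strongly in $L^2(T_1,T_2; L^2(B_{3/4}^+, y^a dX))$ and weakly in $L^2(T_1,T_2; H^1(B_{3/4}^+, y^a dX))$. Passing to the limit in the weak formulation, using $A_k\to I$ in $L^2(B_1^+)$ against the weak convergence of $\nabla U_k$ together with $f_k, F^k \to 0$ in $L^2$, shows that $U_\infty$ satisfies $y^a\partial_t U_\infty - \dive(y^a\nabla U_\infty)=0$ in $(Q_{3/4}^+)^*$ with $-y^a(U_\infty)_y|_{y=0}=0$ on $Q_{3/4}^+$.

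The main obstacle is to check that $U_\infty$ inherits the prescribed boundary condition on $\{x_n=0\}$, so that it is an admissible competitor in the assumption. In the Dirichlet case this follows from the continuity of the trace onto $\{x_n=0\}$ acting on $L^2(T_1,T_2; H^1(B_{3/4}^+, y^a dX))$: for almost every $y$ standard trace theory in the unweighted $x$-variables applies, and integrating with the weight $y^a$ yields the required continuity, so weak convergence preserves the identity $U_k|_{x_n=0}=0$. In the Neumann case the condition is already encoded in the weak formulation through the class of admissible test functions and hence passes automatically to the limit. Once $U_\infty$ is identified as an admissible $W$, the strong $L^2$-convergence $U_k \to U_\infty$ contradicts the lower bound, completing the proof.
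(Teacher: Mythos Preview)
Your proposal is correct and follows essentially the same contradiction-and-compactness argument as the paper, which simply asserts that the proof proceeds ``exactly as with Corollary \ref{cor:approx}'' once the weak formulation on $(Q_1^+)^*$ with the appropriate test-function classes is set up. In fact you supply more detail than the paper on the preservation of the lateral boundary condition on $\{x_n=0\}$ under weak limits, a point the paper leaves implicit.
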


Next, we present the regularity of $W$.

\begin{prop}\label{Prop:Harmonic}
Let $W$ be a weak solution to 
\begin{equation}\label{harmonic_2}
\begin{cases}
y^a \partial_tW - \dvie(y^a \nabla W) = 0& \hbox{in}~Q^*_{1} \\ 
-y^a W_y\big|_{y=0} = 0 &\hbox{on}~Q_{1}.
\end{cases}
\end{equation}
Then following estimates hold.
\begin{enumerate}[$(1)$]
    \item For every integer $k\geq0$, multi-index $\beta\in\mathbb{N}_0^n$ and each $Q_r(t_0,x_0)\subset Q_1,$ we have
    $$\sup_{Q_{r/2}(t_0,x_0) \times [0, r/2) } |\partial^k_t D^{\beta}_x W| \leq \frac{C(n,s)}{r^{k+|\beta|}} \osc_{Q_{r}(t_0,x_0) \times [0, r)} W.$$
    \item For each $Q_r(t_0,x_0) \subset Q_1$,
    $$\max_{Q_{r/2}(t_0,x_0) \times [0, r/2)} |W| \leq C(r,n,s)\|W\|_{L^2(Q_r(t_0,x_0)\times[0,r),y^adXdt)}.$$ 
    \item We have 
    $$\sup_{(t,x)\in Q_{1/2}} |W_y(t,x,y)| \leq C(n,s)\|W\|_{L^2(Q_1^*,y^adXdt)}y\qquad\hbox{for all}~0\leq y<1/2.$$
\end{enumerate}
\end{prop}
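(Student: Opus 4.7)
The plan is to reduce all three estimates to the standard theory of degenerate parabolic equations with $A_2$ Muckenhoupt weights via an even reflection across $\{y=0\}$. Define $\tilde W(t,x,y) = W(t,x,|y|)$ on $Q_1 \times (-1,1)$; the Neumann condition $-y^a W_y|_{y=0}=0$ together with the symmetry computation in \cite{B-DLC-S} shows that $\tilde W$ is a weak solution of
$$|y|^a \partial_t \tilde W - \dvie(|y|^a \nabla \tilde W) = 0 \qquad \text{in } Q_1 \times (-1,1).$$
Since $a \in (-1,1)$, the weight $|y|^a$ belongs to $A_2(\R^{n+1})$, the equation is invariant under translations in $(t,x)$, and there is no longer any boundary condition to enforce at $y=0$.

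For part $(2)$ the plan is to run a Moser iteration adapted to the weighted setting. Starting from Lemma \ref{lem:cacc_mod} with $F\equiv0$ and $f\equiv0$, and invoking the Fabes-Kenig-Serapioni weighted Sobolev-Poincar\'e inequality, iteration over a sequence of nested parabolic cylinders gives the local $L^{\infty}$-$L^2$ bound on $\tilde W$; evenness in $y$ converts the $L^2$-integral over the symmetric cylinder into the one-sided $L^2(Q_r(t_0,x_0)\times[0,r),y^a\,dXdt)$-norm of $W$, up to a harmless factor. For part $(1)$ I would exploit that the reflected equation has coefficients independent of $t$ and $x$: each difference quotient $\Delta_t^h \tilde W$ and $\Delta_{x_i}^h \tilde W$ is a weak solution of the same equation, so in the limit $\partial_t^k D_x^{\beta}\tilde W$ are weak solutions as well. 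Applying $(2)$ to these derivatives on a slightly smaller cylinder and performing the usual scaling yields the factor $r^{-(k+|\beta|)}$; subtracting any constant from $W$ (also a solution) turns the $L^{\infty}$ norm on the right into the oscillation.

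Finally, for part $(3)$, I would use $(1)$ with $k=1,|\beta|=0$ and with $k=0,|\beta|=2$ to bound $\partial_t W$ and $\Delta_x W$ on $Q_{3/4}\times[0,3/4)$ by $C\|W\|_{L^2(Q_1^{*},\,y^a\,dXdt)}$, invoking part $(2)$ to control the $L^{\infty}$ norms by the $L^2$ norm. For $y>0$ the equation reads $\partial_y(y^a W_y) = y^a(\partial_t W - \Delta_x W)$. Integrating from $0$ to $y$ and using the Neumann condition $\lim_{s\to 0^+} s^a W_y(t,x,s)=0$ yields
$$y^a W_y(t,x,y) = \int_0^y s^a \bigl(\partial_t W - \Delta_x W\bigr)(t,x,s)\,ds,$$
so that $|W_y(t,x,y)| \leq C(a+1)^{-1}\,\|W\|_{L^2(Q_1^{*},\,y^a\,dXdt)}\, y$, where $a+1 = 2(1-s)>0$. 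The main obstacle is the clean execution of the Moser iteration for the reflected equation in part $(2)$: one must handle the degenerate $A_2$ weight and obtain the correct gain in integrability in each step, relying on the Fabes-Kenig-Serapioni Sobolev inequality in its parabolic form (cf.\ Chiarenza-Serapioni, Ishige, Guti\'errez-Wheeden). Once part $(2)$ is in hand, parts $(1)$ and $(3)$ follow, respectively, from translation invariance plus Caccioppoli-type iteration on difference quotients, and from the direct integration of the equation above.
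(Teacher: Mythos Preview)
Your approach to parts $(1)$ and $(2)$ is essentially the paper's: the paper simply cites Chiarenza--Serapioni for the $L^\infty$--$L^2$ bound on the evenly reflected $\tilde W$, and cites \cite{Stinga-Torrea-SIAM} for the derivative estimates, which is exactly the difference-quotient-plus-Moser argument you sketch.

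For part $(3)$ your route is actually more direct than the paper's. The paper introduces the change of variable $z=(y/(1-a))^{1-a}$, rewrites the equation as $\partial_tW-\Delta_xW-z^{\alpha}W_{zz}=0$ with $\alpha=-2a/(1-a)$, bounds $|W_{zz}|\le C|z|^{-\alpha}\|W\|_{L^2}$ via $(1)$--$(2)$, and integrates $W_z$ from $0$ to $z_0$. Unwinding the substitution this is equivalent to your identity $\partial_y(y^aW_y)=y^a(\partial_tW-\Delta_xW)$ integrated from $0$ to $y$; your version avoids the change of variables entirely. The one point you pass over is the justification of $\lim_{y\to0^+}y^aW_y(t,x,y)=0$: for a weak solution the Neumann condition is encoded in the absence of a boundary term in the variational identity, not as a pointwise limit. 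The paper handles this by observing that $V=y^aW_y$ is a weak solution of $y^{-a}\partial_tV-\dvie(y^{-a}\nabla V)=0$ with $V|_{y=0}=0$, taking the odd reflection in $y$, and invoking Chiarenza--Serapioni to obtain H\"older continuity of $\tilde V$ across $\{y=0\}$, which forces $y^aW_y\to0$ locally uniformly. Once you insert that step (or argue, using the boundedness of $\partial_tW-\Delta_xW$ from $(1)$--$(2)$, that $y^aW_y$ has a continuous limit at $y=0$ and then identify it as zero via the weak formulation), your argument for $(3)$ is complete and cleaner than the paper's.
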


\begin{proof}
The proof of $(1)$ follows as in the proof of Corollary 1.13 of \cite{Stinga-Torrea-SIAM}.

To prove (2), we see from \cite{Stinga-Torrea-SIAM}
and \cite{B-DLC-S} that $\tilde{W}(t,x,y)=W(t,x,|y|)$ is a weak solution to
$|y|^a\partial_t\tilde{W}-\dvie(|y|^a \nabla\tilde{W})=0$ in $Q_1\times (-1,1)$. Then, by \cite{Chiarenza-Serapioni}, $\tilde{W}$ is locally bounded
and controlled by its $L^2$-norm.

To prove (3), we see that, since the coefficients of the equation in \eqref{harmonic_2} are smooth in $Q_1^*$,
we can differentiate through to get
$$y^a \partial_t W - y^a \left(\Delta_x W + \frac{a}{y} W_y + W_{yy} \right) = 0 \qquad\hbox{in}~Q_1^*.$$
It is easy to check that $V=y^aW_y$ is a weak solution to
$$\begin{cases}
y^{-a}\partial_tV - \dvie(y^{-a} \nabla V) = 0& \hbox{in}~Q^*_{1} \\ 
V\big|_{y=0} = 0 &\hbox{on}~Q_{1}
\end{cases}$$
(the test functions for this equation vanish on $\partial Q_1^*$). Let
$$\tilde{V}(t,x,y)=
\begin{cases}
V(t,x,y)&\hbox{for}~y>0\\
-V(t,x,-y)&\hbox{for}~y\leq0.
\end{cases}$$
Then $\tilde{V}$ is a weak solution to the degenerate parabolic equation
$$|y|^a\partial_t\tilde{V}-\dvie(|y|^a\nabla\tilde{V}) = 0\qquad\hbox{for}~(t,x,y)\in Q_1\times(-1,1).$$
Since $|y|^a$ is a Muckenhoupt $A_2$-weight, it follows that $\tilde{V}$ is locally H\"older continuous \cite{Chiarenza-Serapioni}.
Therefore, $y^aW_y\to0$ locally uniformly as $y\to0^+$.
Now, by substituting $z = \left(\frac{y}{1-a}\right)^{1-a}$ in the equation for $W$ above, we find that
$$\partial_t W - \left( \Delta_x W + z^{\alpha} W_{zz}\right) = 0$$
for $z>0$ small, where $\alpha = - \frac{2a}{1-a}$. Additionally,
$y^aW_y=W_z$, so that $W$ is differentiable with respect to $z$ up to the boundary $z=0$, with $W_z\big|_{z=0}=0$.
Next, for $z>0$ small, by $(1)$ and $(2)$,
$$|W_{zz}|\leq\frac{| -\partial_t W + \Delta_x W|}{|z^{\alpha}|}\leq \frac{C}{|z|^{\alpha}}\|W\|_{L^2(Q_1^*,y^adXdt)}$$
which in turn implies that, for $z_0>0$ small,
$$|W_z(t,x,z_0)| = \left| \int^{z_0}_0 W_{zz}(t,x,z)\,dz\right| \leq  C\|W\|_{L^2(Q_1^*,y^adXdt)} z_0^{1-\alpha}$$
for all $(t,x)\in Q_{1/2}$. After transforming back to $y$ we get the final result. 
\end{proof}

\begin{cor}\label{cor:bdd_reg}
Let $W$ be a weak solution to
$$\begin{cases}
y^a \partial_tW - \dvie(y^a \nabla W) = 0& \hbox{in}~(Q^+_1)^* \\ 
-y^a W_y\big|_{y=0} = 0 &\hbox{on}~Q_1^+ \\
W=0\quad\hbox{or}\quad\partial_{x_n}W=0&\hbox{on}~Q_{1}^*\cap\{x_n=0\}.
\end{cases}$$
Then Proposition \ref{Prop:Harmonic} holds for this $W$ if we replace the cubes $Q$
by half-cubes $Q^+$ in all the estimates there.
\end{cor}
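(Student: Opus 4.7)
The plan is to prove the corollary by reflecting $W$ across the lateral boundary $\{x_n = 0\}$ and then invoking Proposition \ref{Prop:Harmonic} for the reflected function on the full cube. Specifically, I would define
\[
\tilde W(t, x', x_n, y) = \begin{cases} W(t, x', |x_n|, y) & \text{in the Neumann case (even reflection)},\\ \operatorname{sgn}(x_n)\, W(t, x', |x_n|, y) & \text{in the Dirichlet case (odd reflection)}. \end{cases}
\]
Because the operator $y^a \partial_t - \dvie(y^a \nabla)$ and the Neumann condition at $\{y = 0\}$ are invariant under $x_n \mapsto -x_n$, the hope is that $\tilde W$ is a weak solution on $Q_1^*$ to the same equation with the Neumann condition at $\{y = 0\}$, so that Proposition \ref{Prop:Harmonic} can be applied to $\tilde W$ and the estimates transferred to $W$ by restriction.

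The first step is to verify that $\tilde W$ solves the interior problem on $Q_1^*$. Given a test function $\phi$ on $Q_1^*$ as in the weak formulation \eqref{eq:exten_2_mod}, I would write $\phi = \phi_+ + \phi_-$ as the sum of its even and odd parts in $x_n$. After the change of variables $x_n \mapsto -x_n$ on the lower half, the integrals in the weak formulation for $\tilde W$ on $Q_1^*$ should collapse into twice the corresponding integrals for $W$ on $(Q_1^+)^*$ tested against $\phi_+$ in the Neumann case (the odd cross terms vanishing by symmetry) or against $\phi_-$ in the Dirichlet case. The restriction of $\phi_+$ to $\{x_n>0\}$ is a valid test function for the Neumann problem in \eqref{eq:boundaryQ1} (it need not vanish on $\{x_n=0\}$), while the restriction of $\phi_-$ vanishes identically on $\{x_n = 0\}$ and hence is a valid test function for the Dirichlet problem. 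Applying the weak formulation for $W$ on $(Q_1^+)^*$ then yields the weak formulation for $\tilde W$ on $Q_1^*$.

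Once $\tilde W$ is known to solve the interior problem, I would apply Proposition \ref{Prop:Harmonic} to $\tilde W$ and restrict to $(Q_1^+)^*$. For item (1), the oscillation of $\tilde W$ over $Q_r(t_0, x_0) \times [0, r)$ is controlled by a universal multiple of the oscillation of $W$ over $Q_r^+(t_0, x_0) \times [0, r)$ (exactly equal in the Neumann case, and controlled by at most twice the oscillation in the Dirichlet case because of the sign change and the vanishing of $W$ at $\{x_n=0\}$). For item (2), $\|\tilde W\|_{L^2(Q_r(t_0,x_0)\times[0,r), y^a dXdt)} \leq \sqrt{2}\,\|W\|_{L^2(Q_r^+(t_0, x_0) \times [0, r), y^a dXdt)}$. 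For item (3), $\tilde W_y$ coincides with $W_y$ on $Q_{1/2}^+$ and $\|\tilde W\|_{L^2(Q_1^*, y^a dXdt)} = \sqrt{2}\,\|W\|_{L^2((Q_1^+)^*, y^a dXdt)}$, so the pointwise bound on $|W_y|$ follows directly.

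The main technical obstacle is showing that $\tilde W \in L^2([-1,1]; H^1_{L, a}(B_1^*))$ with no singular distributional derivative supported on $\{x_n = 0\}$. In the Neumann case, this requires that the trace of $\partial_{x_n} W$ on $\{x_n = 0\}$ vanishes, preventing a jump in the derivative of the even extension; in the Dirichlet case, it requires continuity of the odd extension, which is guaranteed by the vanishing trace of $W$ itself. Both conditions are encoded in the weak formulation of \eqref{eq:boundaryQ1} through the function space and the admissible test functions, and extracting them rigorously is the delicate step. Once this regularity issue is settled, the reflection argument closes the proof without any further modification to the statements of Proposition \ref{Prop:Harmonic}.
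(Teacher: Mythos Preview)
Your proposal is correct and takes essentially the same approach as the paper: the paper's proof simply states that the odd reflection of $W$ in $x_n$ (Dirichlet case) or the even reflection (Neumann case) is a weak solution to \eqref{harmonic_2}, and then invokes Proposition~\ref{Prop:Harmonic}. Your write-up supplies more of the technical verification (symmetrizing test functions, checking the $H^1$ regularity of the extension) than the paper does, but the underlying argument is identical.
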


\begin{proof}
This is an immediate consequence of Proposition \ref{Prop:Harmonic}. Indeed, 
the odd reflection of $W$ with respect to $x_n$ (for Dirichlet boundary condition) and
the even reflection of $W$ with respect to $x_n$ (for Neumann boundary condition)
are weak solutions to \eqref{harmonic_2}. 
\end{proof}

\begin{lem}[Trace inequality]\label{lem:trace_new}
There exists a constant $C>0$, depending only on $n$ and $s$, such that,
for any $U\in L^2((-1,1);H^1_{L,a}(B_1^*))$,
\begin{multline*}
    \int^{r^2}_{-r^2} r^{2-2s} \norm{U(t,\cdot,0)}^2_{L^2(B_r)} dt \\
    \leq C  \int^{r^2}_{-r^2} y^a\big(\norm{U(t,\cdot,\cdot)}^2_{L^2(B^*_r,y^a dX)} + r^2 \norm{\nabla U(t,\cdot,\cdot)}^2_{L^2(B^*_r,y^a dX)}\big) \,dt
\end{multline*}
for all $0<r<1$. The same is true if we replace $B_r$ by $B^+_r$. 
\end{lem}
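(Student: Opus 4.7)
\medskip
\noindent\textbf{Proof proposal for Lemma \ref{lem:trace_new}.} The plan is to establish the estimate first at unit scale $r=1$ via a weighted pointwise Hardy-type argument, and then recover the stated inequality for arbitrary $0<r<1$ by parabolic scaling. The main technical issue is simply tracking the exponents, as the weight $y^a$ with $a=1-2s\in(-1,1)$ is locally integrable at $y=0$ together with its negative power $y^{-a}$ near $y=0$ in a way that exactly matches the trace.

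For the unit scale, I would fix $(t,x)\in(-1,1)\times B_1$ at which $U(t,x,\cdot)\in H^1((0,1),y^a dy)$ (this holds for a.e.\ $(t,x)$ by Fubini since $U\in L^2((-1,1);H^1_{L,a}(B_1^*))$). Writing, for each $y\in(0,1)$,
\begin{equation*}
U(t,x,0)=U(t,x,y)-\int_0^y U_y(t,x,\tau)\,d\tau,
\end{equation*}
squaring, and applying Cauchy--Schwarz with the splitting $1=\tau^{a/2}\cdot\tau^{-a/2}$ to the integral, I get
\begin{equation*}
|U(t,x,0)|^2\leq 2|U(t,x,y)|^2+\frac{2y^{1-a}}{1-a}\int_0^1 \tau^a|U_y(t,x,\tau)|^2\,d\tau,
\end{equation*}
where I used $\int_0^y\tau^{-a}d\tau=y^{1-a}/(1-a)<\infty$ because $a<1$. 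Multiplying by $y^a$ and integrating over $y\in(0,1)$ produces, on the left, $(1+a)^{-1}|U(t,x,0)|^2$, while on the right all the $y$-integrals are finite constants depending only on $s$. Integrating then in $x\in B_1$ and $t\in(-1,1)$ yields
\begin{equation*}
\int_{-1}^{1}\|U(t,\cdot,0)\|_{L^2(B_1)}^2\,dt\leq C(s)\int_{-1}^{1}\bigl(\|U(t,\cdot,\cdot)\|_{L^2(B_1^*,y^adX)}^2+\|\nabla U(t,\cdot,\cdot)\|_{L^2(B_1^*,y^adX)}^2\bigr)\,dt,
\end{equation*}
which is the inequality at $r=1$.

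Now I rescale. For $0<r<1$, set $\tilde U(t,x,y)=U(r^2t,rx,ry)$ on $Q_1^*$, so that $\tilde U\in L^2((-1,1);H^1_{L,a}(B_1^*))$. A direct change of variables gives
\begin{align*}
\int_{-1}^{1}\|\tilde U(t,\cdot,0)\|_{L^2(B_1)}^2\,dt&=r^{-n-2}\int_{-r^2}^{r^2}\|U(\tau,\cdot,0)\|_{L^2(B_r)}^2\,d\tau,\\
\int_{-1}^{1}\|\tilde U(t,\cdot,\cdot)\|_{L^2(B_1^*,y^adX)}^2\,dt&=r^{-a-n-3}\int_{-r^2}^{r^2}\|U(\tau,\cdot,\cdot)\|_{L^2(B_r^*,y^adX)}^2\,d\tau,\\
\int_{-1}^{1}\|\nabla\tilde U(t,\cdot,\cdot)\|_{L^2(B_1^*,y^adX)}^2\,dt&=r^{-a-n-1}\int_{-r^2}^{r^2}\|\nabla U(\tau,\cdot,\cdot)\|_{L^2(B_r^*,y^adX)}^2\,d\tau,
\end{align*}
where the extra factor $r^2$ in the last line comes from $|\nabla\tilde U|^2=r^2|\nabla U|^2$. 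Applying the $r=1$ inequality to $\tilde U$ and multiplying through by $r^{n+2}$ turns the bound into
\begin{equation*}
\int_{-r^2}^{r^2}\|U\|_{L^2(B_r)}^2\,d\tau\leq C r^{-(a+1)}\int_{-r^2}^{r^2}\|U\|_{L^2(B_r^*,y^adX)}^2\,d\tau+Cr^{1-a}\int_{-r^2}^{r^2}\|\nabla U\|_{L^2(B_r^*,y^adX)}^2\,d\tau.
\end{equation*}
Since $a=1-2s$, we have $-(a+1)=-(2-2s)$ and $1-a=2s$. Multiplying both sides by $r^{2-2s}$ gives precisely the claimed inequality (the exponents combine as $r^{2-2s+2s}=r^2$ in front of the gradient term). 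For the half-cube statement, the argument is identical: the pointwise Hardy-type step is purely in the variable $y$ and is unaffected by replacing $B_1$ by $B_1^+$, and the scaling preserves the half-space structure. No boundary condition on $\{x_n=0\}$ is ever used.
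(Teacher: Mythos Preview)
Your proof is correct and follows the same overall architecture as the paper's: establish the trace inequality at unit scale, then recover the general case by the parabolic rescaling $(t,x,y)\mapsto(r^2t,rx,ry)$, tracking the weight $y^a$ through the change of variables. Your exponent bookkeeping in the scaling step is accurate.

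The only difference is in how the $r=1$ case is obtained. The paper simply invokes the weighted trace theorem of Nekvinda, which gives $\|U(t,\cdot,0)\|_{L^2(B_1)}^2\leq C\|U(t,\cdot,\cdot)\|_{H^1(B_1^*,y^adX)}^2$ for a.e.\ $t$, and then integrates in time. You instead give a direct, self-contained argument: the one-dimensional Hardy-type identity $U(t,x,0)=U(t,x,y)-\int_0^y U_y\,d\tau$, Cauchy--Schwarz with the splitting $\tau^{a/2}\tau^{-a/2}$ (using $a<1$ for integrability of $\tau^{-a}$), and then averaging against $y^a\,dy$ (using $a>-1$). This elementary route avoids the external reference and makes the dependence on $s$ completely explicit; the paper's citation is shorter but blackboxes the mechanism. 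Both lead to the same estimate with constants depending only on $n$ and $s$.
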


\begin{proof}
The general estimate follows by scaling from the case $r=1$. From \cite{Nekvinda}, we have that, for a.e $t \in (-1,1)$, 
$\norm{U(t,\cdot,0)}^2_{L^2(B_1)} \leq C \norm{U(t,\cdot,\cdot)}^2_{H^1(B^*_1,y^a dX)}$. Then we just integrate in time.
\end{proof}

\section{Interior Regularity}\label{sec:regularity_interior}

In this Section we prove Theorems \ref{thm:interiorholder} and \ref{thm:interiorLp}.

We say that a function $f\in L^2(Q_1)$ is in $L^{\alpha/2, \alpha}(0,0)$, for $0 < \alpha \leq 1$, whenever 
$$[f]^2_{L^{\alpha/2,\alpha}(0,0)} = \sup_{0 < r \leq 1} \frac{1}{r^{n + 2 + 2\alpha}} \int_{Q_r} |f - f(0,0)|^2 \,dt \, dx < \infty $$
where $\displaystyle f(0,0)= \lim_{r \rightarrow 0} \frac{1}{|Q_r|} \int_{Q_r} f(t,x) \, dt \, dx $.  
In view of Theorem \ref{thm:Campanato}, we see that if $f$ satisfies this property uniformly in balls centered at points close to the origin
then $f$ is parabolically $\alpha$-H\"older continuous at the origin. Futhermore, Theorem \ref{thm:interiorholder}
will follow directly from the following statement after rescaling and translation, and by using estimate \eqref{eq:L2vsHs}.

\begin{thm}\label{thm:thm_1_holder}
Let $u\in\Dom(H^s)$ be as in Theorem \ref{thm:interiorholder}, with $f\in L^2(\R\times\Omega)$.
Suppose that $B_1\subset\Omega$ and that $f \in L^{\alpha/2,\alpha}(0,0)$, for some $0<\alpha <1$.
\begin{enumerate}[$(1)$]
\item Assume that $ 0 < \alpha + 2s <1$. There exist $0< \delta < 1$, depending only on $n$, ellipticity, $\alpha$ and $s$, and a constant $C_1 >0$ such that if 
$$\sup_{0 < r \leq 1} \frac{1}{r^n} \int_{B_r} |A(x) - A(0)|^2 \,dx < \delta ^2 $$
then there exists a constant $c$ such that 
$$ \frac{1}{r^{n+2}}\int_{Q_r} |u(t,x)-c|^2 \, dt \, dx \leq C_1 r^{2(\alpha + 2s)} $$
for all $r>0$ small. Moreover,
$$|c|+C_1^{1/2}\leq C_0(\|u\|_{\Dom(H^s)}+|f(0,0)|+[f]_{L^{\alpha/2,\alpha}(0,0)})$$
where $C_0>0$ depends on $A(x)$, $n$, $s$, $\alpha$ and ellipticity.
\item Assume that $1< \alpha + 2s<2$. There exists $0< \delta < 1,$ depending only on $n$, ellipticity, $\alpha$ and $s$, and a constant $C_1 >0$ such that if 
$$\sup_{0 < r \leq 1} \frac{1}{r^{n+2(\alpha + 2s-1)}} \int_{B_r} |A(x) - A(0)|^2 \,dx < \delta ^2 $$
then there exists a linear function $\ell(x) = \mathcal{A} + \mathcal{B} \cdot x$ such that 
$$ \frac{1}{r^{n+2}}\int_{Q_r} |u(t,x)-\ell(x)|^2 \, dt \, dx \leq C_1 r^{2(\alpha + 2s)} $$
for all $r>0$ small. 
Moreover,
$$|\mathcal{A}|+|\mathcal{B}|+C_1^{1/2}\leq C_0(\|u\|_{\Dom(H^s)}+|f(0,0)|+[f]_{L^{\alpha/2,\alpha}(0,0)})$$
where $C_0>0$ depends on $A(x)$, $n$, $s$, $\alpha$ and ellipticity.
\end{enumerate}
\end{thm}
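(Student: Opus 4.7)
The plan is to use the extension characterization of Theorem \ref{th_exten}, which recasts \eqref{eq:Hsuf} as a local degenerate parabolic problem for $U=U(t,x,y)$, and to run a Campanato-type iterative scheme. At each dyadic scale $\rho^k$ the strategy is to approximate $u$ by the trace of a solution $W$ of the homogeneous extension equation \eqref{eq:harmonic}, using Corollary \ref{cor:approx} for the approximation and Proposition \ref{Prop:Harmonic} for the smoothness of $W$; the conclusion is then read off from Theorem \ref{thm:Campanato}.

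After translation to the origin and a harmless rescaling, I would reduce to the normalization $\|u\|_{\Dom(H^s)}+|f(0,0)|+[f]_{L^{\alpha/2,\alpha}(0,0)}\le 1$, and, by subtracting an explicit particular solution that handles the spatially constant contribution of $f(0,0)$, assume $f(0,0)=0$. The proof then proceeds by induction on $k\ge 0$, producing polynomials $P_k$ (constants in case (1), affine functions of $x$ only in case (2)) that satisfy
$$\frac{1}{|Q_{\rho^k}|}\int_{Q_{\rho^k}}|u(t,x)-P_k(x)|^2\,dt\,dx\le C_1^2\,\rho^{2k(\alpha+2s)},$$
together with a companion estimate $|P_{k+1}-P_k|_\ast\le C\,\rho^{k(\alpha+2s)}$ for the natural norm on the polynomial space, which includes an $|\mathcal{B}_{k+1}-\mathcal{B}_k|\rho^k$ term in case (2).

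The inductive step is the heart of the proof. Setting $\tilde u_k(t,x)=\rho^{-k(\alpha+2s)}[u(\rho^{2k}t,\rho^k x)-P_k(\rho^k x)]$ and rescaling the coefficients and data accordingly, the extension $\tilde U_k$ of $\tilde u_k$ satisfies a problem of the form \eqref{eq:exten_1_mod} in $Q_1^*$: in case (2), subtracting the affine $\ell_k(x)=\mathcal{A}_k+\mathcal{B}_k\cdot x$ generates a divergence-form source $-\dive(y^a\tilde F_k)$ with $\tilde F_k=((\tilde A_k-I)\mathcal{B}_k,0)$, because $\nabla\ell_k$ is constant and the $A(0)=I$ contribution integrates away after a linear change of variables. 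The hypotheses on $A$ and on $f\in L^{\alpha/2,\alpha}(0,0)$ are tuned so that, uniformly in $k$,
$$\int_{B_1}|\tilde A_k-I|^2\,dx+\int_{Q_1}\tilde f_k^{\,2}\,dtdx+\int_{Q_1^*}y^a|\tilde F_k|^2\,dXdt\le\delta^2.$$
Invoking Corollary \ref{cor:approx} with $\delta=\delta(\varepsilon)$ produces a solution $W$ of \eqref{eq:harmonic} on $Q_{3/4}^*$ with $\int_{Q_{3/4}^*}y^a|\tilde U_k-W|^2\,dXdt<\varepsilon^2$. By Proposition \ref{Prop:Harmonic}, $W$ is smooth in $(t,x)$ uniformly up to $\{y=0\}$, so Taylor expansion at the origin yields a constant (case (1), remainder $O(\rho)$) or an affine function of $x$ together with an $O(\rho^2)$ time oscillation to be absorbed into the remainder (case (2)). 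Since $\alpha+2s<1$ (respectively $<2$), choosing $\rho$ small first makes this remainder at most $\tfrac12\rho^{\alpha+2s}$; then $\varepsilon$, and hence $\delta$, is fixed to control the trace error from $\tilde U_k$ to $W(\cdot,\cdot,0)$ via Lemma \ref{lem:trace_new} and Lemma \ref{lem:cacc_mod}. Undoing the rescaling defines $P_{k+1}$; summing the geometric companion estimates produces the limit giving $c$ (case (1)) or $\ell(x)$ (case (2)) with the stated bounds on the coefficients, while intermediate scales $r\in(\rho^{k+1},\rho^k)$ are handled by routine comparison.

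The main obstacle is the bookkeeping in case (2): the subtraction of the affine $\ell_k$ must produce a divergence-form perturbation whose $L^2$-size is controlled by the Hölder modulus of $\tilde A_k-I$, the rescaling must preserve this smallness uniformly in $k$, and the pure-time oscillation of the approximating $W$ must be absorbed into the spatial polynomial's remainder, since, as dictated by Krylov's definition of the intermediate space captured in Theorem \ref{thm:Campanato}(2), the approximating polynomials depend on $x$ only.
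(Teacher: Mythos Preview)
Your proposal is correct and follows essentially the same approach as the paper: reduce to the extension $U$, normalize so that $A(0)=I$ and $f(0,0)=0$, and run an inductive Campanato iteration at scales $\lambda^k$ using Corollary~\ref{cor:approx} to approximate by a smooth solution $W$ of \eqref{eq:harmonic}, with Proposition~\ref{Prop:Harmonic}, Lemma~\ref{lem:trace_new}, and Lemma~\ref{lem:cacc_mod} controlling the remainder. The paper organizes this into a one-step improvement lemma (Lemmas~\ref{lem:induc_step_1} and \ref{lem:poly_step_1}) followed by an induction lemma (Lemmas~\ref{lem:induction} and \ref{lem:poly_induction}), and in case~(2) handles the divergence-form source $G=((I-A(x))\nabla_x\ell_k,0)$ exactly as you outline.
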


We say that a function $f\in L^2(Q_1)$ is in $L^{-s+\alpha/2, -2s+\alpha}(0,0)$, for $0 < \alpha < 1$, whenever 
$$[f]^2_{L^{-s+\alpha/2, -2s+\alpha}(0,0)} = \sup_{0 < r \leq 1} \frac{1}{r^{n + 2 + 2(-2s+\alpha)}} \int_{Q_r} |f(t,x)|^2 \,dt \, dx < \infty $$
and that is in $L^{-s+(1+\alpha)/2, -2s+\alpha+1}(0,0)$ whenever 
$$[f]^2_{L^{-s+(1+\alpha)/2, -2s+\alpha+1}(0,0)} = \sup_{0 < r \leq 1} \frac{1}{r^{n + 2 + 2(-2s+\alpha+1)}} \int_{Q_r} |f(t,x)|^2 \,dt \, dx < \infty .$$
Then we have the following consequences 
\begin{itemize}
    \item If $f\in L^2(Q_1)$ is also in $L^p(Q_1)$, for $(n+2)/(2s) < p < (n+2)/(2s-1)^+$, then 
    $[f]_{L^{-s+\alpha/2, -2s+\alpha}(0,0)} \leq C_n\norm{f}_{L^p(Q_1)}$, for $\alpha = 2s - (n+2)/p$.
    \item If $s >1/2$ and $f \in L^p(Q_1)$ for $p > (n+2)/(2s-1)$, then 
    $  [f]_{L^{-s+(1+\alpha)/2, -2s+\alpha+1}(0,0)} \leq C_n\norm{f}_{L^p(Q_1)} $, for $\alpha = 2s- (n+2)/p - 1$.
\end{itemize}
In view of these observations, Theorem \ref{thm:interiorLp} will follow immediately from the next result.

\begin{thm}\label{thm:thm_2_holder}
Let $u\in\Dom(H^s)$ be as in Theorem \ref{thm:interiorLp}, with $f\in L^2(\R\times\Omega)$.
Suppose that $B_1\subset\Omega$ and let $0<\alpha <1$.
\begin{enumerate}[$(1)$]
    \item Assume that $f \in L^{-s+\alpha/2, -2s+\alpha}(0,0)$. Then there exist  $0< \delta < 1$, depending only on $n,$ ellipticity, $\alpha, s$, and a constant $C_1>0$ such that if 
    $$\sup_{0 < r \leq 1} \frac{1}{r^n} \int_{B_r} |A(x) - A(0)|^2 dx < \delta ^2$$
    then there a exists  constant $c$ such that
    $$ \frac{1}{r^{n+2}}\int_{Q_r} |u(t,x)-c|^2 \, dt \, dx \leq C_1 r^{2 \alpha}$$
    for all $r>0$ small. Moreover,
    $$|c|+C_1^{1/2}\leq C_0\big(\|u\|_{\Dom(H^s)}+[f]_{L^{-s+\alpha/2, -2s+\alpha}(0,0)}\big)$$
where $C_0>0$ depends on $A(x)$, $n$, $s$, $\alpha$ and ellipticity.
    \item Assume that $f \in L^{-s+(1+\alpha)/2, -2s+\alpha+1}(0,0)$. Then there exist  $0< \delta < 1$, depending only on $n,$ ellipticity, $\alpha, s$, and a constant $C_1>0$ such that if 
    $$\sup_{0 < r \leq 1} \frac{1}{r^{n+2 \alpha}} \int_{B_r} |A(x) - A(0)|^2\, dx < \delta ^2$$
    then there exists a linear function $\ell(x) = \mathcal{A} + \mathcal{B} \cdot x$ such that
    $$ \frac{1}{r^{n+2}}\int_{Q_r} |u(t,x)-\ell(x)|^2 \, dt \, dx \leq C_1 r^{2(1+\alpha)} $$
    for all $r>0$ small. Moreover,
    $$|\mathcal{A}|+|\mathcal{B}|+C_1^{1/2}\leq C_0\big(\|u\|_{\Dom(H^s)}+[f]_{L^{-s+(1+\alpha)/2, -2s+\alpha+1}(0,0)}\big)$$
where $C_0>0$ depends on $A(x)$, $n$, $s$, $\alpha$ and ellipticity.
\end{enumerate}
\end{thm}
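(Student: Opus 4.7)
The plan is to use the compactness--iteration scheme already employed for Theorem~\ref{thm:thm_1_holder}: pass to the parabolic extension (Theorem~\ref{th_exten}), iteratively approximate the extension by solutions of the model equation via Corollary~\ref{cor:approx} and Proposition~\ref{Prop:Harmonic}, and conclude via the Campanato characterization Theorem~\ref{thm:Campanato}. The only new feature compared to the H\"older right-hand side case is that the seminorms $[f]_{L^{-s+\alpha/2,-2s+\alpha}(0,0)}$ and $[f]_{L^{-s+(1+\alpha)/2,-2s+\alpha+1}(0,0)}$ are exactly the ones that make the rescaled right-hand side bounded in $L^2$ uniformly in $k$, so the iteration then proceeds identically.

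Let $U$ be the extension of $u$ from Theorem~\ref{th_exten}. After a linear change of variables reducing to $A(0)=I$ and normalizing so that $M_0 := \|u\|_{\Dom(H^s)} + [f]$ is as small as needed, I would prove by induction on $k \geq 0$ the existence of constants $c_k \in \R$ (in case~(1)) or linear polynomials $\ell_k \in \mathcal{P}_1$ (in case~(2)), with summable increments, such that
$$\frac{1}{|Q^*_{\rho^k}|_a} \int_{Q^*_{\rho^k}} y^a |U - c_k|^2 \, dX\, dt \leq \rho^{2\alpha k}$$
in case~(1), and the analogous bound with $\ell_k$ and $\rho^{2(1+\alpha)k}$ in case~(2), where $|Q^*_r|_a := \int_{Q^*_r} y^a \, dX\, dt \sim r^{n+a+3}$ and $\rho \in (0,1)$ is fixed below. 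For the inductive step, rescale via $\tilde U_k(t,x,y) := \rho^{-\alpha k}[U(\rho^{2k}t, \rho^k x, \rho^k y) - c_k]$ (case~(1); analogously in case~(2) subtracting $\ell_k$ and scaling by $\rho^{-(1+\alpha)k}$). A direct computation shows that $\tilde U_k$ satisfies the extension equation on $Q^*_1$ with coefficients $\tilde B_k(x) = B(\rho^k x)$ and data $\tilde f_k(t,x) = \rho^{(2s-\alpha)k} f(\rho^{2k} t, \rho^k x)$, where $\|\tilde B_k - I\|^2_{L^2(B_1)} \leq C\delta^2$ by the weighted smallness on $A$, $\|\tilde f_k\|_{L^2(Q_1)} \leq [f]$ by the choice of seminorm on $f$, and $\int_{Q^*_1} y^a \tilde U_k^2 \leq C_n$ by the inductive hypothesis. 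Applying Corollary~\ref{cor:approx} yields $W_k$ solving \eqref{eq:harmonic} on $Q^*_{3/4}$ with $\int_{Q^*_{3/4}} y^a|\tilde U_k - W_k|^2 < \varepsilon^2$, for any $\varepsilon$ (with corresponding $\delta$).

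By Proposition~\ref{Prop:Harmonic}, $W_k$ is smooth in $(t,x)$ with derivatives bounded by $\|W_k\|_{L^2(Q^*_1,y^a)}$, and $|W_{k,y}(t,x,y)| \leq Cy$ near $y=0$. In case~(1), $|W_k - W_k(0,0,0)| \leq C\rho$ on $Q^*_\rho$ by Taylor expansion, so
$$\frac{1}{|Q^*_\rho|_a}\int_{Q^*_\rho} y^a|\tilde U_k - W_k(0,0,0)|^2\,dX\,dt \leq C\rho^2 + C\varepsilon^2 \rho^{-(n+a+3)};$$
choose $\rho$ small (using $2\alpha < 2$) then $\varepsilon$ small, and set $c_{k+1} := c_k + \rho^{\alpha k} W_k(0,0,0)$ to close the induction. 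In case~(2) take the candidate $W_k(0,0,0) + \nabla_x W_k(0,0,0) \cdot x$: the $(t,x)$-Taylor remainder is $O(|t|+|x|^2)$, and Proposition~\ref{Prop:Harmonic}(3) ensures the $y$-dependence enters only at order $O(y^2)$, producing an averaged error $C\rho^4$ which is $\leq \tfrac12 \rho^{2(1+\alpha)}$ for small $\rho$ since $\alpha < 1$.

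The sequences $c_k$, $\ell_k$ are Cauchy and converge to $c$, $\ell$ with $|c| \lesssim M_0$, $|\mathcal{A}| + |\mathcal{B}| \lesssim M_0$; interpolating between dyadic scales gives the claimed weighted Campanato decay for $U$ at all small $r$. The flat decay for $u = U(\cdot,\cdot,0)$ follows from the trace inequality Lemma~\ref{lem:trace_new}, combined with the Caccioppoli estimate Lemma~\ref{lem:cacc_mod} applied at each scale to $U - c_k$ (which satisfies the same extension equation), and Theorem~\ref{thm:Campanato} then yields the statement. The most delicate step, and where case~(2) genuinely requires more than case~(1), will be ensuring that the $y$-dependence of $W_k$ enters only at order $O(y^2)$: this rests on the precise bound $|W_y| \leq Cy$ from Proposition~\ref{Prop:Harmonic}(3), without which the linear-in-$x$ approximation could not reach the $r^{2(1+\alpha)}$ rate.
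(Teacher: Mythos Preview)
Your proposal is correct and follows essentially the same approach as the paper: the paper's proof simply says to repeat the compactness--iteration scheme of Theorem~\ref{thm:thm_1_holder}, replacing the exponent $\alpha$ by $-2s+\alpha$ in part~(1) and by $-2s+\alpha+1$ in part~(2), and noting that no normalization $f(0,0)=0$ is needed since the Morrey seminorm already controls $\|f\|_{L^2(Q_r)}$ directly. You have correctly identified that the rescaled data $\tilde f_k$ stays uniformly bounded in $L^2$ precisely because of the choice of seminorm, and your outline of the inductive step (including the role of Proposition~\ref{Prop:Harmonic}(3) in case~(2)) matches the details of Lemmas~\ref{lem:induc_step_1}--\ref{lem:poly_induction}.
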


Therefore, the rest of this section is devoted to the proofs of Theorems \ref{thm:thm_1_holder} and \ref{thm:thm_2_holder}.

\subsection{Proof of Theorem \ref{thm:thm_1_holder}(1)}

In view of the extension problem characterization in Theorem \ref{th_exten},
we only need to prove the theorem for $u(t,x) = U(t,x,0)$, where $U$ is a solution to \eqref{eq:exten_1_mod} in $Q_1^\ast$ with $F\equiv0$.
We will consider normalized solutions $U$ as defined next. Without loss of generality,
we can assume that $A(0) = I$ and $f(0,0) =0$ (otherwise, one needs to take $U-\frac{y^{1-a}}{1-a}f(0,0)$).
Given $\delta>0$, we say that $U$ is a $\delta$-normalized solution if the following conditions hold:
\begin{enumerate}
    \item $\displaystyle\sup_{0 < r \leq 1} \frac{1}{r^n}
    \int_{B_r} |A(x) - I|^2 dx < \delta^2$;
    \item $\displaystyle [f]^2_{ L^{\alpha/2, \alpha}(0,0)} = \sup_{0 < r \leq 1} \frac{1}{r^{n + 2 + 2\alpha}} \int_{Q_r} |f|^2 \,dt \, dx  < \delta^2$;
    \item $\displaystyle \int_{Q_1}U(t,x,0)^2 \, dt \,dx + \int_{Q^*_1} y^a U^2 \, dt \, dX \leq 1$.
\end{enumerate}
Notice that $(1)$ can always be assumed by scaling, while $(2)$ and$(3)$ hold after normalizing
\begin{equation}\label{eq:normalization}
U(x,y)\bigg(\int_{Q_1}U(t,x,0)^2 \, dt \,dx + \int_{Q^*_1} y^a U^2 \, dt \, dX+\frac{1}{\delta} [f]^2_{ L^{\alpha/2, \alpha}(0,0)}\bigg)^{-1}.
\end{equation}

\begin{lem}\label{lem:induc_step_1}
Given $0 < \alpha + 2s < 1$, there exist $0< \delta, \lambda <1$ depending on $n$, $s$ and ellipticity, a constant $c$ and a universal constant $D>0$ such that,
for any $\delta$-normalized solution $U$ to \eqref{eq:exten_1_mod},
$$\frac{1}{\lambda^{n+2}} \int_{Q_{\lambda}} |U(t,x,0)-c|^2 \, dt \, dx + \frac{1}{\lambda^{n+3+a}} \int_{Q^*_{\lambda}} y^a|U-c|^2 \, dt \, dX < \lambda^{2(\alpha + 2s)} $$
and $|c| \leq D.$
\end{lem}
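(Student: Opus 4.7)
\medskip

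The plan is to approximate $U$ on $Q^*_{3/4}$ by a solution $W$ of the constant-coefficient homogeneous problem \eqref{eq:harmonic} via Corollary~\ref{cor:approx}, set $c:=W(0,0,0)$, and then transfer to $U$ the pointwise regularity of $W$ provided by Proposition~\ref{Prop:Harmonic}. Since $F\equiv 0$, conditions (1) and (2) of $\delta$-normalization match exactly the smallness hypothesis in Corollary~\ref{cor:approx} with the same $\delta$, so for any $\varepsilon>0$ to be fixed below we choose $\delta=\delta_0(\varepsilon)$ producing a weak solution $W$ to \eqref{eq:harmonic} on $Q^*_{3/4}$ with $\int_{Q^*_{3/4}}y^a|U-W|^2\,dXdt<\varepsilon^2$. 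From Proposition~\ref{Prop:Harmonic}(2), $\|W\|_{L^\infty(Q^*_{3/8})}\le C\|W\|_{L^2(Q^*_{3/4},y^adXdt)}\le C(1+\varepsilon)\le D$, which bounds $|c|\le D$ universally. Part~(1) provides $|\partial_tW|,|\nabla_xW|\le C$ on $Q^*_{1/4}$, while part~(3) gives $|W_y(t,x,y)|\le Cy$, and hence $|W(t,x,y)-W(t,x,0)|\le Cy^2$. Combining these, for any $0<\lambda\le 1/4$,
\[
\max\!\Big\{\sup_{Q_\lambda}|W(\cdot,\cdot,0)-c|^2,\ \sup_{Q^*_\lambda}|W-c|^2\Big\}\le C\lambda^2.
\]

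\medskip

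The weighted extension piece is then immediate from the triangle inequality:
\[
\frac{1}{\lambda^{n+3+a}}\int_{Q^*_\lambda}y^a|U-c|^2\,dXdt\le\frac{2\varepsilon^2}{\lambda^{n+3+a}}+C\lambda^2.
\]
The trace piece is the harder step. Setting $V:=U-W$, a direct computation using that $W$ solves \eqref{eq:harmonic} shows that $V$ satisfies, in the weak sense of \eqref{eq:exten_2},
\[
y^a\partial_tV-\dvie(y^aB(x)\nabla V)=-\dvie\big(y^a(I-B)\nabla W\big),\qquad -y^aV_y\big|_{y=0}=f,
\]
with the forcing vector having vanishing $(n{+}1)$-th component (since $B$ is block-diagonal with $1$ in the lower-right entry). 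I apply Lemma~\ref{lem:cacc_mod} with a cutoff $\eta$ equal to $1$ on $Q^*_{1/2}$, supported in $Q^*_{5/8}$, and vanishing at $t_1=-(5/8)^2$. Using $\int_{Q^*_{5/8}}y^aV^2\le\varepsilon^2$, the uniform bound $|\nabla W|\le C$ on $Q^*_{5/8}$ together with $\int_{B_1}|I-B|^2\le\delta^2$, and the Cauchy--Schwarz estimate $\int_{Q_{5/8}}|V(\cdot,\cdot,0)||f|\le\|V(\cdot,\cdot,0)\|_{L^2(Q_{5/8})}\|f\|_{L^2(Q_1)}\le C\delta$, I obtain
\[
\int_{Q^*_{1/2}}y^a|\nabla V|^2\,dXdt\le C(\varepsilon^2+\delta^2+\delta)=:\sigma(\varepsilon),
\]
with $\sigma(\varepsilon)\to 0$ as $\varepsilon\to 0$. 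The trace inequality of Lemma~\ref{lem:trace_new} at scale $r=1/2$ then gives $\int_{Q_{1/2}}|V(\cdot,\cdot,0)|^2\,dtdx\le C\sigma(\varepsilon)$, whence
\[
\frac{1}{\lambda^{n+2}}\int_{Q_\lambda}|U(\cdot,\cdot,0)-c|^2\,dtdx\le\frac{C\sigma(\varepsilon)}{\lambda^{n+2}}+C\lambda^2.
\]

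\medskip

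To close the proof I select the parameters in the correct order. Since $\alpha+2s<1$, one has $2>2(\alpha+2s)$, so $\lambda^2=o(\lambda^{2(\alpha+2s)})$ as $\lambda\to 0^+$. I first fix $\lambda=\lambda(n,s,\alpha,\text{ellipticity})\in(0,1/4)$ small enough that the two $C\lambda^2$ contributions together do not exceed $\lambda^{2(\alpha+2s)}/2$. With $\lambda$ now fixed, I choose $\varepsilon=\varepsilon(\lambda)$ small enough that $2\varepsilon^2\lambda^{-(n+3+a)}+C\sigma(\varepsilon)\lambda^{-(n+2)}<\lambda^{2(\alpha+2s)}/2$, which is possible since both factors tend to $0$ with $\varepsilon$. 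Finally, I set $\delta:=\delta_0(\varepsilon)$ from Corollary~\ref{cor:approx}. The main obstacle is the trace step: Corollary~\ref{cor:approx} delivers only an $L^2(y^adXdt)$ approximation, so to transfer that closeness to the nonweighted trace on $\{y=0\}$ I must apply Caccioppoli to $V$ in order to gain $L^2(y^adXdt)$ control of $\nabla V$ and then invoke Lemma~\ref{lem:trace_new}, all the while treating the mismatch-of-coefficients forcing $(I-B)\nabla W$ and the boundary data $f$ as genuinely small quantities driven by $\delta$.
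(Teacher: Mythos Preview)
Your proof is correct and follows the paper's strategy: approximate $U$ by a solution $W$ of the constant-coefficient problem via Corollary~\ref{cor:approx}, set $c=W(0,0,0)$, control the weighted bulk piece by the triangle inequality, and handle the trace piece with a Caccioppoli estimate plus Lemma~\ref{lem:trace_new}. The one organizational difference is that the paper applies Caccioppoli and the trace inequality to $U-c$ at the small scale $\lambda$ (so the gradient term picks up the already-established bound on $\int_{Q^*_{2\lambda}}y^a|U-c|^2$), whereas you apply them to $V=U-W$ at a fixed scale $\sim 1/2$, exploiting directly that $\int y^a V^2\le\varepsilon^2$ and that the coefficient-mismatch forcing $(I-B)\nabla W$ is $O(\delta)$; both routes reach the same $C\lambda^2+O(\varepsilon,\delta)$ decomposition and close identically by choosing $\lambda$, then $\varepsilon$, then $\delta$.
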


\begin{proof}
Let $0 < \varepsilon < 1$ be fixed. We use Corollary \ref{cor:approx} to get a function $W$ which satisfies \eqref{eq:harmonic}. Then,
since $U$ is a normalized solution,
$$\int_{Q^*_{1/2}} y^a|W|^2\, dt \, dX \leq 2 \int_{Q^*_{1/2}} y^a|U-W|^2\, dt \, dX + 2 \int_{Q^*_{1/2}}y^a U^2 \, dt \,dX \leq 2 \varepsilon^2 + 2 \leq 4.$$
Define $c = W(0,0,0)$. Hence, by Proposition \ref{Prop:Harmonic}$(2)$,
we get that $|c| \leq D$, for some universal constant $D$. Now, for any $(t,X) \in Q^*_{1/4}$, by Proposition \ref{Prop:Harmonic}, 
\begin{align*}
|W(t,X) -c| &\leq |W(t,x,y) - W(t,x,0)| + |W(t,x,0)-W(t,0,0)| + |W(t,0,0)-c| \\
&\leq  N(y^2+ |x| + |t|) \leq N( |X| + |t|^{1/2}) 
\end{align*}
for some universal constant $N>0$. Then for any $0< \lambda < 1/4$,
\begin{align*}
 \frac{1}{\lambda^{n+3+a}} &\int_{Q^*_{\lambda}}y^a |U-c|^2  \, dt \, dX \\ 
&\leq  \frac{2}{\lambda^{n+3+a}} \int_{Q^*_{\lambda}}y^a |U-W|^2 \, dt \, dX + \frac{2}{\lambda^{n+3+a}} \int_{Q^*_{\lambda}}y^a |W-c|^2 \, dt \, dX \\
&\leq   \frac{2 \varepsilon^2 }{\lambda^{n+3+a}} + \frac{2 N^2 }{\lambda^{n+3+a}} \int_{Q^*_{\lambda}} y^a(|X|^2+ |t|)\, dt \, dX \\ 
&\leq \frac{2 \varepsilon^2 }{\lambda^{n+3+a}} + c_{n,a} \lambda^2 .
\end{align*}

Next we apply the trace inequality of Lemma \ref{lem:trace_new} to $(U-c)$ to get
\begin{align*}
\lambda^{1+a} \int_{Q_{\lambda}} |U(t,x,0)-c|^2 \,dt\, dx &\leq C\int_{Q^*_{\lambda}}  y^a|U-c|^2 \,dt\, dX + C \lambda^2 \int_{Q^*_{\lambda}} y^a |\nabla U|^2 \,dt\, dX \\
&\leq 2C \varepsilon^2 + Cc_{n,a} \lambda^{n+5+a}+C \lambda^2 \int_{Q^*_{\lambda}} y^a |\nabla U|^2 \, dt\, dX.
\end{align*}

Now we estimate the last integral by applying Lemma \ref{lem:cacc_mod} to $(U-c)$. For this purpose, take $\eta$ such that $\eta = 1$ in $Q^*_\lambda$,
$\eta = 0$ outside $Q^*_{2 \lambda}$, and $|\partial_t \eta| + |\nabla \eta| \leq \frac{2}{\lambda}$ in $Q^*_{2 \lambda}$. Then
\begin{align*}
\lambda^2\int_{Q^*_{\lambda}} &y^a |\nabla U|^2\, dt \, dX \\ 
&\leq C \lambda^2\Bigg( \int_{Q^*_{2 \lambda}}  y^a \frac{1}{\lambda^2} |U-c|^2  \, dt \, dX + \int_{Q_{2\lambda}} |U(t,x,0)-c||f(t,x)|\, dt \,dx  \Bigg) \\ 
&\leq  C \int_{Q^*_{2 \lambda}} y^a|U-c|^2 dt \, dX +C \big( \norm{U(\cdot,\cdot,0)}_{L^2(Q_{2\lambda})}  + |c| |Q_{2 \lambda}|^{1/2} \big)\norm{f}_{L^2(Q_{2 \lambda})} \\
&\leq2C \varepsilon^2 + Cc_{n,a} \lambda^{n+5+a}+ C(1+ |c|)\delta.
\end{align*}

Thus, for any $0 < \lambda < 1/8 $, 
\begin{multline*}
\frac{1}{\lambda^{n+2}} \int_{Q_{\lambda}} |U(t,x,0)-c|^2 dt \, dx +\frac{1}{\lambda^{n+3+a}} \int_{Q^*_{\lambda}} y^a|U-c|^2 \, dt \, dX\\
  < \frac{C\varepsilon^2}{\lambda^{n+3+a}} + c_{n,a} \lambda^2 + \frac{C \delta}{\lambda^{n+3+a}}.
\end{multline*}
Next if we make $\lambda$ sufficiently small we have $c_{n,a}\lambda^2 \leq \frac{1}{3}\lambda^{2(\alpha + 2s)}$. Then we can choose $\varepsilon$
small such that $\frac{C\varepsilon^2}{\lambda^{n+3+a}}\leq\frac{1}{3}\lambda^{2(\alpha + 2s)}$. 
Finally, with this $\varepsilon$ in Corollary \ref{cor:approx}, we can let $\delta$ small enough such that
$C(1+ |c|)\delta\leq\frac{1}{3}\lambda^{2(\alpha + 2s)}$. 
\end{proof}

\begin{lem}\label{lem:induction}
Assume the conditions on Lemma \ref{lem:induc_step_1}.
Then there exist a sequence of constants $c_k$, $k\geq 0$, and a universal constant $D>0$ such that 
$$|c_k - c_{k+1}| \leq D \lambda^{k(\alpha + 2s)}$$ 
and 
$$\frac{1}{\lambda^{k(n+2)}}\int_{Q_{\lambda ^k}} |U(t,x,0)-c_k|^2 dt \, dx + \frac{1}{\lambda^{k(n+3+a)}}\int_{Q^*_{\lambda ^k}} y^a|U-c_k|^2 dt \, dX  < \lambda^{2k(\alpha + 2s)}$$
for all $k\geq0$.
\end{lem}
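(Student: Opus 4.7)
\textbf{Proof plan for Lemma \ref{lem:induction}.}

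The plan is to proceed by induction on $k$, using Lemma \ref{lem:induc_step_1} at each step as a one-scale iteration, and to set up the rescaling so that the rescaled function is again a $\delta$-normalized solution of the \emph{same} form as in Lemma \ref{lem:induc_step_1}. The base case $k=0$ is immediate: set $c_0=0$, so that the desired inequality reduces to condition $(3)$ in the definition of $\delta$-normalized solution.

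For the inductive step, suppose we have already produced $c_0,\dots,c_k$ satisfying the conclusion. Motivated by the natural parabolic scaling $(t,x,y)\mapsto(\lambda^{2k}t,\lambda^k x,\lambda^k y)$ and the target H\"older exponent $\alpha+2s$, I would define
$$
\tilde U(t,x,y)=\frac{U(\lambda^{2k}t,\lambda^k x,\lambda^k y)-c_k}{\lambda^{k(\alpha+2s)}},\qquad
\tilde A(x)=A(\lambda^k x),\qquad
\tilde f(t,x)=\lambda^{-k\alpha}f(\lambda^{2k}t,\lambda^k x).
$$
A direct computation, using the identity $a+2s-1=0$, shows that $\tilde U$ is a weak solution of the analogue of \eqref{eq:exten_1_mod} (with $F\equiv 0$) in $Q_1^*$ with coefficient matrix $\tilde B(x)=\mathrm{diag}(\tilde A(x),1)$ and right hand side $\tilde f$ on $Q_1$. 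One then checks that $\tilde U$ is again a $\delta$-normalized solution: conditions $(1)$ and $(2)$ reduce, after a change of variables $r\mapsto\lambda^k r$, to the original assumptions on $A$ and $f$; condition $(3)$ is exactly the inductive hypothesis at stage $k$, once we account for the Jacobian factors $\lambda^{-k(n+2)}$ and $\lambda^{-k(n+3+a)}$ and the normalization factor $\lambda^{-2k(\alpha+2s)}$.

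Applying Lemma \ref{lem:induc_step_1} to $\tilde U$ produces a constant $\tilde c$ with $|\tilde c|\le D$ satisfying
$$
\frac{1}{\lambda^{n+2}}\int_{Q_\lambda}|\tilde U(t,x,0)-\tilde c|^2\,dt\,dx+\frac{1}{\lambda^{n+3+a}}\int_{Q_\lambda^*}y^a|\tilde U-\tilde c|^2\,dt\,dX<\lambda^{2(\alpha+2s)}.
$$
Setting $c_{k+1}=c_k+\lambda^{k(\alpha+2s)}\tilde c$ immediately yields $|c_{k+1}-c_k|\le D\lambda^{k(\alpha+2s)}$, and undoing the change of variables in the inequality above (same Jacobian bookkeeping as before) produces the desired bound at scale $\lambda^{k+1}$.

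The only genuine obstacle is the scaling bookkeeping: one must verify that the exponents $\alpha+2s$ in the definition of $\tilde U$, together with $a=1-2s$, cause the coefficient of $y^a\partial_t\tilde U-\dvie(y^a\tilde B\nabla\tilde U)$ to balance, and that the boundary factor $\lambda^{k(a+2s-1)}=1$ makes $\tilde f$ come out as $\lambda^{-k\alpha}f(\lambda^{2k}\cdot,\lambda^k\cdot)$, which is precisely the scaling under which the seminorm $[\cdot]_{L^{\alpha/2,\alpha}(0,0)}$ is invariant. Once this is written out cleanly, all three normalization conditions transfer to $\tilde U$ and the induction closes.
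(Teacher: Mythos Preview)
Your proposal is correct and follows essentially the same approach as the paper's proof: induction on $k$ with $c_0=0$, the parabolic rescaling $\tilde U(t,X)=\lambda^{-k(\alpha+2s)}(U(\lambda^{2k}t,\lambda^kX)-c_k)$, verification that $\tilde U$ is again a $\delta$-normalized solution (using $a=1-2s$ to get $\tilde f=\lambda^{-k\alpha}f(\lambda^{2k}\cdot,\lambda^k\cdot)$), application of Lemma~\ref{lem:induc_step_1} to $\tilde U$, and setting $c_{k+1}=c_k+\lambda^{k(\alpha+2s)}\tilde c$. The scaling bookkeeping you flag as the only genuine obstacle is exactly what the paper carries out explicitly.
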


\begin{proof}
We prove this lemma by induction. First we consider the base $k=0$. We let $c_0=0$ and notice that the estimates on $U$ hold
because $U$ is a normalized solution. Next, we let $c_1$ be the constant $c$ from Lemma \ref{lem:induc_step_1},
so clearly the conclusion holds in this case. Now we assume that the lemma is true for some $k \geq 1$. We define 
$$\lU(t, X) = \frac{U(\lambda^{2k}t, \lambda^kX) - c_k}{\lambda^{k(\alpha + 2s)}}\qquad\hbox{for}~(t,X) \in Q^*_1.$$
Recall that, in particular, $U$ satisfies
$$-\int_{Q^*_{\lambda^k}}y^aU \partial_t \phi\,dX \, dt + \int_{Q^*_{\lambda^k}} y^a B(x) \nabla U \nabla \phi \, dX \, dt = \int_{Q_{\lambda^k}}f(t,x) \phi(t,x,0) \, dx \, dt$$
for suitable test functions $\phi$. Therefore, by changing variables here, it is easy to see that $\lU$ satisfies
$$-\int_{Q^*_1}y^a\lU \partial_t\tilde{\phi}\,d X \, dt + \int_{Q^*_1} y^a \tilde{B}(x) \nabla \lU \nabla \tilde{\phi} \, d X \, d t = \int_{Q_1}\tilde{f}(t, x) \tilde{\phi}( t, x,0) \, dx \, dt$$
where $\tilde{\phi}(t,X)=\phi(\lambda^{2k}t, \lambda^k X)$, $\tilde{B}(x) = B(\lambda^k x)$, $\tilde{f}(t,x) = \lambda^{-k\alpha} f(\lambda^{2k}t, \lambda^k x)$.
Furthermore, $\tilde{A}(0) = I$, $\tilde{f}(0,0) = 0$ and, by changing variables and using the induction hypotheses,
$$\frac{1}{r^{n}} \int_{B_r} (\tilde{A}(x) -I)^2\, dx+\frac{1}{r^{n+2+ 2 \alpha}} \int_{Q_r} |\tilde{f}(t, x)|^2 \, dt\,dx < \delta^2$$
and
$$\int_{Q_1} \tilde{U}(t,x,0)^2 dx\, dt + \int_{Q_1^*} y^a \tilde{U}^2 dX\, dt \leq1.$$
In other words, $\lU$ is a $\delta$-normalized weak solution to
$$\begin{cases}
y^a \partial_t \tilde{U} - \dvie(y^a \tilde{B}(x) \nabla \lU)= 0&\hbox{in}~Q^*_1 \\ 
-y^a \lU_y|_{y=0}= \tilde{f}&\hbox{on}~Q_1.
\end{cases}$$
Thus we can apply Lemma \ref{lem:induc_step_1} to $\lU$ to get the existence of a constant $c$ such that
$$\frac{1}{\lambda^{n+2}} \int_{Q_{\lambda}} |\lU(t,x,0)-c|^2 \, dt \, dx + \frac{1}{\lambda^{n+3+a}} \int_{Q^*_{\lambda}} y^a|\lU-c|^2 \, dt \, dX  < \lambda^{2(\alpha + 2s)}.$$
If we change variables back we obtain
\begin{align*}
 \frac{1}{\lambda^{(n+2)(k+1)}}& \int_{Q_{\lambda^{k+1}}} |U(t,x,0)-c_k - c \lambda^{k(\alpha+ 2s)}|^2 \, dt \, dx \\
   &+ \frac{1}{\lambda^{(n+3+a)(k+1)}} \int_{Q_{\lambda^{k+1}}} y^a|U-c_k - c \lambda^{k(\alpha+ 2s)}|^2 \, dt \, dX< \lambda^{2(k+1)(\alpha +2s)}.
\end{align*}
Defining $c_{k+1} = c_k + \lambda^{k(\alpha + 2s)} c$ we see that $|c_{k+1} - c_k| \leq D \lambda^{k(\alpha + 2s)}$.
\end{proof}

\begin{proof}[Proof of Theorem \ref{thm:thm_1_holder}{$(1)$}]
If $\{c_k\}_{k\geq0}$ is the sequence of constants from Lemma \ref{lem:induction} then we see that $c_{\infty} = \lim_{k \rightarrow \infty} c_k$ exists
and is finite. Indeed, to show that $\{c_k\}_{k\geq0}$ is a Cauchy sequence of real numbers,
let $m,k\geq0$ and suppose that $m=k+j$ for some $j\geq1$. Then
\begin{align*}
|c_k-c_m| &= |c_k-c_{k+j}| \leq \sum_{\ell=0}^{j-1}|c_{k+\ell}-c_{k+\ell+1}| \\
&\leq D\sum_{\ell=0}^{j-1}\lambda^{(k+\ell)(\alpha+2s)} \leq D\lambda^{k(\alpha+2s)}\sum_{\ell=0}^\infty\lambda^{\ell(\alpha+2s)} \\
&\leq C(D,\lambda,\alpha,s)\lambda^{k(\alpha+2s)}\to0\qquad\hbox{as}~k\to\infty.
\end{align*}
Given any $0<r < 1/8$, let $k\geq 0$ such that $ \lambda^{k+1}< r \leq \lambda^k$. Then, by Lemma \ref{lem:induction},
\begin{align*}
\frac{1}{r^{n+2}}& \int_{Q_r} |U(t,x,0)-c_{\infty}|^2\,dt\,dx  \\
&\leq \frac{2}{r^{n+2}} \int_{Q_r} |U(t,x,0)-c_k|^2 \,dt\,dx + 2C_n|c_k - c_{\infty}|^2 \\
&\leq \frac{2}{\lambda^{n+2}}\frac{1}{\lambda^{k(n+2)}}\int_{Q_{\lambda^{k}}} |U(t,x,0)-c_k|^2 \,dt\,dx+\frac{C_n}{(1-\lambda^{\alpha+2s})^2}D^2 \lambda^{2k(\alpha + 2s)} 
\leq C_1r^{2(\alpha + 2s)} 
\end{align*}
where $C_1= C_1(n,\lambda,D,\alpha,s)>0$.
\end{proof}

\subsection{Proof of Theorem \ref{thm:thm_1_holder}(2)}\label{subsec: thm_interior}

As before, we will prove Theorem \ref{thm:thm_1_holder}(2) for $u(t,x) = U(t,x,0)$, where $U$ is a solution to \eqref{eq:exten_1_mod} in $Q_1^\ast$.
We will consider normalized solutions $U$ as defined next. Again, without loss of generality,
we can assume that $A(0) = I$ and $f(0,0) =0$.
Given $\delta>0$, we say that $U$ is a $\delta$-normalized solution (with $F$ not identically $0$) if the following conditions hold:
\begin{enumerate}
    \item $\displaystyle\sup_{0 < r \leq 1} \frac{1}{r^{n+2(\alpha+2s-1)}}
    \int_{B_r} |A(x) - I|^2 dx < \delta^2$;
    \item $\displaystyle [f]^2_{ L^{\alpha/2, \alpha}(0,0)} = \sup_{0 < r \leq 1} \frac{1}{r^{n + 2 + 2\alpha}} \int_{Q_r} |f|^2 \,dt \, dx  < \delta^2$;
    \item $\displaystyle\sup_{0<r\leq1}\frac{1}{r^{n+3+a+2(\alpha+2s-1)}}\int_{Q_r^*}y^a|F|^2\,dt\,dX<\delta^2$;
     \item $\displaystyle \int_{Q_1}U(t,x,0)^2 \, dt \,dx + \int_{Q^*_1} y^a U^2 \, dt \, dX \leq 1$.
\end{enumerate}
Notice that $(1)$ can always be assumed by scaling, and $(2)$, $(3)$ and $(4)$ hold after an
appropriate normalization, see \eqref{eq:normalization}.

\begin{lem}\label{lem:poly_step_1}
Given $1 < \alpha + 2s < 2$, there exist $0< \delta, \lambda <1$ depending on $n$, $s$ and ellipticity,
a linear function $\ell(x) =\mathcal{A} + \mathcal{B} \cdot x$ and a universal constant $D>0$
such that for any $\delta$-normalized solution $U$ to \eqref{eq:exten_1_mod},
$$\frac{1}{\lambda^{n+2}} \int_{Q_{\lambda}} |U(t,x,0)-\ell(x)|^2 \, dt \, dx + \frac{1}{\lambda^{n+3+a}} \int_{Q^*_{\lambda}} y^a|U-\ell(x)|^2 \, dt \, dX < \lambda^{2(\alpha + 2s)} $$
and $|\mathcal{A}| + |\mathcal{B}| \leq D.$
\end{lem}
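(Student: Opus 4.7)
The argument mirrors Lemma \ref{lem:induc_step_1}, but the approximating function is now the affine $\ell(x) = \mathcal{A} + \mathcal{B}\cdot x$ rather than a constant, since $\alpha + 2s > 1$ allows one derivative in space. For a small $\varepsilon>0$ (to be fixed later) Corollary \ref{cor:approx} produces a solution $W$ of the homogeneous problem \eqref{eq:harmonic} in $Q^*_{3/4}$ with $\int_{Q^*_{3/4}} y^a |U-W|^2\,dt\,dX \leq \varepsilon^2$ and $\|W\|_{L^2(Q^*_{3/4},y^adXdt)} \leq C$. Proposition \ref{Prop:Harmonic} then yields universal control on $\|W\|_{L^\infty(Q_{1/2}\times[0,1/2))}$, $\|\nabla_x W\|_{L^\infty}$, $\|\partial_t W\|_{L^\infty}$, and second spatial derivatives, together with the crucial asymptotic $|W_y(t,x,y)| \leq Ny$ with $W_y|_{y=0}=0$.

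Set $\mathcal{A}=W(0,0,0)$, $\mathcal{B}=\nabla_xW(0,0,0)$ and $\ell(x)=\mathcal{A}+\mathcal{B}\cdot x$; then $|\mathcal{A}|+|\mathcal{B}|\leq D$ for a universal constant $D$. Combining the $y^2$-control from $W_y|_{y=0}=0$, a second-order Taylor expansion of $W(t,\cdot,0)$ at $0$, and Lipschitz-in-$t$ control of both $W(\cdot,0,0)$ and $\nabla_xW(\cdot,0,0)$ produces
$$|W(t,x,y)-\ell(x)|\leq N(|X|^2+|t|)\qquad\text{on}~Q^*_{1/2}.$$
Triangle inequality in $L^2(y^a\,dt\,dX)$ against $U$ then yields $\frac{1}{\lambda^{n+3+a}}\int_{Q_\lambda^*} y^a|U-\ell|^2 \leq \frac{C\varepsilon^2}{\lambda^{n+3+a}}+CN^2\lambda^4$. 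For the boundary estimate on $Q_\lambda$, I apply the trace inequality (Lemma \ref{lem:trace_new}) to $V=U-\ell$, which reduces matters to controlling $\int_{Q_\lambda^*} y^a V^2$ and $\int_{Q_\lambda^*} y^a |\nabla V|^2$.

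The gradient bound demands the Caccioppoli inequality (Lemma \ref{lem:cacc_mod}) applied to $V$, which solves the extension equation with unchanged boundary datum $f$ but modified bulk source $\tilde F = F + ((I-A(x))\mathcal{B},0)$, since $\dive(y^a B(x)\nabla\ell)=\dive(y^a((A(x)-I)\mathcal{B},0))$. This is the main technical obstacle, and is precisely where the strengthened normalization on $A(x)-I$ enters: the condition $r^{-n-2(\alpha+2s-1)}\int_{B_r}|A(x)-I|^2<\delta^2$ gives $\int_{Q_{2\lambda}^*} y^a|(I-A(x))\mathcal{B}|^2\leq CD^2\delta^2\lambda^{n+3+a+2(\alpha+2s-1)}$, which after the trace-inequality scaling contributes an amount of order $CD^2\delta^2\lambda^{2(\alpha+2s)}$, exactly the right magnitude. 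Assembling everything and applying Cauchy--Schwarz to the boundary source term $\int|V(\cdot,\cdot,0)||f|$ (using the normalization bound $\|f\|_{L^2(Q_{2\lambda})}\leq\delta(2\lambda)^{(n+2)/2+\alpha}$ and $\|U(\cdot,\cdot,0)\|_{L^2(Q_1)}\leq 1$), the total bound takes the form $\frac{C\varepsilon^2}{\lambda^M}+CN^2\lambda^4+C(1+D)\delta$ for some exponent $M$. Since $\alpha+2s<2$, the exponent $4$ strictly exceeds $2(\alpha+2s)$, so one first chooses $\lambda$ small enough that $CN^2\lambda^4\leq\frac{1}{3}\lambda^{2(\alpha+2s)}$, then $\varepsilon$ small relative to $\lambda$, and finally $\delta$ in Corollary \ref{cor:approx} and in the normalizations of $A-I$ and $F$, completing the proof.
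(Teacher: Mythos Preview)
Your proposal is correct and follows essentially the same approach as the paper: approximate by $W$ via Corollary~\ref{cor:approx}, set $\ell(x)=W(0,0,0)+\nabla_xW(0,0,0)\cdot x$, use Proposition~\ref{Prop:Harmonic}(3) to get the $|W-\ell|\leq N(|X|^2+|t|)$ expansion, then combine the trace inequality with Caccioppoli applied to $V=U-\ell$ (whose equation picks up the extra divergence term $(I-A(x))\mathcal{B}$), and finally choose $\lambda,\varepsilon,\delta$ in that order. One minor imprecision: in your final assembled bound the $\delta$-term should in fact carry a negative power of $\lambda$ (as in the paper's $\frac{C\delta}{\lambda^{n+3+a}}$), but since $\lambda$ is fixed before $\delta$ this does not affect the argument.
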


\begin{proof}
Let $0< \varepsilon < 1.$ Then, as in Lemma \ref{lem:induc_step_1}, there exists a
function $W$ which satisfies Corollary \ref{cor:approx}, the smoothness estimates of Proposition \ref{Prop:Harmonic} and also
$$\int_{Q^*_{1/2}} y^a|W|^2  \, dt \, dX \leq 4.$$
Now define $$\ell(x) = W(0,0,0) + \nabla_x W(0,0,0) \cdot x=\mathcal{A}+\mathcal{B}\cdot x.$$
By Proposition \ref{Prop:Harmonic}, there exists a universal constant $D$ such that $|\mathcal{A}|+|\mathcal{B}| \leq D$.
Next, for any $(t,X) \in Q^*_{1/4}$ we have, for some universal constant $N>0$,
\begin{align*}
|W(t,x,y)-\ell(x)| &\leq |W(t,x,y)-W(t,x,0)| + |W(t,x,0)-W(0,x,0) | \\ 
&\quad+| W(0,x,0) - W(0,0,0) - \nabla_x W(0,0,0) \cdot x| \\ 
&\leq C|W_y(t,x,\xi)|y + Ct+C|x|^2 \\
&\leq C\xi y + Ct+C|x|^2\leq N(|X|^2 + t)
\end{align*}
where we used the mean value theorem for some $0\leq\xi\leq y$ and Proposition \ref{Prop:Harmonic}(3).
Then, for any $0< \lambda < 1/4,$ 
\begin{align*}
\frac{1}{\lambda^{n+3+a}}& \int_{Q^*_{\lambda}} y^a|U-\ell(x)|^2  \, dt \, dX \\ 
&\leq \frac{2}{\lambda^{n+3+a}} \int_{Q^*_{\lambda}} y^a|U-W|^2  \, dt \, dX + \frac{2}{\lambda^{n+3+a}} \int_{Q^*_{\lambda}} y^a|W-\ell(x)|^2  \, dt \, dX \\ 
&\leq   \frac{2 \varepsilon^2 }{\lambda^{n+3+a}} + \frac{2 N^2 }{\lambda^{n+3+a}} \int_{Q^*_{\lambda}}y^a (|X|^4+ |t|^2) \, dt \, dX \\ 
&\leq \frac{2 \varepsilon^2 }{\lambda^{n+3+a}} + c_{n,a} \lambda^4 .
\end{align*}

In the next step, we apply the trace inequality (Lemma \ref{lem:trace_new}) to $U-\ell$. Hence, for $0< \lambda < 1/8,$
\begin{multline*}
\lambda^{1+a} \int_{Q_{\lambda}} |U(t,x,0)-\ell(x)|^2\, dt\, dx \\
\leq C \int_{Q^*_{\lambda}} y^a|U-\ell(x)|^2 \,dt\, dX +  C\lambda^2 \int_{Q^*_{\lambda}} y^a|\nabla (U- \ell)|^2 \, dt\, dX.
\end{multline*}

Observe that $V=U-\ell$ is a weak solution to 
$$\begin{cases}
y^a \partial_t V - \dvie(y^a B(x) \nabla V) = - \dvie(y^a(F+G) )&\hbox{in}~Q^*_1 \\
-y^aV_y|_{y=0}=f &\hbox{on}~Q_1
\end{cases}$$
where the vector field $G$ is given by $G= ((I-A(x))\nabla_x \ell, 0 )$ and $G(0)=0$.
Thus, by Lemma \ref{lem:cacc_mod},
\begin{align*}
\int_{Q^*_{\lambda}} |\nabla (U-\ell)|^2 y^a\, dt \, dX &\leq C \int_{Q^*_{2 \lambda}} y^a \bigg(\frac{1}{\lambda^2}|U-\ell|^2 + |F+G|^2 \bigg) \, dt \, dX \\ 
& \quad    + C\int_{Q_{2\lambda}} |U(t,x,0)-\ell(x)||f(t,x)|\, dt \,dx \\
&\leq  \frac{C}{\lambda^2} \int_{Q^*_{2 \lambda}} y^a|U-\ell|^2\, dt \, dX + C\norm{F+G}_{L^2(Q^*_{2 \lambda},y^adtdX)}^2 \\ 
& \quad + C \big( \norm{U(\cdot,\cdot,0)}_{L^2(Q_{2\lambda})}  + \norm{\ell}_{L^2(Q_{2 \lambda})} \big)\norm{f}_{L^2(Q_{2 \lambda})} \\ 
&\leq \frac{C}{\lambda^2} \int_{Q^*_{2 \lambda}}y^a|U-\ell|^2\, dt \, dX  + C\delta^2\lambda^{n+3+a+2(\alpha+2s-1)}\\
&\quad + C(1+ D)\delta \\
&\leq \frac{C}{\lambda^2} \int_{Q^*_{2 \lambda}}y^a|U-\ell|^2\, dt \, dX+ C\delta.
\end{align*}
Thus,
\begin{multline*}
\frac{1}{\lambda^{n+2}} \int_{Q_{\lambda}} |U(t,x,0)-\ell(x)|^2\, dt \, dx + \frac{1}{\lambda^{n+3+a}} \int_{Q^*_{\lambda}} y^a|U-\ell(x)|^2 \, dt \, dX\\
 \leq \frac{C\varepsilon^2}{\lambda^{n+3+a}} + c_{n,a} \lambda^4 + \frac{C \delta}{\lambda^{n+3+a}}<\lambda^{2(\alpha+2s)}
\end{multline*}
where the last inequality follows by first choosing $\lambda$ small, then $\varepsilon$ sufficiently small and, for this $\varepsilon>0$,
a $0<\delta<1$ in Lemma \ref{cor:approx} small enough.
\end{proof}

\begin{lem}\label{lem:poly_induction}
Assume the conditions on Lemma \ref{lem:poly_step_1}.
Then there exist a sequence of linear functions $\ell_k(x) = \mathcal{A}_k + \mathcal{B}_k \cdot x$,
$k\geq 0$, and a universal constant $D>0$ such that 
$$|\mathcal{A}_k - \mathcal{A}_{k+1}|+\lambda^k|\mathcal{B}_k - \mathcal{B}_{k+1}| \leq D \lambda^{k(\alpha + 2s)}$$ 
and
$$\frac{1}{\lambda^{k(n+2)}}\int_{Q_{\lambda ^k}} |U(t,x,0)-\ell_k|^2 \,dt \, dx + \frac{1}{\lambda^{k(n+3+a)}}\int_{Q^*_{\lambda ^k}} y^a|U-\ell_k|^2\, dt \, dX  < \lambda^{2k(\alpha + 2s)}$$
for all $k\geq0$.
\end{lem}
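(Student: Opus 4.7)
The plan is to proceed by induction on $k$, following the template of Lemma \ref{lem:induction} but now tracking linear approximating polynomials instead of constants. For the base cases I take $\ell_0(x)\equiv0$, for which the stated decay at $k=0$ is just the normalization (4) built into the definition of a $\delta$-normalized solution; then I apply Lemma \ref{lem:poly_step_1} directly to $U$ to produce $\ell_1(x)=\mathcal{A}_1+\mathcal{B}_1\cdot x$ with $|\mathcal{A}_1|+|\mathcal{B}_1|\leq D$ and the estimate for $k=1$.

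For the inductive step, assuming the conclusion holds up to some $k\geq 1$, I would rescale by setting
$$\lU(t,X)=\frac{U(\lambda^{2k}t,\lambda^{k}X)-\ell_k(\lambda^k x)}{\lambda^{k(\alpha+2s)}},\qquad (t,X)\in Q_1^*.$$
A direct chain-rule computation shows that $\lU$ is a weak solution to the extension problem in $Q_1^*$ with rescaled coefficients $\tilde A(x)=A(\lambda^k x)$ (so $\tilde A(0)=I$), boundary data $\tilde f(t,x)=\lambda^{-k\alpha}f(\lambda^{2k}t,\lambda^k x)$ (so $\tilde f(0,0)=0$), and modified vector field
$$\tilde F(t,x)=\lambda^{k(1-\alpha-2s)}\bigl[F(\lambda^{2k}t,\lambda^k x)+((A(\lambda^k x)-I)\mathcal{B}_k,\,0)\bigr],$$
the extra piece arising from expanding $\dvie(y^aB(x)\nabla\ell_k)$ after subtracting $\ell_k$, exactly as in the proof of Lemma \ref{lem:poly_step_1}.

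The crucial verification is that $\lU$ is itself a $\delta$-normalized solution so that Lemma \ref{lem:poly_step_1} can be reapplied. Conditions (1), (2), and (4) for $\tilde A$, $\tilde f$, and $\lU$ follow from direct changes of variable: the homogeneity of the seminorms matches the inductive estimate (and uses $\alpha+2s-1>0$ to absorb powers of $\lambda^{k}$). Condition (3) on $\tilde F$ is the main obstacle, since the prefactor $\lambda^{k(1-\alpha-2s)}$ blows up; however, rescaling $\int |A-I|^2$ produces a compensating factor $\lambda^{2k(\alpha+2s-1)}$, so the net contribution is controlled by $|\mathcal{B}_k|^2$ times a universal constant $C_{n,a}$. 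I therefore need an auxiliary sub-claim that $|\mathcal{B}_k|\leq D^{\ast}$ uniformly in $k$, which follows by summing the telescoping bound $|\mathcal{B}_{j+1}-\mathcal{B}_j|\leq D\lambda^{j(\alpha+2s-1)}$ (a convergent geometric series since $\alpha+2s>1$). Shrinking the initial $\delta$ so that $(D^{\ast})^2 C_{n,a}\delta^2$ is below the threshold required by Lemma \ref{lem:poly_step_1} then preserves normalization at every step.

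Once $\lU$ is $\delta$-normalized, Lemma \ref{lem:poly_step_1} provides $\tilde\ell(x)=\tilde{\mathcal{A}}+\tilde{\mathcal{B}}\cdot x$ with $|\tilde{\mathcal{A}}|+|\tilde{\mathcal{B}}|\leq D$ and a decay estimate at scale $\lambda$. I would then define
$$\ell_{k+1}(x)=\ell_k(x)+\lambda^{k(\alpha+2s)}\tilde\ell(x/\lambda^k),$$
which gives $\mathcal{A}_{k+1}-\mathcal{A}_k=\lambda^{k(\alpha+2s)}\tilde{\mathcal{A}}$ and $\lambda^k(\mathcal{B}_{k+1}-\mathcal{B}_k)=\lambda^{k(\alpha+2s)}\tilde{\mathcal{B}}$, producing the required control on the increments. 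Undoing the change of variables in the estimate for $\lU-\tilde\ell$ at scale $\lambda$ transfers precisely into the desired bound for $U-\ell_{k+1}$ at scale $\lambda^{k+1}$, closing the induction. The delicate bookkeeping in condition (3), combined with the preliminary uniform bound on $|\mathcal{B}_k|$, is where I expect the most care to be required.
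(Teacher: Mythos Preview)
Your proposal is correct and follows essentially the same route as the paper: define $\ell_0=0$, obtain $\ell_1$ from Lemma \ref{lem:poly_step_1}, rescale to form $\lU$, verify that $\lU$ is again $\delta$-normalized (using the telescoping bound $|\mathcal{B}_k|\leq D\sum_j\lambda^{j(\alpha+2s-1)}$ to control the new drift term in condition (3)), reapply Lemma \ref{lem:poly_step_1}, and set $\ell_{k+1}(x)=\ell_k(x)+\lambda^{k(\alpha+2s)}\tilde\ell(\lambda^{-k}x)$. Your remark about shrinking $\delta$ upfront to absorb the universal factor $(1+|\mathcal{B}_k|^2)$ in condition (3) is a point the paper leaves implicit.
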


\begin{proof}
The proof is by induction. For the base step $k=0$, we set $\ell_0(x)  = 0$ and hence the estimates on $U$ are true because $U$ is a $\delta$-normalized solution.
For $k=1$ we choose $\ell_1(x) = \ell(x)$ from Lemma \ref{lem:poly_step_1} and obviously the conclusion holds.
Suppose the result is true for some $k \geq 1$. Define
$$\lU(t, X) = \frac{U(\lambda^{2k}t, \lambda^kX) - \ell_k(\lambda^kx)}{\lambda^{k(\alpha + 2s)}}\qquad\hbox{for}~(t,X) \in Q^*_1.$$
Recall that $U$ satisfies
\begin{multline*}
\int_{Q^*_{\lambda^k}}y^a U \partial_t \phi \, dt\, dX  + \int_{Q^*_{\lambda^k}} y^a B(x) \nabla U \nabla \phi \,dt \, dX \\
 = \int_{Q_{\lambda^k}^*}y^aF\nabla\phi\,dt\,dX+\int_{Q_{\lambda^k}}f(t,x) \phi(t,x,0) \,dt \, dx
\end{multline*}
for suitable test functions $\phi$.
Now, by the change of variables $X = \lambda^k X, t = \lambda^{2k} t$, we find that $\tilde{U}$ is a weak solution to
$$\begin{cases}
y^a \partial_t \tilde{U} - \dvie(y^a \tilde{B}(x) \nabla \lU)= - \dvie(y^a(\tilde{F}+\tilde{G}))&\hbox{in}~Q^*_1 \\ 
-y^a \lU_y|_{y=0} = \tilde{f}&\hbox{on}~Q_1
\end{cases}$$
where $\tilde{B}(x) = B(\lambda^k x)$, $\tilde{F}(t,X)=\lambda^{-k(\alpha+2s-1)}F(\lambda^{2k}t,\lambda^kX)$,
$\tilde{f}(t,x) = \lambda^{-k \alpha} f(\lambda^{2k}t, \lambda^k x) $
and
$$\tilde{G} = \bigg( \frac{I-\tilde{B}(x)}{\lambda^{k(\alpha + 2s-1)}} \nabla_x \ell_k(\lambda^k x), 0\bigg)\qquad\hbox{with}~\tilde{G}(0) = 0.$$
Moreover, by the hyptheses on $f$, $A(x)$ and $F$,
$$\frac{1}{r^{n+2+2 \alpha}} \int_{Q_r} |\tilde{f}|^2 \, dt \, dx  < \delta^2$$
and
\begin{align*}
\frac{1}{r^{n+3+a+2(\alpha + 2s-1)}} &\int_{Q^*_r} y^a |\tilde{F}+\tilde{G}|^2 \, dt\, dX  \\ 
& \leq  \frac{2}{(\lambda^k r)^{n+3+a+2(\alpha + 2s-1)}}  \int_{Q^*_{\lambda^kr}} y^a (|F|^2+|I-B(x)|^2|\mathcal{B}_k|^2)\, dt\, dX   \\
&\leq 2(1+D^2C^2) \delta^2
\end{align*}
where we used that
$$ |\mathcal{B}_k| \leq \sum^k_{j=1} |\mathcal{B}_j -\mathcal{B}_{j-1}| \leq D \sum^{\infty}_{j=0} \lambda^{j(\alpha + 2s-1)} \leq DC$$
Additionally, by changing variables and the induction hypothesis,
$$\int_{Q_1} \tilde{U}(t,x,0)\, dt \, dx + \int_{Q^*_1} y^a \tilde{U}^2 \,dt \, dX\leq 1$$
so that $\tilde{U}$ is a $\delta$-normalized solution.
Whence, by Lemma \ref{lem:poly_step_1}, there exists a linear function $\ell(x)$ such that 
$$ \frac{1}{\lambda^{n+2}} \int_{Q_{\lambda}} |\tilde{U}(t,x,0) - \ell(x)|^2\,dt\, dx +  \frac{1}{\lambda^{n+3+a}} \int_{Q^*_{\lambda}} y^a|\tilde{U} - \ell|^2 \,dt\, dX < \lambda^{2(\alpha + 2s)}.$$
By changing variables back,
\begin{multline*}
 \frac{1}{\lambda^{(k+1)(n+2)}} \int_{Q_{\lambda^{k+1}}} |U(t,x,0) - \ell_{k+1}(x)|^2 \, dt \, dx \\ 
+\frac{1}{\lambda^{(k+1)(n+3+a)}} \int_{Q^*_{\lambda^{k+1}}}y^a|U - \ell_{k+1}|^2  \, dt \, dX  < \lambda^{2(k+1)(\alpha + 2s)}
\end{multline*}
where $\ell_{k+1}(x) = \ell_k(x) + \lambda^{k(\alpha+2s)} \ell(\lambda^{-k} x)$. Then $$|\ell_{k+1}(x) - \ell_k(x)|= \lambda^{k(\alpha + 2s)} |\ell(\lambda^{-k} x)| \leq D \lambda^{k(\alpha + 2s)}(1+ \lambda^{-k}|x|)$$
so that $|\mathcal{A}_{k+1} - \mathcal{A}_k| = |\ell_{k+1}(0) - \ell_k(0)| \leq D \lambda^{k(\alpha + 2s)} $ and, by construction,
$|\mathcal{B}_{k+1}- \mathcal{B}_k| \leq \lambda^{k(\alpha+2s-1)}|\mathcal{B}| \leq D \lambda^{k(\alpha+2s-1)} $
\end{proof}

\begin{proof}[Proof of Theorem \ref{thm:thm_1_holder}$(2)$]
It follows the same procedure as the proof of Theorem \ref{thm:thm_1_holder}$(1)$,
but instead we need to use now Lemmas \ref{lem:poly_step_1} and \ref{lem:poly_induction}. 
\end{proof}

\subsection{Proof of Theorem \ref{thm:thm_2_holder} }

The proof follows very similar lines to those for Theorem \ref{thm:thm_1_holder} with minor changes.
Indeed, in the proof of Theorem \ref{thm:thm_1_holder}$(1)$ we need to replace the exponent $\alpha$
by $-2s+\alpha$, while in the proof of Theorem \ref{thm:thm_1_holder}$(2)$ we substitute the exponent $\alpha$
by $-2s+\alpha+1$. Notice also that we do not need the normalization $f(0,0)=0$.

\section{Boundary regularity for fractional heat equations}\label{sec:regularity_bdd_heat}

In this Section we perform a detailed analysis of boundary regularity and asymptotic behavior of half space
solutions for master equations driven by fractional powers of heat operators.
First we state known estimates for the fractional heat operator from \cite{Stinga-Torrea-SIAM}.
In the following we let $\Lambda^{1/2,1}(\R^{n+1})$ be the H\"older--Zygmund space of continuous
functions $u=u(t,x)$ such that the norm
$$\|u\|_{\Lambda^{1/2,1}(\R^{n+1})}=\|u\|_{L^\infty(\R^{n+1})}+\sup_{(t,x),(\tau,z)\in\R^{n+1}}
\frac{|u(\tau,x-z)+u(\tau,x+z)-2u(t,x)|}{|t-\tau|^{1/2}+|z|}$$
is finite.
 
\begin{prop}\label{prop:global_bdd_diri}
Let $u,f\in L^\infty(\R^{n+1})$ be such that
$$(\partial_t-\Delta)^{s}u=f\qquad\hbox{in}~\R^{n+1}.$$
\begin{enumerate}[$(1)$]
    \item Suppose that $f \in C^{\alpha/2, \alpha}(\R^{n+1})$ for $0< \alpha \leq 1.$
    \begin{enumerate}[$(a)$]
        \item If $\alpha + 2s$ is not an integer then $u \in C^{\alpha/2 + s, \alpha+2s}(\R^{n+1})$, with the estimate 
         $$\norm{u}_{C^{\alpha/2 + s, \alpha+2s}(\R^{n+1})} \leq C\big( \norm{f}_{C^{\alpha/2, \alpha}(\R^{n+1})} + \norm{u}_{L^{\infty}(\R^{n+1})} \big).$$
        \item If $\alpha + 2s=1$ then $u(t,x)$ is in the H\"older--Zygmund space $\Lambda^{1/2,1}(\R^{n+1})$, with the estimate 
        $$\norm{u}_{\Lambda^{1/2,1}(\R^{n+1})}\leq C \big( \norm{f}_{C^{\alpha/2, \alpha}(\R^{n+1})} + \norm{u}_{L^{\infty}(\R^{n+1})} \big).$$
    \end{enumerate}
    The constants $C>0$ above depend only on $n$, $s$ and $\alpha$.
    \item Suppose that $f \in L^{\infty}(\R^{n+1})$.
        \begin{enumerate}[$(a)$]
        \item If $s\neq1/2$ then $u \in C^{s, 2s}(\R^{n+1})$, with the estimate
        $$\norm{u}_{C^{s, 2s}(\R^{n+1})} \leq C \big(\norm{f}_{L^{\infty}(\R^{n+1})} + \norm{u}_{L^{\infty}(\R^{n+1})} \big).$$ 
        \item If $s = 1/2$ then $u$ is in the H\"older-Zygmund space $\Lambda^{1/2, 1}(\R^{n+1})$, with the estimate 
        $$\norm{u}_{\Lambda^{1/2,1}(\R^{n+1})} \leq C \big( \norm{f}_{L^{\infty}(\R^{n+1})} + \norm{u}_{L^{\infty}(\R^{n+1})} \big).$$
    \end{enumerate}
    The constants $C>0$ above depend only on $n$ and $s$.
\end{enumerate}
\end{prop}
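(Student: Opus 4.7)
The plan is to follow the approach of \cite{Stinga-Torrea-SIAM}: represent the solution through the parabolic heat semigroup, and then use a characterization of parabolic Hölder and Zygmund spaces in terms of a Poisson-type semigroup for $H = \partial_t - \Delta$. First, I would represent the solution by the Riesz-type potential
$$u = H^{-s}f = \frac{1}{\Gamma(s)}\int_0^\infty e^{-\tau H}f\,\frac{d\tau}{\tau^{1-s}},$$
noting that the explicit fundamental kernel $K_{-s}(\tau,x-z)$ (Remark \ref{rem:fund1}(b)) has Gaussian bounds, so $u$ is well-defined and bounded when $f$ is bounded. One should also verify that $u$ so defined really is the unique bounded solution of the equation, which follows from a Liouville-type argument in the whole space.

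Next, introduce the parabolic Poisson semigroup $P_y = e^{-y\sqrt{H}}$ via subordination,
$$P_y g(t,x) = \frac{y}{2\sqrt{\pi}}\int_0^\infty e^{-y^2/(4\tau)}\,e^{-\tau H}g(t,x)\,\frac{d\tau}{\tau^{3/2}},$$
and prove the semigroup characterization of parabolic Hölder/Zygmund classes: for $0<\beta<1$,
$$g\in C^{\beta/2,\beta}(\R^{n+1})\iff \|\partial_y P_y g\|_{L^\infty}\leq C y^{\beta-1},$$
with one more derivative in $y$ characterizing $C^{(1+\beta)/2,1+\beta}$, and $g\in \Lambda^{1/2,1}(\R^{n+1})$ iff $\|\partial_y^2 P_y g\|_{L^\infty}\leq C y^{-1}$. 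These are proved by integrating the semigroup in $y$ (to reconstruct differences of $g$) and by direct kernel estimates for $P_y$ (to bound the semigroup derivatives). Then, to prove both $(1)$ and $(2)$, apply $\partial_y^k P_y$ to the formula for $u = H^{-s}f$, commute $P_y$ past $e^{-\tau H}$, and use the key identity $\partial_y^2 P_y = H P_y$ to reduce everything to bounds on $\|P_y f\|_\infty$ and $\|H P_y f\|_\infty$. For Hölder $f$ one uses $\|P_y f - f\|_\infty \leq C y^{\alpha}[f]_{C^{\alpha/2,\alpha}}$ together with the smoothing of the heat semigroup; for bounded $f$ one uses direct $L^\infty$-bounds. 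The gain of exactly $2s$ derivatives comes from the $\tau^{s-1}$ weight in the Riesz integral, via a change of variables $\tau = y^2 r$.

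The main obstacle is the critical cases $\alpha+2s=1$ in $(1)$ and $s=1/2$ in $(2)$: at these exponents the direct estimate for $\partial_y P_y u$ produces a logarithmic factor and cannot yield Lipschitz regularity. The correct output is instead the bound $\|\partial_y^2 P_y u\|_{L^\infty}\leq C y^{-1}$, which by the characterization above is precisely membership in the parabolic Zygmund space $\Lambda^{1/2,1}$. Handling these endpoints cleanly — verifying that the subordination identity, the commutation $[P_y, e^{-\tau H}] = 0$, and the kernel bounds remain compatible at the borderline, and tracking that no extra logarithms survive after integrating against $\tau^{s-1}\,d\tau$ — is the delicate technical part of the argument.
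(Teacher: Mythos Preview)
Your outline is correct and matches the method of \cite{Stinga-Torrea-SIAM}. Note, however, that the paper does not prove this proposition: it is stated at the start of Section~\ref{sec:regularity_bdd_heat} as a known result imported from \cite{Stinga-Torrea-SIAM} (``First we state known estimates for the fractional heat operator from \cite{Stinga-Torrea-SIAM}''), with no proof given. So there is nothing to compare against in the paper itself; your sketch simply reproduces the argument of the cited reference, including the Poisson semigroup characterization of the parabolic H\"older and Zygmund classes and the handling of the endpoint cases via $\partial_y^2 P_y$.
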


\subsection{Boundary regularity in the half space -- Dirichlet}

In the half space $\R \times \R^n_+$ we consider the heat operator $\partial_t - \Delta^+_D$,
where $\Delta_D^+$ is the Dirichlet Laplacian in $\R^n_+= \{x\in\R^n: x_n > 0 \}$.
For a function $u(t,x)$ defined on $\R \times \overline{\R^n_+}$ with $u(t,x',0) = 0$ and $0<s<1$ we define
\begin{align*}
(\partial_t-\Delta^+_D)^su(t,x) &= \frac{1}{\Gamma(-s)} \int^{\infty}_0 \big(e^{\tau \Delta^+_D}u(t-\tau,x) - u(t,x) \big) \frac{d \tau}{\tau^{1+s}}
\end{align*}
where $\{e^{\tau\Delta_D^+}\}_{\tau\geq0}$ is the semigroup generated by $\Delta_D^+$.
Let $x^* = (x',-x_n)$ for $x \in \R^n$ and $u_0(t,x)$ be the odd extension of $u(t,x)$ about the $x_n$ axis given by
$$u_0(t,x)=
\begin{cases}
u(t,x)&\hbox{if}~x_n \geq 0 \\
-u(t,x^*)=-u(t,x',-x_n)&\hbox{if}~x_n <0.
\end{cases}$$
Now
\begin{align*}
e^{\tau \Delta^+_D} u(t-\tau,x) &= e^{\tau \Delta} u_0(t-\tau,x) \\
&= \frac{1}{(4 \pi \tau)^{n/2}} \int_{\R^n_+}\left( e^{-|x-z|^2/(4 \tau)}- e^{-|x-z^*|^2/(4 \tau)} \right)u(t-\tau,z)\, dz
\end{align*}
for any $\tau>0$, $x \in \R^n_+$. Hence, for $x\in \R^n_+,$
\begin{multline*}
(\partial_t-\Delta^+_D)^su(t,x)= \\
\frac{1}{(4 \pi)^{n/2} \Gamma(-s)}\int^{\infty}_0 \int_{\R^n_+} \left(\frac{e^{-|x-z|^2/(4 \tau)}- e^{-|x-z^*|^2/(4 \tau)}}{\tau^{n/2+1+s}}\right) (u(t-\tau,z) - u(t,x))\, dz\,d\tau
\end{multline*}
and
\begin{multline*}
(\partial_t - \Delta^+_D)^{-s} f(t,x) = \frac{1}{\Gamma(s)} \int^{\infty}_0 e^{\tau\Delta^+_D }f(t-\tau,x) \frac{d \tau }{\tau^{1-s}} \\
=  \frac{1}{(4\pi)^{n/2}\Gamma(s)} \int^{\infty}_0\int_{\R^n_+}\left(\frac{e^{-|x-z|^2/(4 \tau)}- e^{-|x-z^*|^2/(4 \tau)}}{\tau^{n/2+1-s}} \right)f(t-\tau,z) \,dz\,d\tau.
\end{multline*}

\begin{thm}[Boundary regularity in half space -- Dirichlet]\label{thm:global_bdd_diri}
Let $u,f\in L^\infty(\R\times\R^n_+)$ be such that
$$\begin{cases}
(\partial_t - \Delta^+_D)^s u = f&\hbox{in}~\R \times \R^n_+ \\ 
u= 0&\hbox{on}~\R \times \partial \R^n_+=\R\times \{x\in\R^n:x_n =0 \}.
\end{cases}$$
\begin{enumerate}[$(1)$]
    \item Suppose that $f \in C^{\alpha/2,\alpha}(\R \times \overline{\R^n_+})$ for some $0< \alpha \leq 1$.
     In addition, assume that $f(t,x',0) = 0$, for all $t\in\R$, $x' \in \R^{n-1}$.
     \begin{enumerate}[$(a)$]
        \item If $\alpha + 2s$ is not an integer then $u \in C^{\alpha/2 + s, \alpha+2s}(\R \times \overline{\R^n_+})$, with the estimate 
         $$\norm{u}_{C^{\alpha/2 + s, \alpha+2s}(\R \times \overline{\R^n_+})} \leq C\big( \norm{f}_{C^{\alpha/2, \alpha}(\R \times \overline{\R^n_+})}
          + \norm{u}_{L^{\infty}(\R \times \overline{\R^n_+})} \big).$$
        \item If $\alpha + 2s=1$ then $u(t,x)$ is in the H\"older--Zygmund space $\Lambda^{1/2,1}(\R \times \overline{\R^n_+}),$ with the estimate 
        $$\norm{u}_{\Lambda^{1/2,1}(\R \times \overline{\R^n_+})}\leq C \big( \norm{f}_{C^{\alpha/2, \alpha}(\R \times \overline{\R^n_+})}
        + \norm{u}_{L^{\infty}(\R \times \overline{\R^n_+})} \big).$$   
       \end{enumerate}
          The constants $C>0$ above depend only on $n$, $s$ and $\alpha$.
    \item Let $f \in L^{\infty}(\R \times \overline{\R^n_+})$.
    \begin{enumerate}
        \item If $s\neq1/2$ then $u \in C^{s, 2s}(\R \times \overline{\R^n_+}) $ with the estimate
        $$\norm{u}_{C^{s, 2s}(\R \times \overline{\R^n_+})}
        \leq C \left( \norm{f}_{L^{\infty}(\R \times \overline{\R^n_+})} + \norm{u}_{L^{\infty}(\R \times \overline{\R^n_+})} \right) $$ 
        \item If $s = 1/2$ then $u$ is in parabolic H\"older-Zygmund space $\Lambda^{1/2, 1}(\R \times \overline{\R^n_+})$ with the estimate 
        $$\norm{u}_{\Lambda^{1/2,1}(\R \times \overline{\R^n_+})} \leq 
        C \left( \norm{f}_{L^{\infty}(\R \times \overline{\R^n_+})} + \norm{u}_{L^{\infty}(\R \times \overline{\R^n_+})} \right) $$
    \end{enumerate}
        The constants $C>0$ above depend only on $n$ and $s$.
\end{enumerate}
\end{thm}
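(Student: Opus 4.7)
The plan is to reduce the boundary problem to the global estimates of Proposition~\ref{prop:global_bdd_diri} via odd reflection across the hyperplane $\{x_n=0\}$. I denote by $u_0$ and $f_0$ the odd extensions of $u$ and $f$ about $\{x_n=0\}$, with $u_0$ as in the paragraph preceding the statement. From the identity $e^{\tau\Delta_D^+}u(t-\tau,x)=e^{\tau\Delta}u_0(t-\tau,x)$ for $x\in\R^n_+$, and from the fact that the Euclidean heat semigroup commutes with odd reflection (the Gaussian heat kernel being invariant under $x_n\mapsto -x_n$), the difference $e^{\tau\Delta}u_0(t-\tau,\cdot)-u_0(t,\cdot)$ is the odd extension of $e^{\tau\Delta_D^+}u(t-\tau,\cdot)-u(t,\cdot)$. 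Integrating against $\Gamma(-s)^{-1}\tau^{-1-s}\,d\tau$ yields the pointwise identity
$$(\partial_t-\Delta)^s u_0=f_0\qquad\text{in}~\R\times\R^n.$$

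Next I would check the regularity of $f_0$. For part $(2)$, $f_0\in L^\infty(\R^{n+1})$ trivially, with $\|f_0\|_\infty=\|f\|_\infty$. For part $(1)$, the hypothesis $f(t,x',0)=0$ is essential: it guarantees that $f_0$ is continuous across $\{x_n=0\}$. Moreover, the H\"older bound transfers, since for $x_n>0$ and $z_n<0$ the triangle inequality through the boundary points $(x',0)$ and $(z',0)$ gives
$$|f_0(t,x)-f_0(t,z)|\leq |f(t,x)-f(t,x',0)|+|f(t,z',-z_n)-f(t,z',0)|\leq 2[f]_{C^{\alpha/2,\alpha}}|x-z|^\alpha,$$
using that $0<x_n,|z_n|\leq |x_n-z_n|\leq |x-z|$; time differences are bounded directly. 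Hence $f_0\in C^{\alpha/2,\alpha}(\R^{n+1})$, with norm controlled by $\|f\|_{C^{\alpha/2,\alpha}(\R\times\overline{\R^n_+})}$.

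Then I apply Proposition~\ref{prop:global_bdd_diri} to the pair $(u_0,f_0)$ on $\R^{n+1}$: cases $(1)(a)$, $(1)(b)$, $(2)(a)$, $(2)(b)$ of the present theorem correspond directly to the four sub-cases of the proposition, and yield the regularity of $u_0$ on $\R^{n+1}$ (in $C^{\alpha/2+s,\alpha+2s}$, $\Lambda^{1/2,1}$, $C^{s,2s}$, or $\Lambda^{1/2,1}$ respectively), with the claimed norm inequalities. Restricting $u_0$ back to $\R\times\overline{\R^n_+}$ recovers $u$, and restriction does not increase parabolic H\"older or H\"older--Zygmund norms, so the theorem follows.

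The only conceptual step is the reduction $(\partial_t-\Delta)^s u_0=f_0$, which relies on the commutativity of the heat semigroup with odd reflection; everything else is routine. The role of the condition $f(t,x',0)=0$ in part $(1)$ becomes transparent in this framework: without it, $f_0$ would have a jump discontinuity on $\{x_n=0\}$, the global H\"older estimate for $f_0$ would fail, and one would instead need to subtract an explicit one-dimensional profile capturing the boundary singularity — precisely the situation handled by the delicate analysis of Theorem~\ref{thm:1.3_f_nonzero}.
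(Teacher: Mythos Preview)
Your proof is correct and follows essentially the same approach as the paper: odd reflection of $u$ and $f$ across $\{x_n=0\}$, verification that $(\partial_t-\Delta)^s u_0=f_0$ on $\R^{n+1}$, and invocation of Proposition~\ref{prop:global_bdd_diri}. Your explicit check that $f_0\in C^{\alpha/2,\alpha}(\R^{n+1})$ in part~$(1)$ (which the paper leaves implicit) and your phrasing of the key identity via commutativity of the heat semigroup with odd reflection (versus the paper's region-by-region pointwise verification) are minor presentational differences only.
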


\begin{proof}
This result follows by observing that if $f_0$ and $u_0$ are the odd reflections of $f$ and $u$ with respect to the variable $x_n$, respectively,
then $(\partial_t - \Delta)^s u_0=f_0$ in $\R^{n+1}$. Thus we can invoke Proposition \ref{prop:global_bdd_diri}.
From the pointwise formula we see that
$(\partial_t - \Delta)^s u_0(t,x) = (\partial_t - \Delta^+_D)^s u(t,x) = f(t,x)=f_0(t,x)$ when $x \in \R^n_+$. Now, for some $(t,x)$ such that $x_n <0$ we have
\begin{align*}
(\partial_t- \Delta)^s u_0(t,x) &= \frac{1}{\Gamma(-s)} \int^{\infty}_0 \left(e^{\tau \Delta} u_0(t,x) - u_0(t,x) \right)\frac{d \tau}{\tau^{1+s}} \\ 
&= \frac{1}{\Gamma(-s)}  \int^{\infty}_0 \left( u(t,x^*) - e^{\tau \Delta^+_D} u(t,x^*)\right) \frac{d \tau}{\tau^{1+s}} \\ 
&= - (\partial_t - \Delta^+_D)^s u(t,x^*)= - f(t,x^*)=f_0(t,x).
\end{align*}
Also we can see that if $(t,x)$ is such that $x_n = 0$ then $u_0(t,x)=0$ and
$$(\partial_t - \Delta)^s u_0(t,x)=\frac{1}{(4\pi)^{n/2} |\Gamma(-s)|}\int^{\infty}_0 \int_{\R^n}\frac{e^{-|z|^2/(4 \tau)}}{\tau^{n/2+1+s}}u_0(t-\tau,x-z)\,dz\,d \tau=0$$
because $u_0(t-\tau,x-z)$ is an odd function in the variable $z_n$.
\end{proof}

\subsection{Boundary behavior in the half space -- Dirichlet}\label{subsection:halfspace}

We collect some particular one dimensional pointwise solutions that will be useful in our proofs.
Consider the problem 
$$\begin{cases}
(\partial_t - D_{xx}^+)^s u = f&\hbox{in}~\R\times\R_+ \\ 
u(t,0) =0&\hbox{in}~\R
\end{cases}$$
where $D^+_{xx}$ denotes the Dirichlet Laplacian in the half line $[0,\infty)$ and 
$$f(t,x)=\begin{cases}
1&\hbox{when}~0<s < 1/2 \\
\chi_{[0,1]}(x)&\hbox{when}~1/2 \leq s < 1.
\end{cases}$$
Since $f$ is independent of $t$, we have that $u$ is also independent of $t$ and solves
$$\begin{cases}
(- D_{xx}^+)^s u = f&\hbox{in}~\R_+ \\ 
u(0) =0.
\end{cases}$$
Then we have the following results (see also \cite{Stinga-Caffa}).

\smallskip

\noindent\textbf{Case 1: $0< s < 1/2$.}
There exists a constant $c_s>0$ such that
$$u(t,x) = c_s x^{2s}\qquad\hbox{for}~(t,x)\in\R\times \R^+.$$

\smallskip

\noindent\textbf{Case 2: $s=1/2$.}
We have
\begin{align*}
    u(t,x) &= c \int^1_0 \big( \log|x+z| - \log|x-z| \big) \,dz \\
    &= cx \int^{1/x}_0 \big(\log|1+\omega| - \log|1-\omega|\big) \,d \omega.
\end{align*}
For $0<x<1$, 
\begin{align*}
    u(t,x) &= cx \int^{1}_0 \big(\log(1+\omega)- \log(1-\omega)\big)\, d \omega  + cx \int^{1/x}_1 \big(\log(1+\omega) - \log(\omega-1)\big)\, d \omega\\
    &= c \big( (1+x)\log(1+x) -(1-x)\log(1-x) - 2x\log x\big).
\end{align*}
Hence, there exists $C>0$ such that, for any $0< x< 1$,
$$u(t,x) =-Cx\log x+\eta_1(x)$$
where $\eta_1(x)\sim x$ as $x\to0$. Therefore,
$$u(t,x) \sim -x \log x\qquad\hbox{as}~x\to0,~\hbox{uniformly in}~t\in\R.$$
On the other hand, if $x\geq 1$ then, 
\begin{align*}
    u(t,x)  &= cx \int^{1/x}_0 \big( \log(1+\omega) - \log(1-\omega)) \,d\omega \\
    &= cx \big[ (1/x+1) \log(1/x+1) + (1-1/x) \log(1-1/x) \big].
\end{align*}
Hence, for any $x\geq 1$, 
$$ u(t,x) =x \eta_2\Big(\frac{1}{x}\Big)$$
where 
$$ \eta_2(x) = c\big[ (1+x) \log(1+x) + (1-x) \log(1-x) \big].$$
To study the behavior of $u(t,x)$ near infinity we need to study the behavior of $\eta_2(x)$ near $0$. Using the series expansion for $\log(1\pm x)$ we see that 
$\eta_2(x) \sim x^2$ as $x \to 0$. Therefore, 
$$u(t,x) \sim \frac{1}{x}\qquad\hbox{as}~x \to \infty.$$

\smallskip

\noindent\textbf{Case 3: $1/2< s < 1$.}
We have 
\begin{align*}
    u(t,x) &= c \int^1_0 \Big( |x-z|^{2s-1} - |x+z|^{2s-1}\Big)\,dz\\
    &= cx^{2s} \int^{1/x}_0 \Big( |1-\omega|^{2s-1} - (1+ \omega)^{2s-1} \Big)\, d \omega.
\end{align*}
Let us consider $0<x<1$. Then 
\begin{align*}
    u(t,x) &= cx^{2s} \bigg[ \int^1_0 (1-\omega)^{2s-1} \,d \omega + \int^{1/x}_1 (\omega -1)^{2s-1}\,d\omega- \int^{1/x}_0 (1+\omega)^{2s-1}\, d \omega \bigg] \\
    &= c_s\big[ 2x^{2s} +(1-x)^{2s} -(1+x)^{2s}\big].
\end{align*}
On the other hand, if $x \geq 1$, then
\begin{align*}
    u(t,x) &= cx^{2s} \int^{1/x}_0 \Big( (1-\omega)^{2s-1} - (1+ \omega)^{2s-1} \Big)\, d \omega  \\
    &=c_sx^{2s}\big[ 2 - (1-1/x)^{2s} - (1+1/x)^{2s} \big].
\end{align*}
Whence, there exists $c_s>0$ such that
$$u(t,x)=
\begin{cases}
2c_sx^{2s}+\eta_{s1}(x)&\hbox{if}~ 0<x<1, \\
c_s x^{2s}\Big(2 - \eta_{s2}\Big( \frac{1}{x}\Big)\Big)&\hbox{if}~ x \geq 1,
\end{cases}
$$
where $\eta_{s1}$ and $\eta_{s2}$ are smooth up to $x=0$. Using the series expansions of $(1\pm x)^{2s}$, we get
\begin{equation}\label{eq:etas}
\eta_{s1}(x) \sim -4sx\quad \hbox{and}\quad\eta_{s2}(x) \sim 2+ 2s(2s-1)x^2 \qquad \hbox{as}~x\to 0.
\end{equation}
Using these estimates we conclude that
$$u(t,x) \sim x\qquad\hbox{as}~x\to0,~\hbox{uniformly in}~t\in\R,$$ 
and
$$u(t,x) \sim x^{2s-2} \qquad\hbox{as}~x\to\infty,~\hbox{uniformly in}~t\in\R.$$ 

\smallskip

Consider next the problem in a higher dimensional half space
\begin{equation}\label{eq:half_space}
\begin{cases}
(\partial_t - \Delta^+_D)^s w= g&\hbox{in}~\R\times \R^n_+ \\ 
w(t,x',0)=0&\hbox{on}~\R \times \partial \R^n_+
\end{cases}
\end{equation}
where
\begin{equation}\label{eq:g_func}
g(t,x)=\begin{cases}
1&\hbox{when}~0<s < 1/2 \\
\chi_{[0,1]}(x_n)&\hbox{when}~1/2 \leq s < 1.
\end{cases}
\end{equation}

The study of these solutions relies on the following observation.
Suppose that $g : \R^{n+1} \rightarrow \R$ is a function depending only on the $x_n$ variable, that is,
$g(t,x) = \phi(x_n)$ for some function $\phi : \R \rightarrow \R,$ for all $(t,x)\in \R^{n+1}$.
Let $w$ satisfy
$$(\partial_t - \Delta)^s w = g\qquad\hbox{in}~\R^{n+1}.$$
Then $w$ is a function that depends only on $x_n$. More precisely,
$w(t,x) = \psi(x_n)$ for all $(t,x)\in \R^{n+1}$, where $\psi : \R \to \R$ solves the one dimensional problem 
$$ (- D_{xx})^s \psi = \phi\qquad\hbox{in}~\R.$$ 
Indeed, that $w$ does not depend on $t$ is clear because $g$ does not depend on $t$.
Then $w$ will satisfy $(-\Delta)^sw=g$ and therefore the conclusion follows as in \cite{Stinga-Caffa}.

Thus, the pointwise solution $w(t,x)$ to \eqref{eq:half_space} with $g$ as in \eqref{eq:g_func} will be 
\begin{equation}\label{eq:diri_bdd}
w(t,x)=
\begin{cases}
c_s x^{2s}_n&\hbox{if}~0< s< 1/2, \\
-Cx_n \log x_n + \eta_1(x_n)&\hbox{for}~0<x_n <1,~\hbox{if}~s=1/2, \\
x_n \eta_2\Big(\frac{1}{x_n}\Big)&\hbox{for}~ x_n\geq1, ~\hbox{if}~s=1/2, \\
2c_s x^{2s}_n+\eta_{s1}(x_n)&\hbox{for}~0<x_n <1 ,~\hbox{if}~1/2<s<1 \\
c_s x^{2s}_n\Big(2 - \eta_{s2}\Big( \frac{1}{x_n}\Big)\Big)&\hbox{for}~ x_n\geq1 ,~\hbox{if}~1/2<s<1 
\end{cases}
\end{equation}
for some constants $c_s,C>0$.

Now, if we consider the following extension problem
\begin{equation}\label{eq:diri_dbb_exten}
\begin{cases}
y^a\partial_t W - \dvie(y^a \nabla W)=0&\hbox{in}~\R \times \R^n_+ \times (0, \infty) \\ 
-y^a W_y\big|_{y=0}=\theta g&\hbox{on}~\R \times \R^n_+ \\ 
W=0&\hbox{on}~\R \times \partial \R^n_+ \times [0,\infty)
\end{cases}
\end{equation}
with $g$ as in \eqref{eq:g_func} and $\theta \in \R$, then the pointwise solution $W(t,x,y)$ will satisfy
$$W(t,x,0) = \theta w(t,x)\qquad\hbox{for all}~(t,x)\in\R\times\R^n_+$$
where $w(t,x)$ is as in \eqref{eq:diri_bdd}. Though these solutions $W$ can be computed explicitly, we will only need
bounds for them and their derivatives in the $x_n$-direction (see the proof of the following Lemmas).

\begin{lem}\label{lem:W}
The solution $W(t,x,y)$ to \eqref{eq:diri_dbb_exten} satisfies the following estimates. 
\begin{enumerate}[$(1)$]
    \item If $s< 1/2$ then $|W(t,x,y)| \leq C|\theta|x^{2s}_n$ for all $(t,x,y)\in\R\times\R^n_+\times(0,\infty)$, where $C>0$ depends only on $s$.
    \item If $s\geq 1/2$ then $\|W\|_{L^\infty(\R\times\R^n_+\times(0,\infty))} \leq C|\theta|$, where $C>0$ depends only on $s$.
\end{enumerate}
\end{lem}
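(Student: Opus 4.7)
The plan is to invoke the representation formula \eqref{solution} of Theorem \ref{th_exten}, specialized to $H=\partial_t-\Delta_D^+$ on $\R\times\R^n_+$. Because the data $\theta g$ in \eqref{eq:diri_dbb_exten} is independent of $t$ and of the tangential variables $x'=(x_1,\ldots,x_{n-1})$, the pointwise solution $W$ depends only on $(x_n,y)$ and coincides with the Poisson extension of the trace $u(t,x)=\theta w(x_n)$ recorded in \eqref{eq:diri_bdd}; in particular the integrals in \eqref{solution} make pointwise sense.

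For part $(2)$, with $s\geq 1/2$ and $g(x)=\chi_{[0,1]}(x_n)$, I would first read off from \eqref{eq:diri_bdd} the uniform bound $\|w\|_{L^\infty(\R^n_+)}\leq C_s$. Indeed, for $s=1/2$ the formula $w(x_n)=-Cx_n\log x_n+\eta_1(x_n)$ on $0<x_n<1$ extends continuously to $x_n=0$ with vanishing limit, while $w(x_n)=x_n\eta_2(1/x_n)\sim 1/x_n\to 0$ as $x_n\to\infty$; for $1/2<s<1$ the asymptotics $w\sim x_n$ at $0$ and $w\sim x_n^{2s-2}\to 0$ at infinity are similarly decisive. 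Once $w$ is bounded, applying the first form in \eqref{solution} together with the fact that the Dirichlet heat semigroup $e^{\tau\Delta_D^+}$ is sub-Markovian (hence contractive on $L^\infty$) gives $|e^{-\tau H}(\theta w)|\leq |\theta|\|w\|_\infty$. The substitution $v=y^2/(4\tau)$ evaluates
$$\frac{y^{2s}}{4^s\Gamma(s)}\int_0^\infty e^{-y^2/(4\tau)}\,\frac{d\tau}{\tau^{1+s}}=1,$$
so $|W|\leq |\theta|\|w\|_\infty\leq C_s|\theta|$ as required.

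For part $(1)$, with $0<s<1/2$ and $g\equiv 1$, the trace $w(x_n)=c_s x_n^{2s}$ is unbounded, so I would instead use the third form in \eqref{solution}, which leverages the simpler datum $H^s W(\cdot,\cdot,0)=\theta g=\theta$:
$$W(t,x,y)=\frac{\theta}{\Gamma(s)}\int_0^\infty e^{-y^2/(4r)}\, e^{r\Delta_D^+}1(x)\,\frac{dr}{r^{1-s}}.$$
By reflection and factoring of the Dirichlet heat kernel on $\R^n_+$ one computes $e^{r\Delta_D^+}1(x)=\mathrm{erf}(x_n/(2\sqrt{r}))$, which obeys the two-sided envelope $\min(1,Cx_n/\sqrt{r})$. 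Splitting the integral at $r=x_n^2$ and bounding $e^{-y^2/(4r)}\leq 1$, the piece $r\leq x_n^2$ is controlled by $\int_0^{x_n^2} r^{s-1}\,dr = x_n^{2s}/s$, while the piece $r\geq x_n^2$ becomes $Cx_n\int_{x_n^2}^\infty r^{s-3/2}\,dr = C_s x_n^{2s}$, where convergence at infinity is where the hypothesis $s<1/2$ is used essentially. Combining yields $|W|\leq C|\theta| x_n^{2s}$.

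The main technical obstacle will be justifying that the representation formulas of \eqref{solution} do apply in this half-space setting, where the trace $\theta w$ is not in the $L^2$-based domain of $H^s$ used in Theorem \ref{th_exten}. I expect this to be handled by appealing to the explicit formula for the Dirichlet heat semigroup on $\R^n_+$, which renders all integrands in \eqref{solution} absolutely convergent pointwise and permits a direct verification that the formulas define a classical solution of \eqref{eq:diri_dbb_exten} with the correct trace; uniqueness of pointwise solutions (via a standard barrier argument, taking advantage of $x_n^{2s}$ or a bounded majorant as comparison function) then closes the identification.
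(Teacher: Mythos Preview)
Your proof is correct, and for part $(2)$ it is essentially identical to the paper's argument: boundedness of the trace $w$ from \eqref{eq:diri_bdd} plus the $L^\infty$-contractivity of $e^{\tau\Delta_D^+}$ and the first representation in \eqref{solution}.

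For part $(1)$ you take a genuinely different (and cleaner) route. The paper uses the \emph{first} formula in \eqref{solution}, applying the heat semigroup directly to the odd extension of $c_s x_n^{2s}$ and estimating the resulting integral by splitting the domain of $\omega$ and using a mean-value bound on $(1+\omega)^{2s}-(\omega-1)^{2s}$ to show $e^{\tau\Delta}w_o\leq C_s x_n^{2s}$. You instead use the \emph{third} formula in \eqref{solution}, which replaces the unbounded trace $c_s x_n^{2s}$ by the constant datum $H^s w=\theta$, so the object to control is simply $e^{r\Delta_D^+}1(x)=\operatorname{erf}(x_n/(2\sqrt r))$. Your splitting at $r=x_n^2$ with the envelope $\min(1,Cx_n/\sqrt r)$ is immediate and isolates precisely where $s<1/2$ enters (convergence of $\int^\infty r^{s-3/2}\,dr$). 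The paper in fact performs exactly this $\operatorname{erf}$ computation in the proof of the next lemma (Lemma~\ref{lem:nablaW}), so your approach anticipates that step and avoids the somewhat delicate pointwise estimate of the semigroup acting on $x_n^{2s}$. The paper's route, on the other hand, yields the slightly sharper intermediate statement that $e^{\tau\Delta}w_o(t-\tau,x)\leq C_s x_n^{2s}$ uniformly in $\tau$, which is not needed here but could be of independent use.
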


\begin{proof}
After dividing by $\theta$, we can assume that $\theta=1$.
Recall that the solution $W$ to \eqref{eq:diri_dbb_exten} is given by
\begin{align*}
W(t,x,y) &= \frac{y^{2s}}{4^s \Gamma(s)} \int^{\infty}_0 e^{-y^2/(4\tau)} e^{\tau \Delta} w_o(t-\tau,x)\, \frac{d\tau}{\tau^{1+s}}
\end{align*}
where $w_o$ denotes the odd reflection of $w$ with respect to the $x_n$ variable.

Consider first the case of $s<1/2$. Then $w(t,x) = c_s x^{2s}_n$ and
\begin{align*}
    e^{\tau \Delta} &w_o(t-\tau,x) \\
    & = \frac{C}{\tau^{1/2}}\bigg[\int^{x_n}_{-\infty} e^{-z^2_n /(4\tau)} (x_n-z_n)^{2s}\,dz_n
    - \int^{\infty}_{x_n} e^{-z^2_n / (4\tau)} (z_n-x_n)^{2s}\, dz_n\bigg] \\
    &= \frac{Cx^{2s+1}_n}{\tau^{1/2}}\bigg[\int^{1}_{-\infty} e^{-x^2_n \omega^2 / (4\tau)} (1-\omega)^{2s}\, d\omega -
     \int^{\infty}_{1} e^{-x^2_n \omega^2 / (4\tau)} (\omega-1)^{2s}\,d\omega\bigg] \\
         &= \frac{Cx^{2s+1}_n}{\tau^{1/2}}\bigg[\int_{-1}^{\infty} e^{-x^2_n \omega^2 / (4\tau)} (1+\omega)^{2s}\, d\omega -
     \int^{\infty}_{1} e^{-x^2_n \omega^2 / (4\tau)} (\omega-1)^{2s}\,d\omega\bigg] \\
   &\leq \frac{Cx^{2s+1}_n}{\tau^{1/2}} \bigg[\int^{2}_{-1} e^{-x^2_n\omega^2 /(4\tau)}\, d\omega
   +\int^{\infty}_{2} e^{-x^2_n\omega^2 /(4\tau)} [(1+\omega)^{2s}-(\omega-1)^{2s}]\, d\omega\bigg].
\end{align*}
The first integral above can be estimated by
\begin{equation}\label{eq:xn}
 \int^{\infty}_{-\infty}e^{-x^2_n\omega^2 /(4\tau)}\,d \omega = C \frac{\tau^{1/2}}{x_n}.
 \end{equation}
For the second integral we use the mean value theorem to estimate
$(1+\omega)^{2s} - (\omega -1)^{2s} \leq C$, whenever $2< \omega < \infty$. 
Therefore, by applying again \eqref{eq:xn}, we conclude that
$$e^{\tau \Delta} w_o(t-\tau,x)\leq C_sx_n^{2s}.$$
Hence, from the explicit formula for $W$ we conclude $(1)$.

For the case when $s\geq1/2$, notice that $w$ in \eqref{eq:diri_bdd} is bounded,
so that there exists $C_s>0$ such that $|e^{\tau\Delta_D^+}w(t-\tau,x)|\leq C_s$ for all $t\in\R$, $\tau>0$ and $x\in\R^n_+$.
Whence $(2)$ follows from the explicit formula for $W$.
\end{proof}

\begin{lem}\label{lem:nablaW}
The solution $W(t,x,y)$ to \eqref{eq:diri_dbb_exten} satisfies the following estimates, 
\begin{enumerate}[$(1)$]
    \item If $s<1/2$, then $|\partial_{x_n} W(t,x,y)| \leq C y^{2s-1}$ for all $(t,x,y)\in (Q^+_1)^* $, where $C>0$ depends only on $s$ and $\theta$.
    \item If $s= 1/2$ then $|\partial_{x_n} W(t,x,y)| \leq C |\log (x_n^2+y^2)|$ for all $(t,x,y)\in (Q^+_{1/2})^* $, where $C>0$ depends only on $s$ and $\theta$.
    \item If $s > 1/2$ then $|\partial_{x_n} W(t,x,y)| \leq C$
    for all $(t,x,y)\in (Q^+_1)^* $, where $C>0$ depends only on $s$ and $\theta$.
\end{enumerate}
\end{lem}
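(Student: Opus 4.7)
My plan is to start from the representation \eqref{solution} of $W$ specialized to our setting. Since $H^su$ equals (up to a universal constant depending only on $s$) $\theta g$ with $u(t,x)=W(t,x,0)$, the second line of \eqref{solution} gives
\[
W(t,x,y)=\frac{c_s\theta}{\Gamma(s)}\int_0^\infty e^{-y^2/(4r)}\,e^{r\Delta^+_D}g(x)\,\frac{dr}{r^{1-s}}.
\]
Both $g$ and $W$ depend only on $x_n$, so I will differentiate under the integral. Writing $e^{r\Delta^+_D}g(x_n)=\int_0^\infty K^+_r(x_n,z_n)\,g(z_n)\,dz_n$ with the half-line Dirichlet kernel $K^+_r$, the key identity
\[
\partial_{x_n}K^+_r(x_n,z_n)=-\partial_{z_n}\tilde K_r(x_n,z_n),
\]
where $\tilde K_r$ is the corresponding Neumann (even-reflection) kernel, allows me to integrate by parts in $z_n$ and reduce each case to an explicit $r$-integral.

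In case $(1)$, $g\equiv1$, so only the boundary term at $z_n=0$ survives and equals $\tilde K_r(x_n,0)=\frac{1}{\sqrt{\pi r}}e^{-x_n^2/(4r)}$. This leads to
\[
\partial_{x_n}W(t,x,y)=c\theta\int_0^\infty e^{-(x_n^2+y^2)/(4r)}\,r^{s-3/2}\,dr,
\]
and the substitution $u=(x_n^2+y^2)/(4r)$ evaluates the integral in terms of the Gamma function, yielding the closed form $\partial_{x_n}W=c_s\theta(x_n^2+y^2)^{s-1/2}$, the constant involving $\Gamma(\tfrac12-s)>0$ (finite since $s<\tfrac12$). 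Because $s-\tfrac12<0$ and $x_n^2+y^2\ge y^2$, the claimed bound $|\partial_{x_n}W|\le Cy^{2s-1}$ follows at once.

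In case $(2)$, $g=\chi_{[0,1]}$ produces boundary contributions at both $z_n=0$ and $z_n=1$, giving
\[
\partial_{x_n}W=c\theta\int_0^\infty\bigl[2e^{-(x_n^2+y^2)/(4r)}-e^{-((x_n-1)^2+y^2)/(4r)}-e^{-((x_n+1)^2+y^2)/(4r)}\bigr]\frac{dr}{r}.
\]
Pairing and applying the Frullani identity $\int_0^\infty(e^{-b/(4r)}-e^{-a/(4r)})\,dr/r=\log(a/b)$ collapses this to
\[
\partial_{x_n}W(t,x,y)=c\theta\,\log\frac{((x_n-1)^2+y^2)((x_n+1)^2+y^2)}{(x_n^2+y^2)^2}.
\]
On $(Q^+_{1/2})^*$ both $(x_n\pm1)^2+y^2$ lie between positive constants, so the logarithm splits as $O(1)-2\log(x_n^2+y^2)$; since $x_n^2+y^2\le\tfrac12$ there, the constant is absorbed into $|\log(x_n^2+y^2)|\ge\log 2$, giving the stated bound.

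In case $(3)$, the same integration by parts produces $\partial_{x_n}W=c_s\theta\int_0^\infty B(x_n,y,r)\,r^{s-3/2}\,dr$, where $B$ denotes the same combination of three exponentials. I split the integral as $\int_0^1+\int_1^\infty$. The first part is controlled by $|B|\le4$ together with the integrability of $r^{s-3/2}$ near $0$, which uses $s>\tfrac12$. On the second, each individual $\int_1^\infty e^{-a/(4r)}r^{s-3/2}\,dr$ diverges, since $e^{-a/(4r)}\to1$ and $r^{s-3/2}$ is not integrable at infinity, so cancellation is essential: a second-order Taylor expansion combined with the algebraic identity $-2(x_n^2+y^2)+((x_n-1)^2+y^2)+((x_n+1)^2+y^2)=2$ gives $B(x_n,y,r)=\tfrac{1}{2r}+O(1/r^2)$ uniformly for $(x_n,y)$ bounded, so that $B\cdot r^{s-3/2}=O(r^{s-5/2})$, which is integrable at infinity since $s<\tfrac32$. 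Combining both estimates yields $|\partial_{x_n}W|\le C$ on $(Q^+_1)^*$. This cancellation analysis in case $(3)$ is the main delicate point: the slow decay of the Gaussians at $r=\infty$ must be balanced against the vanishing of the leading Taylor coefficients of $B$ to obtain integrability.
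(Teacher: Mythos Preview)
Your argument is correct, and for cases $(1)$ and $(2)$ it lands on exactly the same closed forms as the paper, namely $\partial_{x_n}W=c_s\theta(x_n^2+y^2)^{s-1/2}$ and $\partial_{x_n}W=c\theta\bigl(\log((1+x_n)^2+y^2)+\log((1-x_n)^2+y^2)-2\log(x_n^2+y^2)\bigr)$. The difference is in the route. The paper treats the three cases with three different devices: in $(1)$ it first applies Fubini to rewrite $W$ as an integral against the Poisson-type kernel $(y^2+(x_n-z_n)^2)^{-(1-2s)/2}-(y^2+(x_n+z_n)^2)^{-(1-2s)/2}$ and only then differentiates; in $(2)$ it inserts a regularizing $\chi_{(0,1)}(r)$ to produce the logarithmic kernel, integrates by parts to get $W$ explicitly, and then differentiates; and in $(3)$ it abandons the $g$-representation altogether and instead uses the first Poisson formula $W=\frac{y^{2s}}{4^s\Gamma(s)}\int e^{-y^2/(4\tau)}e^{\tau\Delta^+_D}w\,\tau^{-1-s}\,d\tau$, propagating the pointwise bound $|\partial_{x_n}w|\le C$ (established earlier from the explicit formula for $w$) through the semigroup.

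Your approach is more uniform: a single identity $\partial_{x_n}K^+_r=-\partial_{z_n}\tilde K_r$ converts $\partial_{x_n}W$ in all three cases into the same scalar integral $\int_0^\infty B(x_n,y,r)\,r^{s-3/2}\,dr$, and the case split enters only when evaluating that integral (Gamma function, Frullani, Taylor cancellation). This buys you a cleaner structure and avoids relying on the detailed analysis of $w$ in case $(3)$. The paper's route in $(3)$ has its own economy: once $|\partial_{x_n}w|\le C$ is known, the boundedness of $\partial_{x_n}W$ is immediate from the Poisson formula with no need for the large-$r$ cancellation you carry out. Either approach is fine; yours is self-contained, the paper's reuses prior work on the trace.
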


\begin{proof}
The solution $W$ to \eqref{eq:diri_dbb_exten} for $\theta=1$ is given by
\begin{equation}\label{eq:Wo}
\begin{aligned}
W(t,x,y) &= \frac{y^{2s}}{4^s \Gamma(s)} \int^{\infty}_0 e^{-y^2/(4\tau)} e^{\tau \Delta^+_D} w(t-\tau,x)\, \frac{d\tau}{\tau^{1+s}} \\
&= \frac{1}{\Gamma(s)} \int^{\infty}_0 e^{-y^2/(4 \tau)}e^{\tau \Delta_D^+} g(t-\tau,x)\,\frac{d\tau}{\tau^{1-s}}.
\end{aligned}
\end{equation}

Consider first the case of $s<1/2$. Using the second formula in \eqref{eq:Wo} and the fact that $g$ depends only on $x_n$, we get that
$$W(t,x,y) =C_s\int_0^\infty e^{-y^2/(4\tau)}\int_0^\infty\bigg(\frac{e^{-|x_n-z_n|^2/(4\tau)}}{\tau^{1/2}}
-\frac{e^{-|x_n+z_n|^2/(4\tau)}}{\tau^{1/2}}\bigg)g(z_n)\,dz_n\,\frac{d\tau}{\tau^{1-s}}.$$
We would like to apply Fubini's Theorem above. Since $g$ is bounded and $x_n,z_n>0$, we only need to check that
$$0\leq I:=\int_0^\infty e^{-y^2/(4\tau)}\int_0^\infty\bigg(\frac{e^{-|x_n-z_n|^2/(4\tau)}}{\tau^{1/2}}
-\frac{e^{-|x_n+z_n|^2/(4\tau)}}{\tau^{1/2}}\bigg)\,dz_n\,\frac{d\tau}{\tau^{1-s}}<\infty.$$
Indeed
\begin{align*}
I &=  \int^\infty_0 \frac{e^{-y^2/(4 \tau)}}{\tau^{1/2}} \bigg[ \int^\infty_0 e^{-(x_n-z_n)^2/(4 \tau)} dz_n - \int^0_{-\infty}e^{-(x_n-z_n)^2/(4 \tau)} dz_n \bigg] \frac{d \tau}{\tau^{1-s}}\\
    &=  \int^\infty_0 e^{-y^2/(4 \tau)}\bigg[\int^{x_n/(2 \sqrt{\tau})}_{-\infty} e^{-\omega^2}\, d \omega -  \int^\infty_{x_n/(2 \sqrt{\tau})}e^{-\omega^2}\, d \omega  \bigg]\frac{d \tau}{\tau^{1-s}}\\
    &=  \int^\infty_0 e^{-y^2/(4 \tau)}\operatorname{erf}(x_n/(2 \sqrt{\tau}))\frac{d \tau}{\tau^{1-s}}
\end{align*}
where we have denoted $\displaystyle\operatorname{erf}(r)=\int_{-r}^re^{-\omega^2}\,d\omega$.
One one hand, if $0<\tau<1$ then $\operatorname{erf}(x_n/(2 \sqrt{\tau})) < C$, so that
$$\int^1_0 e^{-y^2/(4 \tau)}\operatorname{erf}(x_n/(2 \sqrt{\tau}))\,\frac{d \tau}{\tau^{1-s}} \leq C\int^1_0 \,\frac{d \tau}{\tau^{1-s}}<\infty.$$
On the other hand, when $\tau$ is large, by using the Taylor expansion of $e^{-\omega^2}$, we can estimate
$\operatorname{erf}(x_n/(2 \sqrt{\tau})) \sim Cx_n/(2 \sqrt{\tau})$ so we have
$$\int_1^\infty e^{-y^2/(4\tau)}\operatorname{erf}(x_n/(2 \sqrt{\tau}))\,\frac{d\tau}{\tau^{1-s}}\leq
Cx_n\int^\infty_1 \tau^{s-1/2}\, \frac{d \tau}{\tau}<\infty.$$
Hence $I$ is convergent. Thus, for each fixed $(t,x,y)$, after Fubini's Theorem, 
\begin{align*}
     W(t,x,y) &=C_s \int_{0}^\infty g(z_n) \int^\infty_0\bigg(\frac{e^{-(y^2+|x_n-z_n|^2)/(4\tau)}}{\tau^{1/2-s}}-\frac{e^{-(y^2+|x_n+z_n|^2)/(4\tau)}}{\tau^{1/2-s}}\bigg)\,\frac{d\tau}{\tau} \, dz_n \\
     &= C_s \int^\infty_0\bigg( \frac{1}{(y^2+(x_n-z_n)^2)^{(1-2s)/2}}- \frac{1}{(y^2+(x_n+z_n)^2)^{(1-2s)/2}}\bigg)dz_n.
\end{align*}
Since $s<1/2$, it is easy to check that we can differentiate inside the integral to finally obtain
$$\partial_{x_n}W(t,x,y) = \frac{C_s}{(x^2_n+y^2)^{(1-2s)/2}} $$
from which the estimate in $(1)$ follows.

For $s=1/2$, we use the second formula in \eqref{eq:Wo} and a similar computation as in \cite{users} to find that,
since $g_o$ is independent of $t$ and has zero mean,
\begin{align*}
    W(t,x,y) &= \frac{1}{2\pi}\int^{\infty}_0 \int^{\infty}_{-\infty}e^{-(y^2+z^2_n)/(4\tau)} g_o(t-\tau,x_n-z_n) \, dz_n \,\frac{d\tau}{\tau} \\
    &=\frac{1}{2\pi}\int^{\infty}_0 \int^{\infty}_{-\infty}e^{-r(y^2+z^2_n)} g_o(x_n-z_n) \, dz_n\, \frac{dr}{r}\\
    &=\frac{1}{2\pi}\int^{\infty}_0 \int^{\infty}_{-\infty}\big(e^{-r(y^2+z^2_n)}-\chi_{(0,1)}(r) \big) g_o(x_n-z_n) \, dz_n \,\frac{dr}{r} \\
    &=-\frac{1}{2\pi}\int^\infty_{-\infty} \log\big((x_n-z_n)^2+y^2\big)g_o(z_n)\, dz_n \\
    &=\frac{1}{2\pi}\int^\infty_{0}\Big[\log\big((x_n+z_n)^2+y^2\big)-\log\big((x_n-z_n)^2+y^2\big)\Big]g(z_n)\, dz_n .
\end{align*}
Next, since $g(z_n)=\chi_{[0,1]}(z_n)$, by using integration by parts,
\begin{align*}
    W(t,x,y)&= (1+x_n) \log((1+x_n)^2+y^2) - (1-x_n) \log((1-x_n)^2+y^2)  - 2x_n \log(x_n^2+y^2) \\
    &\quad + 2y \arctan((1+x_n)/y)-2y \arctan((1-x_n)/y) - 4y \arctan(x_n/y)
\end{align*}
Therefore,
$$\partial_{x_n}W(t,x,y)=\log((1+x_n)^2+y^2)+ \log((1-x_n)^2+y^2)- 2\log(x^2_n + y^2)$$
from which $(2)$ follows.

To prove $(3)$ for $s>1/2$, we notice that 
$$w(t,x)=
\begin{cases}
 c_s\big[ 2x^{2s}_n +(1-x_n)^{2s} -(1+x_n)^{2s}\big] \quad \hbox{for}~ 0<x_n<1, \\
c_sx_n^{2s}\big[ 2 - (1-1/x_n)^{2s} - (1+1/x_n)^{2s} \big] \quad \hbox{for}~ x_n\geq 1
\end{cases}
$$
Then, for  $0<x_n<1$,
$$\partial_{x_n}w(t,x)=c_s\big[ 2x^{2s-1}_n -(1-x_n)^{2s-1} -(1+x_n)^{2s-1}\big]$$
and, for $x_n\geq1$,
\begin{multline*}
\partial_{x_n}w(t,x)=
c_sx_n^{2s-1}\big[ 2 - (1-1/x_n)^{2s} - (1+1/x_n)^{2s} \big] \\
+ c_sx_n^{2s-2}\big[ (1-1/x_n)^{2s-1} + (1+1/x_n)^{2s-1}\big].
\end{multline*}
Now using the estimate for $\eta_{s2}(1/x_n)$ in \eqref{eq:etas},
we conclude that $\partial_{x_n}w \sim C$ as $x_n\to 0$, and $\partial_{x_n} w \sim x^{2s-2}_n$ as $x_n \to \infty$. Then we see that $|\partial_{x_n}w|$ is bounded everywhere. From here and the first formula in \eqref{eq:Wo}
is it easy to check that $|\partial_{x_n}(e^{\tau\Delta_D^+}w(t-\tau,x)|\leq C$ for all $\tau>0$ and $(t,x)\in\R\times\R^n_+$, which in turn 
establishes $(3)$.
\end{proof}

\subsection{Boundary regularity in the half space -- Neumann}

In the half space $\R \times \R^n_+$ we consider the heat operator $\partial_t - \Delta^+_N$,
where $\Delta_N^+$ is the Neumann Laplacian in $\R^n_+$.
For a function $u(t,x)$ defined on $\R \times \overline{\R^n_+}$ with $u_{x_n}(t,x',0) = 0$ and $0<s<1$ we define
\begin{align*}
(\partial_t-\Delta^+_N)^su(t,x) &= \frac{1}{\Gamma(-s)} \int^{\infty}_0 \big(e^{\tau \Delta^+_N}u(t-\tau,x) - u(t,x) \big) \frac{d \tau}{\tau^{1+s}}
\end{align*}
where $\{e^{\tau\Delta_N^+}\}_{\tau\geq0}$ is the semigroup generated by $\Delta_N^+$.
As before, let $x^* = (x',-x_n)$ for $x \in \R^n$. Denote by
$u_e(t,x)$ the even extension of $u(t,x)$ about the $x_n$ axis given by
$$u_e(t,x)=
\begin{cases}
u(t,x)&\hbox{if}~x_n \geq 0 \\
u(t,x^*)=u(t,x',-x_n)&\hbox{if}~x_n <0.
\end{cases}$$
Now
\begin{align*}
e^{\tau \Delta^+_N} u(t-\tau,x) &= e^{\tau \Delta} u_e(t-\tau,x) \\
&= \frac{1}{(4 \pi \tau)^{n/2}} \int_{\R^n_+}\left( e^{-|x-z|^2/(4 \tau)}+e^{-|x-z^*|^2/(4 \tau)} \right)u(t-\tau,z)\, dz
\end{align*}
for any $\tau>0$, $x \in \R^n_+$. Hence, for $x\in \R^n_+,$
\begin{multline*}
(\partial_t-\Delta^+_N)^su(t,x)= \\
\frac{1}{(4 \pi)^{n/2} \Gamma(-s)}\int^{\infty}_0 \int_{\R^n_+} \left(\frac{e^{-|x-z|^2/(4 \tau)}+
e^{-|x-z^*|^2/(4 \tau)}}{\tau^{n/2+1+s}}\right) (u(t-\tau,z) - u(t,x))\, dz\,d\tau
\end{multline*}
and
\begin{multline*}
(\partial_t - \Delta^+_N)^{-s} f(t,x) = \frac{1}{\Gamma(s)} \int^{\infty}_0 e^{-\tau(\partial_t - \Delta^+_N) }f(t,x) \frac{d \tau }{\tau^{1-s}} \\
=  \frac{1}{(4\pi)^{n/2}\Gamma(s)} \int^{\infty}_0\int_{\R^n_+}\left(\frac{e^{-|x-z|^2/(4 \tau)}+e^{-|x-z^*|^2/(4 \tau)}}{\tau^{n/2+1-s}} \right)f(t-\tau,z) \,dz\,d\tau.
\end{multline*}

\begin{thm}[Boundary regularity in half space -- Neumann]\label{thm:half_neumann}
Let $u,f\in L^\infty(\R\times\R^n_+)$ be such that $\int_{\R^n_+} f(t,x)\,dx=0$ for all $t\in\R$ and
$$\begin{cases}
(\partial_t-\Delta_N^+)^su=f&\hbox{in}~\R\times\R^n_+\\
-u_{x_n}=0&\hbox{on}~\R\times\partial\R^n_+.
\end{cases}$$
\begin{enumerate}[$(1)$]
    \item Suppose that $f \in C^{\alpha/2, \alpha}(\R \times \overline{\R^n_+})$ for $0< \alpha \leq 1$.
    \begin{enumerate}[$(a)$]
        \item If $\alpha + 2s$ is not an integer then $u \in C^{\alpha/2 + s, \alpha+2s}(\R \times \overline{\R^n_+})$ with the estimate 
         $$\norm{u}_{C^{\alpha/2 + s, \alpha+2s}(\R \times \overline{\R^n_+})} \leq
         C \big( \norm{f}_{C^{\alpha/2, \alpha}(\R \times \overline{\R^n_+})} + \norm{u}_{L^{\infty}(\R \times \overline{\R^n_+})} \big).$$
        \item If $\alpha + 2s=1$ then $u(t,x)$ is in the H\"older--Zygmund space $\Lambda^{1/2,1}(\R \times \overline{\R^n_+})$
        with the estimate 
        $$\norm{u}_{\Lambda^{1/2,1}(\R \times \overline{\R^n_+})}
        \leq C \big( \norm{f}_{C^{\alpha/2, \alpha}(\R \times \overline{\R^n_+})} + \norm{u}_{L^{\infty}(\R \times \overline{\R^n_+})} \big).$$
    \end{enumerate}
    The constants $C>0$ depend only on $n$, $s$ and $\alpha$.
    \item Let $f \in L^{\infty}(\R \times \overline{\R^n_+})$.
    \begin{enumerate}[$(a)$]
        \item If $s\neq1/2$ then $u \in C^{s,2s}(\R \times \overline{\R^n_+})$ with the estimate
        $$\norm{u}_{C^{s, 2s}(\R \times \overline{\R^n_+})}
        \leq C \big( \norm{f}_{L^{\infty}(\R \times \overline{\R^n_+})} + \norm{u}_{L^{\infty}(\R \times \overline{\R^n_+})} \big).$$ 
        \item If $s = 1/2$ then $u$ is in the H\"older--Zygmund space $\Lambda^{1/2, 1}(\R \times \overline{\R^n_+})$ with the estimate 
        $$\norm{u}_{\Lambda^{1/2,1}(\R \times \overline{\R^n_+})} \leq
        C \big( \norm{f}_{L^{\infty}(\R \times \overline{\R^n_+})} + \norm{u}_{L^{\infty}(\R \times \overline{\R^n_+})} \big).$$
    \end{enumerate} 
    The constants $C>0$ above depend only on $n$ and $s$.
\end{enumerate}
\end{thm}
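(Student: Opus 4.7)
The plan is to reduce the Neumann boundary case to the whole-space result, Proposition \ref{prop:global_bdd_diri}, via even reflection across the hyperplane $\{x_n=0\}$, in complete analogy with the odd-reflection argument used for the Dirichlet case in Theorem \ref{thm:global_bdd_diri}. Specifically, let $u_e$ and $f_e$ denote the even extensions of $u$ and $f$ with respect to the $x_n$ variable, so that $u_e, f_e \in L^\infty(\R^{n+1})$. The key claim is that $u_e$ satisfies
$$(\partial_t - \Delta)^s u_e = f_e \qquad \text{in } \R^{n+1}.$$

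To verify the claim, I use the pointwise semigroup formula together with the representation of the Neumann heat kernel on $\R^n_+$ as the restriction of the free heat kernel applied to the even extension. Concretely, for $x \in \R^n_+$ and $\tau > 0$ one checks
$$e^{\tau \Delta} u_e(t-\tau, x) = e^{\tau \Delta_N^+} u(t-\tau, x),$$
so that $(\partial_t - \Delta)^s u_e(t,x) = (\partial_t - \Delta_N^+)^s u(t,x) = f(t,x) = f_e(t,x)$. For $x_n < 0$, the identity $u_e(t,x) = u_e(t,x^*)$ together with the same kernel identity applied at $x^*$ gives $(\partial_t - \Delta)^s u_e(t,x) = f(t,x^*) = f_e(t,x)$. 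The case $x_n=0$ follows by continuity; note that the condition $u_{x_n}(t,x',0) = 0$ ensures that $u_e$ is at least $C^1$ across the hyperplane, so there are no distributional singularities from the reflection. Next, the even extension $f_e$ inherits the H\"older regularity of $f$ on all of $\R^{n+1}$: for $x, z \in \R^n_+$ one has $|f_e(t,x)-f_e(s,z^*)| = |f(t,x)-f(s,z)|$ while $|x-z^*| \geq |x-z|$, and therefore $\|f_e\|_{C^{\alpha/2,\alpha}(\R^{n+1})} \leq C \|f\|_{C^{\alpha/2,\alpha}(\R \times \overline{\R^n_+})}$ (and similarly for the $L^\infty$ case).

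Having established the equation in $\R^{n+1}$ with the appropriate regularity of $f_e$, I apply Proposition \ref{prop:global_bdd_diri} to $u_e$ to obtain the $C^{\alpha/2+s,\alpha+2s}$ or $\Lambda^{1/2,1}$ bound on $\R^{n+1}$ in the corresponding subcases. Restricting back to $\R \times \overline{\R^n_+}$ yields the desired estimates for $u$. Note that, unlike the Dirichlet case, no vanishing trace condition on $f$ at $\{x_n=0\}$ is required here, since even reflection always preserves H\"older and Zygmund regularity across the boundary; this is why the Neumann statement is cleaner than its Dirichlet counterpart. The main technical point is the verification of the equation for $u_e$ at and near $\{x_n=0\}$: the mean-zero hypothesis $\int_{\R^n_+}f(t,x)\,dx=0$ guarantees that the definition of $(\partial_t - \Delta_N^+)^s$ through the semigroup formula is consistent with the weak formulation of Section \ref{sec:Hs_def} (so there is no ambiguity from the zero eigenvalue of $-\Delta_N^+$), while the $C^1$ matching of $u_e$ across $\{x_n=0\}$ prevents any spurious boundary contribution in the pointwise formula. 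Once the identity $(\partial_t-\Delta)^s u_e = f_e$ is established, the rest is a direct citation of Proposition \ref{prop:global_bdd_diri}.
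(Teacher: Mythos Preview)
Your proposal is correct and follows essentially the same approach as the paper: even-reflect $u$ and $f$ in $x_n$, verify via the semigroup formula that $(\partial_t-\Delta)^s u_e=f_e$ on $\R^{n+1}$ (checking $x_n>0$ and $x_n<0$ separately using $e^{\tau\Delta}u_e=e^{\tau\Delta_N^+}u$), and then invoke Proposition~\ref{prop:global_bdd_diri}. Your additional remarks on preservation of H\"older regularity under even reflection and the role of the mean-zero condition are reasonable elaborations, but the core argument is identical to the paper's.
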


\begin{proof}
We show this result by noticing that if $f_e$ and $u_e$ are the even reflections of $f$ and $u$ with respect to the variable $x_n$, respectively,
then $(\partial_t - \Delta)^s u_e=f_e$ in $\R^{n+1}$, so that Proposition \ref{prop:global_bdd_diri} applies.
From the pointwise formula we see that $(\partial_t-\Delta)^su_e(t,x) = (\partial_t-\Delta^+_N)^s u(t,x)=f(t,x)=f_e(t,x)$ for $x \in \R^n_+$.
Now, for $x \in \R^n$ is such that $x_n < 0$,
\begin{align*}
    (\partial_t -\Delta)^su_e(t,x) &= \frac{1}{\Gamma(-s)} \int^{\infty}_0 \left( e^{\tau \Delta}u_e(t-\tau,x) - u_e(t,x) \right)\frac{d \tau}{\tau^{1+s}} \\
    &= \frac{1}{\Gamma(-s)} \int^{\infty}_0 \left( e^{\tau \Delta^+_N}u(t-\tau,x^*) - u(t,x^*) \right)\frac{d \tau}{\tau^{1+s}} \\
    &= (\partial_t-\Delta_N^+)^su(t,x^*)= f(t,x^*) = f_e(t,x).
\end{align*}
\end{proof}

\section{Global regularity}\label{sec:regularity_global}

In this Section we present the proofs of Theorems \ref{thm:1.3_f_nonzero}, \ref{thm:1.3_f_zero}, \ref{thm:boundaryNeumannHolder} and \ref{thm:1.5}.

We assume that $\Omega \subset \R^n_+$
is a bounded domain such that its boundary contains a flat portion on $\{x_n = 0 \}$ in such a way that $B^+_1 \subset \Omega$. 

We say that $f\in L^2(Q_1^+)$ is in $L^{\alpha/2, \alpha}_+(0,0)$, $0<\alpha\leq1$, if
 \begin{equation}\nonumber
    [f]^2_{L^{2,\alpha/2,\alpha}_+(0,0)} := \sup_{0 < r \leq 1} \frac{1}{r^{n+2+2 \alpha}} \int_{Q^+_r} |f-f(0,0)|^2\,dt\, dx< \infty 
 \end{equation}
 where $\displaystyle f(0,0)= \lim_{r \rightarrow 0} \frac{1}{|Q_r^+|} \int_{Q_r^+} f(t,x) \, dt \, dx $.
 
 Theorem \ref{thm:1.3_f_nonzero} follows from the
 next result after flattening the boundary, translation and rescaling, and by taking into account estimate \eqref{eq:L2vsHs}
 and the properties of half space solutions, see subsection \ref{subsection:halfspace},
 and Theorem \ref{thm:Campanato}. 
 
\begin{thm}\label{thm:bdd_holder_1_f_non-zero}
Let $u\in\Dom(H^s)$ be a solution to \eqref{eq:Hsuf} with Dirichlet boundary condition
and assume that $f\in L^{\alpha/2,\alpha}_+(0,0)$, for some $0< \alpha<1$.
Let $w$ be the half space solution to \eqref{eq:half_space}.
\begin{enumerate}[$(1)$]
    \item Assume that $ 0 < \alpha + 2s <1$. There exist $0< \delta < 1$, depending only on $n$, ellipticity, $\alpha$ and $s$, and a constant $C_1 >0$ such that if 
$$\sup_{0 < r \leq 1} \frac{1}{r^{n+2\alpha}} \int_{B^+_r} |A(x) - A(0)|^2\,dx < \delta ^2$$
then  
$$ \frac{1}{r^{n+2}}\int_{Q^+_r} |u(t,x)-f(0,0)w(t,x)|^2 \, dt \, dx \leq C_1 r^{2(\alpha + 2s)} $$
for all $r>0$ small. Moreover, 
$$C^{1/2}_1 \leq C_0\big(1+\|u\|_{\Dom(H^s)} + |f(0,0)| + [f]_{L^{\alpha/2,\alpha}_+(0,0)} \big) $$
where $C_0>0$ depends on $A(x)$, $n$, $s$, $\alpha$ and ellipticity.
\item Assume that $s = 1/2$ and $1<\alpha + 2s <2$. Let $0<\varepsilon<1/2$ such that
$0<\alpha+\varepsilon<1$. There exists $0<\delta <1$, depending only on $n$, ellipticity, $\alpha$ and $s$, and a constant $C_1>0$ such that if 
$$\sup_{0 < r \leq 1/2} \frac{1}{r^{n+2(\alpha+\varepsilon)}} \int_{B^+_r}|A(x) - A(0)|^2 \,dx < \delta ^2$$
then there exists a linear function $l(x) =  \mathcal{B} \cdot x$ such that 
$$ \frac{1}{r^{n+2}}\int_{Q^+_r} |u(t,x)-f(0,0) w(t,x)-l(x)|^2 \, dt \, dx \leq C_1 r^{2(\alpha + 1)}$$
for all $r>0$ small. Moreover, 
$$C^{1/2}_1 + |\mathcal{B}| \leq C_0\big(1+\|u\|_{\Dom(H^s)} + |f(0,0)| + [f]_{L^{\alpha/2,\alpha}_+(0,0)} \big)$$
where $C_0>0$ depends on $A(x)$, $n$, $s$, $\alpha$ and ellipticity.
\item Assume that $s > 1/2$ and $1< \alpha + 2s<2$. There exists $0< \delta < 1,$ depending only on $n$, ellipticity, $\alpha$ and $s$, and a constant $C_1 >0$ such that if 
$$\sup_{0 < r \leq 1} \frac{1}{r^{n+2(\alpha + 2s-1)}} \int_{B^+_r} |A(x) - A(0)|^2 \,dx < \delta ^2$$
 then there exists a linear function $l(x) =  \mathcal{B} \cdot x$ such that 
$$ \frac{1}{r^{n+2}}\int_{Q^+_r} |u(t,x)-f(0,0) w(t,x)-l(x)|^2 \, dt \, dx \leq C_1 r^{2(\alpha + 2s)}$$
for all $r>0$ small. Moreover, 
$$C^{1/2}_1 + |\mathcal{B}| \leq C_0\big(1+\|u\|_{\Dom(H^s)} + |f(0,0)| + [f]_{L^{\alpha/2,\alpha}_+(0,0)} \big)$$
where $C_0>0$ depends on $A(x)$, $n$, $s$, $\alpha$ and ellipticity.
\end{enumerate}
\end{thm}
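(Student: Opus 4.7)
The plan is to adapt the compactness-and-iteration scheme of the interior case (Theorems~\ref{thm:thm_1_holder}--\ref{thm:thm_2_holder}) to the boundary setting, with the crucial new ingredient being the subtraction of the explicit half-space solution $f(0,0)w(t,x)$ that captures the Dirichlet boundary singularity of $u$. Via Theorem~\ref{th_exten} I first recast $(\partial_t+L)^su=f$ as the extension problem for $U(t,x,y)$ on $(Q^+_1)^*$ with $-y^aU_y|_{y=0}=c_sf$ on $Q^+_1$ and $U=0$ on $Q^*_1\cap\{x_n=0\}$. Let $W(t,x,y)$ be the solution of \eqref{eq:diri_dbb_exten} with $\theta=f(0,0)$, i.e.\ the extension of $w$.

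Setting $V=U-W$, I obtain that $V$ is a weak solution on $(Q^+_1)^*$ to
\begin{equation*}
\begin{cases}
y^a\partial_tV-\dvie(y^aB(x)\nabla V)=-\dvie(y^aG)&\text{in }(Q^+_1)^*\\
-y^aV_y|_{y=0}=c_s\bigl(f(t,x)-f(0,0)g(x_n)\bigr)&\text{on }Q^+_1\\
V=0&\text{on }Q^*_1\cap\{x_n=0\}
\end{cases}
\end{equation*}
where $G=f(0,0)(B(x)-I)\nabla W$ and $g$ is as in \eqref{eq:g_func}. Since $g(x_n)=1$ in a neighborhood of $x_n=0$, the Neumann source $f-f(0,0)g$ vanishes at the origin at rate $r^\alpha$ by the $L^{\alpha/2,\alpha}_+(0,0)$ assumption on $f$, while $G$ is small because of the reduction $A(0)=I$ together with the assumed modulus of continuity of $A$.

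I would then introduce $\delta$-normalized solutions in the spirit of Section~\ref{sec:regularity_interior}, adapted to half-cubes, with the usual normalizations on $V$, on $f-f(0,0)g$, on the modulus of continuity of $B(x)$, and a matching bound on the weighted $L^2$-norm of $G$ obtained through Lemmas~\ref{lem:W}--\ref{lem:nablaW}. Corollary~\ref{cor:boundaryapprox} then produces an approximating $\tilde W$ solving the homogeneous degenerate heat equation on $(Q^+_{3/4})^*$ with Dirichlet at $\{x_n=0\}$ and Neumann at $\{y=0\}$, for which the boundary smoothness is provided by Corollary~\ref{cor:bdd_reg}. Because $\tilde W\equiv 0$ on $\{x_n=0\}$, in part~(1) one has $\tilde W(0,0,0)=0$ so no approximating constant is needed, while in parts~(2) and~(3) the only surviving component of $\nabla\tilde W(0,0,0)$ is the normal one, leading to $\ell(x)=\partial_{x_n}\tilde W(0,0,0)\,x_n$. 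Rescaling and iterating exactly as in Lemmas~\ref{lem:induc_step_1}--\ref{lem:induction} for part~(1), and Lemmas~\ref{lem:poly_step_1}--\ref{lem:poly_induction} for parts~(2)--(3), produces the Campanato-type decay, with the linear approximants $\mathcal{B}_k$ converging as a telescoping sum whose limit gives the final $\mathcal{B}$.

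The principal obstacle is the control of the weighted norm of $G=f(0,0)(B(x)-I)\nabla W$, which forces the three distinct regularity hypotheses on $A(x)$. Lemma~\ref{lem:nablaW} shows that $|\nabla W|$ is bounded for $s>1/2$, grows like $|\log(x_n^2+y^2)|$ for $s=1/2$, and like $y^{2s-1}$ for $s<1/2$. In the $s=1/2$ case the logarithm cannot be absorbed by an $\alpha$-H\"older modulus of $B(x)-I$, which is precisely why part~(2) demands the extra $\varepsilon$ of regularity; in the other two cases the blow-up or decay of $|\nabla W|$ is compensated either by the weight $y^a$ or by the prescribed vanishing rate of $B(x)-I$. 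A further technicality is that $W$ is not a polynomial, so under the iteration rescaling $(t,X)\mapsto(\lambda^{2k}t,\lambda^kX)$ it picks up a non-trivial factor (a pure power $\lambda^{2ks}$ for $s\neq 1/2$, with an additional logarithmic correction when $s=1/2$); this must be tracked carefully so that the contributions match the iteration rate $\lambda^{k(\alpha+2s)}$ (respectively $\lambda^{k(\alpha+1)}$ for $s=1/2$) at every scale.
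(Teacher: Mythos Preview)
Your approach is essentially the same as the paper's: subtract the extension $W$ of the explicit half-space solution, verify that $V=U-W$ satisfies \eqref{eq:bdd_exten_mod} with a divergence source $F=(I-B(x))\nabla W$ obeying a scale-invariant Morrey bound (via Lemma~\ref{lem:nablaW} and the three distinct hypotheses on $A$), then run the half-space analogue of the interior iteration on $V$ using Corollaries~\ref{cor:boundaryapprox} and~\ref{cor:bdd_reg}. Two small points: first, your $G$ should be $(B(x)-I)\nabla W$ without the extra factor $f(0,0)$, since $W$ already carries $\theta=f(0,0)$; second, your final concern about tracking the rescaling of $W$ through the iteration is unnecessary---once you pass to $V$, the iteration never touches $W$ again, and the relevant seminorm $[F]_{\alpha,s}$ is dilation-invariant by construction, so $\tilde F(t,X)=\lambda^{-k(\alpha+2s-1)}F(\lambda^{2k}t,\lambda^kX)$ automatically satisfies the same bound at every step (exactly as in Lemma~\ref{lem:poly_induction}).
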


Similarly, Theorem \ref{thm:1.3_f_zero} is a direct consequence of the following result.

\begin{thm}\label{thm:bdd_holder_1_f_zero}
Let $u\in\Dom(H^s)$ be a solution to \eqref{eq:Hsuf} with Dirichlet boundary condition
and assume that $f\in L^{\alpha/2,\alpha}_+(0,0)$, for some $0< \alpha<1$, and that $f(t,0)=0$ for all $t\in [-1,1]$.
\begin{enumerate}[$(1)$]
    \item Assume that $ 0 < \alpha + 2s <1$. There exist $0< \delta < 1$, depending only on $n$, ellipticity, $\alpha$ and $s$, and a constant $C_1 >0$ such that if 
$$\sup_{0 < r \leq 1} \frac{1}{r^n} \int_{B^+_r} |A(x) - A(0)|^2\,dx < \delta ^2$$
then  
$$ \frac{1}{r^{n+2}}\int_{Q^+_r} |u(t,x)|^2 \, dt \, dx \leq C_1 r^{2(\alpha + 2s)} $$
for all $r>0$ small. Moreover, 
$$C^{1/2}_1 \leq C_0\big(\|u\|_{\Dom(H^s)}+ [f]_{L^{\alpha/2,\alpha}_+(0,0)} \big) $$
where $C_0>0$ depends on $A(x)$, $n$, $s$, $\alpha$ and ellipticity.
\item Assume that $1< \alpha + 2s<2$. There exists $0< \delta < 1,$ depending only on $n$, ellipticity, $\alpha$ and $s$, and a constant $C_1 >0$ such that if 
$$\sup_{0 < r \leq 1} \frac{1}{r^{n+2(\alpha + 2s-1)}} \int_{B^+_r} |A(x) - A(0)|^2 \,dx < \delta ^2$$
 then there exists a linear function $l(x) =  \mathcal{B} \cdot x$ such that 
$$ \frac{1}{r^{n+2}}\int_{Q^+_r} |u(t,x)-l(x)|^2 \, dt \, dx \leq C_1 r^{2(\alpha + 2s)}$$
for all $r>0$ small. Moreover, 
$$C^{1/2}_1 + |\mathcal{B}| \leq C_0\big(\|u\|_{\Dom(H^s)} +  [f]_{L^{\alpha/2,\alpha}_+(0,0)} \big)$$
where $C_0>0$ depends on $A(x)$, $n$, $s$, $\alpha$ and ellipticity.
\end{enumerate}
\end{thm}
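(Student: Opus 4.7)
The plan is to carry out the same compactness-plus-iteration scheme used to establish Theorems~\ref{thm:thm_1_holder} and \ref{thm:thm_2_holder} in the interior setting, adapted to a flat boundary point via the up-to-the-boundary approximation of Corollary~\ref{cor:boundaryapprox} and the up-to-the-boundary regularity of Corollary~\ref{cor:bdd_reg}. First I would invoke Theorem~\ref{th_exten} to pass to the extension $U$, which is a weak solution of the boundary problem \eqref{eq:boundaryQ1} in $(Q_1^+)^*$ with Dirichlet condition $U=0$ on $Q_1^*\cap\{x_n=0\}$ and Neumann data $c_s f$ on $Q_1^+\times\{y=0\}$. Assume $A(0)=I$, and after the usual scaling--normalization as in \eqref{eq:normalization} define a $\delta$-normalized boundary solution by the analogues of conditions (1)--(4) of subsection~\ref{subsec: thm_interior}, with balls replaced by half-balls. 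The crucial structural feature, inherited from the hypothesis that $f$ vanishes on the flat boundary portion and from $f\in L^{\alpha/2,\alpha}_+(0,0)$, is that the parabolic rescaling $\tilde f(t,x)=\lambda^{-k\alpha}f(\lambda^{2k}t,\lambda^k x)$ preserves both $\tilde f(0,0)=0$ and $\tilde f|_{x_n=0}=0$, so the limiting profile in every step of the compactness argument satisfies the homogeneous Dirichlet condition.

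For part~(1) with $0<\alpha+2s<1$, I would prove the base step in the spirit of Lemma~\ref{lem:induc_step_1} with approximating constant $c=0$. Corollary~\ref{cor:boundaryapprox} produces $W$ close to $U$ in $L^2((Q_{3/4}^+)^*,y^a\,dXdt)$ which solves the degenerate heat equation with Dirichlet condition on $\{x_n=0\}$ and Neumann-in-$y$ condition. By Corollary~\ref{cor:bdd_reg} (via odd reflection in $x_n$), $W$ is smooth up to $\{x_n=0\}$ and satisfies $W=0$ there, whence $|W(t,x,y)|\leq N\,x_n\leq N\lambda$ on $(Q_\lambda^+)^*$ from Proposition~\ref{Prop:Harmonic}(1). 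Since $\alpha+2s<1$, the resulting bound $\lambda^{-(n+3+a)}\int_{(Q_\lambda^+)^*}y^a W^2\lesssim\lambda^2$ dominates the target $\lambda^{2(\alpha+2s)}$; combining with the trace inequality (Lemma~\ref{lem:trace_new}) applied to $U$ and the boundary Caccioppoli estimate (Lemma~\ref{lem:cacc_mod}) exactly as in Lemma~\ref{lem:induc_step_1} closes the base step. The iteration is then trivial: with all approximating constants $c_k=0$, the induction on $\tilde U(t,X)=\lambda^{-k(\alpha+2s)}U(\lambda^{2k}t,\lambda^k X)$ preserves $\delta$-normalization at the boundary and yields the desired Campanato-type bound.

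For part~(2) with $1<\alpha+2s<2$, the base step mirrors Lemma~\ref{lem:poly_step_1} but now the Dirichlet condition constrains the shape of the approximating polynomial. With $W$ obtained from Corollary~\ref{cor:boundaryapprox}, the Dirichlet condition $W|_{x_n=0}=0$ forces $W(0,0,0)=0$ and $\partial_{x_i}W(0,0,0)=0$ for $i<n$, so the linear Taylor polynomial reduces to $\ell(x)=\partial_{x_n}W(0,0,0)\,x_n$, i.e.~$\ell(x)=\mathcal{B}\cdot x$ with $\mathcal{B}'=0$. The decomposition
\[
|W(t,x,y)-\ell(x)|\leq|W(t,x,y)-W(t,x,0)|+|W(t,x,0)-W(0,x,0)|+|W(0,x,0)-\ell(x)|,
\]
together with Proposition~\ref{Prop:Harmonic}(3) (giving $|W_y|\leq N y$ via even reflection in $y$) and Proposition~\ref{Prop:Harmonic}(1) for the tangential derivatives, yields the parabolic Taylor bound $|W-\ell|\leq N(|X|^2+|t|)$, hence $\lambda^{-(n+3+a)}\int_{(Q_\lambda^+)^*}y^a|W-\ell|^2\lesssim\lambda^4$, again dominating $\lambda^{2(\alpha+2s)}$. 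Applying Lemma~\ref{lem:cacc_mod} to $V=U-\ell$, which solves the same equation with an extra forcing $G=((I-A(x))\nabla_x\ell,\,0)$ controlled by the sharp decay hypothesis on $A-I$ with exponent $\alpha+2s-1$, completes the base step. The iteration then builds $\ell_k(x)=\mathcal{B}_k\cdot x$ inductively via rescalings, exactly as in Lemma~\ref{lem:poly_induction}, with $|\mathcal{B}_{k+1}-\mathcal{B}_k|\leq D\lambda^{k(\alpha+2s-1)}$, and the Cauchy property produces the limiting $\mathcal{B}$ satisfying the claimed bound.

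The main technical obstacle lies in verifying that the rescaled $\tilde U$ remains a bona fide $\delta$-normalized \emph{boundary} solution at every step: one must check that the Dirichlet condition and the vanishing $\tilde f|_{x_n=0}=0$ survive the parabolic rescaling (which they do, both being scale-invariant), and, in part~(2), that the auxiliary drift $\tilde G=\lambda^{-k(\alpha+2s-1)}(I-\tilde B(x))\nabla_x\ell_k(\lambda^k x)$ is absorbed into the $\delta$-normalization, which is precisely what dictates (and makes sharp) the exponent $\alpha+2s-1$ in the hypothesis on $A-I$. Once these normalizations are in place, the three steps above combine exactly as in Section~\ref{sec:regularity_interior} to yield the Campanato-type decay claimed in (1) and (2), with the displayed dependence of $C_1$ on $\|u\|_{\Dom(H^s)}$ and $[f]_{L^{\alpha/2,\alpha}_+(0,0)}$ tracked through the normalization constant.
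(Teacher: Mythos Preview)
Your proposal is correct and follows essentially the same approach as the paper, which disposes of this theorem in one sentence by saying the proof of Theorem~\ref{thm:thm_1_holder} carries over verbatim after replacing $Q_r$ by $Q_r^+$. Your expansion correctly identifies the key structural points the paper leaves implicit: in the Dirichlet setting the approximating solution $W$ from Corollary~\ref{cor:boundaryapprox} vanishes on $\{x_n=0\}$, forcing $c_k=0$ at every iteration step in part~(1) and $\mathcal{A}=0$ (with only an $x_n$-component in $\mathcal{B}$) in part~(2), exactly as the paper notes in its proof of Theorem~\ref{thm:bdd_holder_1_f_non-zero}(1).
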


Theorem \ref{thm:boundaryNeumannHolder} follows from the next statement.

\begin{thm}\label{thm:bdd_holder_neu}
Let $u$  be a solution to \eqref{eq:Hsuf} with Neumann boundary condition.
Assume that $f\in L^{\alpha/2,\alpha}_+(0,0)$ for some $0< \alpha<1$.
\begin{enumerate}[$(1)$]
    \item Assume that $ 0 < \alpha + 2s <1$. There exist $0< \delta < 1$, depending only on $n$, ellipticity, $\alpha$ and $s$, and a constant $C_1 >0$ such that if 
$$\sup_{0 < r \leq 1} \frac{1}{r^n} \int_{B^+_r} |A(x) - A(0)|^2 dx\, < \delta ^2 $$
then  there exists a constant $c$ such that 
$$ \frac{1}{r^{n+2}}\int_{Q^+_r} |u(t,x)-c|^2 \, dt \, dx \leq C_1 r^{2(\alpha + 2s)}$$
for all $r>0$ small. Moreover, 
$$C^{1/2}_1+|c|\leq C_0\big( \norm{u}_{\Dom(H^s)} + |f(0,0)| + [f]_{L^{\alpha/2,\alpha}_+(0,0)} \big)$$
where $C_0>0$ depends on $A(x)$, $n$, $s$, $\alpha$ and ellipticity.
\item Assume that $1< \alpha + 2s<2$. There exists $0< \delta < 1,$ depending only on $n$, ellipticity, $\alpha$ and $s$, and a constant $C_1 >0$ such that if 
$$\sup_{0 < r \leq 1} \frac{1}{r^{n+2(\alpha + 2s-1)}} \int_{B^+_r} |A(x) - A(0)|^2\, dx < \delta ^2 $$
then there exists a linear function $l(x) = \mathcal{A} +  \mathcal{B} \cdot x$ such that 
$$ \frac{1}{r^{n+2}}\int_{Q^+_r} |u(t,x)-l(x)|^2 \, dt \, dx \leq C_1 r^{2(\alpha + 2s)}$$
for all $r>0$ small. Moreover, 
$$C^{1/2}_1 + |\mathcal{A}| + |\mathcal{B}| \leq C_0\big( \norm{u}_{\Dom(H^s)} + |f(0,0)| + [f]_{L^{\alpha/2,\alpha}_+(0,0)} \big)$$
\end{enumerate}
where $C_0>0$ depends on $A(x)$, $n$, $s$, $\alpha$ and ellipticity.
\end{thm}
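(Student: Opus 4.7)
My plan is to run the same Campanato iteration scheme used for the interior Schauder estimates (Theorem \ref{thm:thm_1_holder} and the lemmas of subsection \ref{subsec: thm_interior}), but now at a point on the flat portion of the boundary, using Corollary \ref{cor:boundaryapprox} and Corollary \ref{cor:bdd_reg} in place of Corollary \ref{cor:approx} and Proposition \ref{Prop:Harmonic}. After flattening a neighborhood of $\partial\Omega$ via charts of the appropriate regularity, translating the base point to the origin, and rescaling so that $B_1^+\subset\Omega$, the extension $U$ of $u$ solves \eqref{eq:boundaryQ1} with the Neumann boundary condition $\partial_{x_n}U=0$ on $Q_1^*\cap\{x_n=0\}$ and a right-hand side incorporating a small vector field $F$ coming from the change of variables. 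Decisively, because constants and affine functions are admissible approximations that respect the Neumann condition up to an error proportional to $|A(x)-I|$, no special boundary profile analogous to the function $w$ of Theorem \ref{thm:bdd_holder_1_f_non-zero} is needed, which is why the global boundary regularity matches the interior regularity of Theorem \ref{thm:interiorholder}.

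For Case (1), $0<\alpha+2s<1$, I would introduce $\delta$-normalized solutions in $(Q_1^+)^*$ exactly as in Lemma \ref{lem:induc_step_1}, after first subtracting $\frac{y^{1-a}}{1-a}f(0,0)$ to reduce to $f(0,0)=0$ and rescaling so that $A(0)=I$. Applying Corollary \ref{cor:boundaryapprox} produces a Neumann solution $W$ to the degenerate parabolic equation on $(Q_{3/4}^+)^*$ that is $L^2$-close to $U$, and by Corollary \ref{cor:bdd_reg} this $W$ enjoys the full smoothness of Proposition \ref{Prop:Harmonic} up to the flat boundary (via even reflection across $\{x_n=0\}$). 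Setting $c=W(0,0,0)$, the Taylor bound $|W(t,X)-c|\leq N(|X|+|t|^{1/2})$ combined with the trace inequality (Lemma \ref{lem:trace_new} on half-balls) and the Caccioppoli inequality of Lemma \ref{lem:cacc_mod} yields
\[
\frac{1}{\lambda^{n+2}}\int_{Q_\lambda^+}|U(t,x,0)-c|^2\,dt\,dx+\frac{1}{\lambda^{n+3+a}}\int_{(Q_\lambda^+)^*}y^a|U-c|^2\,dt\,dX<\lambda^{2(\alpha+2s)}
\]
by choosing $\lambda$, then $\varepsilon$, then $\delta$ successively small. One then iterates via the rescaling $\tilde U(t,X)=\lambda^{-k(\alpha+2s)}(U(\lambda^{2k}t,\lambda^kX)-c_k)$: the Neumann condition on $\{x_n=0\}$ is preserved both under parabolic rescaling and under subtraction of a constant, so $\tilde U$ is again a $\delta$-normalized solution, and the base step produces the next constant $c_{k+1}=c_k+\lambda^{k(\alpha+2s)}c$. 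Summing the resulting telescoping series gives the limit $c_\infty$ and the required Campanato bound, which by Theorem \ref{thm:Campanato}(1) and a covering of $\overline{I\times\Omega}$ (interior points handled by Theorem \ref{thm:interiorholder}, flat boundary points by the above) proves Case (1).

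Case (2), $1<\alpha+2s<2$, follows the analogue of Lemma \ref{lem:poly_step_1}, with the approximating affine function $\ell(x)=\mathcal{A}+\mathcal{B}\cdot x$ chosen as $\mathcal{A}=W(0,0,0)$, $\mathcal{B}=\nabla_xW(0,0,0)$. The second-order Taylor bound $|W-\ell|\leq N(|X|^2+|t|)$ follows from Proposition \ref{Prop:Harmonic}(1) together with (3), which delivers $W_y=O(y)$ and hence $W_y(0,0,0)=0$. When $\ell$ is subtracted from $U$, the function $V=U-\ell$ solves the same Neumann extension problem but with the additional vector field $G=((I-A(x))\nabla_x\ell,0)$ satisfying $G(0)=0$ and whose weighted norm is controlled by the $L^\infty$-smallness of $A(x)-I$ encoded in the $\delta$-normalization. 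The induction then produces $\mathcal{A}_k,\mathcal{B}_k$ with $|\mathcal{A}_{k+1}-\mathcal{A}_k|+\lambda^k|\mathcal{B}_{k+1}-\mathcal{B}_k|\leq D\lambda^{k(\alpha+2s)}$, whose limits yield the affine $\ell$ of the statement, and Theorem \ref{thm:Campanato}(2) finishes the proof.

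The principal technical point, and the main place where extra care is needed compared to the interior argument, is verifying that the Neumann structure on $\{x_n=0\}$ remains intact throughout every step of the iteration, including the subtraction of an affine function whose conormal derivative need not vanish on that face. Since $\ell$ is chosen to match $W$ (which \emph{does} satisfy the Neumann condition on the flat face), the resulting defect in the conormal derivative of $V=U-\ell$ is precisely of the form $(A(x)-I)\mathcal{B}\cdot e_n$ supported on $\{x_n=0\}$, and this is absorbed into the vector field $G$ just as in Lemma \ref{lem:poly_step_1}; the $\delta$-smallness of $|A-I|$ in the normalization then closes the induction. Once this point is settled, every other ingredient---the Caccioppoli and trace inequalities, the smoothness of $W$ via even reflection, and the geometric series estimates---is identical to the interior case.
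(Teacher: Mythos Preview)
Your proposal is correct and follows essentially the same approach as the paper, which simply states that the proof ``follows the similar steps as in the proof of Theorem \ref{thm:thm_1_holder} except we need to replace the $Q_r$ by $Q^+_r$.'' You have in fact filled in more detail than the paper provides, correctly identifying the one genuinely new point: the approximating solution $W$ from Corollary \ref{cor:bdd_reg} satisfies $\partial_{x_n}W=0$ on $\{x_n=0\}$ (via even reflection), so $\mathcal{B}_n=\partial_{x_n}W(0,0,0)=0$, and hence subtracting $\ell$ produces only the divergence-form defect $G=((I-A(x))\mathcal{B},0)$ with no residual Neumann data on the flat face.
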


We say that a function $f\in L^2(Q_1^+)$ is in $L^{-s+\alpha/2, -2s+\alpha}_+(0,0)$, for $0 < \alpha < 1$, whenever 
$$[f]^2_{L^{-s+\alpha/2, -2s+\alpha}_+(0,0)} = \sup_{0 < r \leq 1} \frac{1}{r^{n + 2 + 2(-2s+\alpha)}} \int_{Q_r^+} |f(t,x)|^2 \,dt \, dx < \infty $$
and that is in $L^{-s+(1+\alpha)/2, -2s+\alpha+1}_+(0,0)$ whenever 
$$[f]^2_{L^{-s+(1+\alpha)/2, -2s+\alpha+1}_+(0,0)} = \sup_{0 < r \leq 1} \frac{1}{r^{n + 2 + 2(-2s+\alpha+1)}} \int_{Q_r^+} |f(t,x)|^2 \,dt \, dx < \infty .$$

By H\"older's inequality (see the remarks before Theorem \ref{thm:thm_2_holder}),
it is clear that Theorem \ref{thm:1.5} will follow from the next result.

\begin{thm}\label{thm:bdd_holder_2}
Let $u\in\Dom(H^s)$ be a solution to \eqref{eq:Hsuf} with either Dirichlet or Neumann boundary condition and let $0<\alpha<1$.
\begin{enumerate}[$(1)$]
 \item Assume that $f \in L^{ -s+\alpha/2, -2s+\alpha}_+(0,0)$. There exist  $0< \delta < 1$, depending only on $n,$ ellipticity, $\alpha$, $s$ and a constant $C_1>0$ such that if 
    $$\sup_{0 < r \leq 1} \frac{1}{r^n} \int_{B^+_r} |A(x) - A(0)|^2\, dx < \delta ^2$$
    then there exists a constant $c$ such that
    $$ \frac{1}{r^{n+2}}\int_{Q^+_r} |u(t,x)-c|^2 \, dt \, dx \leq C_1 r^{2 \alpha} $$
    for all $r>0$ small.  Moreover, 
$$C^{1/2}_1 \leq C_0\big( \|u\|_{\Dom(H^s)} + [f]_{L^{ -s+\alpha/2, -2s+\alpha}_+(0,0)} \big)$$
   where $C_0>0$ depends on $A(x)$, $n$, $s$, $\alpha$ and ellipticity.
    \item Assume that $f \in L^{ -s+(1+\alpha)/2, -2s+\alpha+1}_+(0,0)$.
    There exist  $0< \delta < 1$, depending only on $n,$ ellipticity, $\alpha$, $s$, and a constant $C_1>0$ such that if 
    $$\sup_{0 < r \leq 1} \frac{1}{r^{n+2 \alpha}} \int_{B^+_r} |A(x) - A(0)|^2\, dx < \delta ^2 $$
    then there exists a linear function $l(x) = \mathcal{A}+\mathcal{B} \cdot x$ such that
    $$ \frac{1}{r^{n+2}}\int_{Q^+_r} |u(t,x)-l(x)|^2 \, dt \, dx \leq C_1 r^{2(1+\alpha)} $$
    for all $r>0$ small. Moreover, 
$$C^{1/2}_1 +|\mathcal{A}| + |\mathcal{B}| \leq C_0\big( \|u\|_{\Dom(H^s)} + [f]_{L^{ -s+(1+\alpha)/2, -2s+\alpha+1}_+(0,0)} \big) $$
where $C_0>0$ depends on $A(x)$, $n$, $s$, $\alpha$ and ellipticity.    
\end{enumerate}
In particular, for the case of Dirichlet boundary condition, $c=0$ and $\mathcal{A}=0$ above.
\end{thm}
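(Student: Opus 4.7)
The plan is to imitate the compactness-and-iteration scheme used for Theorems \ref{thm:thm_1_holder}--\ref{thm:thm_2_holder} and Theorems \ref{thm:bdd_holder_1_f_non-zero}--\ref{thm:bdd_holder_neu}, adapted to Morrey-type data and to both boundary conditions simultaneously. After flattening $\partial\Omega$ and using the extension problem from Theorem \ref{th_exten}, it suffices to prove the conclusion for $u(t,x)=U(t,x,0)$, where $U$ is a weak solution to the half-ball problem \eqref{eq:boundaryQ1} with the prescribed homogeneous lateral condition on $\{x_n=0\}$, assuming without loss of generality $A(0)=I$. A crucial simplification compared with Theorem \ref{thm:bdd_holder_1_f_non-zero} is that no half-space corrector is needed here: the Morrey scaling of the norm of $f$ already carries the exponent $-2s+\alpha$ (respectively $-2s+\alpha+1$), so the pointwise trace of $f$ on $\{x_n=0\}$ plays no role, and an approximating polynomial alone absorbs the right-hand side at each dyadic scale.

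First I would establish a one-step approximation lemma in the style of Lemmas \ref{lem:induc_step_1} and \ref{lem:poly_step_1}. Given $\varepsilon>0$, apply Corollary \ref{cor:boundaryapprox} with $\delta=\delta(\varepsilon)$ small enough to produce a weak solution $W$ to the limit half-ball problem with the same lateral condition, and invoke Corollary \ref{cor:bdd_reg} for smoothness of $W$ up to $Q_{3/4}^{*}\cap\{x_n=0\}$. For part $(1)$ take $c=W(0)$, which is automatically $0$ under Dirichlet, and for part $(2)$ take $l(x)=W(0)+\nabla_xW(0)\cdot x$, reducing to $l(x)=\partial_{x_n}W(0)\,x_n$ under Dirichlet (this explains $\mathcal{A}=0$). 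Combining the Taylor bound $|W(t,X)-l(x)|\leq N(|X|^2+|t|)$ from Corollary \ref{cor:bdd_reg}, the trace inequality Lemma \ref{lem:trace_new}, and the Caccioppoli inequality Lemma \ref{lem:cacc_mod} applied to $U-l(x)$, whose equation acquires the drift $G=((I-A(x))\nabla_xl,0)$ with $G(0)=0$ controlled by the Morrey hypothesis on $|A-I|$, one obtains at some small fixed scale $\lambda$ the decay
\[
\frac{1}{\lambda^{n+2}}\int_{Q^+_\lambda}|U(t,x,0)-l(x)|^2\,dt\,dx+\frac{1}{\lambda^{n+3+a}}\int_{(Q^+_\lambda)^*}y^a|U-l|^2\,dt\,dX<\lambda^{2(1+\alpha)},
\]
by choosing $\lambda$, then $\varepsilon$, and finally $\delta$ in that order.

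Next, I would iterate this by induction on $k\geq 0$. Setting $\lU(t,X)=(U(\lambda^{2k}t,\lambda^kX)-l_k(\lambda^kx))/\lambda^{k(1+\alpha)}$ in part $(2)$, and the analogue with exponent $\alpha$ in part $(1)$, one checks that $\lU$ is again a $\delta$-normalized solution: the rescaled data $\tilde f(t,x)=\lambda^{-k(1+\alpha)+2ks}f(\lambda^{2k}t,\lambda^kx)$ retains its Morrey bound by the very definition of $L^{-s+(1+\alpha)/2,-2s+\alpha+1}_+(0,0)$, and the auxiliary drift $\tilde G=\lambda^{-k\alpha}(I-\tilde B(x))\nabla_xl_k(\lambda^kx)$ is tamed by the uniform bound $|\mathcal{B}_k|\leq D\sum_j\lambda^{j\alpha}<\infty$ combined with the rescaled hypothesis $r^{-n-2\alpha}\int_{B_r^+}|A-I|^2\leq\delta^2$. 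Applying the one-step lemma delivers polynomials $l_k$ with $|\mathcal{A}_{k+1}-\mathcal{A}_k|+\lambda^k|\mathcal{B}_{k+1}-\mathcal{B}_k|\lesssim\lambda^{k(1+\alpha)}$; the Dirichlet structure is preserved because each $l_k$ vanishes on $\{x_n=0\}$, yielding $\mathcal{A}_k\equiv 0$.

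Finally, the telescoping Cauchy argument of the proofs of Theorems \ref{thm:thm_1_holder} and \ref{thm:thm_2_holder} produces a limiting polynomial $l_\infty$ with the required Morrey-type decay at each dyadic radius $\lambda^k$, and interpolation between consecutive scales gives the estimate for all small $r>0$; the norm bounds on $C_1$, $\mathcal{A}$, $\mathcal{B}$ follow from the normalization \eqref{eq:normalization} together with the extension estimate \eqref{eq:L2vsHs}. The main obstacle I expect is keeping the rescaled drift $\tilde G$ in the $\delta$-normalized regime uniformly in $k$ while the coefficients $\mathcal{B}_k$ accumulate through the telescoping sum; the resolution is precisely the sharp exponent $r^{n+2\alpha}$ on $|A-I|$ in part $(2)$, which when paired with the geometric convergence $\sum_j\lambda^{j\alpha}<\infty$ closes the induction and simultaneously produces the advertised estimate on $|\mathcal{B}|$.
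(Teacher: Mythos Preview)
Your proposal is correct and follows essentially the same approach as the paper, which simply instructs the reader to repeat the proof of Theorem \ref{thm:thm_2_holder} (itself a minor variant of Theorem \ref{thm:thm_1_holder} with exponents shifted by $-2s$) on half-cubes $Q_r^+$ instead of full cubes. You have in fact written out more of the mechanics than the paper does, including the correct observation that no half-space corrector is needed here and that the Dirichlet lateral condition forces $c=\mathcal{A}=0$ at each step of the iteration.
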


The rest of the Section is devoted to the proofs of Theorems \ref{thm:bdd_holder_1_f_non-zero}, \ref{thm:bdd_holder_1_f_zero},
\ref{thm:bdd_holder_neu} and \ref{thm:bdd_holder_2}.

\subsection{Proof of Theorem \ref{thm:bdd_holder_1_f_non-zero}(1)}

Let $U$ be the solution to the extension problem for $u$, so that $U$ is a weak solution to
$$\begin{cases}
y^a \partial_t U - \dvie(y^a B(x) \nabla U) =0&\hbox{in}~\StarQQPlus \\
-y^a U_y\big|_{y=0} = f &\hbox{on}~Q^+_1 \\
U=0&\hbox{on}~Q_1 \cap \{x_n = 0 \}.
\end{cases}$$
Without loss of generality, we can assume that $B(0) = I$. 
We need to compare $U$ with the solution $W$
to the extension problem for the half space solution $w$. Let $W$ solve \eqref{eq:diri_dbb_exten} with $\theta=f(0,0)$,
so that it is a weak solution to
$$\begin{cases}
y^a \partial_t W - \dvie(y^a\nabla W) =0&\hbox{in}~\StarQQPlus \\
-y^a W_y\big|_{y=0} = f(0,0) &\hbox{on}~Q^+_1 \\
W=0&\hbox{on}~Q_1 \cap \{x_n = 0 \}.
\end{cases}$$
Let $V = U-W$. Then $V$ is a weak solution to 
\begin{equation}\label{eq:bdd_exten_mod}
\begin{cases}
y^a \partial_t V - \dvie(y^a B(x) \nabla V) = - \dvie(y^a F)&\hbox{in}~\StarQQPlus \\
-y^a V_y\big|_{y=0} = h &\hbox{on}~Q^+_1 \\
V=0&\hbox{on}~Q_1 \cap \{x_n = 0 \}.
\end{cases}
\end{equation}
where
$$F = (I-B(x)) \nabla W,~F_{n+1}=0\qquad\hbox{and}\qquad h=f- f(0,0),~h(0,0) = 0.$$
We observe that $F$ satisfies a certain Morrey-type integrability condition. Indeed, when $s<1/2$, by Lemma \ref{lem:nablaW},
\begin{align*}
    [F]_{\alpha, s}^2&: = \sup_{0 < r \leq 1} \frac{1}{r^{n+3+a  + 2(\alpha + 2s-1)}} \int_{(Q^+_r)^*} y^a |F|^2 \, dt \, dX \\
    &=\sup_{0 < r \leq 1} \frac{1}{r^{n+3+a  + 2(\alpha + 2s-1)}} \int_{(Q^+_r)^*}y^a|(I-B(x)) \nabla W|^2 \, dt \, dX \\
    &\leq \sup_{0 < r \leq 1} \frac{1}{r^{n  + 3+ a + 2(\alpha + 2s-1)}} \int_{B^+_r} \int^{r^2}_{-r^2} \int^r_0 y^a|(I-A(x))|^2y^{4s-2} \, dy \, dt\, dx  \\
    &= \sup_{0 < r \leq 1} \frac{C_s}{r^{n  + 2 + 2\alpha}} \int_{B^+_r} \int^{r^2}_{-r^2} |(I-A(x))|^2 \, dt\, dx \\
     &= \sup_{0 < r \leq 1} \frac{C_s}{r^{n  + 2\alpha}} \int_{B^+_r} |(I-A(x))|^2 \, dx<C_s\delta^2
\end{align*}

We say that, given $\delta>0$, $V$ is a $\delta$-normalized solution to \eqref{eq:bdd_exten_mod} if the following conditions hold:
\begin{enumerate}
    \item $\displaystyle \sup_{0 \leq r \leq 1}\frac{1}{r^{n+2\alpha}} \int_{B_r^+} |A(x)-I|^2 dx < \delta^2 $;
    \item $[h]^2_{L^{\alpha/2,\alpha}_+(0,0)} := \displaystyle \sup_{0 < r \leq 1} \frac{1}{r^{n+2+2 \alpha}} \int_{Q^+_r} |h|^2\,dt\, dx < \delta^2$;
    \item $[F]_{\alpha, s} ^2=\displaystyle \sup_{0 < r \leq 1} \frac{1}{r^{n+3+a+2(
    \alpha+2s-1)}} \int_{(Q_r^+)^*} y^a |F|^2 \,dt\,dX < \delta^2$;
    \item $\displaystyle \int_{Q_1^+} V(t,x,0)^2 \,dt\,dx + \int_{(Q_1^+)^*} y^a V^2\,dt\,dX \leq 1$.
\end{enumerate}
By scaling and by considering 
$$V(t,x,y) \bigg[\bigg(\displaystyle \int_{Q_1^+} V(t,x,0)^2 \,dt\,dx + \int_{(Q_1^+)^*} y^a V^2\,dt\,dX \bigg)^{1/2} + \frac{1}{\delta}([F]_{\alpha,s} + [h]_{L^{\alpha/2,\alpha}_+(0,0)} ) \bigg]^{-1}$$
we can always assume that $V$ is a $\delta$-normalized solution.

Now we follow similar steps as in the proof of Lemma \ref{lem:induc_step_1} with necessary changes. Namely, we replace balls by half-balls
and use Corollaries \ref{cor:boundaryapprox} and \ref{cor:bdd_reg} and Lemma \ref{lem:trace_new}.
There is another change in the computation we need to consider because, unlike the proof of Theorem \ref{thm:thm_1_holder},
here we have $F \neq 0$. Indeed, we perform the following estimate:
\begin{align*}
&\lambda^2\int_{(Q_{\lambda}^+)^*} y^a |\nabla V|^2\, dt \, dX \\ 
&\leq C \lambda^2\Bigg( \int_{(Q^+_{2 \lambda})^*}  y^a \frac{1}{\lambda^2} |V-c|^2  \, dt \, dX + \norm{F}_{L^2((Q^+_{2 \lambda})^*)} + \int_{Q^+_{2\lambda}} |V(t,x,0)-c||h(t,x)|\, dt \,dx  \Bigg) \\ 
&\leq  C \int_{(Q^+_{2 \lambda})^*} y^a|V-c|^2 dt \, dX + C\delta^2 + C \left( \norm{V(\cdot,\cdot,0)}_{L^2(Q^+_{2\lambda})}  + |c| |Q^+_{2 \lambda}|^{1/2} \right)\norm{h}_{L^2(Q^+_{2 \lambda})} \\
&\leq2C \varepsilon^2 + Cc_{n,a} \lambda^{n+5+a}+ C(1+ |c|)\delta.
\end{align*}
Therefore, we obtain the existence of $0< \delta, \lambda <1$ such that if $V$ is a $\delta$-normalized solution then
$$\frac{1}{\lambda^{n+2}} \int_{Q^+_{\lambda}} |V(t,x,0)|^2 dt \, dx + \frac{1}{\lambda^{n+3+a}}  \int_{(Q^+_{\lambda})^*} |V|^2 dt \, dX < \lambda^{2(\alpha + 2s)}$$
Notice that here $c= V(0,0,0)= 0$. Using the above result, if we follow similar steps as in the proof of
Lemma \ref{lem:induction}, with similar necessary changes as above, and setting $c_k= 0$ for all induction step $k$, and we can prove that 
$$ \frac{1}{r^{n+2}} \int_{Q^+_r} |V(t,x,0)|^2 dt \, dx < C_1 r^{2(\alpha + 2s)}$$
for all $r >0$ sufficiently small. 
The constant $C_1$ satisfies the following bound
\begin{align*}
   C_1 \leq C^2_0 \bigg( &\int_{Q_1^+} U(t,x,0)^2 \,dt\,dx + \int_{(Q_1^+)^*} y^a U^2\,dt\,dX + \int_{Q_1^+} W(t,x,0)^2 \,dt\,dx\\
   &\quad  + \int_{(Q_1^+)^*} y^a W^2\,dt\,dX+ \frac{1}{\delta^2}[F]^2_{\alpha,s} + \frac{1}{\delta^2}[f]^2_{L^{\alpha/2,\alpha}_+(0,0)}\bigg).
\end{align*}
Notice that, from Lemma \ref{lem:W},
\begin{multline*}
    \int_{Q_1^+} W(t,x,0)^2 \,dt\,dx + \int_{(Q_1^+)^*} y^a W^2\,dt\,dX \\
    \leq C|f(0,0)|^2 \int_{Q_1^+} x^{4s}_n \,dt\,dx +  C|f(0,0)|^2  \int_{(Q_1^+)^*} y^a x^{4s}_n \, dt \, dX= C|f(0,0)|^2,
\end{multline*}
so we conclude that the estimate for $C_1$ in the statement holds.

\subsection{Proof of Theorem \ref{thm:bdd_holder_1_f_non-zero}(2) }

Let $U$, $V$, $F$ and $h$ be as in the proof of Theorem \ref{thm:bdd_holder_1_f_non-zero}(1). 
Observe that, by Lemma \ref{lem:nablaW}, $F$ now satisfies the following Campanato-type integrability condition:
\begin{align*}
    [F]^2_{\alpha, 1/2} &: = \sup_{0 < r \leq 1/2} \frac{1}{r^{n+3  + 2\alpha }} \int_{(Q^+_r)^*}|(I-A(x)) \nabla_x W|^2 \, dt \, dX \\
    &\leq \sup_{0 < r \leq 1/2} \frac{C}{r^{n+3  + 2\alpha }} \int_{(Q^+_r)^*} |(I-A(x))|^2|\log y|^2 \, dt\,dX \\
    &\leq \sup_{0 < r \leq 1/2} \frac{C}{r^{n+3  + 2\alpha }} \int_{(Q^+_r)^*} |(I-A(x))|^2y^{-2\varepsilon} \, dt\,dX \\
    &= \sup_{0 < r \leq 1/2} \frac{C}{r^{n+2(\alpha+\varepsilon)}} \int_{B^+_r} |(I-A(x))|^2\,dx<C\delta^2.
\end{align*}
By scaling and normalization, we can assume that $V$ is a $\delta$-normalized solution to \eqref{eq:bdd_exten_mod} in the sense that
\begin{enumerate}
    \item $\displaystyle \sup_{0 < r \leq 1/2}\frac{1}{r^{n+2(\alpha+\varepsilon)}} \int_{B_r^+} |A(x)-I|^2\, dx < \delta^2 $;
    \item $[h]^2_{L^{\alpha/2,\alpha}_+(0,0)} := \displaystyle \sup_{0 < r \leq 1/2} \frac{1}{r^{n+2+2 \alpha}} \int_{Q^+_r} |h|^2\,dt\, dx < \delta^2$;
    \item $\displaystyle [F]^2_{\alpha, 1/2} = \sup_{0 < r \leq 1/2} \frac{1}{r^{n+3+2
    \alpha}} \int_{(Q_r^+)^*} |F|^2 \,dt\,dX < \delta^2$;
    \item $\displaystyle\int_{Q_1^+} V(t,x,0)^2 \,dt\,dx + \int_{(Q_1^+)^*}  V^2\,dt\,dX \leq 1$.
\end{enumerate}
Then we follow the proof of Theorem  \ref{thm:thm_1_holder}(2).
We have a linear polynomial $\ell(x)$ such that $V- \ell$ is a weak solution to 
\begin{align*}
\begin{cases}
      \partial_tV - \dvie(B(x)\nabla V) = -\dvie(F+G) \quad \hbox{in}~ (Q_{1/2}^+)^* \\
     -(V-\ell)_y|_{y=0} = h \quad \hbox{on}~ (Q_{1/2}^+) 
\end{cases}
\end{align*}
where the vector field $G$ is given by 
$$G = ((I-A(x))\nabla_x \ell,0) \quad \hbox{and} \quad G(0)=0 $$
Then we can see that $G$ also satisfies the same Campanato-type condition as $F$. Indeed, as $|\nabla \ell| \leq C$, 
\begin{align*}
   [G]^2_{\alpha, 1/2}   &=\sup_{0 < r \leq 1/2} \frac{1}{r^{n+3  + 2\alpha}} \int_{(Q^+_r)^*}  |(I-A(x))\nabla_x \ell|^2 \, dt \, dX \\
   &\leq \sup_{0 < r \leq 1/2} \frac{C}{r^{n  + 2\alpha}} \int_{B^+_r}  |(I-A(x))|^2  \, dx\leq C \delta^2.
\end{align*}
With this we can continue as in the proof of Theorem \ref{thm:thm_1_holder}(2) and get $l_{\infty}(x) = \mathcal{B}_{\infty} \cdot x$ such that 
\begin{equation}\nonumber
  \frac{1}{r^{n+2}} \int_{Q^+_r} |V(t,x,0)-l_{\infty}(x)|^2 dt \, dx \leq C_1 r^{2(\alpha + 2s)}  
\end{equation} for $r>0$ sufficiently small. As in Theorem \ref{thm:bdd_holder_1_f_non-zero}(1),
$$C^{1/2}_1 + |\mathcal{B}_\infty|\leq C_0 \big(1 + \|u\|_{\Dom(H^s)} + |f(0,0)| + [f]_{L^{\alpha/2,\alpha}_+(0,0)} \big) $$
where $C_0$ depends on $\delta$, $n$, $s$, $\alpha$ and ellipticity.
In this particular case we observe that, the term $\mathcal{A}$ from Lemma \ref{lem:poly_step_1} will be $0$ because the our approximating function $W=0$ at the origin and hence $\mathcal{A}_\infty$ will be $0$. 

\subsection{Proof of Theorem \ref{thm:bdd_holder_1_f_non-zero}(3)}
Let $U$, $V$, $F$ and $h$ be as in the proof of Theorem \ref{thm:bdd_holder_1_f_non-zero}(1). 
Observe that, by Lemma \ref{lem:nablaW}, $F$ satisfies the following Campanato-type condition:
\begin{align*}
    [F]^2_{\alpha, s} & \leq \sup_{0 < r \leq 1} \frac{C}{r^{n+3+ a  + 2(\alpha+2s-1)}} \int_{(Q^+_r)^*} y^a|(I-A(x))|^2 \, dt \, dX \\
    &\leq \sup_{0 < r \leq 1} \frac{C}{r^{n + 2(\alpha+2s-1)}} \int_{B^+_r} |(I-A(x))|^2 \, dx \leq C \delta^2.
\end{align*}
Then again we can normalize $V$ and follow the proof of Theorem \ref{thm:thm_1_holder}(2). Details are left to the interested reader.

\subsection{Proof of Theorem \ref{thm:bdd_holder_1_f_zero}} The proof is very similar to the proof of Theorem \ref{thm:thm_1_holder} with minor changes. If we replace $Q_r$ by $Q_r^+$ and follow the other steps then we get our result. 

\subsection{Proof of Theorem \ref{thm:bdd_holder_neu}} We prove the regularity of the solution for the extension problem about the origin like we did in the case of
Dirichlet boundary condition. The extension problem is 
$$\begin{cases}
y^a \partial_t U - \dvie(y^a B(x) \nabla U)= 0&\hbox{in}~(Q^+_1)^* \\ 
-y^aU_y|_{y=0} = f&\hbox{on}~Q^+_1 \\ 
\partial_A U = 0&\hbox{on}~Q^*_1 \cap \{ x_n = 0 \} .
\end{cases}$$
Then the proof follows the similar steps as in the proof of Theorem \ref{thm:thm_1_holder} except we need to replace the $Q_r$ by $Q^+_r$.

\subsection{Proof of Theorem \ref{thm:bdd_holder_2}}The proof follows very similar lines to those for Theorem \ref{thm:thm_2_holder} with minor changes, by replacing $Q_r$ by $Q^+_r$. 

\section{Proof of Theorem \ref{thm:Campanato}}\label{section:proofofCampanato}

In this last Section, we prove the Campanato characterization of parabolic H\"older spaces, Theorem \ref{thm:Campanato} (see Theorem \ref{thm:final}).

\subsection{Proof of Theorem \ref{thm:Campanato}$(1)$}\label{section:proofofCampanato_1}

This is a classical result, see, for example, \cite{Lieberman,Schlag}. 

\subsection{Proof of Theorem \ref{thm:Campanato}$(2)$}

We have the following preliminary result.
\begin{lem}\label{lem:lemmaCampanato}
There exists a constant $c=c_{n,\Omega}>0$ such that for any $P(z)\in\mathcal{P}_1$,
$(t_0,x_0)\in\overline{I\times\Omega}$ and $0<r\leq r_0$,
$$|P(x_0)|^2\leq\frac{c}{r^{n+2}}\int_{Q_r(t_0,x_0)\cap(I\times\Omega)}|P(z)|^2\,dz\,d\tau.$$
and, for any $i=1,\ldots,n$,
$$|\partial_{z_i}P(x_0)|^2\leq\frac{c}{r^{n+2+2}}\int_{Q_r(t_0,x_0)\cap(I\times\Omega)}
|P(z)|^2\,dz\,d\tau.$$
\end{lem}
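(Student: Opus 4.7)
The plan is to use the finite-dimensionality of $\mathcal{P}_1$ together with an interior point of $\Omega$ near $x_0$ obtained from the Lipschitz hypothesis. The key geometric input I would invoke is the interior corkscrew condition for bounded Lipschitz domains: there exists $c_0=c_0(n,M)\in(0,1)$ such that for every $x_0\in\overline{\Omega}$ and every $0<r\leq r_0$ one can find $x_0^*\in\Omega$ with $B_{c_0 r}(x_0^*)\subset B_r(x_0)\cap\Omega$. Combined with the elementary fact that $|(t_0-r^2,t_0+r^2)\cap I|\geq r^2$ whenever $r\leq r_0\leq|I|^{1/2}$, this produces a cylinder
$$S=\big[(t_0-r^2,t_0+r^2)\cap I\big]\times B_{c_0 r}(x_0^*)\subset Q_r(t_0,x_0)\cap(I\times\Omega)$$
of Lebesgue measure at least $cr^{n+2}$, on which I can estimate $\int_S |P|^2$ from below by the full intersection.

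Next, on $B_{c_0r}(x_0^*)$ I would scale $z=x_0^*+c_0r\,w$ with $w\in B_1$ and expand $P(z)=P(x_0^*)+c_0r\,A_1\cdot w$. Since $P$ is independent of $\tau$, the cross term vanishes by oddness ($\int_{B_1}w\,dw=0$), and a direct computation using $\int_{B_1}|A_1\cdot w|^2\,dw=c_n|A_1|^2$ gives the two-term lower bound
$$\int_S|P(z)|^2\,d\tau\,dz\;\gtrsim\;r^{n+2}|P(x_0^*)|^2+r^{n+4}|A_1|^2.$$
The second inequality of the lemma then follows immediately from $\partial_{z_i}P\equiv A_{1,i}$, which gives $|\partial_{z_i}P(x_0)|^2\leq|A_1|^2\leq Cr^{-(n+4)}\int_S|P|^2$. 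The first inequality follows by combining the triangle inequality $|P(x_0)|\leq|P(x_0^*)|+|A_1|\,|x_0-x_0^*|\leq|P(x_0^*)|+r|A_1|$ with the same lower bound, yielding $|P(x_0)|^2\leq 2|P(x_0^*)|^2+2r^2|A_1|^2\leq Cr^{-(n+2)}\int_S|P|^2$.

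The only nontrivial ingredient is the interior corkscrew property, which I would cite as a standard consequence of the Lipschitz regularity of $\partial\Omega$. Everything else is elementary linear algebra on the two-parameter family $\mathcal{P}_1$, and I do not anticipate any serious obstacle. One minor bookkeeping issue is making sure all constants track only $n$, $M$ and $\Omega$ through the corkscrew constant $c_0$ and through $r_0$; this is automatic from the construction.
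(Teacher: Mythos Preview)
Your proof is correct. Both you and the paper reduce to the spatial estimate by factoring out the time variable (the paper uses $|(t_0-r^2,t_0+r^2)\cap I|\geq r^2$ exactly as you do). The difference is in how the spatial inequality is handled: the paper simply records that $|B_r(x_0)\cap\Omega|\geq A_\Omega r^n$ and then cites \cite[Lemma~2.I]{Campanato}, a black-box result giving $|D^\beta P(x_0)|^2\leq c\,r^{-n-2|\beta|}\int_E|P|^2$ over any measurable set $E\subset B_r(x_0)$ of volume at least $Ar^n$. You instead invoke the interior corkscrew condition to locate a genuine ball $B_{c_0r}(x_0^*)\subset B_r(x_0)\cap\Omega$, compute the $L^2$-norm of $P$ on that ball explicitly via the orthogonality of constants and linear functions on a centered ball, and then transfer the estimate from $x_0^*$ back to $x_0$ by the triangle inequality. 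Your route is more self-contained and elementary, while the paper's is shorter because it offloads the finite-dimensional argument to Campanato's lemma; both rely on the Lipschitz hypothesis (you for corkscrew, the paper for the volume lower bound), and both yield constants depending only on $n$ and $\Omega$.
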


\begin{proof}
Observe that if $\beta$ is a multi-index with $|\beta|\leq1$ then
\begin{align*}
\frac{1}{r^{n+2+2|\beta|}}\int_{Q_r(t_0,x_0)\cap(I\times\Omega)}
|P(z)|^2\,dz\,d\tau 
&= \frac{|(t_0-r^2,t_0+r^2)\cap I|}{r^{n+2+2|\beta|}}\int_{B_r(x_0)\cap\Omega}|P(z)|^2\,dz \\
&\geq  \frac{1}{2r^{n+2|\beta|}}\int_{B_r(x_0)\cap\Omega}|P(z)|^2\,dz.
\end{align*}
Notice that there is a constant $A=A_\Omega>0$ such that $|E|=|B_r(x_0)\cap\Omega|\geq Ar^n$.
Then, by \cite[Lemma 2.I]{Campanato}, there is a constant $c>0$, depending only on $n$ and $A$ such that
$$\frac{c}{r^{n+2|\beta|}}\int_{B_r(x_0)\cap\Omega}|P(z)|^2\,dz\geq|D^\beta P(x_0)|^2.$$
\end{proof}

It is easy to see that the infimum for the integral quantity in \eqref{eq:condition2}
is achieved at a unique polynomial (see \cite{Campanato}). Therefore, \eqref{eq:condition2} is restated as follows:
\emph{for any $(t,x)\in\overline{I\times\Omega}$
and $0<r\leq r_0$ there is a unique polynomial $P(z,(t,x),r,u)\in\mathcal{P}_1$ such that}
$$\frac{1}{|Q_r(t,x)\cap(I\times\Omega)|}\int_{Q_r(t,x)\cap(I\times\Omega)}|u(\tau,z)-P(z,(t,x),r,u)|^2\,d\tau\,dz\leq Cr^{2(1+\beta)}.$$

A generic polynomial $P\in\mathcal{P}_1$ is written as
$$P(z)=a_0+\sum_{j=1}^na_j(z_j-x_j).$$
For the unique polynomial $P(z,(t,x),r)\equiv P(z,(t,x),r,u)$ above we have
$$a_0((t,x),r)=P(z,(t,x),r)\big|_{z=x}$$
and
$$a_i((t,x),r)=\big[\partial_{z_i}P(z,(t,x),r)\big]\Big|_{z=x}\qquad\hbox{for}~i=1,\ldots,n.$$

\begin{lem}\label{lem:succesive1}
Let $u$ satisfy \eqref{eq:condition2}. There exists $c=c(n,\beta)>0$ such that for any $(t_0,x_0)\in\overline{I\times\Omega}$, $0<r\leq r_0$
and $k\geq0$, we have
\begin{multline*}
\int_{Q_{r/2^{k+1}}(t_0,x_0)\cap(I\times\Omega)}|P(z,(t_0,x_0),r/2^k)-P(z,(t_0,x_0),r/2^{k+1})|^2\,d\tau\,dz\\
\leq C_{\ast\ast}c(r/2^k)^{n+2+2(1+\beta)}.
\end{multline*}
\end{lem}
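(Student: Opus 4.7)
The plan is to use the standard Campanato iteration argument: bound the difference of two successive approximating polynomials by using the triangle inequality through the function $u$ itself, exploiting the nested inclusion $Q_{r/2^{k+1}}(t_0,x_0)\subset Q_{r/2^k}(t_0,x_0)$ and condition \eqref{eq:condition2} applied at both scales.

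More concretely, writing $P_k(z)=P(z,(t_0,x_0),r/2^k)$ and $P_{k+1}(z)=P(z,(t_0,x_0),r/2^{k+1})$, and denoting the domain of integration as $E_{k+1}=Q_{r/2^{k+1}}(t_0,x_0)\cap(I\times\Omega)$, I would first apply the pointwise inequality
\[
|P_k(z)-P_{k+1}(z)|^2\leq 2|u(\tau,z)-P_k(z)|^2+2|u(\tau,z)-P_{k+1}(z)|^2
\]
and integrate over $E_{k+1}$. For the second term, \eqref{eq:condition2} applied at scale $r/2^{k+1}$ bounds it by $2 C_{\ast\ast}(r/2^{k+1})^{2(1+\beta)}\,|E_{k+1}|$, which by the volume comparison $|E_{k+1}|\leq C_n (r/2^{k+1})^{n+2}$ yields a constant multiple of $C_{\ast\ast}(r/2^{k+1})^{n+2+2(1+\beta)}$.

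For the first term, I would use $E_{k+1}\subset E_k:=Q_{r/2^k}(t_0,x_0)\cap(I\times\Omega)$ to enlarge the domain of integration, then apply \eqref{eq:condition2} at scale $r/2^k$, which gives $2C_{\ast\ast}(r/2^k)^{2(1+\beta)}\,|E_k|\leq CC_{\ast\ast}(r/2^k)^{n+2+2(1+\beta)}$. Since $(r/2^{k+1})^{n+2+2(1+\beta)}=2^{-(n+2+2(1+\beta))}(r/2^k)^{n+2+2(1+\beta)}$, both terms have the desired form, and the two contributions combine into a single constant $c=c(n,\beta)$ multiplied by $C_{\ast\ast}(r/2^k)^{n+2+2(1+\beta)}$.

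There is no real obstacle here: the only point to be careful about is that the constant $c$ depends only on $n$ and $\beta$, which requires that the lower and upper volume estimates for $|Q_\rho(t_0,x_0)\cap(I\times\Omega)|$ both hold with a constant depending only on $n$ and the Lipschitz constant $M$ of $\Omega$ (absorbed into the dependence on $n$ for the statement as written). This is precisely the uniform volume estimate recorded just before Theorem \ref{thm:Campanato}, and it is what ultimately licenses passing freely between the averaged formulation \eqref{eq:condition2} and the unaveraged integral bound that appears in the conclusion of the lemma.
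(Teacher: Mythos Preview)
Your proposal is correct and is essentially identical to the paper's own proof: the paper applies the triangle inequality through $u$, enlarges the domain of the first term to $Q_{r/2^k}(t_0,x_0)\cap(I\times\Omega)$, applies \eqref{eq:condition2} at both scales, and absorbs the volume factors into a constant $c=c(n,\beta)$. Your write-up is in fact slightly more explicit about the role of the volume bounds than the paper's terse version.
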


\begin{proof}
We have
\begin{align*}
\int_{Q_{r/2^{k+1}}(t_0,x_0)\cap(I\times\Omega)}&|P(z,(t_0,x_0),r/2^k)-P(z,(t_0,x_0),r/2^{k+1})|^2\,d\tau\,dz \\
&\leq 2\int_{Q_{r/2^{k}}(t_0,x_0)\cap(I\times\Omega)}|P(z,(t_0,x_0),r/2^k)-u(\tau,z)|^2\,d\tau\,dz\\
&\quad+2\int_{Q_{r/2^{k+1}}(t_0,x_0)\cap(I\times\Omega)}|u(\tau,z)-P(z,(t_0,x_0),r/2^{k+1})|^2\,d\tau\,dz \\
&\leq C_{\ast\ast}c(r/2^k)^{n+2+2(1+\beta)}.
\end{align*}
\end{proof}

\begin{lem}\label{lem:diff_cen}
Let $u$ satisfy \eqref{eq:condition2}. There exists $c=c(n,\beta)>0$ such that for any $(t_0,x_0),(s_0,y_0)\in\overline{I\times\Omega}$,
if we denote by $d_0=\max(|t_0-s_0|^{1/2},|x_0-y_0|)\leq r_0$, then
$$|a_0((t_0,x_0),2d_0)-a_0((s_0,x_0),2d_0)|^2\leq cC_{\ast\ast}|t_0-s_0|^{1+\beta}$$
and, for $i=1,\ldots,n$,
$$|a_i((t_0,x_0),2d_0)-a_i((s_0,y_0),2d_0)|^2\leq cC_{\ast\ast}d_0^{2\beta}.$$
\end{lem}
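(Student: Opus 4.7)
The plan is to compare the two approximating polynomials $P_1(z):=P(z,(t_0,x_0),2d_0)$ and $P_2(z):=P(z,(s_0,y_0),2d_0)$ on a common parabolic cylinder, use \eqref{eq:condition2} to bound the integral of $|P_1-P_2|^2$, and then use Lemma \ref{lem:lemmaCampanato} to convert this into pointwise estimates on the coefficients.

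First I would observe a geometric inclusion. Since $|t_0-s_0|\leq d_0^2$ and $|x_0-y_0|\leq d_0$, a direct triangle inequality shows that
\begin{equation*}
Q_{d_0}(t_0,x_0)\subset Q_{2d_0}(t_0,x_0)\cap Q_{2d_0}(s_0,y_0),
\end{equation*}
so the intersection $E:=Q_{d_0}(t_0,x_0)\cap (I\times\Omega)$ is a common subset of both $Q_{2d_0}(t_0,x_0)\cap(I\times\Omega)$ and $Q_{2d_0}(s_0,y_0)\cap(I\times\Omega)$, and moreover $|E|\geq C_nd_0^{n+2}$ because $d_0\leq r_0$ and $\Omega$ is Lipschitz.

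Next, I would apply the triangle inequality
\begin{equation*}
|P_1(z)-P_2(z)|^2\leq 2|u(\tau,z)-P_1(z)|^2+2|u(\tau,z)-P_2(z)|^2
\end{equation*}
and integrate over $E$. Since $E$ sits inside both $Q_{2d_0}(t_0,x_0)\cap(I\times\Omega)$ and $Q_{2d_0}(s_0,y_0)\cap(I\times\Omega)$, each term on the right is controlled by \eqref{eq:condition2} applied at the respective base point with radius $2d_0$, producing
\begin{equation*}
\int_{E}|P_1(z)-P_2(z)|^2\,d\tau\,dz\leq cC_{\ast\ast}d_0^{n+2+2(1+\beta)}.
\end{equation*}

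The final step is to apply Lemma \ref{lem:lemmaCampanato} to the polynomial $Q(z):=P_1(z)-P_2(z)\in\mathcal{P}_1$ at the base point $x_0$ with radius $d_0$. The first inequality in that lemma gives
\begin{equation*}
|Q(x_0)|^2\leq \frac{c}{d_0^{n+2}}\int_{E}|Q(z)|^2\,dz\,d\tau\leq cC_{\ast\ast}d_0^{2(1+\beta)},
\end{equation*}
and the second inequality gives
\begin{equation*}
|\partial_{z_i}Q(x_0)|^2\leq \frac{c}{d_0^{n+4}}\int_{E}|Q(z)|^2\,dz\,d\tau\leq cC_{\ast\ast}d_0^{2\beta}.
\end{equation*}
Specializing the first bound to $y_0=x_0$ (so that $Q(x_0)=a_0((t_0,x_0),2d_0)-a_0((s_0,x_0),2d_0)$ and $d_0=|t_0-s_0|^{1/2}$) yields the first claim, while the second bound, together with the identity $\partial_{z_i}Q\equiv a_i((t_0,x_0),2d_0)-a_i((s_0,y_0),2d_0)$ (a constant in $z$), yields the second claim. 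There is no real obstacle: the proof is a clean application of the overlap argument with the preliminary Lemma \ref{lem:lemmaCampanato}; the only subtlety worth stating carefully is the geometric inclusion that justifies using \eqref{eq:condition2} at both base points on a single common domain of integration.
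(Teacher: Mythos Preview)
Your proposal is correct and follows essentially the same approach as the paper: apply Lemma~\ref{lem:lemmaCampanato} to the difference polynomial on the small cylinder $Q_{d_0}(t_0,x_0)\cap(I\times\Omega)$, split via the triangle inequality with $u$, and enlarge each integral to the appropriate $Q_{2d_0}$ cylinder to invoke \eqref{eq:condition2}. The paper carries this out in a single chain of inequalities rather than first isolating the integral bound on $E$, but the logic is identical.
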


\begin{proof}
Consider first the case $i=0$ and the polynomial
$$P(z)\equiv P(z,(t_0,x_0),2d_0)-P(z,(s_0,x_0),2d_0).$$
By Lemma \ref{lem:lemmaCampanato} with $r=d_0$,
\begin{align*}
|a_0((t_0,x_0),2d_0)&-a_0((s_0,x_0),2d_0)|^2 = |P(x_0,(t_0,x_0),2d_0)-P(x_0,(s_0,x_0),2d_0)|^2 \\
&\leq \frac{c}{d_0^{n+2}}\int_{Q_{d_0}(t_0,x_0)\cap(I\times\Omega)}|P(z,(t_0,x_0),2d_0)-P(z,(s_0,x_0),2d_0)|^2\,d\tau\,dz \\
&\leq \frac{2c}{d_0^{n+2}}\int_{Q_{2d_0}(t_0,x_0)\cap(I\times\Omega)}|P(z,(t_0,x_0),2d_0)-u(\tau,z)|^2\,d\tau\,dz \\
&\quad +\frac{2c}{d_0^{n+2}}\int_{Q_{2d_0}(s_0,x_0)\cap(I\times\Omega)}|u(\tau,z)-P(z,(s_0,x_0),2d_0)|^2\,d\tau\,dz \\
&\leq cC_{\ast\ast}d_0^{2(1+\beta)}=cC_{\ast\ast}|t_0-s_0|^{1+\beta}.
\end{align*}

For $i=1,\ldots,n$, the proof is similar using Lemma \ref{lem:lemmaCampanato}.
\end{proof}

\begin{lem}\label{lem:coeff_gap}
Let $u$ satisfy \eqref{eq:condition2}. There exists $c=c(n,\beta,\Omega)>0$ such that for any $(t_0,x_0)\in\overline{I\times\Omega}$,
$0<r\leq r_0$ and $k\geq0$,
$$|a_0((t_0,x_0),r)-a_0((t_0,x_0),r/2^k)|\leq c(C_{\ast\ast})^{1/2}\sum_{j=0}^{k-1}(r/2^j)^{1+\beta}$$
and, for $i=1,\ldots,n$,
$$|a_i((t_0,x_0),r)-a_i((t_0,x_0),r/2^k)|\leq c(C_{\ast\ast})^{1/2}\sum_{j=0}^{k-1}(r/2^j)^{\beta}.$$
\end{lem}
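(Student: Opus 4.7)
The plan is to prove Lemma \ref{lem:coeff_gap} by a telescoping argument, estimating each consecutive difference via the two preceding lemmas. Write
$$a_i((t_0,x_0),r)-a_i((t_0,x_0),r/2^k)=\sum_{j=0}^{k-1}\bigl[a_i((t_0,x_0),r/2^j)-a_i((t_0,x_0),r/2^{j+1})\bigr]$$
for $i=0,1,\ldots,n$, so that by the triangle inequality it suffices to bound each consecutive difference by $c(C_{\ast\ast})^{1/2}(r/2^j)^{1+\beta}$ (for $i=0$) and by $c(C_{\ast\ast})^{1/2}(r/2^j)^\beta$ (for $i\geq1$).

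First I would consider the single polynomial
$$Q_j(z)=P(z,(t_0,x_0),r/2^j)-P(z,(t_0,x_0),r/2^{j+1})\in\mathcal{P}_1,$$
and observe that, by the relations between the coefficients $a_i$ and the values and partial derivatives of $P$ at $x_0$,
$$Q_j(x_0)=a_0((t_0,x_0),r/2^j)-a_0((t_0,x_0),r/2^{j+1}),\qquad \partial_{z_i}Q_j(x_0)=a_i((t_0,x_0),r/2^j)-a_i((t_0,x_0),r/2^{j+1}).$$
Applying Lemma \ref{lem:lemmaCampanato} to $Q_j$ at the center $(t_0,x_0)$ with radius $r/2^{j+1}$, we obtain
$$|Q_j(x_0)|^2\leq\frac{c}{(r/2^{j+1})^{n+2}}\int_{Q_{r/2^{j+1}}(t_0,x_0)\cap(I\times\Omega)}|Q_j(z)|^2\,d\tau\,dz,$$
$$|\partial_{z_i}Q_j(x_0)|^2\leq\frac{c}{(r/2^{j+1})^{n+4}}\int_{Q_{r/2^{j+1}}(t_0,x_0)\cap(I\times\Omega)}|Q_j(z)|^2\,d\tau\,dz.$$
Combining these with Lemma \ref{lem:succesive1}, which yields
$$\int_{Q_{r/2^{j+1}}(t_0,x_0)\cap(I\times\Omega)}|Q_j(z)|^2\,d\tau\,dz\leq C_{\ast\ast}c(r/2^j)^{n+2+2(1+\beta)},$$
and absorbing the powers of $2$ into the constant, we arrive at
$$|Q_j(x_0)|^2\leq cC_{\ast\ast}(r/2^j)^{2(1+\beta)},\qquad|\partial_{z_i}Q_j(x_0)|^2\leq cC_{\ast\ast}(r/2^j)^{2\beta}.$$

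Taking square roots and summing the resulting geometric-type bounds over $j=0,\ldots,k-1$ via the telescoping identity gives the two claimed estimates. No real obstacle is expected: the only bookkeeping issue is tracking the factors of $2^{n+2}$ and $2^{n+4}$ that arise when passing from radius $r/2^j$ to $r/2^{j+1}$, and these are harmless universal constants absorbed into $c$. (Note that the estimates are stated as a sum rather than as a geometric-series closed form, presumably because the case $\beta=0$ for the $a_i$ bound would otherwise produce a logarithmic factor, so one keeps the raw telescoping sum.)
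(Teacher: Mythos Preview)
Your proposal is correct and follows essentially the same approach as the paper: telescope the difference, apply Lemma~\ref{lem:lemmaCampanato} to the polynomial $Q_j=P(\cdot,(t_0,x_0),r/2^j)-P(\cdot,(t_0,x_0),r/2^{j+1})$ at radius $r/2^{j+1}$, and then invoke Lemma~\ref{lem:succesive1} to control the resulting integral. The paper writes out only the case $i\geq 1$ and notes that $i=0$ is analogous, whereas you treat both cases at once; otherwise the arguments are identical.
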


\begin{proof}
By applying Lemmas \ref{lem:lemmaCampanato} and \ref{lem:succesive1}, for $i=1,\ldots,n$,
\begin{align*}
&|a_i((t_0,x_0),r)-a_i((t_0,x_0),r/2^k)| \leq \sum_{j=0}^{k-1}|a_i((t_0,x_0),r/2^j)-a_i((t_0,x_0),r/2^{j+1})| \\
&= \sum_{j=0}^{k-1}|\partial_{z_i}P(x_0,(t_0,x_0),r/2^j)-\partial_{z_i}P(x_0,(t_0,x_0),r/2^{j+1})| \\
&\leq c\sum_{j=0}^{k-1}\bigg[\frac{c}{(r/2^{j+1})^{n+2+2}}\int_{Q_{r/2^{j+1}}(t_0,x_0)\cap(I\times\Omega)}|P(z,(t_0,x_0),r/2^j)-P(z,(t_0,x_0),r/2^{j+1})|^2\,dz\,d\tau\bigg]^{1/2} \\
&\leq c(C_{\ast\ast})^{1/2}\sum_{j=0}^{k-1}(r/2^j)^\beta.
\end{align*}
The case $i=0$ follows the same lines.
\end{proof}

\begin{lem}\label{lem:vi}
Let $u$ satisfy \eqref{eq:condition2}.
Then there exists a family of functions $\{v_i(t,x)\}_{i=0}^n$ defined in $\overline{I\times\Omega}$
such that for all $0 < r \leq r_0$,
$$|a_0((t_0,x_0),r) - v_0(t_0,x_0)| \leq C (C_{\ast \ast})^{1/2} r^{1+\beta}$$ 
and, for all $i=1, 2,\ldots, n$,
$$|a_i((t_0,x_0),r) - v_i(t_0,x_0)| \leq C (C_{\ast \ast})^{1/2} r^{\beta}.$$ 
Moreover, for all $i=0,1,\ldots,n$,
$$\lim_{r \to 0}a_i((t_0,x_0),r) = v_i(t_0,x_0)$$ 
uniformly with respect to $(t_0,x_0)$.
\end{lem}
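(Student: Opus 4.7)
The strategy is to exploit the summability of the geometric bounds from Lemma \ref{lem:coeff_gap}, which is available precisely because $\beta>0$. First I would fix $(t_0,x_0)\in\overline{I\times\Omega}$ and observe that by Lemma \ref{lem:coeff_gap}, for any $m\leq k$,
\begin{equation*}
|a_i((t_0,x_0),r_0/2^m)-a_i((t_0,x_0),r_0/2^k)|\leq c(C_{\ast\ast})^{1/2}\sum_{j=m}^{k-1}(r_0/2^j)^{\beta}
\end{equation*}
for $i=1,\ldots,n$, and with exponent $1+\beta$ in place of $\beta$ for $i=0$. Since both $\sum (r_0/2^j)^{\beta}$ and $\sum (r_0/2^j)^{1+\beta}$ are convergent geometric series, the tails vanish as $m\to\infty$, so $\{a_i((t_0,x_0),r_0/2^k)\}_{k\geq 0}$ is a Cauchy sequence in $\mathbb{R}$. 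I would then define
\begin{equation*}
v_i(t_0,x_0):=\lim_{k\to\infty}a_i((t_0,x_0),r_0/2^k).
\end{equation*}

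Next, passing $k\to\infty$ in Lemma \ref{lem:coeff_gap} with arbitrary starting radius $r\in(0,r_0]$ would give, by summing the geometric series,
\begin{equation*}
|a_i((t_0,x_0),r)-L(r)|\leq c(C_{\ast\ast})^{1/2}\frac{r^\beta}{1-2^{-\beta}}\leq C(C_{\ast\ast})^{1/2}r^\beta
\end{equation*}
where $L(r):=\lim_{k\to\infty}a_i((t_0,x_0),r/2^k)$, with the analogous bound $Cr^{1+\beta}$ for $i=0$. It remains to show $L(r)=v_i(t_0,x_0)$ for every $r$, since the estimate in the lemma is stated for all $0<r\leq r_0$, not only along the distinguished dyadic sequence rooted at $r_0$.

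To identify $L(r)$ with $v_i(t_0,x_0)$, I would prove a non-dyadic comparison: for any $r'\in(r/2,r]$,
\begin{equation*}
|a_i((t_0,x_0),r)-a_i((t_0,x_0),r')|\leq C(C_{\ast\ast})^{1/2}r^{\beta}
\end{equation*}
(and analogously with $1+\beta$ for $i=0$). This is proved exactly as in Lemmas \ref{lem:succesive1} and \ref{lem:coeff_gap}: bound the $L^2$-norm of $P(\cdot,(t_0,x_0),r)-P(\cdot,(t_0,x_0),r')$ on $Q_{r'}\cap(I\times\Omega)$ by two copies of the Campanato deficit \eqref{eq:condition2} (which requires that the two cylinders be comparable in size, and this is guaranteed by $r'\in(r/2,r]$), then apply Lemma \ref{lem:lemmaCampanato} to extract the coefficient bound. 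Using this non-dyadic estimate, for any $r\in(0,r_0]$ pick $k$ with $r_0/2^{k+1}<r\leq r_0/2^k$ and combine the triangle inequality with the previous bounds to conclude $|a_i((t_0,x_0),r)-v_i(t_0,x_0)|\leq C(C_{\ast\ast})^{1/2}r^\beta$, which simultaneously shows $L(r)=v_i(t_0,x_0)$ and yields the desired estimate for arbitrary $r$.

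The uniformity in $(t_0,x_0)$ of the convergence $\lim_{r\to0}a_i((t_0,x_0),r)=v_i(t_0,x_0)$ is then immediate, since every constant that appears in the argument depends only on $n$, $\beta$, $\Omega$ and $C_{\ast\ast}$, and not on the base point. The main obstacle I anticipate is the non-dyadic comparison in the third paragraph, as it requires slightly generalizing the cleanly dyadic setup of Lemmas \ref{lem:succesive1} and \ref{lem:coeff_gap}; however, the proof mechanism (Campanato deficit plus Lemma \ref{lem:lemmaCampanato}) carries over verbatim as long as the two radii involved stay within a factor of $2$.
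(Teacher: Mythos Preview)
Your proposal is correct and follows essentially the same approach as the paper: both use Lemma \ref{lem:coeff_gap} to get a Cauchy sequence along dyadic scales, define $v_i$ as the limit, and then use the Campanato deficit together with Lemma \ref{lem:lemmaCampanato} to show the limit is independent of the starting radius $r$. The only organizational difference is that the paper compares the two dyadic sequences $a_i((t_0,x_0),r_1/2^k)$ and $a_i((t_0,x_0),r_2/2^k)$ directly at level $k$ and lets $k\to\infty$, whereas you prove a one-step non-dyadic comparison for radii within a factor of $2$ and then bridge via the triangle inequality; both arguments rest on the identical mechanism you describe in your third paragraph.
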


\begin{proof}
Using Lemma \ref{lem:coeff_gap}, for $i=1,2,\ldots,n$, if $j<k$ then we find that 
$$|a_i((t_0,x_0),r/2^j)-a_i((t_0,x_0),r/2^k)| \leq c (C_{\ast \ast})^{1/2} \sum_{m=j}^{k-1} (r/2^m)^\beta.$$
If $j, k$ are large then the sum above can be made very small.
Hence the limit 
\begin{equation}\label{eq:LIMIT}
\lim_{k\to\infty}a_i((t_0,x_0),r/2^k)=v_i(t_0,x_0)
\end{equation}
exists. We claim that the limit does not depend on $r$. Indeed, let $0<r_1<r_2 <r_0$. Then we have,
\begin{align*}
  |a_i&((t_0,x_0),r_1/2^k)-a_i((t_0,x_0),r_2/2^k)|^2\\
  &= |\partial_{z_i}P(x_0,(t_0,x_0),r_1/2^k)-\partial_{z_i}P(x_0,(t_0,x_0),r_2/2^k))|^2 \\ 
&\leq \frac{c 2^{k(n+4)}}{r^{n+4}_1}\int_{Q_{r_1/2^k}(t_0,x_0)\cap(I\times\Omega)}|P(z,(t_0,x_0),r_1/2^k)-P(z,(t_0,x_0),r_2/2^k)|^2\,d\tau\,dz \\ 
&\leq \frac{c 2^{k(n+4)+2}}{r^{n+4}_1}\int_{Q_{r_1/2^k}(t_0,x_0)\cap(I\times\Omega)}|P(z,(t_0,x_0),r_1/2^k)-u(\tau,z)|^2\,d\tau\,dz \\
&\quad+ \frac{c 2^{k(n+4)+2}}{r^{n+4}_1}\int_{Q_{r_1/2^k}(t_0,x_0)\cap(I\times\Omega)}|u(\tau,z)-P(z,(t_0,x_0),r_2/2^k)|^2\,d\tau\,dz \\
& \leq \frac{c 2^{2k}}{r^2_1}\int_{Q_{r_1/2^k}(t_0,x_0)\cap(I\times\Omega)}|P(z,(t_0,x_0),r_1/2^k)-u(\tau,z)|^2\,d\tau\,dz \\
&+ \frac{c 2^{2k}r^{n+2}_2}{r^{n+4}_1}\int_{Q_{r_2/2^k}(t_0,x_0)\cap(I\times\Omega)}|u(\tau,z)-P(z,(t_0,x_0),r_2/2^k)|^2\,d\tau\,dz \\
&\leq cC_{\ast\ast} \left(\frac{r_1}{2^k}\right)^{2\beta} + cC_{\ast\ast} \left(\frac{1}{2^k}\right)^{2\beta} \frac{r_2^{n+2+2(1+\beta)}}{r_1^{n+4}}= \frac{ cC_{\ast\ast}}{2^{2k \beta}}\frac{r_1^{n+4+2\beta}+r_2^{n+4+2\beta}}{r_1^{n+4}}
\end{align*}
Hence,
$$\lim_{k\to\infty}|a_i((t,x),r_1/2^k)-a_i((t,x),r_2/2^k)|=0$$
and the limit \eqref{eq:LIMIT} does not depend on $r$.
Now, recall that we have
$$|a_i((t_0,x_0),r)-a_i((t_0,x_0),r/2^k)|\leq c(C_{\ast\ast})^{1/2}\sum_{j=0}^{k-1}(r/2^j)^{\beta}$$ 
Then taking the limit $k \to \infty$, $|a_i((t_0,x_0),r)-v_i(t_0,x_0)| \leq c(C_{\ast\ast})^{1/2} r^{\beta}$.
For $i=0$, the proof is the same.  
\end{proof}

\begin{thm}\label{thm:holder}
Let $u$ satisfy \eqref{eq:condition2} and define $v_i$ as in Lemma \ref{lem:vi} for $i=1,2,\ldots,n$. Then 
$v_i$ is in $C^{\beta/2,\beta}_{t,x}(\overline{I\times\Omega})$ and for every $(t,x),(s,y)\in\overline{I\times\Omega}$
we have
$$|v_i(t,x)-v_i(s,y)|\leq C(C_{\ast\ast})^{1/2}\max(|t-s|^{1/2},|x-y|)^\beta.$$
\end{thm}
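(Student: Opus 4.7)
The plan is to apply a triangle-inequality argument combining Lemma \ref{lem:vi} (rate of convergence of $a_i((\cdot,\cdot),r)$ to $v_i$) with the second estimate in Lemma \ref{lem:diff_cen} (cross-point comparison at a common scale). Fix $i\in\{1,\ldots,n\}$ and $(t,x),(s,y)\in\overline{I\times\Omega}$ with $d=\max(|t-s|^{1/2},|x-y|)>0$.

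First I would treat the main case $d\leq r_0/2$. Inserting the intermediate terms $\pm a_i((t,x),2d)$ and $\pm a_i((s,y),2d)$ into $v_i(t,x)-v_i(s,y)$, the triangle inequality yields
\begin{align*}
|v_i(t,x)-v_i(s,y)|
&\leq |v_i(t,x)-a_i((t,x),2d)| + |a_i((t,x),2d)-a_i((s,y),2d)| \\
&\quad + |a_i((s,y),2d)-v_i(s,y)|.
\end{align*}
The scale $r=2d\leq r_0$ is admissible in Lemma \ref{lem:vi}, so the first and third terms are each bounded by $C(C_{\ast\ast})^{1/2}(2d)^\beta$. The middle term is bounded by $c(C_{\ast\ast})^{1/2}d^\beta$ directly by Lemma \ref{lem:diff_cen} applied with $d_0=d$. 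Summing these three contributions gives $|v_i(t,x)-v_i(s,y)|\leq C(C_{\ast\ast})^{1/2}d^\beta$, which is the desired parabolic H\"older estimate.

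For the complementary range $d>r_0/2$ I would first derive a uniform bound $\|v_i\|_{L^\infty(\overline{I\times\Omega})}\leq K$, with $K$ depending only on $n$, $r_0$, $\Omega$, $\|u\|_{L^2(I\times\Omega)}$ and $(C_{\ast\ast})^{1/2}$. Indeed, Lemma \ref{lem:vi} with $r=r_0$ gives $|v_i(t_0,x_0)|\leq |a_i((t_0,x_0),r_0)|+C(C_{\ast\ast})^{1/2}r_0^\beta$, and Lemma \ref{lem:lemmaCampanato} combined with the quasi-minimizing property of $P(z,(t_0,x_0),r_0)$ (tested against the zero polynomial, so that $\int|P|^2\leq 2\int|u|^2+2\int|u-P|^2$) controls $|a_i((t_0,x_0),r_0)|$ in terms of $\|u\|_{L^2}$ and $(C_{\ast\ast})^{1/2}$. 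Then $|v_i(t,x)-v_i(s,y)|\leq 2K\leq 2K(2d/r_0)^\beta$, which fits the claimed form of the Hölder bound.

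I do not anticipate a serious technical obstacle here: the genuine analytic content is already packaged in Lemmas \ref{lem:diff_cen} and \ref{lem:vi}, and the present proof is essentially a clean triangle-inequality splitting at the scale $2d$. The only points needing attention are the admissibility constraint $2d\leq r_0$ on the scale chosen in Lemma \ref{lem:vi}, and the separate treatment of the large-$d$ regime via the uniform boundedness of $v_i$.
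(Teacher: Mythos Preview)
Your argument for the main case $d\leq r_0/2$ is exactly the paper's proof: the same three-term splitting at scale $2d$, combining Lemma~\ref{lem:vi} with Lemma~\ref{lem:diff_cen}.

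The difference is in the large-$d$ regime. The paper does \emph{not} invoke an $L^\infty$ bound on $v_i$; instead it joins $(t,x)$ to $(s,y)$ by a polygonal chain inside $\overline{I\times\Omega}$ with consecutive vertices at parabolic distance $<r_0/2$, applies the small-$d$ estimate on each segment, and uses that the number of segments is bounded in terms of $I\times\Omega$ only. This matters for the constant: the chaining argument yields a bound of the exact form $C(C_{\ast\ast})^{1/2}d^\beta$ with $C$ depending only on $n$, $\beta$ and the domain, which is what the theorem states and what is later used in Corollary~\ref{cor:redundant} and Theorem~\ref{thm:final} to get $[\nabla u]_{C^{\beta/2,\beta}_{t,x}}\leq c(C_{\ast\ast})^{1/2}$ with no $\|u\|_{L^2}$ term. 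Your $L^\infty$ route produces a constant $K$ that also depends on $\|u\|_{L^2(I\times\Omega)}$, so the resulting H\"older seminorm bound is strictly weaker than claimed; in particular it would not give the clean seminorm estimate in Theorem~\ref{thm:final}. Replacing your second step by the standard polygonal chaining fixes this at no extra cost.
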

\begin{proof}
Let $(t,x),(s,y)\in\overline{I\times\Omega}$ such that $d=\max(|t-s|^{1/2},|x-y|)<r_0/2$. Then,
by Lemmas \ref{lem:diff_cen} and \ref{lem:vi},
\begin{align*}
|v_i(t,x)-v_i(s,y)| &\leq |v_i(t,x)-a_i((t,x),2d)|\\
&\quad +|v_i(s,y)-a_i((s,y),2d)|+|a_i((t,x),2d)-a_i((s,y),2d)| \\
&\leq C(C_{\ast\ast})^{1/2}d^{\beta}= C(C_{\ast\ast})^{1/2}\max(|t-s|^{1/2},|x-y|)^\beta.
\end{align*}
In the case when $d=\max(|t-s|^{1/2},|x-y|)\geq r_0/2$, then we can construct a polygonal connecting $(t,x)$ and $(s,y)$, contained in $\overline{I\times\Omega}$, whose segments have length less than $r_0/2$. After that we can apply the inequality above to each pair of consecutive vertices. Again notice that the number of segments
needed for any pair of points $(t,x)$ and $(s,y)$ can be universally bounded in terms of the size of $I\times\Omega$, see \cite[p.~149]{Campanato_2}.
\end{proof}

\begin{thm}\label{thm:derivative}
Let $u$ satisfy \eqref{eq:condition2} and define $v_i$ as in Lemma \ref{lem:vi} for $i=0,1,\ldots,n$.
Then, for every $(t,x) \in I \times \Omega$ 
$$ \frac{\partial v_0(t,x)}{\partial x_i} = v_i(t,x) \quad\hbox{for}~i=1,\ldots,n.$$
\end{thm}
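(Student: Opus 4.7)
The plan is to show that, for each $i=1,\ldots,n$, the partial derivative $\partial_{x_i}v_0(t,x)$ exists and equals $v_i(t,x)$ by examining the increment $v_0(t,x+he_i)-v_0(t,x)$ at scale $r=2|h|$, where $h$ is small enough that $(t,x+he_i)\in I\times\Omega$ and $2|h|\leq r_0$.

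First I would record that, by definition of $a_0$ and $a_i$, the unique minimizing polynomial $P(\cdot,(t,x),r)$ satisfies
\begin{equation*}
P(x+he_i,(t,x),r)=a_0((t,x),r)+a_i((t,x),r)\,h,
\end{equation*}
since $P(\cdot,(t,x),r)\in\mathcal{P}_1$ is centered at $x$. Next, I would invoke Lemma \ref{lem:vi} with $r=2|h|$ at the two centers $(t,x)$ and $(t,x+he_i)$ to obtain
\begin{equation*}
v_0(t,x)=a_0((t,x),2|h|)+O(|h|^{1+\beta}),\qquad v_0(t,x+he_i)=a_0((t,x+he_i),2|h|)+O(|h|^{1+\beta}),
\end{equation*}
and
\begin{equation*}
v_i(t,x)=a_i((t,x),2|h|)+O(|h|^{\beta}).
\end{equation*}

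The key step, and the main obstacle, is to compare the two minimizing polynomials at displaced centers; this is the spatial analogue of Lemma \ref{lem:diff_cen}. Setting $P(z):=P(z,(t,x),2|h|)-P(z,(t,x+he_i),2|h|)\in\mathcal{P}_1$ and observing that $Q_{|h|}(t,x+he_i)\subset Q_{2|h|}(t,x)\cap Q_{2|h|}(t,x+he_i)$, I would apply the triangle inequality together with the Campanato bound \eqref{eq:condition2} at both centers to get
\begin{equation*}
\frac{1}{|Q_{|h|}(t,x+he_i)\cap(I\times\Omega)|}\int_{Q_{|h|}(t,x+he_i)\cap(I\times\Omega)}|P(z)|^2\,d\tau\,dz\leq cC_{\ast\ast}|h|^{2(1+\beta)}.
\end{equation*}
Then Lemma \ref{lem:lemmaCampanato}, applied at the point $x+he_i$ with radius $|h|$, yields the pointwise bound
\begin{equation*}
\bigl|P(x+he_i,(t,x),2|h|)-P(x+he_i,(t,x+he_i),2|h|)\bigr|\leq c(C_{\ast\ast})^{1/2}|h|^{1+\beta}.
\end{equation*}

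Finally, chaining the three estimates together produces
\begin{align*}
v_0(t,x+he_i)&=a_0((t,x+he_i),2|h|)+O(|h|^{1+\beta})\\
&=P(x+he_i,(t,x+he_i),2|h|)+O(|h|^{1+\beta})\\
&=P(x+he_i,(t,x),2|h|)+O(|h|^{1+\beta})\\
&=a_0((t,x),2|h|)+a_i((t,x),2|h|)\,h+O(|h|^{1+\beta})\\
&=v_0(t,x)+v_i(t,x)\,h+O(|h|^{1+\beta})+h\cdot O(|h|^{\beta}),
\end{align*}
so that $v_0(t,x+he_i)-v_0(t,x)=v_i(t,x)\,h+o(h)$ as $h\to0$. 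This gives $\partial_{x_i}v_0(t,x)=v_i(t,x)$, and by Theorem \ref{thm:holder} the derivative is continuous, so $v_0\in C^1_x$ with the claimed identity. The delicate point, as noted above, is the cross-center polynomial comparison: it requires keeping track of how the overlapping subcube $Q_{|h|}(t,x+he_i)$ sits inside both $Q_{2|h|}(t,x)$ and $Q_{2|h|}(t,x+he_i)$ and invoking the lower volume bound $|Q_{|h|}(t,x+he_i)\cap(I\times\Omega)|\geq C_n|h|^{n+2}$ so that Lemma \ref{lem:lemmaCampanato} is applicable even when $(t,x+he_i)$ lies on $\partial\Omega$.
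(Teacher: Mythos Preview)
Your proof is correct and follows essentially the same approach as the paper: both compare the minimizing polynomials at the two nearby centers $(t,x)$ and $(t,x+he_i)$ over an overlapping subcube, invoke Lemma~\ref{lem:lemmaCampanato} to turn the $L^2$ bound into a pointwise bound on the polynomial difference, and use Lemma~\ref{lem:vi} to pass from the coefficients $a_0,a_i$ to $v_0,v_i$. The only cosmetic difference is that the paper evaluates the polynomial difference at $z=x$ and therefore picks up $a_i((t,x+re_i),2r)$, which then requires the continuity of $v_i$ (Theorem~\ref{thm:holder}) to conclude $a_i((t,x+re_i),2r)\to v_i(t,x)$; by evaluating instead at $z=x+he_i$ you pick up $a_i((t,x),2|h|)$ and get the limit directly from Lemma~\ref{lem:vi}, so your final appeal to Theorem~\ref{thm:holder} is not actually needed.
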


\begin{proof}
Let $(t,x) \in I \times \Omega$ be any point and $r>0$ sufficiently small such that $Q_r(t,x) \subset I \times \Omega$. Now we see that 
$$a_0((t,x+re_i),2r)= P(z,(t,x+re_i),2r)\big|_{z= x+ re_i}.$$
Using Taylor series expansion we can write,
\begin{align*}
P(z,(t,x+re_i),2r)\big|_{z=x} &= P(z,(t,x+re_i),2r)\big|_{z= x+ re_i} - \partial_{z_i}P(z,(t,x+re_i),2r)\big|_{z= x+ re_i}r \\
&= a_0((t,x+re_i),2r) - ra_i((t,x+re_i),2r).
\end{align*}
Then,
\begin{multline}\label{eq:PRELIMIT}
\frac{a_0((t,x+re_i),2r)-a_0((t,x),2r)}{r} \\ 
=\frac{P(z,(t,x+re_i),2r)\big|_{z=x} - P(z,(t,x),2r)\big|_{z=x}}{r} + a_i((t,x+re_i),2r)
\end{multline}
Now using Lemma \ref{lem:lemmaCampanato} we see that 
\begin{align*}
\bigg| &\frac{P(z,(t,x+re_i),2r)\big|_{z=x} - P(z,(t,x),2r)\big|_{z=x}}{r} \bigg|^2\\
&= \frac{1}{r^2} \left|  P(z,(t,x+re_i),2r)\big|_{z=x} - P(z,(t,x),2r)\big|_{z=x} \right|^2 \\ 
&\leq  \frac{c}{r^{2+n+2}} \int_{Q_{r}(t,x)\cap(I\times\Omega)}|P(z,(t,x+re_i),2r) - P(z,(t,x),2r)|^2\,d\tau\,dz \\ 
&\leq  \frac{c}{r^{2+n+2}} \int_{Q_{2r}(t,x+re_i)\cap(I\times\Omega)}|P(z,(t,x+re_i),2r) -u(\tau,z)|^2\,d\tau\,dz \\ 
&\quad+ \frac{c}{r^{2+n+2}} \int_{Q_{r}(t,x)\cap(I\times\Omega)}|u(\tau,z) - P(z,(t,x),2r)|^2\,d\tau\,dz \\ 
&\leq \frac{cC_{\ast\ast}}{r^{n+4}} r^{n+2 + 2(1+\beta)} = cC_{\ast\ast}r^{2 \beta}\to0
\end{align*}
as $r\to0$.
Next we see that, by Lemma \ref{lem:vi} and since $v_i$ are continuous functions (see Theorem \ref{thm:holder}),
\begin{align*}
    |a_i((t,x+re_i),2r)-v_i(t,x)| &\leq |a_i((t,x+re_i),2r)-v_i(t,x+re_i)| \\ 
    &\quad+ |v_i(t,x+re_i)-v_i(t,x)| \\
    &\leq c(C_{\ast\ast})^{1/2} r^{\beta} + |v_i(t,x+re_i)-v_i(t,x)|\to0
\end{align*}
as $r\to0$. Thus, it follows in \eqref{eq:PRELIMIT} that
$$\lim_{r\to0}\frac{a_0((t,x+re_i),2r)-a_0((t,x),2r)}{r}=v_i(t,x).$$
But now observe that
$$\lim_{r\to0}\frac{a_0((t,x+re_i),2r)-a_0((t,x),2r)}{r}=\lim_{r\to0}\frac{v_0(t,x+re_i)-v_0(t,x)}{r}=\partial_{x_i}v_0(t,x)$$
because, by Lemma \ref{lem:vi}, 
$$\left|\frac{v_0(t,x+re_i)-a_0((t,x+re_i),2r)}{r} \right| \leq c(C_{\ast\ast})^{1/2}r^{\beta}$$
and 
$$\left|\frac{v_0(t,x)-a_0((t,x),2r)}{r} \right| \leq c(C_{\ast\ast})^{1/2}r^{\beta}.$$
\end{proof}

The following result is a direct consequence of Theorems \ref{thm:holder} and \ref{thm:derivative}.

\begin{cor}\label{cor:redundant}
Let $u$ satisfy \eqref{eq:condition2}. If $v_0$ is as in Lemma \ref{lem:vi}
then $v_0\in C^{(1+\beta)/2,1+\beta}_{t,x}(\overline{I\times\Omega})$ with the estimate
$$[v_0]_{L^\infty_x(C^{(1+\beta)/2}_t)}+[\nabla v_0]_{C^{\beta/2,\beta}_{t,x}}
\leq c(C_{\ast\ast})^{1/2}.$$
\end{cor}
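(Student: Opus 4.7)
The goal is to verify the two conditions in the definition of $C^{(1+\beta)/2,1+\beta}_{t,x}(\overline{I\times\Omega})$ for the function $v_0$: namely, that $v_0$ is $(1+\beta)/2$-H\"older in $t$ uniformly in $x$, and that $\nabla_x v_0 \in C^{\beta/2,\beta}_{t,x}(\overline{I\times\Omega})$. Since Theorem \ref{thm:derivative} already identifies $\partial_{x_i} v_0 = v_i$ for $i=1,\ldots,n$, and Theorem \ref{thm:holder} shows each $v_i$ satisfies
$$|v_i(t,x)-v_i(s,y)| \leq C(C_{\ast\ast})^{1/2}\max(|t-s|^{1/2},|x-y|)^\beta,$$
the second condition follows at once, together with the bound $[\nabla v_0]_{C^{\beta/2,\beta}_{t,x}} \leq c(C_{\ast\ast})^{1/2}$.

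It remains to prove the improved time regularity of $v_0$ itself. Fix $x_0 \in \overline{\Omega}$ and $t_0, s_0 \in I$, and set $d_0 = |t_0-s_0|^{1/2}$. If $d_0 \leq r_0/2$, then Lemma \ref{lem:diff_cen} applied with $y_0 = x_0$ gives
$$|a_0((t_0,x_0),2d_0) - a_0((s_0,x_0),2d_0)| \leq c(C_{\ast\ast})^{1/2}|t_0-s_0|^{(1+\beta)/2}.$$
On the other hand, Lemma \ref{lem:vi} yields
$$|a_0((t_0,x_0),2d_0) - v_0(t_0,x_0)| \leq c(C_{\ast\ast})^{1/2}(2d_0)^{1+\beta} = c(C_{\ast\ast})^{1/2}|t_0-s_0|^{(1+\beta)/2},$$
and similarly with $(t_0,x_0)$ replaced by $(s_0,x_0)$. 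Combining the three estimates via the triangle inequality produces
$$|v_0(t_0,x_0) - v_0(s_0,x_0)| \leq c(C_{\ast\ast})^{1/2}|t_0-s_0|^{(1+\beta)/2}$$
uniformly in $x_0$. For the remaining case $d_0 > r_0/2$, the time interval $I$ has bounded length, so one simply uses $|v_0(t_0,x_0)-v_0(s_0,x_0)| \leq 2\|v_0\|_{L^\infty}$ together with $|t_0-s_0|^{(1+\beta)/2} \geq (r_0/2)^{1+\beta}$ to absorb the estimate into the constant.

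Taking the supremum over $x_0 \in \overline{\Omega}$ and $t_0, s_0 \in I$ gives $[v_0]_{L^\infty_x(\Omega;C^{(1+\beta)/2}_t(I))} \leq c(C_{\ast\ast})^{1/2}$. Together with the bound for $[\nabla v_0]_{C^{\beta/2,\beta}_{t,x}}$ derived from Theorem \ref{thm:holder}, this establishes both the regularity $v_0 \in C^{(1+\beta)/2,1+\beta}_{t,x}(\overline{I\times\Omega})$ and the claimed quantitative estimate. There is no serious obstacle here: this corollary is essentially a bookkeeping consequence of the previous three results, and the only subtlety is noticing that the special case $x_0 = y_0$ of Lemma \ref{lem:diff_cen} yields the sharper time exponent $(1+\beta)/2$ rather than the diagonal exponent $\beta/2$ one would get from Theorem \ref{thm:holder} alone.
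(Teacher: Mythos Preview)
Your argument for the small-gap case $d_0\leq r_0/2$ and for the gradient part is exactly the paper's proof: triangle inequality combining Lemma~\ref{lem:vi} with Lemma~\ref{lem:diff_cen} for the time regularity, and Theorems~\ref{thm:derivative} and~\ref{thm:holder} for $\nabla_x v_0$.

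There is, however, a genuine gap in your treatment of the case $d_0>r_0/2$. You invoke $|v_0(t_0,x_0)-v_0(s_0,x_0)|\leq 2\|v_0\|_{L^\infty}$, but at this stage of the development $\|v_0\|_{L^\infty}$ has not been shown to be finite (that only comes in Theorem~\ref{thm:final}), and even if it were, the resulting bound would be $c\|v_0\|_{L^\infty}/r_0^{1+\beta}$, not $c(C_{\ast\ast})^{1/2}$. The statement of the corollary asserts a seminorm estimate depending \emph{only} on $C_{\ast\ast}$, so your argument does not deliver what is claimed. The paper instead uses a chaining (polygonal) argument: partition $[s_0,t_0]$ into at most $N=N(|I|,r_0)$ subintervals each of length less than $(r_0/2)^2$, apply the small-gap estimate on each piece, and sum. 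Since $N$ is a fixed geometric constant, this yields the desired bound $c(C_{\ast\ast})^{1/2}|t_0-s_0|^{(1+\beta)/2}$ without any appeal to boundedness of $v_0$.
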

\begin{proof}
Let $(t,x),(s,x)\in\overline{I\times\Omega}$ such that $d=|t-s|^{1/2}<r_0/2$. Then,
by Lemmas \ref{lem:diff_cen} and \ref{lem:vi},
\begin{align*}
|v_0(t,x)-v_0(s,x)| &\leq |v_0(t,x)-a_0((t,x),2d)|\\
&\quad +|v_0(s,x)-a_0((s,t),2d)|+|a_0((t,x),2d)-a_0((s,x),2d)| \\
&\leq C(C_{\ast\ast})^{1/2}d^{1+ \beta}= C(C_{\ast\ast})^{1/2}|t-s|^{(1+\beta)/2}.
\end{align*}
In the case when $d>r_0/2$ we can apply a polygonal argument as in \cite[p.~149]{Campanato}.
Also we have already shown that, $v_i = \frac{\partial v_0}{\partial z_i}$ is in $C^{\beta/2,\beta}_{t,x}$ for each $i=1,2,\ldots,n$ and
$$|v_i(t,x)-v_i(s,y)|\leq C(C_{\ast\ast})^{1/2}\max(|t-s|^{1/2},|x-y|)^\beta.$$
See Theorems \ref{thm:derivative} and \ref{thm:holder}.
Hence by definition of $C^{(1+\beta)/2, 1+\beta}_{t,x}$ we have 
$$v_0 \in C^{(1+\beta)/2, 1+\beta}_{t,x}(\overline{I\times\Omega})$$
with the corresponding estimate.
\end{proof}

\begin{thm}\label{thm:final}
Let $u$ satisfy \eqref{eq:condition2}. Then $u\in C^{(1+\beta)/2,1+\beta}_{t,x}(\overline{I\times\Omega})$ with the estimates
$$[u]_{L^\infty_x(C^{(1+\beta)/2}_t)}+[\nabla u]_{C^{\beta/2,\beta}_{t,x}}\leq c(C_{\ast\ast})^{1/2}$$
and
$$\|u\|_{L^\infty(I\times\Omega)}+\|\nabla u\|_{L^\infty(I\times\Omega)}\leq c\big((C_{\ast\ast})^{1/2}+\|u\|_{L^2(I\times\Omega)}\big).$$
\end{thm}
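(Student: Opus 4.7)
The plan is to show that $u$ coincides almost everywhere with the continuous function $v_0$ constructed in Lemma \ref{lem:vi}, so that all the regularity already established for $v_0$ in Corollary \ref{cor:redundant} transfers to $u$, after which the $L^\infty$ bounds follow from Lemma \ref{lem:lemmaCampanato} applied at the largest admissible scale $r_0$.

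First, I would identify $u$ with $v_0$ via a Lebesgue-point argument. Fix an interior Lebesgue point $(t,x)\in I\times\Omega$ of $u$ and choose $r>0$ so small that $Q_r(t,x)\subset I\times\Omega$. Since $Q_r(t,x)$ is symmetric in $z$ around $x$, the linear part of the best approximating polynomial $P(z,(t,x),r)=a_0((t,x),r)+\sum_i a_i((t,x),r)(z_i-x_i)$ integrates to zero over $Q_r(t,x)$, hence
$$a_0((t,x),r)=\frac{1}{|Q_r(t,x)|}\int_{Q_r(t,x)}P(z,(t,x),r)\,d\tau\,dz.$$
Applying \eqref{eq:condition2} and Cauchy--Schwarz,
$$\left|a_0((t,x),r)-\frac{1}{|Q_r(t,x)|}\int_{Q_r(t,x)}u(\tau,z)\,d\tau\,dz\right|\leq (C_{\ast\ast})^{1/2}r^{1+\beta}\to0$$
as $r\to0$. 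The right-hand average tends to $u(t,x)$ by the Lebesgue differentiation theorem, while the left-hand side tends to $v_0(t,x)$ by Lemma \ref{lem:vi}. Hence $u(t,x)=v_0(t,x)$ almost everywhere on $I\times\Omega$. Since $v_0$ is continuous on $\overline{I\times\Omega}$ (Corollary \ref{cor:redundant}), we identify $u$ with its continuous representative $v_0$ throughout $\overline{I\times\Omega}$, and the seminorm estimate
$$[u]_{L^\infty_x(\Omega;C^{(1+\beta)/2}_t(I))}+[\nabla_xu]_{C^{\beta/2,\beta}_{t,x}(I\times\Omega)}\leq c(C_{\ast\ast})^{1/2}$$
follows directly from Corollary \ref{cor:redundant}.

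It remains to establish the $L^\infty$ bounds in terms of $\|u\|_{L^2}$ and $(C_{\ast\ast})^{1/2}$. Fix $(t,x)\in\overline{I\times\Omega}$ and apply Lemma \ref{lem:lemmaCampanato} at scale $r=r_0$ together with the triangle inequality to obtain
\begin{align*}
|a_0((t,x),r_0)|^2 &\leq \frac{c}{r_0^{n+2}}\int_{Q_{r_0}(t,x)\cap(I\times\Omega)}|P(z,(t,x),r_0)|^2\,d\tau\,dz\\
&\leq \frac{2c}{r_0^{n+2}}\int_{Q_{r_0}(t,x)\cap(I\times\Omega)}|u|^2\,d\tau\,dz+2cC_{\ast\ast}r_0^{2(1+\beta)},
\end{align*}
and analogously for $|a_i((t,x),r_0)|$ with the denominator $r_0^{n+4}$ in front of the $L^2$-integral. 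Then Lemma \ref{lem:vi} gives $|v_0(t,x)-a_0((t,x),r_0)|\leq c(C_{\ast\ast})^{1/2}r_0^{1+\beta}$ and $|v_i(t,x)-a_i((t,x),r_0)|\leq c(C_{\ast\ast})^{1/2}r_0^{\beta}$, so combining these with the previous display produces
$$\|u\|_{L^\infty(I\times\Omega)}+\|\nabla_xu\|_{L^\infty(I\times\Omega)}\leq c\big((C_{\ast\ast})^{1/2}+\|u\|_{L^2(I\times\Omega)}\big),$$
with the constant depending only on $n$, $\beta$, $\Omega$ and $|I|$ (through $r_0$). This completes the proof.

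The main technical point is the identification $u=v_0$ a.e.; this requires exploiting the symmetry of $Q_r(t,x)$ around $x$ in the spatial directions to kill the linear part of $P$ under averaging, a maneuver which is clean at interior points but would be awkward near the boundary. Fortunately the Lebesgue-point argument only needs to be run at a.e.\ interior point, and continuity of $v_0$ up to $\overline{I\times\Omega}$ extends the identification to the closure automatically.
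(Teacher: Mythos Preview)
Your proof is correct and follows essentially the same architecture as the paper: identify $u$ with $v_0$ a.e.\ via a Lebesgue-point argument, then invoke Corollary \ref{cor:redundant} for the seminorm estimate and Lemmas \ref{lem:lemmaCampanato} and \ref{lem:vi} at scale $r_0$ for the $L^\infty$ bounds. The only minor difference is in the identification step: you exploit the spatial symmetry of $Q_r(t,x)$ at interior points to conclude $a_0((t,x),r)=\frac{1}{|Q_r|}\int_{Q_r}P$, whereas the paper instead bounds $|P(z)-a_0|\leq Cr\sum_j|a_j((t,x),r)|$ pointwise and uses the convergence of the $a_j$'s from Lemma \ref{lem:vi}; both routes yield $v_0=u$ a.e.
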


\begin{proof}
For any $(t_0,x_0) \in I \times \Omega$, we have, by Lebesgue differentiation theorem,
$$\lim_{r \to 0}\frac{1}{|Q_r(t_0,x_0)\cap(I\times\Omega)|}\int_{Q_r(t_0,x_0)\cap(I\times\Omega)}|u(t,x)-u(t_0,x_0)|^2\,dt\,dx = 0$$
see \cite{Calderon}. Then, for any $0<r\leq r_0$,
\begin{align*}
    |a_0((t_0,x_0),r)-u(t_0,x_0)|^2 &\leq  \frac{C}{r^{n+2}}\int_{Q_r(t_0,x_0)\cap(I\times\Omega)}|P(x,(t_0,x_0),r)-a_0((t_0,x_0),r)|^2 \,dt\,dx  \\
                                     &\quad  + \frac{C}{r^{n+2}}\int_{Q_r(t_0,x_0)\cap(I\times\Omega)} |P(x,(t_0,x_0),r)-u(t,x)|^2 \,dt\,dx \\
                                     &\quad + \frac{C}{r^{n+2}}\int_{Q_r(t_0,x_0)\cap(I\times\Omega)} |u(t,x)-u(t_0,x_0)|^2 \,dt\,dx.
\end{align*}
Now, using \eqref{eq:condition2} and the following equation,
$$P(x,(t_0,x_0),r) = a_0((t_0,x_0),r) + \sum^n_{j=1} a_j((t_0,x_0),r)(x_j-(x_0)_j) $$
We get 
$$ \frac{1}{r^{n+2}}\int_{Q_r(t_0,x_0)\cap(I\times\Omega)}|P(x,(t_0,x_0),r)-a_0((t_0,x_0),r)|^2 \,dt\,dx \leq C\sum^n_{j=1} |a_j((t_0,x_0),r)|^2 r^2.$$
For a fixed $(t_0,x_0)$, $|a_j((t_0,x_0),r)|^2$ converges as $r\to0$, see Lemma \ref{lem:vi}.
Hence, as $r \to 0$, using all the previous results and estimates we see that
$$v_0(t_0,x_0)=\lim_{r \to 0} a_0((t_0,x_0),r)= u(t_0,x_0).$$
Therefore, $u$ can be modified on a set of measure zero so that $u=v_0$. In particular,
by Theorem \ref{thm:derivative}, $u$ is differentiable in $I\times\Omega$ and, by using Corollary \ref{cor:redundant}, seminorm estimates follow.
For the boundedness of $u$ and $\nabla u$, we use Lemmas \ref{lem:lemmaCampanato}
and \ref{lem:vi} to bound in the following way. On one hand, 
\begin{align*}
|u(t,x)|^2 &\leq C|u(t,x)-a_0((t,x),r_0)|^2+C|a_0((t,x),r_0)|^2 \\
&= C|v_0(t,x)-a_0((t,x),r_0)|^2+C|P(x,(t,x),r_0)|^2 \\
&\leq cC_{\ast\ast}r_0^{2(1+\beta)}+\frac{C}{r_0^{n+2}}\int_{Q((t,x),r_0)\cap(I\times\Omega)}|P(z,(t,x),r_0)|^2\,dz\,d\tau \\
&\leq cC_{\ast\ast}r_0^{2(1+\beta)}+\frac{C}{r_0^{n+2}}\int_{Q((t,x),r_0)\cap(I\times\Omega)}|P(z,(t,x),r_0)-u(\tau,z)|^2\,dz\,d\tau \\
&\quad +\frac{C}{r_0^{n+2}}\int_{I\times\Omega}|u(\tau,z)|^2\,dz\,d\tau \\
&\leq cC_{\ast\ast}r_0^{2(1+\beta)}+\frac{C}{r_0^{n+2}}\|u\|_{L^2(I\times\Omega)}^2.
\end{align*}
Similarly, 
\begin{align*}
|u_{x_i}(t,x)|^2 &\leq C|u_{x_i}(t,x)-a_i((t,x),r_0)|^2+C|a_i((t,x),r_0)|^2 \\
&\leq cC_{\ast\ast}r_0^{2\beta}+\frac{C}{r_0^{n+4}}\|u\|_{L^2(I\times\Omega)}^2.
\end{align*}
\end{proof}

\medskip

\noindent\textbf{Acknowledgments.} The authors are grateful to the referee for providing useful comments that
helped improve the presentation of the paper.



\end{document}